\documentclass[12pt,reqno]{amsart}
\textwidth=15cm \textheight=23cm
\oddsidemargin=0.5cm \evensidemargin=0.5cm
\topmargin=0cm



\usepackage{amsmath} 

\usepackage{amsthm}
\usepackage{amssymb}
\usepackage{graphics}
\usepackage{latexsym}

\numberwithin{equation}{section}
\newtheorem{thm}{Theorem}[section]
\newtheorem{prop}[thm]{Proposition}
\newtheorem{lem}[thm]{Lemma}
\newtheorem{cor}[thm]{Corollary}

\theoremstyle{remark}
\newtheorem{rem}[thm]{Remark}
\newtheorem{defn}{Definition}

\newcommand{\BBB}{\mathbb}
\newcommand{\R}{{\BBB R}}
\newcommand{\Z}{{\BBB Z}}
\newcommand{\T}{{\BBB T}}

\newcommand{\N}{{\BBB N}}
\newcommand{\C}{{\BBB C}}

\newcommand{\lec}{{\ \lesssim \ }}

\newcommand{\ga}{\gamma}

\newcommand{\vp}{\varphi}

\newcommand{\e}{\varepsilon}

\newcommand{\p}{\partial}
\newcommand{\la}{\lambda}
\newcommand{\La}{\Lambda}

\newcommand{\de}{\delta}

\newcommand{\supp}{\operatorname{supp}}

\newcommand{\dis}{\displaystyle}
\newcommand{\mT}{\mathcal{T}}

\newcommand{\EQ}[1]{\begin{equation} \begin{split} #1
 \end{split} \end{equation}}
\newcommand{\EQS}[1]{\begin{align} #1 \end{align}}

\newcommand{\EQQ}[1]{\begin{equation*} \begin{split} #1
 \end{split} \end{equation*}}

\newcommand{\ti}{\widetilde}
\newcommand{\ha}{\widehat}

\title[Unconditional well-posdeness for the 4NLS on the torus]
{Unconditional well-posedness for the fourth order nonlinear Sch\"{o}dinger type equations on the torus 
}
\author[T. K.  Kato]{Takamori Kato}
\address[T. K. Kato]{Mathematical Science Course, Faculty of Science and Engineering, Saga University, Saga, 840-8502, 
Japan}
\email[T.K. Kato]{tkkato@cc.saga-u.ac.jp}
\keywords{fourth order Schr\"{o}dinger, Cauchy problem, well-poseness, unconditional, normal form}


\begin{document}

\begin{abstract}
We prove the unconditional well-posedness for the fourth order nonlinear Schr\"{o}dinger type equations
 in $H^s(\T)$ when $s \ge 1$, which includes the non-integrable case. 
This regularity threshold is optimal because the nonlinear terms cannot be defined in the space-time distribution framework for $s<1$. 
The main idea is to employ the normal form reduction and a kind of cancellation property to deal with derivative losses. 
\end{abstract}

\maketitle
\setcounter{page}{001}

\section{Introduction}

In this paper, we consider the Cauchy problem for the fourth order nonlinear Schr\"{o}dinger type equations (4NLS) on $\T:= \R/ 2\pi \Z$:
\EQS{
&i \p_t u +\p_x^4 u = -\frac{3}{8} \la_1|u|^4 u+ \la_2 \bar{u} (\p_x u)^2 + \la_3 u |\p_x u|^2 + \la_4 u^2 \p_x^2 \bar{u} 
+ \la_5 |u|^2 \p_x^2 u, \label{4NLS1}\\
&u(0,x)=\vp(x)\in H^s(\T)\label{initial}
}
where $\la_1,\la_2, \la_3, \la_4$ and $\la_5$ are real constants, $u=u(t,x): \R\times \T\to \C$ is an unknown function and $\vp=\vp(x): \T \to \C$ 
is a given function. 
The 4NLS arises in the context of a motion of vortex filament. More precisely, using the localized induction approximation, 
Da Rios \cite{DR} proposed some equation which approximates the three dimensional motion of an isolated vortex filament embedded 
in an inviscid incompressible fluid filling an infinite region. 
The Da Rios equation is reduced to the cubic nonlinear Schr\"{o}dinger equation 
\begin{equation*}
i \p_t u +\p_x^2 u= - \frac{1}{2} |u|^2 u
\end{equation*}
via the Hasimoto transform \cite{Ha}. 
To describe the motion of actual vortex filament more precisely, some detailed models taking into account the effect of 
higher order corrections to the equation have been introduced by Fukumoto and Moffatt \cite{FM}.  
By using the Hasimoto transform, the Fukumoto-Moffatt equation is rewritten into the generalization of \eqref{4NLS1} as follows:
\begin{align} \label{fm4NLS}
i\p_t u + \p_x^4 u + \mu \p_x^2 u= & -\frac{3}{8} \la_1 |u|^4 u + \la_2 \bar{u} (\p_x u)^2 + \la_3 u |\p_x u|^2 \notag \\
& + \la_4 u^2 \p_x^2 \bar{u} + \la_5 |u|^2 \p_x^2 u   +\la_6 |u|^2 u  
\end{align}
where $\la_1 ,\dots, \la_6$ and $\mu$ are real constants. 
For the physical background, see \cite{FM}. 

We observe some conserved quantities. Define $\int_{\T} f(x) \, dx:= \frac{1}{2 \pi} \int_{0}^{2 \pi} f(x) \, dx$ for a 
$2 \pi$-periodic function $f$. Put
\EQQ{
&E_1(u) (t) :=\int_{\mathbb{T}} |u(t,x)|^2  \, dx, \hspace{0.3cm} E_2(u)(t):= \text{Im} \int_{\T} \bar{u}(t,x) \, \p_x u (t,x) \, dx,  \\
&E_3(u)(t):=\frac{1}{2} \int_{\mathbb{T}}  |\p_x u(t,x)|^2 + \frac{\la_4}{2} |u(t,x)|^4 \, dx, \\
&E_4(u)(t):= \frac{1}{2} \int_{\mathbb{T}}  |\p_x^2 u(t,x)|^2 + \frac{\la_1}{8} |u(t,x)|^6+ (\la_2+\la_4) |u(t,x)|^2 |\p_x u(t,x)|^2 \\
& \hspace{3cm} + \la_4 \text{Re}[ u^2 (t,x) \, (\overline{\p_x u}(t,x) )^2  ] \,  dx, \\
&A_1: \la_5= \la_2+ \la_4, \qquad A_2: \la_2+\la_3= \la_4 +\la_5, \\
& A_3: 8\la_4= 2  \la_2+\la_3, \,\, \la_4 (\la_2+ 2\la_4-2 \la_5)=-3 \la_1/4 . 
}
For each $j=1,2,3$, we have the conservation law
$E_j(u)(t)=E_j(\vp)$ in the formal sense when the assumption $A_j$ holds.
We have $E_4(u)(t)=E_4(\vp)$ if $A_1$ and $A_2$ hold.
In this case, \eqref{4NLS1} can be regarded as the Hamiltonian PDE $\p_t u =i \p_{\bar{u}} E_4(u)$. 
When $A_1$, $A_2$ and $A_3$ hold, that is, 
$\la_1=4\la_4^2$, $\la_2= 3 \la_4$, $\la_{3}=2 \la_4$ and $\la_5=4 \la_4$, 
\eqref{4NLS1} is a completely integrable system and in the Schr\"{o}dinger hierarchy. 
Then, \eqref{4NLS1} has infinitely many conserved quantities including $E_1$, $E_2$, $E_3$ and $E_4$. 
Moreover, on the Schr\"{o}dinger hierarchy, the first equation is the cubic nonlinear Schr\"{o}dinger equation 
\begin{equation*} 
i \p_t u+ \p_x^2 u = \la_4 |u|^2 u  
\end{equation*}
and the second equation is \eqref{4NLS1}.

In this paper, we focus on the well-posedness of the 4NLS on the torus. 
There is a large literature on the well-posedness of the 4NLS in the real line. 
For instance, see \cite{HIT}, \cite{HJ1}, \cite{Se1}, \cite{Se2} and \cite{Se3}. 
Huo and Jia \cite{HJ2} proved the local well-posedness of \eqref{fm4NLS} in $H^s(\R)$ for $s>1/2$ 
by the Fourier restriction norm method, which was introduced by Bourgain in \cite{Bo1}. 
Recently, Chen, Lu and Wang \cite{CLW} studied \eqref{4NLS1} in the integrable case and 
proved the global well-posedness in the modulation space $M_{2, q}^{s} (\R)$ 
for $s \ge 1/2$ and $2 \le q < \infty$. 
In their proof, they employed the $U^p$-$V^p$ spaces and completely integrable structure. 
J. Adams \cite{Ad} considered all equations belonging to the Schr\"{o}dinger hierarchy and demonstrated the local well-posedness in the 
Fourier-Lebesgue space and the modulation space at almost optimal exponent. Furthermore, in $H^s(\R)$, he proved the global well-posedness at the critical exponent where the iteration method works. 
On the other hand, in the periodic setting, known results for the 4NLS are few because 
the linear part does not have any smoothing effects and that makes the problem more difficult. 
Segata \cite{Se4} proved the local well-posedness in $H^s(\T)$ with $s \ge 4$ 
for \eqref{fm4NLS} without any conditions on $\la_1, \dots, \la_6$ and $\mu$. 
In \cite{Se5}, Segata also obtained the global well-posedness in $H^2(\T)$ for \eqref{4NLS1} in the integrable case. 
His proof of these results is based on the modified energy method, which was introduced by Kwon \cite{Kwo}. 
We are especially interested in low regularity solutions and the non-integrable case. 
The main result in this paper is as below. 

\begin{thm} \label{thm_LWP}
Let $s \geq 1$ and $\la_5= \la_2+\la_4$ or $\la_5=0$. Then, for any $\varphi \in H^s (\mathbb{T})$, 
there exists $T=T(\| \varphi \|_{H^{s}}) >0$ such that there exists a unique solution $u \in C([-T,T]: H^{s} (\mathbb{T}))$ to 
\eqref{4NLS1} with \eqref{initial}. 
Moreover, the solution map $H^s(\mathbb{T}) \ni \varphi \mapsto u \in C([-T,T]: H^s (\mathbb{T}))$ is continuous.  
\end{thm}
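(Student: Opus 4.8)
The plan is to establish unconditional well-posedness via the normal form reduction (infinite iteration of integration by parts in time), working directly with the Duhamel formulation without recourse to auxiliary Bourgain-type spaces. Writing $u(t) = e^{it\partial_x^4}v(t)$ and passing to Fourier coefficients, each nonlinear term in \eqref{4NLS1} becomes a multilinear expression whose time integrand carries an oscillatory factor $e^{it\Phi}$, where $\Phi$ is the resonance function associated with the symbol $k^4$. On $\T$, the key arithmetic fact is that $\Phi = k_1^4 - k_2^4 + \cdots$ (with the frequency constraint $\sum \pm k_j = k$) is, on the non-resonant set, bounded below by a positive power of the largest frequency; this is what replaces the smoothing effect absent in the periodic linear propagator. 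First I would carry out a careful case analysis of the resonance function for each quintic and cubic-with-derivatives interaction, separating the resonant set (where $\Phi$ is small or vanishes) from the non-resonant set.

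On the non-resonant set, I would integrate by parts in time: $\int e^{it\Phi}(\cdots)\,dt = [e^{it\Phi}\Phi^{-1}(\cdots)] - \int e^{it\Phi}\Phi^{-1}\partial_t(\cdots)\,dt$. The gain of $\Phi^{-1}$ must beat the two derivatives produced when $\partial_t$ hits one of the factors (each factor solves the equation, so $\partial_t$ costs four derivatives but multilinearity and the frequency constraint recover most of them). The delicate bookkeeping is that a single integration by parts is not enough to close at $s=1$: one needs to iterate, and at each stage the denominators accumulate while new, higher-degree multilinear terms with more factors are generated. I would organize this as an infinite normal form expansion, proving at each step a multilinear estimate of the schematic form $\|N_j\|_{H^s} \lesssim \|v\|_{H^s}^{j}$ with summable constants, so that the series converges and the boundary terms plus the remaining integral terms are all controlled in $C([-T,T]:H^s)$ for $T$ small depending on $\|\varphi\|_{H^s}$.

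The main obstacle — and the reason the hypothesis $\la_5 = \la_2 + \la_4$ or $\la_5 = 0$ appears — is the region where the resonance function degenerates, i.e. the nearly-resonant and exactly-resonant interactions in the cubic terms $\bar u(\partial_x u)^2$, $u|\partial_x u|^2$, $u^2\partial_x^2\bar u$, $|u|^2\partial_x^2 u$. There the normal form gives no denominator gain, and one is left with genuinely derivative-losing terms; these must cancel. I expect the heart of the proof to be an algebraic identity showing that under the stated constraint on the $\la_j$ the worst resonant contributions from the several cubic terms combine to something that either vanishes or is a perfect time-derivative (a correction to the energy / a gauge-type transformation), exactly paralleling how $\la_5 = \la_2+\la_4$ is precisely condition $A_1$ under which $E_4$ is conserved. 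Concretely I would isolate the terms with two frequencies comparable to the top frequency and the derivative landing on a high mode, show their symbols sum to zero on the resonant set, and absorb the residual into either a redefinition of the unknown or a convergent part of the expansion.

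Once the multilinear estimates and the cancellation are in hand, existence follows by a standard contraction/compactness argument on the normal-form-reduced equation, uniqueness in $C([-T,T]:H^s)$ (the unconditional part) follows because the reduction was performed without assuming any auxiliary regularity on $u$ — only $u \in C_tH^s_x$ is used to justify each integration by parts and to sum the series — and continuous dependence follows by running the same estimates on the difference of two solutions.
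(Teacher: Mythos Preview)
Your proposal has the right general flavor (normal form reduction, resonant/non-resonant decomposition), but it misidentifies several key mechanisms, and as written would not close.

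\textbf{The role of the hypothesis.} You interpret $\la_5=\la_2+\la_4$ (or $\la_5=0$) as the condition under which the worst resonant cubic interactions cancel inside the normal form expansion, and you link it to conservation of $E_4$. In fact $A_1:\la_5=\la_2+\la_4$ is the condition for conservation of $E_1$ (the $L^2$ mass), not $E_4$, and it is used \emph{before} any normal form is applied: one rewrites the resonant part of the cubic nonlinearity as $i\la_5 E_1(u)\partial_x^2 u + (\la_3-2\la_2)E_2(u)\partial_x u + (\text{harmless})$, then uses $\la_5 E_1(u)=\la_5 E_1(\vp)$ to absorb the $\partial_x^2$ piece into the linear operator $\phi_\vp(k)=k^4+\la_5 E_1(\vp)k^2$, and a spatial translation to remove the $\partial_x$ piece. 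This preliminary gauge-type reduction to \eqref{4NLS3} is what eliminates the two-derivative-losing resonant cubic terms; it is not a symbol cancellation inside the normal form.

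\textbf{Finite, not infinite, normal form.} The paper applies the normal form exactly twice, not as an infinite series. One step recovers one derivative; the non-resonant cubic terms lose two, so two steps suffice. After the first step a new quintic resonant piece (the multiplier $M^{(5)}_{8,\vp}$) appears with one derivative loss; this cannot be treated by further differentiation by parts because it is genuinely resonant ($k_1-k_2+k_3-k_4=0$). The paper handles it by an algebraic symmetrization (Proposition~\ref{prop_res}): the leading $|k_5|^7$ coefficient in the symmetrized symbol is proportional to $k_1-k_2+k_3-k_4$ and hence vanishes on the resonant set. This cancellation is \emph{independent} of the hypothesis on the $\la_j$'s; your proposal conflates two distinct mechanisms. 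An infinite Poincar\'e--Dulac iteration as you describe would have to confront this same resonant quintic term at every level, and you give no argument for why the resulting series would converge in $H^1$.

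\textbf{Existence.} Contraction on the normal-form-reduced equation \eqref{NF21} does produce a fixed point $\ha v$, but the paper explicitly notes that there is no way to verify that $u=U_\vp(t)v$ then solves the original equation \eqref{4NLS3}. The actual existence proof uses the known $H^m$ local well-posedness for smooth data (Segata), approximates $\vp$ by smooth $\vp_n$, and passes to the limit using the a~priori estimates of Corollary~\ref{cor_mainest}. Your plan omits this step.
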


\begin{rem} \label{rem_opt}
(i) We notice that 
\begin{align} \label{opt1}
& \la_2 \bar{u} (\p_x u)^2+ \la_3 u |\p_x u|^2+ \la_4 u^2 \p_x^2 \bar{u}+ \la_5 |u|^2 \p_x^2 u \notag  \\
&  =(\la_2- \la_5) \bar{u} (\p_x u)^2+ (\la_3-2 \la_4- \la_5) u |\p_x u|^2+ \la_4 \p_x( u^2 \p_x \bar{u})
 +\la_5 \p_x (|u|^2 \p_x u).  
\end{align}
Let $u \in C([-T, T] : H^s(\T))$ be a solution to \eqref{4NLS1}. 
When $s \ge 1$, $\bar{u} (\p_x u)^2$ and $u |\p_x u|^2$ are in $C([-T, T]: H^{s-2} (\T))$. 
Thus, \eqref{4NLS1} makes sense in $C([-T, T]: H^{s-4} (\T))$. 
On the other hand, when $s<1$, $\bar{u} (\p_x u)^2 $ and $u (\p_x u)^2$ cannot be defined even as the space-time distribution. 
Therefore, by \eqref{opt1}, the unconditional uniqueness in Theorem~\ref{thm_LWP} 
for $s=1$ is optimal except $\la_2=\la_5$ and $\la_3= 2 \la_4+ \la_5$.  \\
(ii) In a similar way to the proof of Theorem~\ref{thm_LWP}, under the same assumptions of Theorem~\ref{thm_LWP}, 
we obtain the unconditional well-posedness for \eqref{fm4NLS}. 
\end{rem}

Since \eqref{4NLS1} is $L^2$-subcritical in the sense of the Sobolev scale, by using some conserved quantities, 
we have the following global result as a corollary of Theorem \ref{thm_LWP}.
\begin{cor} \label{cor_GWP}
{\upshape (I)} (Hamiltonian case) 
Assume that $\la_3=2 \la_4$ and $\la_5=\la_2+ \la_4$. 
Then, for any $\vp \in H^2(\T)$, the solution obtained in Theorem~\ref{thm_LWP} can be extended to one on $t \in (- \infty, \infty)$. \\
{\upshape (II)} Assume that $A_1$ and $A_3$ hold, that is, $\la_3= 8 \la_4-2 \la_2$, $\la_5=\la_2+\la_4$ and $\la_1 =8 \la_2 \la_4/3$. 
Then, for any $\vp \in H^1(\T)$, the solution obtained in Theorem \ref{thm_LWP} can be extended to one on $t\in (-\infty,\infty)$.
\end{cor}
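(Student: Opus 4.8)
The plan is to combine the local theory of Theorem~\ref{thm_LWP} with the conservation laws recorded in the introduction, through the standard scheme of a~priori bounds plus continuation. Since Theorem~\ref{thm_LWP} produces, for data of size $R$ in $H^s$, a solution on a time interval of length $T=T(R)>0$ together with uniqueness and continuous dependence, it suffices to bound $\sup_t\|u(t)\|_{H^s}$ by a constant depending only on $\|\vp\|_{H^s}$ and then iterate. First I would identify the available conserved quantities: in case (I) the hypotheses $\la_3=2\la_4$ and $\la_5=\la_2+\la_4$ give $A_1$ immediately and $A_2$ since then $\la_2+\la_3=\la_2+2\la_4=\la_4+\la_5$, so $E_1$ and $E_4$ are conserved; in case (II) the hypotheses are exactly $A_1$ and $A_3$, so $E_1$ and $E_3$ are conserved.

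The first substantive step is to upgrade the conservation laws from their formal derivation (valid a~priori only for smooth solutions, whereas $E_3$ sees $\p_x u$ at $s=1$ and $E_4$ sees $\p_x^2 u$ at $s=2$) to genuine identities for the solutions of Theorem~\ref{thm_LWP}. I would do this by approximation: pick smooth $\vp_n\to\vp$ in $H^s$, so that the corresponding solutions $u_n$ are regular and satisfy $E_j(u_n)(t)=E_j(\vp_n)$ literally on a common subinterval (their lifespans are bounded below since $\|\vp_n\|_{H^s}$ is bounded), then use the continuous dependence of Theorem~\ref{thm_LWP} — giving $u_n\to u$ in $C([-T,T]:H^s)$ there — together with the continuity of $E_1,E_3$ on $H^1$ and of $E_4$ on $H^2$ to let $n\to\I$ and deduce $E_j(u)(t)=E_j(\vp)$ on the existence interval.

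Granting this, the uniform bound follows from one-dimensional interpolation on $\T$. In case (II), $E_1$ gives $\|u(t)\|_{L^2}=\|\vp\|_{L^2}$ and $E_3$ gives $\tfrac12\|\p_x u(t)\|_{L^2}^2=E_3(\vp)-\tfrac{\la_4}{2}\int_{\T}|u(t)|^4\,dx$; estimating $\int_{\T}|u|^4\le\|u\|_{L^\I}^2\|u\|_{L^2}^2\lesssim(\|u\|_{L^2}^2+\|u\|_{L^2}\|\p_x u\|_{L^2})\|u\|_{L^2}^2$ and absorbing the term containing $\|\p_x u\|_{L^2}$ with Young's inequality yields $\sup_t\|u(t)\|_{H^1}\le C(\|\vp\|_{H^1})$. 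In case (I), $E_1$ again controls $\|u(t)\|_{L^2}$, while $E_4$ writes $\tfrac12\|\p_x^2 u(t)\|_{L^2}^2$ as $E_4(\vp)$ minus the lower-order integrals $\tfrac{\la_1}{8}\int_{\T}|u|^6$, $(\la_2+\la_4)\int_{\T}|u|^2|\p_x u|^2$ and $\la_4\,\text{Re}\int_{\T}u^2(\overline{\p_x u})^2$; bounding each of them via $\|u\|_{L^\I}$ and the inequalities $\|u\|_{L^\I}^2\lesssim\|u\|_{L^2}^2+\|u\|_{L^2}\|\p_x u\|_{L^2}$ and $\|\p_x u\|_{L^2}^2\lesssim\|u\|_{L^2}^2+\|u\|_{L^2}\|\p_x^2 u\|_{L^2}$, one checks that each is at most $C(\|u\|_{L^2})+\tfrac14\|\p_x^2 u\|_{L^2}^2$, so Young's inequality absorbs $\tfrac12\|\p_x^2 u\|_{L^2}^2$ and gives $\sup_t\|u(t)\|_{H^2}\le C(\|\vp\|_{H^2})$. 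With the bound $\|u(t)\|_{H^s}\le M=M(\|\vp\|_{H^s})$ secured, I would iterate Theorem~\ref{thm_LWP} from the times $0,\pm\de,\pm2\de,\dots$ with the fixed step $\de=T(M)>0$, re-establishing the conservation laws and the bound on each new interval by the arguments above; since $\de$ is fixed this extends the solution to all of $\R$, global uniqueness follows from local uniqueness, and the continuity of the global solution map on $[-T,T]$ follows from the local version together with the a~priori bound.

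The hard part will be the second step: turning the formally conserved $E_3$ and $E_4$ into honest conservation laws at the critical regularities $s=1$ and $s=2$. Once that is in place, the interpolation estimates and the continuation are routine.
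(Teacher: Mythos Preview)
Your proposal is correct and follows exactly the route the paper indicates: the paper does not give a detailed proof of Corollary~\ref{cor_GWP} but merely records (just before the statement) that global existence follows from Theorem~\ref{thm_LWP} together with the conserved quantities $E_1,E_3,E_4$, since the equation is $L^2$-subcritical. Your identification of which $E_j$ are available in each case, your approximation argument to justify the conservation laws at the endpoint regularities via the continuous dependence in Theorem~\ref{thm_LWP}, and your Gagliardo--Nirenberg/Young absorption to close the a~priori $H^1$ and $H^2$ bounds are precisely the standard steps the paper leaves to the reader.
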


The main difficulty in proveing Theorem~\ref{thm_LWP} comes from derivative losses in the nonlinear terms. 
We can extract a kind of smoothing effect for the non-resonant parts in the nonlinear terms. 
Therefore, we need to eliminate the resonant parts with derivative losses in the nonlinear terms. 
We will cancel them by the conservation quantity $E_1$ and the changing variable $x$ to $x+c(t)$ below.
When $\la_5=\la_2 + \la_4$ or $\la_5=0$, it follows that  $\la_5 E_1(\vp) = \la_5 E_1(u) (t) $ 
for any solution $u \in C([-T, T]: H^1(\T))$ of \eqref{4NLS1} in the rigorous sense (see Lemma~\ref{lem_E1}). 
Thus, \eqref{4NLS1} can be rewritten into 
\begin{align}
& \p_t u -i \p_x^4 u + i \la_5 E_1(\vp) \p_x^2 u +(\la_3- 2 \la_2) E_2(u) \p_x u \notag \\ 
& \hspace{1cm} =J_1(u)+ J_2 (u)  +J_3 (u)+J_4(u) \label{4NLS2}
\end{align}
where 
 \begin{align*}
J_1(u)=&   -i \la_2 \bar{u} (\p_x u)^2 -i \la_3 u |\p_x u|^2- i \la_4 u^2 \p_x^2 \bar{u} - i \la_5 |u|^2 \p_x^2 u, \\
J_2(u)=&  i \la_5 E_1(u) \p_x^2 u+ (\la_3-2 \la_2) E_2 (u) \p_x u \\ 
& - i (2\la_4+\la_5 - \la_3 ) \int_{\T} |\p_x u|^2 dx \,  u, \\
J_3(u)= & i (2 \la_4+\la_5-\la_3) \int_{\T} |\p_x u|^2 dx \, u, 
\hspace{0.5cm} J_4(u)=  i \frac{3}{8} \la_1 |u|^4 u. 
\end{align*}
Note that $E_2(u)$ does not depend on $x$. Thus, by the change of variable
\begin{equation*}
 u(t,x) \mapsto u \Big(t, x+ \int_0^t ( \la_3- 2 \la_2) E_2(u) (t') \, dt'\Big), 
\end{equation*}
\eqref{4NLS2} is rewritten into 
\begin{align} \label{4NLS3}
\p_t u- i \p_x^4 u + i \la_5 E_1(\varphi) \p_x^2 u = J_1(u)+ J_2(u) +J_3(u)+J_4(u).  
\end{align}
We only need to show the unconditional local well-posedness of \eqref{4NLS3} with \eqref{initial}. 

In \eqref{4NLS3}, the nonlinear terms include the non-resonant parts or no derivative loss parts. 
We use the normal form reduction to recover derivative losses included in the non-resonant parts. 
It was used to recover derivative losses by Takaoka and Tsutsumi in \cite{TaTs} 
in the study of the well-posedness of the modified KdV equation and by Babin, Ilyin and Titi in \cite{BIT} in the study of 
the unconditional uniqueness of the KdV equation. 
For the studies of the unconditional uniqueness by the normal form reduction, 
see \cite{GKO}, \cite{Kn1}, \cite{Kn2}, \cite{Kn3}, \cite{Kn4}, \cite{Kn5}, \cite{KO}, \cite{KOY}, \cite{MoP} and \cite{MoY}. 
Roughly speaking, by Lemmas~\ref{Le1}, \ref{Le2} and \ref{Le3}, 
only one derivative can be recovered by applying the normal form reduction once. 
The non-resonant parts of $J_1(u)+J_2(u)$ (see the first term of \eqref{EE4}) have two derivative losses. 
Thus, we need to apply the normal form reduction twice. 
After we apply it once, quintic terms with one derivative loss appear. 
We need to remove the resonant parts of them, which is represented by the symbol of $M_{8, \vp}^{(5)}$. 
In Proposition~\ref{prop_res}, we compute the symmetrization of $M_{8, \vp}^{(5)}$ and show it has no derivative losses. 
Therefore, by applying the normal form reduction to the non-resonant parts of quintic terms,  
all derivative losses are recovered. 
When $s \ge 1$, all terms appearing by the normal form reduction can be estimated by fundamental tools such as 
the H\"{o}lder and the Young inequalities. 
We note that each nonlinear term appearing through the normal form reduction requires individual estimates, 
leading to numerous case distinctions. 
This is presumably due to the fact that the standard Fourier restriction norm method does not work for 
the non-resonant parts of \eqref{4NLS3}. 
This is the main idea in this paper. A similar idea was used for the fifth order KdV type equations in \cite{KTT} and 
for the fifth order mKdV type equations in \cite{KTT2}. 
Futhermore, we notice that the iteration argument fails to work for \eqref{4NLS3} when $s<1$ 
because the nonlinear terms of \eqref{4NLS3} have 
the resonant parts of high-high-high interaction (see the second term of \eqref{EE4}).   

Finally, we give some notations.
We write $k_{2i+1, 2i+2,\ldots,2j+1}$ to mean 
\begin{equation*}
\sum_{l=i}^j k_{2l+1}- \sum_{l=i}^{j-1} k_{2l+2}
\end{equation*}
for integers $i$ and $j$ satisfying $i < j$. 
$k_{\max}$ is denoted by $k_{\max}:= \max_{1 \le j \le N} \{ |k_j| \}$ and 
$\text{sec}_{1 \le j \le N} \{  |k_j|  \}$ is denoted by the second largest number among $|k_1|, \dots, |k_N|$. 
We will use $A\lesssim B$ to denote an estimate of the form $A \le CB$ for some positive constant $C$ and write $A \sim B$ 
to mean $A \lesssim B$ and $B \lesssim A$. 
If $a \in \R$, $a+$ will denote a number slightly greater than $a$.  
$\| \cdot \|_{L_T^{\infty} X }$ is denoted by $\| \cdot \|_{L_T^{\infty} X}:= \sup_{t \in [-T, T]} \| \cdot \|_{X}$ for a Banach space $X$. 
For $s \in \R$, $l_s^2$ is denoted by $l_s^2 := \{ f: \Z \to \C: \|  f \|_{l_s^2}:= \| \langle \cdot  \rangle^s f \|_{l^2} < \infty \}$.

\section*{Acknowledgement}
We would like to thank Professor Jun-ichi Segata and Professor Takato Uehara for their valuable comments.

\section{notations and preliminary lemmas}

In this section, we prepare some lemmas to prove main theorem. 
First, we give some notations. 
For a $2 \pi$-periodic function $f$ and a function $g$ on $\Z$, we define the Fourier transform and the inverse Fourier transform by 
\begin{equation*}
(\mathcal{F}_x  f)(k):= \ha{f}(k):= \int_{\T} e^{-ixk} f(x) \, dx, \hspace{0.5cm}
(\mathcal{F}^{-1} g) (x):= \sum_{k \in \Z} e^{ixk} g(k). 
\end{equation*} 
Then, we have 
\begin{equation*}
f= \mathcal{F}^{-1} (\mathcal{F}_x f), \hspace{0.5cm} 
\| f \|_{L^2}:= \Big( \int_{\T} |f(x)|^2 \, dx \Big)^{1/2}= \Big( \sum_{k \in \Z} |\ha{f} (k)|^2 \Big)^{1/2}=\|\ha{f}\|_{l^2}. 
\end{equation*}
For functions $f, g$ on $\Z$, 
\begin{equation*}
f \check{*} g (k):= \sum_{k=k_1-k_2} f(k_1) g(k_2)= \sum_{m \in \Z} f(k+m) g(m).
\end{equation*}
We give some estimates on the phase function $\Phi_{\vp}^{(2N+1)}$ defined as 
\begin{align*}
&\Phi_{\vp}^{(2N+1)} 
:= \phi_{\vp}(k_{1,2, \dots, 2N+1})- \Big\{\sum_{l=1}^{N+1} \phi_{\vp} (k_{2l-1}) - \sum_{l=1}^N \phi_{\vp} (k_{2l}) \Big\}  ,\\
&\phi_{\vp} (k):= k^4+  \la_5 E_1(\vp) k^2
\end{align*}
for $N \in \N$, which plays an important role to recover some derivatives when we estimate non-resonant parts of nonlinear terms.
A simple calculation yields that 
\begin{equation}
\Phi_{f}^{(3)}=  (k_1-k_2) (k_3-k_2) \big\{ (k_1-k_2)^2 + (k_3-k_2)^2 + 3(k_1+k_3)^2 + 2 \la_5 E_1(f)  \big\} \label{2eq2}
\end{equation}
for $f \in L^2(\T)$. We easily prove the following lemmas by the factorization formula \eqref{2eq2}.
\begin{lem} \label{Le1}
Assume that $f,g \in L^2(\T)$, $k_{\max}>16 \max\{1, |\la_5| E_1(f) , |\la_5| E_1(g) \}$ and $(k_1-k_2) (k_3-k_2) \neq 0$. 
Then, the following hold: \\
{\upshape (i)} If 
\begin{equation} \label{rel1}
|k_3| >16 \max\{ |k_1|, |k_2|  \}
\end{equation}
then it follows that 
\begin{align}
& |\Phi_f^{(3)}| \gtrsim |k_1-k_2| \, k_{\max}^3,  \label{ff1} \\
& \Big| \frac{1}{\Phi_{f}^{(3)}}- \frac{1}{\Phi_g^{(3)}} \Big|   \lesssim \frac{|\la_5| |E_1(f) -E_1(g) |}{ k_{\max}^2} 
\min \left\{  \frac{1}{|\Phi_f^{(3)}|}, \frac{1}{|\Phi_g^{(3)}|} \right\}.\label{ff2}
\end{align}
{\upshape (ii)} If either $|k_2| > 16 \max \{ |k_1|, |k_3| \}$ or 
\begin{equation*} 
32 |k_2| < \min\{ |k_1|, |k_3|  \}, \hspace{0.5cm} |k_1|/16 \le |k_3| \le 16 |k_1|
 \end{equation*}
holds, then it follows that \eqref{ff2} and 
\begin{equation}
|\Phi_f^{(3)}| \gtrsim k_{\max}^4. \label{ff3} 
\end{equation}
{\upshape (iii)} If  
\begin{equation*} 
32 |k_1| < \min\{ |k_2|, |k_3|  \}, \hspace{0.5cm} |k_2|/16 \le |k_3| \le 16 |k_2|
\end{equation*}
holds, then it follows that \eqref{ff2} and 
\begin{equation} \label{ff4}
|\Phi_f^{(3)}| \gtrsim |k_3-k_2| \, k_{max}^3.
\end{equation}
{\upshape (iv)} If $|k_1| \sim |k_2| \sim |k_3|$ holds, then it follows that \eqref{ff2} and 
\begin{equation} \label{ff5}
|\Phi_f^{(3)}| \gtrsim |k_1-k_2| |k_3-k_2| \, k_{\max}^2.
\end{equation}
\end{lem}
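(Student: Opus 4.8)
\textbf{Proof proposal for Lemma~\ref{Le1}.}

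The entire lemma rests on the factorization \eqref{2eq2}, so the plan is to read off the size of each factor in every regime and then combine. Write $a:= k_1-k_2$, $b:= k_3-k_2$, and $Q_f:= a^2+b^2+3(k_1+k_3)^2+2\la_5 E_1(f)$, so that $\Phi_f^{(3)}=ab\,Q_f$. Under the standing hypothesis $k_{\max}>16\max\{1,|\la_5|E_1(f),|\la_5|E_1(g)\}$ the term $2\la_5 E_1(f)$ is negligible: since $a^2+b^2+3(k_1+k_3)^2\gtrsim k_{\max}^2$ (at least one of $k_1,k_2,k_3$ has modulus $k_{\max}$, and inspecting the three squares shows one of them is $\gtrsim k_{\max}^2$ in each case), we get $Q_f\sim k_{\max}^2$ and the same for $Q_g$; this also gives $|Q_f-Q_g|=2|\la_5||E_1(f)-E_1(g)|$. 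Then \eqref{ff2} follows in all cases at once from
\EQQ{
\Big|\frac{1}{\Phi_f^{(3)}}-\frac{1}{\Phi_g^{(3)}}\Big|
=\frac{|Q_f-Q_g|}{|ab|\,|Q_f|\,|Q_g|}
\lesssim \frac{|\la_5||E_1(f)-E_1(g)|}{k_{\max}^2}\cdot\frac{1}{|ab|\,\min\{|Q_f|,|Q_g|\}},
}
and the last factor is $\min\{1/|\Phi_f^{(3)}|,1/|\Phi_g^{(3)}|\}$ since $|ab||Q_\bullet|=|\Phi_\bullet^{(3)}|$. So \eqref{ff2} is essentially free once the lower bounds on $Q$ are in hand, and the work is entirely in the lower bounds \eqref{ff1}, \eqref{ff3}, \eqref{ff4}, \eqref{ff5} for $|ab|$.

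For those I would go case by case on the frequency configuration. In case (i), $|k_3|>16\max\{|k_1|,|k_2|\}$ forces $k_{\max}=|k_3|$, and $|b|=|k_3-k_2|\sim|k_3|=k_{\max}$, $|k_1+k_3|\sim k_{\max}$, so $Q_f\sim k_{\max}^2$ and $|\Phi_f^{(3)}|=|a||b|Q_f\gtrsim |k_1-k_2|\,k_{\max}^3$, which is \eqref{ff1}. In case (ii): if $|k_2|>16\max\{|k_1|,|k_3|\}$ then $k_{\max}=|k_2|$ and $|a|=|k_1-k_2|\sim k_{\max}$, $|b|=|k_3-k_2|\sim k_{\max}$, giving $|ab|\gtrsim k_{\max}^2$ and \eqref{ff3}; in the other subcase $32|k_2|<\min\{|k_1|,|k_3|\}$ with $|k_1|\sim|k_3|$, again $|a|\sim|k_1|\sim k_{\max}$ and $|b|\sim|k_3|\sim k_{\max}$ so $|ab|\gtrsim k_{\max}^2$. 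Case (iii) is the mirror of case~(i) with the roles of $k_1$ and the differences rearranged: $32|k_1|<\min\{|k_2|,|k_3|\}$ and $|k_2|\sim|k_3|$ give $k_{\max}\sim|k_2|\sim|k_3|$, $|a|=|k_1-k_2|\sim k_{\max}$, and keeping $|b|=|k_3-k_2|$ as is yields $|\Phi_f^{(3)}|\gtrsim|k_3-k_2|\,k_{\max}^3$, which is \eqref{ff4}. Finally, in case (iv) $|k_1|\sim|k_2|\sim|k_3|\sim k_{\max}$, and with no further cancellation available we keep both differences, $|\Phi_f^{(3)}|=|a||b|Q_f\gtrsim|k_1-k_2||k_3-k_2|\,k_{\max}^2$, i.e. \eqref{ff5}. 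The hypothesis $(k_1-k_2)(k_3-k_2)\neq0$ is used only to know $ab\neq0$ so that the reciprocals in \eqref{ff2} make sense.

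The only genuinely delicate point — and the step I would be most careful about — is checking that the four cases (i)--(iv) together with their sub-hypotheses actually exhaust all frequency configurations with $k_{\max}$ large (up to the harmless relabeling symmetry $k_1\leftrightarrow k_3$), and that in the intermediate "comparable" regimes one really does have $3(k_1+k_3)^2+a^2+b^2\gtrsim k_{\max}^2$ with no accidental degeneracy; the subtlety is that $k_1+k_3$ can vanish, but then $a^2+b^2=(k_1-k_2)^2+(k_3-k_2)^2$ cannot also be small when $k_{\max}$ is large, so the sum of squares is always $\gtrsim k_{\max}^2$. Granting that bookkeeping, every inequality in the lemma reduces to the one-line estimates above. $\qed$
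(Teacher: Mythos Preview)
Your approach is exactly what the paper intends (the paper merely says the lemma follows from the factorization \eqref{2eq2}), and the case analysis for \eqref{ff1}, \eqref{ff3}, \eqref{ff4}, \eqref{ff5} is correct. One small slip in your derivation of \eqref{ff2}: from
\[
\frac{|Q_f-Q_g|}{|ab|\,|Q_f|\,|Q_g|}
\lesssim \frac{|\la_5||E_1(f)-E_1(g)|}{k_{\max}^2}\cdot\frac{1}{|ab|\,\min\{|Q_f|,|Q_g|\}}
\]
you conclude that the last factor equals $\min\{1/|\Phi_f^{(3)}|,1/|\Phi_g^{(3)}|\}$, but in fact $|ab|\min\{|Q_f|,|Q_g|\}=\min\{|\Phi_f^{(3)}|,|\Phi_g^{(3)}|\}$, whose reciprocal is the \emph{maximum}. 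The fix is immediate: since you have shown \emph{both} $|Q_f|,|Q_g|\gtrsim k_{\max}^2$, bound instead $1/(|Q_f||Q_g|)\lesssim k_{\max}^{-2}/\max\{|Q_f|,|Q_g|\}$, and then $|ab|\max\{|Q_f|,|Q_g|\}=\max\{|\Phi_f^{(3)}|,|\Phi_g^{(3)}|\}$ gives the desired $\min$. Also, your closing worry about whether cases (i)--(iv) ``exhaust all configurations'' is not needed for this lemma: each part has its own hypothesis and you only need $Q_f\sim k_{\max}^2$ under that hypothesis, which you verified (indeed the quadratic form $(k_1-k_2)^2+(k_3-k_2)^2+3(k_1+k_3)^2$ is positive definite in $(k_1,k_2,k_3)$, so $\gtrsim k_{\max}^2$ unconditionally).
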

We need estimates similar to \eqref{ff1} and \eqref{ff3}--\eqref{ff5} for $\Phi_{f}^{(5)}$ to recover the derivative loss. 
However, no factorization formulas are known for $\Phi_f^{(5)}$ and 
the following proposition means that it is impossible to gain $k_{\max}^3$ by $\Phi_f^{(5)}$ under the assumption 
$k_1-k_2+k_3-k_4 \neq 0$ and $|k_5| \gg \max_{j=1,2,3,4} \{ |k_j| \}$.

\begin{prop} \label{prop_co}
For $f \in L^2 (\T)$ and $\la_5 \in \R$, there exists $(k_1, k_2, k_3, k_4, k_5) \in \Z^5$ such that 
\begin{align*}
& |k_5| \gg \max_{1\le j \le 4}\{ |k_j| \}, \hspace{0.5cm} k_1-k_2+k_3-k_4 \neq 0, \hspace{0.5cm} |k_5| \gg |\la_5 | E_1(f) \notag \\
& |\Phi_f^{(5)}| \ll |k_1-k_2+k_3-k_4| |k_5|^3. 
\end{align*}
\end{prop}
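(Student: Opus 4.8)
The plan is to exhibit an explicit resonant-type configuration in the variables $k_1,\dots,k_4$ while letting $k_5$ tend to infinity, and to compute $\Phi_f^{(5)}$ directly from its definition. Recall
\[
\Phi_f^{(5)} = \phi_f(k_{1,2,3,4,5}) - \big\{ \phi_f(k_1) + \phi_f(k_3) + \phi_f(k_5) - \phi_f(k_2) - \phi_f(k_4) \big\},
\]
with $\phi_f(k) = k^4 + \la_5 E_1(f) k^2$. The key structural observation is that if we choose $k_1,k_2,k_3,k_4$ so that the partial sum $k_{1,2,3,4} := k_1 - k_2 + k_3 - k_4$ is \emph{small} (say equal to a fixed nonzero integer, e.g. $k_{1,2,3,4} = d$ with $|d|$ small but nonzero), then $k_{1,2,3,4,5} = k_5 + d$, and the leading quartic term $\phi_f(k_5+d) - \phi_f(k_5)$ expands as $4 d k_5^3 + O(k_5^2)$. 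So a priori $\Phi_f^{(5)}$ has a term of size $|d| k_5^3 = |k_{1,2,3,4}| |k_5|^3$, which is exactly the quantity we wish to beat. The whole point is therefore to arrange a \emph{further} cancellation that kills this $4 d k_5^3$ term.

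The natural way to get this extra cancellation is to let $d$ itself depend on $k_5$ in a mild way — more precisely, to split $k_5$'s growth between the "new" argument $k_{1,2,3,4,5}$ and the $k_j$'s is not allowed since we need $|k_5| \gg |k_j|$; instead I would keep $k_1,k_2,k_3,k_4$ comparable to one fixed large parameter $L$ with $L \ll |k_5|$, choose their signed sum $d = k_{1,2,3,4}$ to be a fixed nonzero constant independent of $L$ and $k_5$, and then exploit the freedom in the \emph{individual} values of $k_1,\dots,k_4$ (subject to $\sum (-1)^{j+1}k_j = d$) to tune the lower-order coefficients. Concretely, write $\Phi_f^{(5)}$ after the $4dk_5^3$ term as $\big(6 \Sigma_2 + \text{const}\big) k_5^2 + O(|k_5|)$ where $\Sigma_2$ is a quadratic symmetric function of $(k_1,\dots,k_4)$ coming from the Taylor expansion of $\phi_f(k_5+d)$ together with the $-\{\phi_f(k_1)+\cdots\}$ terms; the real obstruction $4dk_5^3$ cannot be removed this way. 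Hence the genuinely correct choice is to make $d=k_{1,2,3,4}$ itself grow, but slower than $|k_5|$: take, say, $k_{1,2,3,4,5}=k_5$ exactly is impossible if $d\neq0$, so instead take $|d|$ of intermediate size, $1 \le |d| \ll |k_5|$, with $|k_5|^3 \gg |d||k_5|^3$ being false — so that route fails too. The resolution: choose the parameters so that the \emph{entire} polynomial $\phi_f(k_5+d)$ is matched by $\phi_f(k_5)$ plus the bracket, i.e. pick $k_1,\dots,k_4$ with $k_1-k_2+k_3-k_4 = d \neq 0$ but with $k_1^4 - k_2^4 + k_3^4 - k_4^4$, $k_1^2-k_2^2+k_3^2-k_4^2$ chosen (using the extra degrees of freedom, e.g. $k_1 = a, k_2 = a-1, k_3 = b, k_4 = b-1$ giving $d = 2$, or more flexibly $k_1,k_3$ large positive and $k_2,k_4$ large negative) so that $\phi_f(k_5+d) - \phi_f(k_5) - [\text{bracket contributions from } k_1,\dots,k_4]$ has no $k_5^3$ and no $k_5^2$ term, leaving $\Phi_f^{(5)} = O(|k_5|) = o(|d||k_5|^3)$.

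Carrying this out, the key steps in order are: (1) write out $\Phi_f^{(5)}$ via the binomial expansion of $(k_5+d)^4$ and $(k_5+d)^2$, collecting coefficients of $k_5^3$, $k_5^2$, $k_5^1$, $k_5^0$; (2) observe that the $k_5^3$-coefficient is $4d$ and the $k_5^2$-coefficient is $6d^2$, while the full bracket subtraction $\phi_f(k_1)+\phi_f(k_3)+\phi_f(k_5)-\phi_f(k_2)-\phi_f(k_4)$ contributes the $\phi_f(k_5)$ that cancels the $k_5^4$ and the $\la_5E_1(f)k_5^2$ pieces, plus a $k_5$-independent remainder $R(k_1,k_2,k_3,k_4)$; (3) now relax the requirement and instead let $d$ and the $k_j$'s both scale with a single large parameter $N$: set $k_5 = N^{10}$ (so $|k_5| \gg \max_j|k_j|$ and $|k_5|\gg|\la_5|E_1(f)$ are automatic), and choose $k_1,\dots,k_4$ of size $\sim N$ with $k_{1,2,3,4}=d\sim N^{?}$; then $|\Phi_f^{(5)}| \sim |d| |k_5|^3 + O(|k_5|^2 \max_j|k_j|^2 + \cdots)$, and since the leading term really is $4|d||k_5|^3$, we in fact \emph{cannot} win unless $d$ is allowed to be negative relative to something — so the honest statement is that one must choose $k_1,\dots,k_4$ so that $k_{1,2,3,4}\neq 0$ yet $\Phi_f^{(5)}$, which equals $4k_{1,2,3,4}k_5^3 + (\text{lower order in }k_5)$, is dominated: this forces $|k_{1,2,3,4}| k_5^3 \gg |\Phi_f^{(5)}|$ to be \emph{false} unless the lower-order terms conspire, which they do only for special $(k_j)$. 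The clean fix used in the literature is (3'): take $|k_{1,2,3,4}|$ to be \emph{large} compared to the other combinations but \emph{small} compared to $|k_5|$ — e.g. $k_1 = M$, $k_2 = 1$, $k_3 = 1$, $k_4 = 1$ so $d = M$, $\max_j|k_j| = M$, and $k_5 = M^2$; then $|\Phi_f^{(5)}| \sim 4 M \cdot M^6 = 4 M^7$ while $|k_{1,2,3,4}||k_5|^3 = M \cdot M^6 = M^7$, i.e. they are \emph{comparable}, not $\ll$ — so this still fails and confirms that the inequality $|\Phi_f^{(5)}|\ll|k_{1,2,3,4}||k_5|^3$ genuinely requires the $4dk_5^3$ cancellation.

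The main obstacle, and the crux of the proof, is therefore step (2)–(3): engineering the simultaneous vanishing of the $k_5^3$- and $k_5^2$-coefficients of $\Phi_f^{(5)}$ while keeping $k_{1,2,3,4}\neq 0$. I expect the correct construction is to take $k_{1,2,3,4,5}$ and $k_5$ to have the \emph{same} absolute value up to lower order — for instance $k_1 = -k_2 + O(1)$, $k_3 = -k_4 + O(1)$ arranged so that $k_1 - k_2 + k_3 + k_4$ is of moderate size — equivalently, to use the identity that when the "output" frequency $k_{1,2,3,4}$ is comparable to some $k_j$, the phase $\Phi_f^{(5)}$ only sees a \emph{cubic} (not quartic in $k_5$) worth of cancellation failure, so that picking $|k_{1,2,3,4}| \sim |k_5|^{1/2}$, all $|k_j| \lesssim |k_5|^{1/2}$, and $k_5$ large yields $|\Phi_f^{(5)}| \lesssim |k_5|^{3} \ll |k_5|^{1/2}|k_5|^3 = |k_{1,2,3,4}||k_5|^3$. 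I would then verify the chain of inequalities in the displayed conclusion one by one for this explicit family: $|k_5| \gg \max_{1\le j\le 4}|k_j|$, $k_{1,2,3,4}\neq 0$, $|k_5| \gg |\la_5|E_1(f)$, and finally $|\Phi_f^{(5)}| \ll |k_{1,2,3,4}||k_5|^3$, completing the proof.
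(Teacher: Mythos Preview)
Your proposal has a genuine gap: the final ``fix'' does not work, and none of the earlier attempts were brought to a conclusion. With $d:=k_1-k_2+k_3-k_4$ and $|k_j|\ll |k_5|$ you correctly expand
\[
\Phi_f^{(5)}=4dk_5^3+6d^2k_5^2+O(|d|^3|k_5|)+O(1)-\big[\phi_f(k_1)-\phi_f(k_2)+\phi_f(k_3)-\phi_f(k_4)\big],
\]
and you correctly identify the leading obstruction $4dk_5^3$. But your last construction takes all $|k_j|\lesssim |k_5|^{1/2}$ and $|d|\sim|k_5|^{1/2}$. Then the bracket is $O(\max_j|k_j|^4)=O(|k_5|^{2})$, the term $6d^2k_5^2=O(|k_5|^{3})$, and the dominant contribution remains $4dk_5^3\sim|k_5|^{7/2}$. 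Hence $|\Phi_f^{(5)}|\sim|k_5|^{7/2}$, which is \emph{comparable} to $|d||k_5|^3$, not $\ll$. Your claimed bound $|\Phi_f^{(5)}|\lesssim|k_5|^3$ is simply false for this family.

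The structural reason your approach cannot succeed is that the only mechanism for cancelling $4dk_5^3$ is the $k_5$-independent bracket $\phi_f(k_1)-\phi_f(k_2)+\phi_f(k_3)-\phi_f(k_4)$, and that is bounded by $C\max_j|k_j|^4$. So cancellation forces $\max_j|k_j|\gtrsim(|d|\,|k_5|^3)^{1/4}$; with $|d|\ge 1$ this means $\max_j|k_j|\gtrsim|k_5|^{3/4}$, directly contradicting your constraint $|k_j|\lesssim|k_5|^{1/2}$. The paper's construction respects this: it fixes $d=1$, takes $k_5=n^9$, and lets $k_3,k_4\sim n^8$ (so $\max_j|k_j|\sim|k_5|^{8/9}>|k_5|^{3/4}$) with $k_1,k_2\sim n^5$. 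Rather than expanding in powers of $k_5$, it uses the splitting $\Phi_f^{(5)}=\Phi_f^{(3)}(k_{1,2,3},k_4,k_5)+\Phi_f^{(3)}(k_1,k_2,k_3)$ together with the factorization \eqref{2eq2}: the two pieces are each $\pm 4n^{27}+O(n^{24})$ with opposite signs, and after the two leading terms cancel one is left with $|\Phi_f^{(5)}|\sim n^{22}\ll n^{27}=|d||k_5|^3$. The idea you are missing is precisely to allow some $|k_j|$ to be nearly as large as $|k_5|$ (above the $|k_5|^{3/4}$ threshold) and to tune them so that the two $\Phi^{(3)}$ pieces cancel to leading and subleading order.
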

\begin{proof}
A simple calculation yields that 
\begin{align} \label{co1}
\Phi_{f}^{(5)}  =&  \Phi_f^{(3)} (k_{1,2,3}, k_4, k_5)+ \Phi_f^{(3)} (k_1, k_2, k_3) \notag \\
= & \{ \Phi_0^{(3)} (k_{1,2,3}, k_4, k_5)+ \Phi_0^{(3)} (k_1, k_2, k_3) \} \notag \\
& +2 \la_5 E_1(f) \{  (k_1-k_2-k_3+k_4) (k_{5}-k_4)+(k_1-k_2) (k_3-k_2)  \} \notag \\
=:& \psi_1(k_1, k_2, k_3, k_3, k_4, k_5) + \la_5 E_1(f) \psi_2(k_1, k_2, k_3, k_4,  k_5). 
\end{align}
Now, we take 
\begin{equation*}
k_5=n^9, \hspace{0.3cm} k_4=n^8, \hspace{0.3cm} k_3=n^{8}+n^3, \hspace{0.3cm}  k_2=n^{5}+n^{3}-1, \hspace{0.3cm}  k_1=n^5
\end{equation*}
for sufficiently large number $n$. Then, it follows that $k_1-k_2+k_3-k_4=1$ and 
\begin{align*}
& \Phi_0^{(3)} (k_{1,2,3}, k_4, k_5)= 4n^{27}-4n^{24} + R_1, \\ 
& \Phi_0^{(3)} (k_1, k_2, k_3) = -4n^{27}+ 4 n^{24} + R_2 
\end{align*}
where $|R_1| \sim n^{18}$ and $|R_2| \sim n^{22}$. Thus, we have 
\begin{equation} \label{co2}
|\psi_1(k_1, k_2, k_3, k_4, k_5)| \sim n^{22}. 
\end{equation}
By taking $n$ sufficiently large, we have $|k_5| \gg |\la_5 | E_1(f)$ and 
\begin{equation} \label{co3}
|\la_5| E_1(f) |\psi_2(k_1, k_2, k_3, k_4, k_5)| \ll n^{20}
\end{equation}
By \eqref{co1}--\eqref{co3}, we obtain $|\Phi_f^{(5)}| \sim n^{22} \ll n^{27} = |k_1-k_2+k_3-k_4| |k_5|^3 $. 
\end{proof}

Therefore, we need the slightly stronger assumption \eqref{rel3}, \eqref{rel4} or \eqref{rel5} instead of \eqref{rel1} in the following lemma. 
\begin{lem} \label{Le2}
Assume $f, g \in L^2(\T)$ and $k_{\max} > 16 \max\{1, |\la_5| E_1(f), |\la_5| E_1(g)  \}$. 
If either 
\begin{align}
& |k_3-k_4|> 16^2 |k_1-k_2| , \hspace{0.5cm} |k_5| > 14 \max_{j=1,2,3,4} \{ |k_j| \}, \label{rel3} \\
& |k_5|^{3/4} > 16^2 \max_{j=1,2,3,4} \{ |k_j| \}, \hspace{0.5cm} k_1-k_2+k_3-k_4 \neq 0  \label{rel4}
\end{align}
or 
\begin{equation} \label{rel5}
|k_5| > 16^2 \max_{j=1,2,3,4} \{ |k_j| \} > 16^3 \, \text{{\upshape sec}}_{j=1,2,3,4} \{ |k_j| \}
\end{equation}
holds, then it follows that 
\begin{align} 
& |\Phi_f^{(5)}| \gtrsim |k_1-k_2+k_3-k_4| |k_5|^3, \label{2eq7} \\
& \Big| \frac{1}{ \Phi_f^{(5)} } - \frac{1}{ \Phi_g^{(5)} }  \Big| \lesssim 
\frac{ |\la_5| | E_1(f) -E_1(g) |  }{k_{\max}} \min \Big\{ \frac{1}{ |\Phi_f^{(5)}| }, \frac{1}{ | \Phi_g^{(5)}  | }  \Big\} \label{2eq8}
\end{align} 
\end{lem}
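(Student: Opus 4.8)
The plan is to reduce everything to the two‑triad decomposition \eqref{co1}: writing $A_f := \Phi_f^{(3)}(k_{1,2,3},k_4,k_5)$ and $B_f := \Phi_f^{(3)}(k_1,k_2,k_3)$, one has $\Phi_f^{(5)} = A_f + B_f$, and each summand is handled by the factorization \eqref{2eq2}. In all three regimes \eqref{rel3}, \eqref{rel4}, \eqref{rel5} one has $|k_5| \gg M$, where $M := \max_{1\le j\le 4}\{|k_j|\}$, so that $k_{\max} = |k_5|$. The strategy is to show $|A_f| \sim |k_1-k_2+k_3-k_4|\,|k_5|^3$ while $|B_f| \ll |A_f|$, whence $|\Phi_f^{(5)}| \sim |A_f|$, which gives \eqref{2eq7}; the difference bound \eqref{2eq8} then follows from \eqref{co1} and the already‑proved lower bounds on $|\Phi_f^{(5)}|$ and $|\Phi_g^{(5)}|$.

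First I would bound $A_f$ from below. By \eqref{2eq2} applied to the triad $(k_{1,2,3},k_4,k_5)$,
\[
A_f = (k_1-k_2+k_3-k_4)(k_5-k_4)\bigl\{(k_1-k_2+k_3-k_4)^2 + (k_5-k_4)^2 + 3(k_{1,2,3}+k_5)^2 + 2\la_5 E_1(f)\bigr\}.
\]
Since $|k_{1,2,3}| \le 3M \ll |k_5|$ one gets $|k_5-k_4| \sim |k_5|$ and $|k_{1,2,3}+k_5| \sim |k_5|$, and since $|\la_5| E_1(f) < k_{\max}/16 = |k_5|/16$, the curly bracket is governed by $3(k_{1,2,3}+k_5)^2$ and is $\sim |k_5|^2$. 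It remains to see that $k_1-k_2+k_3-k_4 \neq 0$ with the right size in each regime: it is assumed in \eqref{rel4}; in \eqref{rel3} the hypothesis $|k_3-k_4| > 16^2|k_1-k_2|$ forces $|k_1-k_2+k_3-k_4| \sim |k_3-k_4| \ge 1$; and in \eqref{rel5} the gap $M > 16\,\text{sec}_{1\le j\le 4}\{|k_j|\}$ forces $|k_1-k_2+k_3-k_4| \sim M \ge 1$. Hence $|A_f| \sim |k_1-k_2+k_3-k_4|\,|k_5|^3$ in all cases.

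Next I would discard $B_f$. By \eqref{2eq2}, $|B_f| \lesssim |k_1-k_2|\,M\,(M^2 + |k_5|)$. Under \eqref{rel4} the crude consequence $|B_f| \lesssim M^4 + M^2|k_5|$ is enough: $M < 16^{-2}|k_5|^{3/4}$ gives $M^4 + M^2|k_5| \lesssim 16^{-4}|k_5|^3 \ll |k_5|^3 \le |A_f|$, and this is exactly the point at which the power $3/4$ is needed so that $M^4 \ll |k_5|^3$. Under \eqref{rel5}, where $|A_f| \sim M|k_5|^3$ and $M < 16^{-2}|k_5|$, the same crude bound gives $M^4 + M^2|k_5| \ll M|k_5|^3$. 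Under \eqref{rel3}, $M^4$ cannot be afforded when $|k_3-k_4|$ is small, so one retains the factor $|k_1-k_2| < 16^{-2}|k_3-k_4|$ in $B_f$: then $|B_f| \lesssim 16^{-2}|k_3-k_4|\,M(M^2 + |k_5|)$, whereas $|A_f| \sim |k_3-k_4|\,|k_5|^3$ with $|k_5| > 14M$, so that $|A_f|/|B_f| \gtrsim 16^2|k_5|^3/(M^3 + M|k_5|) \gg 1$. In every case $|\Phi_f^{(5)}| = |A_f + B_f| \sim |A_f| \sim |k_1-k_2+k_3-k_4|\,|k_5|^3$, which proves \eqref{2eq7}.

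Finally, for \eqref{2eq8} I would use \eqref{co1} once more. Since the $\la_5$‑free part $\psi_1$ is independent of $f$, one has $\Phi_f^{(5)} - \Phi_g^{(5)} = \la_5\bigl(E_1(f) - E_1(g)\bigr)\,\psi_2(k_1,\dots,k_5)$, with $\psi_2$ independent of $f,g$ and, crudely, $|\psi_2| \lesssim M|k_5| + M^2 \lesssim |k_5|^2$. Inserting this into
\[
\Bigl|\tfrac{1}{\Phi_f^{(5)}} - \tfrac{1}{\Phi_g^{(5)}}\Bigr| = \frac{|\la_5|\,|E_1(f)-E_1(g)|\,|\psi_2|}{|\Phi_f^{(5)}|\,|\Phi_g^{(5)}|}
\]
and using $k_{\max} = |k_5|$ together with $|\Phi_f^{(5)}|,|\Phi_g^{(5)}| \gtrsim |k_1-k_2+k_3-k_4|\,|k_5|^3 \ge |k_5|^3$ from \eqref{2eq7}, the factor $|k_5|^2$ turns into the gain $|k_5|^{-1} = k_{\max}^{-1}$ while one of the two $\Phi^{(5)}$ factors is absorbed into the minimum, yielding \eqref{2eq8}. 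The main obstacle is precisely this case bookkeeping: one must verify separately in \eqref{rel3}, \eqref{rel4}, \eqref{rel5} that $A_f$ dominates $B_f$ with no accidental cancellation and that the hypotheses control $k_1-k_2+k_3-k_4$ both in being nonzero and in size; \eqref{rel4} is the delicate case, since there $|k_5|$ is only polynomially — not linearly — larger than the remaining frequencies, which is why the power $3/4$ (rather than a constant) appears.
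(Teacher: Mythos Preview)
Your proof is correct and follows essentially the same approach as the paper: decompose $\Phi_f^{(5)}$ into the two triads via \eqref{co1}, show in each regime that the $(k_{1,2,3},k_4,k_5)$-triad dominates with size $|k_1-k_2+k_3-k_4|\,|k_5|^3$, and then read off \eqref{2eq8} from the explicit $\la_5(E_1(f)-E_1(g))$-dependence of the difference. The only organizational difference is that the paper pulls the $\la_5 E_1(f)$ contribution out into a separate remainder $R_1^{(5)}$ (working with $\Phi_0^{(3)}$ rather than $\Phi_f^{(3)}$ in the two triads), whereas you keep it inside $A_f,B_f$; this produces your extra $M^2|k_5|$ term in the $B_f$ bound, but since $M\ll|k_5|$ in every case it is harmless.
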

\begin{proof}
A simple calculation yields that 
\begin{align} \label{L1}
\Phi_f^{(5)} & = \Phi_{f}^{(3)} (k_{1,2,3}, k_4, k_5) + \Phi_{f}^{(3)} (k_1, k_2, k_3) \notag \\
& = \Phi_{0}^{(3)} (k_{1,2,3}, k_4, k_5) + \Phi_{0}^{(3)} (k_1, k_2, k_3) 
+ \la_5 E_1(f) R_1^{(5)}(k_1, k_2, k_3, k_4, k_5)
\end{align}
where  
\begin{equation}  \label{Rem1}
R_1^{(5)}(k_1, k_2, k_3, k_4, k_5) = 2 \{(k_1-k_2+k_3-k_4) (k_5-k_4) + (k_1-k_2) (k_3-k_2)   \}.
\end{equation}
By \eqref{2eq2}, we have
\begin{align}  \label{exc}
& \Phi_0^{(3)} (k_{1,2,3}, k_4, k_5) =  (k_1-k_2+k_3-k_4) (k_5-k_4) \notag \\ 
& \hspace{2cm} \times \{ (k_1-k_2+k_3-k_4)^2 + (k_5-k_4)^2 + 3 (k_5+k_1-k_2-k_4)^2 \}, \notag \\
& \Phi_0^{(3)} (k_1, k_2, k_3) = (k_1-k_2) (k_3-k_2) \{ (k_1-k_2)^2 + (k_3-k_2)^2 + 3(k_1+k_3)^2  \}.
\end{align}
Firstly, we suppose that \eqref{rel3} holds. Then, we easily check that 
\begin{align}
& |\Phi_0^{(3)} (k_{1,2,3}, k_4, k_5) | > |k_1-k_2+k_3-k_4| |k_5|^3,  \notag \\
& |\Phi_0^{(3)} (k_3,  k_4, k_5) | \le 40 |k_1-k_2| \max_{j=1,2,3} \{ |k_j|^3 \} \le 16^{-3} |k_1-k_2+k_3-k_4| |k_5|^3, \notag \\
& |R_1^{(5)} (k_1,  k_2, k_3, k_4, k_5)| < 3 |k_1-k_2+k_3-k_4| |k_5|  \label{L21}
\end{align} 
and by $|k_5| > 16 \max \{1, |\la_5| E_1(f)  \}$
\begin{equation*}
|\la_5| E_1(f) |R_1^{(5)} (k_1, k_2, k_3, k_4, k_5) | <  16^{-1}  |k_1-k_2+k_3-k_4| |k_5|^3. 
\end{equation*}
Thus, by \eqref{L1}, we have 
\begin{equation} \label{L22}
|\Phi_f^{(5)}|> \frac{1}{2} |k_1-k_2+k_3-k_4| |k_5|^3. 
\end{equation}
Since 
\begin{equation} \label{L23}
\frac{1}{\Phi_f^{(5)}}- \frac{1}{\Phi_g^{(5)}}= \la_5 (E_1(g)-E_1(f)) \frac{ R_1^{(5)} (k_1, k_2, k_3, k_4, k_5) }{ \Phi_f^{(5)} \Phi_g^{(5)} }, 
\end{equation}
by \eqref{L21} and \eqref{L22}, we get \eqref{2eq8}. 

Secondly, we suppose that \eqref{rel4} holds. Then, we easily check that 
\begin{align}
& |\Phi_0^{(3)} (k_{1,2,3}, k_4, k_5) | > 2 |k_1-k_2+k_3-k_4| |k_5|^3,  \notag \\
& |\Phi_0^{(3)} (k_3,  k_4, k_5) | \le 80 \max_{j=1,2,3} \{ |k_j|^4 \} \le 16^{-6} |k_5|^3 \le 16^{-6} |k_1-k_2+k_3-k_4| |k_5|^3 , \notag \\
& |R_1^{(5)} (k_1,  k_2, k_3, k_4, k_5)| < 3 |k_1-k_2+k_3-k_4| |k_5|+ 16^{-3} |k_5|^2  \label{L31}
\end{align} 
and by $|k_5| > 16 \max \{1, |\la_5| E_1(f)  \}$
\begin{equation*}
|\la_5| E_1(f) |R_1^{(5)} (k_1, k_2, k_3, k_4, k_5) | <  16^{-1}  |k_1-k_2+k_3-k_4| |k_5|^3. 
\end{equation*}
Thus, by \eqref{L1}, we have 
\begin{equation} \label{L32}
|\Phi_f^{(5)}|>  |k_1-k_2+k_3-k_4| |k_5|^3. 
\end{equation}
By \eqref{L23}, \eqref{L31} and \eqref{L32}, we get \eqref{2eq8}. 

Finally, we suppose that \eqref{rel5} holds. 
By symmetry, we may assume that $|k_3|= \max_{j=1,2,3,4} \{ |k_j| \}$ or $|k_4|= \max_{j=1,2,3,4} \{ |k_j| \}$ holds.  
When $|k_3|=\max_{j=1,2,3,4} \{ |k_j| \} $, we easily check that 
\begin{align}
& |\Phi_0^{(3)} (k_{1,2,3}, k_4, k_5) | > 2 |k_3| |k_5|^3,  \hspace{0.5cm} 
|\Phi_0^{(3)} (k_3,  k_4, k_5) | \le 80 |k_3|^4 < 16^{-4} |k_3| |k_5|^3, \notag \\
& |R_1^{(5)} (k_1,  k_2, k_3, k_4, k_5)| < 4 |k_3| |k_5|  \label{L33}
\end{align} 
and by $|k_5| > 16 \max \{1, |\la_5| E_1(f)  \}$
\begin{equation*}
|\la_5| E_1(f) |R_1^{(5)} (k_1, k_2, k_3, k_4, k_5) | < 16^{-1}  |k_3| |k_5|^3. 
\end{equation*}
Thus, by \eqref{L1} and $|k_3|> \frac{4}{5} |k_1-k_2+k_3-k_4| $, we have 
\begin{equation} \label{L34}
|\Phi_f^{(5)}|> |k_1-k_2+k_3-k_4| |k_5|^3. 
\end{equation}
By \eqref{L23}, \eqref{L33}, \eqref{L34} and $|k_3|> \frac{4}{5} |k_1-k_2+k_3-k_4|$, we get \eqref{2eq8}. 
In a similar manner as above, we obtain \eqref{2eq7} and \eqref{2eq8} when $|k_4|= \max_{j=1,2,3,4} \{ |k_j| \}$. 
\end{proof}
\begin{lem} \label{Le3}
Assume that $f, g \in L^2(\T)$ and $k_{\max} > 16 \{ 1, |\la_5| E_1(f), |\la_5| E_1(g) \}$. 
Then, the following hold: \\ 
{\upshape (i)} If either   
\begin{equation} \label{rel21}
14 \max_{j=1,2,3} \{ |k_j| \} < \min\{ |k_4|, |k_5| , |k_4-k_5| \}, \hspace{0.3cm} |k_4|/16 \le |k_5| \le 16 |k_4|
\end{equation}
or 
\begin{equation} \label{rel22}
16^2 |k_1-k_2| < |k_4-k_5|, ~~
14 \max_{j=1,2,3} \{ |k_j| \}< \min\{ |k_4|, |k_5| \}, ~~|k_4|/16 \le |k_5| \le 16 |k_4|
\end{equation}
holds, then it follows that 
\begin{align} 
& |\Phi_f^{(5)} | \gtrsim |k_4-k_5| \, k_{\max}^3, \label{2eq9} \\
& \Big| \frac{1}{ \Phi_f^{(5)} }- \frac{1}{ \Phi_g^{(5)} }  \Big| \lesssim \frac{ |\la_5| |E_1(f) -E_1(g) |}{ k_{\max}^2 } 
\min \Big\{ \frac{1}{ |\Phi_f^{(5)} | }, \frac{1}{| \Phi_g^{(5)} | }  \Big\}.  \label{2eq10} 
\end{align}
{\upshape (ii)} If either 
\begin{equation} \label{rel23}
14^2 \max_{j=1,2,4,5} \{ |k_j| \} < |k_4|
\end{equation} 
or
\begin{equation} \label{rel24}
14 \max_{j=1,2,4} \{ |k_j| \} < \min\{ |k_3|, |k_5| \}, \hspace{0.5cm} |k_3|/16 \le |k_5| \le 16 |k_3|
\end{equation}
holds, then it follows that  \eqref{2eq10} and 
\begin{equation}
|\Phi_f^{(5)}| \gtrsim k_{\max}^4.  \label{C31} 
\end{equation}
\end{lem}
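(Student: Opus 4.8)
The plan is to reduce $\Phi_f^{(5)}$ to the third-order phases $\Phi_f^{(3)}$, for which the factorization \eqref{2eq2} (and the estimates behind Lemma~\ref{Le1}) is available; $\Phi_f^{(5)}$ admits no such factorization, and in the direction one would most want it Proposition~\ref{prop_co} already rules it out. The tools are the identity \eqref{L1}, $\Phi_f^{(5)} = \Phi_f^{(3)}(k_{1,2,3},k_4,k_5) + \Phi_f^{(3)}(k_1,k_2,k_3)$, its mirror $\Phi_f^{(5)} = \Phi_f^{(3)}(k_1,k_2,k_{3,4,5}) + \Phi_f^{(3)}(k_3,k_4,k_5)$, the splitting $\Phi_f^{(3)} - \Phi_0^{(3)} = 2\la_5 E_1(f)(k_1-k_2)(k_3-k_2)$ from \eqref{2eq2}, and the difference identity \eqref{L23} with $R_1^{(5)}$ as in \eqref{Rem1}. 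In each regime of the lemma two of the frequencies form a comparable large pair (or one dominates the rest); I would pick the decomposition that places this pair inside a single $\Phi^{(3)}$-factor, estimate that factor via \eqref{2eq2}, and absorb the remaining $\Phi^{(3)}$-factor together with the $\la_5 E_1$-correction as a lower-order error.

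For part (i) I would use \eqref{L1}. Under \eqref{rel21} or \eqref{rel22} the frequencies $k_1,k_2,k_3$ are all $<\tfrac1{14}\min\{|k_4|,|k_5|\}$, so $|k_{1,2,3}|$ is small, $|k_{1,2,3}-k_4|\sim|k_4|\sim k_{\max}$, and \eqref{2eq2} applied to $(k_{1,2,3},k_4,k_5)$ --- the $2\la_5 E_1(f)$-term in the bracket being controlled by $k_{\max}>16|\la_5|E_1(f)$ --- yields $|\Phi_f^{(3)}(k_{1,2,3},k_4,k_5)| \gtrsim |k_{1,2,3}-k_4|\,|k_5-k_4|\,k_{\max}^2 \gtrsim |k_4-k_5|\,k_{\max}^3$. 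By \eqref{2eq2} again, $|\Phi_f^{(3)}(k_1,k_2,k_3)|\lesssim |k_1-k_2|\,\max_{j\le 3}|k_j|^3$ (plus a harmless $|\la_5|E_1(f)$-term), and $|k_1-k_2|\lesssim |k_4-k_5|$ under \eqref{rel21}, resp.\ $|k_1-k_2|<16^{-2}|k_4-k_5|$ under \eqref{rel22}, so this is $\ll |k_4-k_5|k_{\max}^3$; \eqref{2eq9} follows.

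For part (ii) I would split into two cases. If \eqref{rel23} holds, $k_4$ dominates the other four frequencies, so $k_{1,2,3,4,5}=-k_4+(k_1-k_2+k_3+k_5)$ lies within $k_{\max}/14^2$ of $-k_4$; since $\phi_f$ is even, $\phi_f(k_{1,2,3,4,5})$ and the $+\phi_f(k_4)$-term in $\Phi_f^{(5)}$ add rather than cancel, giving $\Phi_f^{(5)} = 2k_4^4 + O(k_{\max}^3) + O(\max_{j\ne 4}|k_j|^4) \gtrsim k_{\max}^4$. The hard part will be the regime \eqref{rel24}, where $k_3,k_5$ are a comparable large pair and $k_1,k_2,k_4$ are small: there both $\Phi^{(3)}$-factors of either decomposition already have size $\sim k_{\max}^4$, so the cancellation between them cannot be thrown away and one must return to $\Phi_f^{(5)} = k_{1,2,3,4,5}^4 - k_3^4 - k_5^4 + O(\max_{j=1,2,4}|k_j|^4) + O(|\la_5|E_1(f)\,k_{\max}^2)$ directly. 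Writing $k_{1,2,3,4,5}=k_3+k_5+\nu$ with $|\nu|\le\tfrac{3}{14}\min\{|k_3|,|k_5|\}$, I would prove $|k_{1,2,3,4,5}^4-k_3^4-k_5^4|\gtrsim k_{\max}^4$ by a short case split on the signs of $k_3$ and $k_5$: when they agree, $g(\nu):=(k_3+k_5+\nu)^4-k_3^4-k_5^4$ has both of its real roots outside the admissible range of $\nu$, hence is positive and monotone there and bounded below by its value at an endpoint, which is $\gtrsim k_{\max}^4$; when they disagree, $|k_{1,2,3,4,5}|\le k_{\max}$ forces $k_{1,2,3,4,5}^4-k_3^4-k_5^4\le-\min\{|k_3|,|k_5|\}^4<0$, and sharpening this elementary inequality gives the $k_{\max}^4$-lower bound. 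Since $E_1(f)$ enters only through the remainder, the same bound holds with $g$ in place of $f$, which is all that is used next.

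For the difference estimates I would use \eqref{L23}, $\tfrac1{\Phi_f^{(5)}}-\tfrac1{\Phi_g^{(5)}} = \la_5(E_1(g)-E_1(f))\,R_1^{(5)}/(\Phi_f^{(5)}\Phi_g^{(5)})$. In part (ii) one has $|R_1^{(5)}|\lesssim k_{\max}^2$ by the triangle inequality and $|\Phi_\bullet^{(5)}|\gtrsim k_{\max}^4$ for both $f$ and $g$, so $|R_1^{(5)}|/\max\{|\Phi_f^{(5)}|,|\Phi_g^{(5)}|\}\lesssim k_{\max}^{-2}$; in part (i) the first product in $R_1^{(5)}$ is $\sim k_{\max}|k_4-k_5|$ and the second is of lower order exactly as the error term above, so $|R_1^{(5)}|\lesssim|k_4-k_5|k_{\max}$ while $|\Phi_\bullet^{(5)}|\gtrsim|k_4-k_5|k_{\max}^3$, again $\lesssim k_{\max}^{-2}$. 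Multiplying by the remaining factor $\min\{|\Phi_f^{(5)}|^{-1},|\Phi_g^{(5)}|^{-1}\}$ yields \eqref{2eq10}. The main obstacle is, as noted, the degenerate case \eqref{rel24}: there is no lossless $\Phi^{(3)}$-decomposition, and one is forced to use the sign configuration of the two large frequencies by hand.
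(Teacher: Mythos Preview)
Your proposal is correct. For part (i) and for case \eqref{rel23} of part (ii) you argue exactly as the paper does, via the decomposition \eqref{L1} and the factorization \eqref{2eq2}. The only genuine divergence is your treatment of case \eqref{rel24}.

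There you abandon the $\Phi^{(3)}$-decomposition and work directly with the dominant terms $k_{1,2,3,4,5}^4-k_3^4-k_5^4$, splitting on the relative signs of $k_3$ and $k_5$. This succeeds: in the same-sign case the real roots of $\nu\mapsto(k_3+k_5+\nu)^4-k_3^4-k_5^4$ lie outside $|\nu|\le\tfrac{3}{14}\min\{|k_3|,|k_5|\}$ (one checks $(k_3^4+k_5^4)^{1/4}<k_3+k_5-\tfrac{3}{14}\min$ using $|k_3|\sim|k_5|$), and evaluating at the endpoint gives $\gtrsim k_{\max}^4$; in the opposite-sign case $|k_{1,2,3,4,5}|\le k_{\max}$ and $\min\{|k_3|,|k_5|\}\ge k_{\max}/16$ give the bound immediately. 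The paper instead observes that $\Phi_f^{(5)}$ can be \emph{rearranged} as
\[
\Phi_f^{(5)}=\Phi_0^{(3)}(k_3,\,k_2+k_4-k_1,\,k_5)-\Phi_0^{(3)}(k_2,k_1,k_4)+\la_5 E_1(f)\,R_2^{(5)},
\]
which puts both large frequencies $k_3,k_5$ into a single third-order phase with an artificially small middle entry $k_2+k_4-k_1$; then Lemma~\ref{Le1}(ii) applies verbatim to give $|\Phi_0^{(3)}(k_3,k_2+k_4-k_1,k_5)|\gtrsim k_{\max}^4$, while the other two terms are lower order. The paper's route is more systematic---it reduces everything to the already-proven trilinear lemma and needs no sign case split---whereas yours is more elementary and self-contained. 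Either is fine here; the rearrangement trick is worth remembering because it generalizes to other frequency configurations where a direct quartic analysis would become tedious.
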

\begin{proof}
A direct computation yields that 
\begin{align} \label{L41}
\Phi_f^{(5)}=\Phi_0^{(3)} (k_{1,2,3}, k_4, k_5)+ \Phi_0^{(3)} (k_1, k_2,k_3)+ \la_5 E_1 (f) R_1^{(5)}(k_1, k_2, k_3, k_4, k_5)
\end{align}
where $R_1^{(5)}$ is defined by \eqref{Rem1}. 
We notice that \eqref{exc} holds by \eqref{2eq2}. \\
(i) First, we suppose that \eqref{rel21} holds. When $|k_4| \le |k_5|$, we easily check that $|k_5|=k_{\max}$ and 
\begin{align}
& |\Phi_0^{(3)} (k_{1,2,3}, k_4, k_5) |> \frac{6}{5} |k_4-k_5| |k_4| |k_5|^2, \label{L46} \\
& |\Phi_0^{(3)} (k_1,  k_2, k_3) | < 14^{-2} |k_4-k_5| |k_4| |k_5|^2, \notag \\
& |R_1^{(5)} (k_1k_2, k_3, k_4, k_5) | < 3 |k_4-k_5| |k_4| \label{L42}
\end{align} 
and by $|k_5| > 16 \max\{ 1, |\la_5| E_1(f) \}$
\begin{equation*}
|\la_5| E_1(f) |R_1^{(5)} (k_1, k_2, k_3, k_4, k_5) | < 8^{-2} |k_4-k_5| |k_4| |k_5|^2.  
\end{equation*}
Thus, by \eqref{L41} and $|k_5| \le 16 |k_4|$, we have 
\begin{equation} \label{L43}
|\Phi_f^{(5)}| > |k_4-k_5| |k_4| |k_5|^2 \ge \frac{1}{16} |k_4-k_5| |k_5|^3. 
\end{equation}
By \eqref{L23}, \eqref{L42} and \eqref{L43}, we get \eqref{2eq10}. 
In a similar manner as above, we obtain \eqref{2eq9}  and \eqref{2eq10} when $|k_5| \le |k_4|$. 

Next, we suppose that \eqref{rel22} holds. When $|k_4| \le |k_5|$, we easily check that \eqref{L46}, \eqref{L42} and 
\begin{equation*}
|\Phi_0^{(3)} (k_1,  k_2, k_3) | \le 40 |k_1-k_2| \max_{j=1,2,3} \{  |k_j|^3 \} <  14^{-3} |k_4-k_5| |k_4| |k_5|^2. 
\end{equation*}
Thus, we have \eqref{L43}. By \eqref{L23} \eqref{L41} and \eqref{L42}, we get \eqref{2eq10}. 
Similarly, we get \eqref{2eq9}  and \eqref{2eq10} when $|k_5| \le |k_4|$. \\
\noindent
(ii) First, we suppose that \eqref{rel24} holds. Then, we easily check that 
\begin{align}
&|\Phi_0^{(3)} (k_{1,2,3}, k_4, k_5) | > \frac{2}{3} |k_4|^4, \hspace{0.5cm} 
|\Phi_0^{(3)}(k_1, k_2,  k_3)| < 14^{-2} |k_4|^4, \notag \\
& |R_1^{(5)} (k_1, k_2, k_3, k_4, k_5) | < 3|k_4|^2 \label{L52}
\end{align}
and by $|k_4| > 16 \max \{ 1, |\la_5| E_1(f) \}$
\begin{equation*}
|\la_5| E_1(f) |R_1^{(5)} (k_1,k_2, k_3, k_4, k_5) | < 8^{-2} |k_4|^4. 
\end{equation*}
Thus, by \eqref{L41}, we have 
\begin{equation} \label{C312}
|\Phi_f^{(5)}|> \frac{1}{2} |k_4|^4. 
\end{equation}
By \eqref{L23}, \eqref{L52} and \eqref{C312}, we get \eqref{2eq10}.

Next, we suppose that \eqref{rel24} holds. By symmetry, we assume that $|k_3| \le |k_5|$ holds. 
Then, it follows that $|k_5|=k_{\max}$ and 
\begin{equation} \label{rel25}
14^2 \max_{j=1,2,4} \{ |k_j| \} <|k_3| \le |k_5| \le 16|k_3|. 
\end{equation}
A simple calculation yields that 
\begin{align} \label{L5}
\Phi_f^{(5)}= \Phi_{0}^{(3)} (k_3, k_2+k_4-k_1, k_5) - \Phi_{0}^{(3)} (k_2, k_1, k_4) + \la_5 E_1(f) R_2^{(5)}(k_1, k_2, k_3, k_4, k_5)
\end{align}
where  
\begin{equation*} 
R_2^{(5)}(k_1, k_2, k_3, k_4, k_5) = 2 \{(k_3+k_1-k_2-k_4) (k_5+k_1-k_2-k_4) -(k_2-k_1) (k_4-k_1)   \}.
\end{equation*}
By \eqref{2eq2}, 
\begin{align*}
& \Phi_0^{(3)} (k_3, k_2+k_4-k_1, k_5) =  (k_3+k_1-k_2-k_4) (k_5+k_1-k_2-k_4) \notag \\ 
& \hspace{2cm} \times \{ (k_3+k_1-k_2-k_4)^2 + (k_5+k_1-k_2-k_4)^2 + 3 (k_3+k_5)^2 \}, \notag \\
& \Phi_0^{(3)} (k_2, k_1, k_3) = (k_1-k_2) (k_4-k_1) \{ (k_2-k_1)^2 + (k_4-k_1)^2 + 3(k_2+k_4)^2  \}.
\end{align*}
By \eqref{rel25}, we easily check that 
\begin{align}
& |\Phi_0^{(3)} (k_3, k_2+k_4-k_1, k_5)  | > \frac{1}{3} |k_3| |k_5|^3, \notag \\
& |\Phi_0^{(3)} (k_2, k_1, k_4)  | \le 80 \max_{j=1,2,4} \{ |k_j|^4 \} < 14^{-2} |k_3| |k_5|^3, \notag \\
& |R_2^{(5)} (k_1, k_2, k_3, k_4, k_5) | < 4 |k_3| |k_5| \label{L51}
\end{align}
and by $|k_5| > 16 \max \{ 1, |\la_5| E_1(f) \}$
\begin{equation*}
|\la_5| E_1(f) |R_2^{(5)} (k_1,k_2, k_3, k_4, k_5) | < 8^{-2} |k_3| |k_5|^2. 
\end{equation*}
Thus,  by \eqref{L5} and $|k_3| \le 16|k_5|$ , we have
\begin{equation} \label{C311}
|\Phi_f^{(5)}| > \frac{1}{4} |k_3| |k_5|^3 \ge \frac{1}{64} |k_5|^4.  
\end{equation}
Since
\begin{equation*}
\frac{1}{\Phi_{f}^{(5)}}- \frac{1}{ \Phi_g^{(5)} } = \la_5 (E_1(g)-E_1(f)) \, 
\frac{ R_2^{(5)} (k_1, k_2,  k_3, k_4,  k_5) }{ \Phi_f^{(5)} \Phi_g^{(5)}},
\end{equation*}
by \eqref{L51} and \eqref{C311}, we get \eqref{2eq10}. 
\end{proof}

\begin{defn}
For $f \in L^2(\T)$ and a $(2N+1)$-multiplier $m^{(2N+1)} (k_1, \dots, k_{2N+1})$, we define  
\begin{align*}
& \La_f^{(2N+1)} (m^{(2N+1)}, \ha{v}_1, \dots, \ha{v}_{2N+1}) (t,k) \notag  \\
& \hspace{1cm}
:= \sum_{k=k_{1,\dots, 2N+1}} e^{-i t \Phi_{f}^{(2N+1)} } m^{(2N+1)} (k_1, \dots, k_{2N+1}) \prod_{1 \le l \le 2N+1}^{*} \ha{v}_l (k_l)
\end{align*}
where 
\begin{equation*}
\prod_{1 \le l \le 2N+1}^{*} \ha{v}_l (k_i):= \prod_{j=1}^{N+1} \ha{v}_{2j-1} (k_{2j-1}) \, \prod_{i=1}^N \overline{\ha{v}}_{2i} (k_{2i})
\end{equation*} 
for functions $\ha{v}_1, \dots, \ha{v}_{2N+1}$ on $\Z$. 
$\La_f^{(2N+1)} (m^{(2N+1)}, \ha{v}, \dots, \ha{v} )$ may simply be written as $\La_f^{(2N+1)} (m^{(2N+1)}, \ha{v}) $. 
\end{defn}

\begin{defn}
The symmetrization of a $(2N+1)$-multiplier $m^{(2N+1)}$ is the multiplier 
\begin{align*}
& [m^{(2N+1)} ]_{sym1}^{(2N+1)} (k_1, \dots, k_{2N+1}) \\ 
& \hspace{1cm} := \frac{1}{(N+1)!} 
\sum_{\sigma \in S_{N+1}} m^{(2N+1)} (k_{\sigma(1)}, k_2, k_{\sigma(3)}, \dots, k_{2N}, k_{ \sigma(2N+1) }), \\
& [m^{(2N+1)} ]_{sym}^{(2N+1)} (k_1, \dots, k_{2N+1}) \\ 
& \hspace{1cm} := \frac{1}{N! (N+1)!} 
\sum_{\sigma \in S_{N+1}} \sum_{\tau \in \ti{S}_N} m^{(2N+1)} 
(k_{\sigma(1)}, k_{\tau(2)}, k_{\sigma(3)}, \dots, k_{\tau(2N)}, k_{ \sigma(2N+1) }). 
\end{align*}
where $S_{N+1}$ is the group of all permutations on $\{ 1, 3 \dots, 2N+1 \}$ 
and $\ti{S}_N$ is the group of all permutations on $\{ 2,4,\dots, 2N\}$. 

We say that a $(2N+1)$-multiplier $m^{(2N+1)}$ is symmetric if $[ m^{(2N+1)} ]_{sym}^{(2N+1)} = m^{(2N+1)}$ holds. 
We also use $\ti{m}^{(2N+1)}:=[m^{(2N+1)}]_{sym}^{(2N+1)}$ for short. 
\end{defn}
\begin{defn}
We say an $N$-multiplier $m^{(N)}$ is symmetric with $(k_i, k_j)$ if
\begin{equation*}
m^{(N)} (k_1, \dots, k_{i}, \dots, k_{j} \dots, k_{N})=m^{(N)} (k_1, \dots, k_{j}, \dots, k_i, \dots, k_N)
\end{equation*}
holds for $(k_1, \dots, k_N) \in \Z^{N}$. 
\end{defn}
\begin{defn}
We define $(2N+3)$-extension operators of a $(2N+1)$-multiplier $m^{(2N+1)}$ and a $3$-multiplier $m^{(3)}$ by 
\begin{align*}
& [ m^{(2N+1)}  ]_{ext1,1}^{(2N+3)} (k_1, \dots, k_{2N+3}) \\
& \hspace{0.5cm} := m^{(2N+1)} (k_1, \dots, k_{2N-1}, k_{2N}, k_{2N+1, 2N+2, 2N+3}), \\
& [ m^{(2N+1)}  ]_{ext1,2}^{(2N+3)} (k_1, \dots, k_{2N+3}) \\ 
& \hspace{0.5cm} := m^{(2N+1)} (k_1, \dots, k_{2N-1}, k_{2N}-k_{2N+3}+k_{2N+2} , k_{2N+1}), \\
& [m^{(3)} ]_{ext2,1}^{(2N+3)} (k_1, \dots, k_{2N+3})  := m^{(3)} (k_{2N+1}, k_{2N+2}, k_{2N+3}), \\
& [m^{(3)} ]_{ext2,2}^{(2N+3)} (k_1, \dots, k_{2N+3}) := m^{(3)} (k_{2N}, k_{2N+3}, k_{2N+2})
\end{align*} 
and define $(2N+5)$-extension operators of a $(2N+1)$-multiplier $m^{(2N+1)}$ by 
\begin{align*}
& [ m^{(2N+1)}  ]_{ext1,1}^{(2N+5)} (k_1, \dots, k_{2N+5}) \\
& \hspace{0.5cm} := m^{(2N+1)} (k_1, \dots, k_{2N-1}, k_{2N}, k_{2N+1, 2N+2, 2N+3, 2N+4, 2N+5}), \\ 
& [ m^{(2N+1)}  ]_{ext1,2}^{(2N+5)} (k_1, \dots, k_{2N+5}) \\
&  \hspace{0.5cm} := m^{(2N+1)} (k_1, \dots, k_{2N-1}, k_{2N}-k_{2N+3}+k_{2N+2}-k_{2N+5}+k_{2N+4} , k_{2N+1}).
\end{align*}
\end{defn}
\begin{rem}\label{rem_sym}
For any multipliers $m_1, m_2$, it follows that
\begin{equation*}
[m_1 m_2]_{ext1,j}^{(2N+1)}= [m_1]_{ext1,j}^{(2N+1)} [m_2]_{ext1,j}^{(2N+1)}, \hspace{0.5cm} 
[m_1 m_2]_{ext2,j}^{(2N+1)}= [m_1]_{ext2,j}^{(2N+1)} [ m_2 ]_{ext2,j}^{(2N+1)}
\end{equation*}
for each $j=1,2$. 
\end{rem}

For $L>0$, we define some multipliers to restrict summation regions in the Fourier space as follows:
\begin{align*}
& \chi_{\le L}^{(2N+1)}:=
\begin{cases}
\dis 1, \hspace{0.2cm} \text{when} \hspace{0.2cm} \max_{1 \le j \le 2N+1}  \{  |k_j| \} \le L \\
0, \hspace{0.2cm} \text{otherwise}
\end{cases}, \\
& \chi_{> L}^{(2N+1)}:=
\begin{cases}
\dis 1, \hspace{0.2cm} \text{when} \hspace{0.2cm} \max_{1 \le j \le 2N+1}  \{  |k_j| \} > L \\
0, \hspace{0.2cm} \text{otherwise}
\end{cases}, \\
& \chi_{H1,1}^{(2N+1)}:=
\begin{cases}
\dis 1, \hspace{0.2cm} \text{when} \hspace{0.2cm} 16^{N} \max_{1 \le l \le 2N} \{ |k_l|   \} < |k_{2N+1}| \\
0, \hspace{0.2cm} \text{otherwise}
\end{cases}, 
\end{align*}
with $N=1,2$ and 
\begin{align*}
& \chi_{H1,2}^{(3)}:=
\begin{cases}
\dis 1, \hspace{0.2cm} \text{when} \hspace{0.2cm} 16 \max\{ |k_1|, |k_3|  \} < |k_2| \\
0, \hspace{0.2cm} \text{otherwise}
\end{cases}, \\
& \chi_{H2,1}^{(3)}:=
\begin{cases}
\dis 1, \hspace{0.2cm} \text{when} \hspace{0.2cm} 32 |k_1| < \min\{ |k_2-k_3| , |k_2|, |k_3|\}, 
\hspace{0.3cm} |k_2|/16 \le |k_3| \le 16 |k_2| \\
0, \hspace{0.2cm} \text{otherwise}
\end{cases}, \\
& \chi_{H2,2}^{(3)}:=
\begin{cases}
\dis 1, \hspace{0.2cm} \text{when} \hspace{0.2cm} |k_2-k_3| \le 32 |k_1| < \min\{ |k_2|, |k_3| \}, 
\hspace{0.3cm} |k_2|/16 \le |k_3| \le 16 |k_2| \\
0, \hspace{0.2cm} \text{otherwise}
\end{cases}, \\
& \chi_{H2,3}^{(3)}:=
\begin{cases}
\dis 1, \hspace{0.2cm} \text{when} \hspace{0.2cm} 32 |k_2| < \min\{ |k_1|, |k_3|\}, 
\hspace{0.3cm} |k_1|/16 \le |k_3| \le 16 |k_1| \\
0, \hspace{0.2cm} \text{otherwise}
\end{cases}, \\
& \chi_{H3}^{(3)}:=1-([2 \chi_{H1,1}^{(3)} ]_{sym1}^{(3)}+ \chi_{H1,2}^{(3)} + [ 2 \chi_{H2,1}^{(3)} ]_{sym1}^{(3)}
+ [2 \chi_{H2,2}^{(3)}]_{sym1}^{(3)}+ \chi_{H2,3}^{(3)}). 
\end{align*}
Note that $|k_1| \sim |k_2| \sim |k_3|$ for $(k_1, k_2, k_3) \in \supp \chi_{H3}^{(3)}$. 
We put
\begin{align*}
\chi_{NR1}^{(3)}:=
\begin{cases}
1, \hspace{0.2cm} \text{when} \hspace{0.2cm} (k_1-k_2) (k_3-k_2) \neq 0 \\
0, \hspace{0.2cm} \text{otherwise}
\end{cases},
\end{align*}
$\chi_{NR1}^{(5)}:=[\chi_{NR1}^{(3)}]_{ext1,1}^{(5)} [ \chi_{NR1}^{(3)} ]_{ext2,1}^{(5)}$ and 
\begin{align*}
& \chi_{R1}^{(2N+1)}:=
\begin{cases}
\dis 1, \hspace{0.2cm} \text{when} \hspace{0.2cm} k_1-k_2+ \dots +k_{2N-1}-k_{2N}= 0 \\
0, \hspace{0.2cm} \text{otherwise}
\end{cases}
\end{align*}
with $N=1,2$ and 
\begin{align*}
& \chi_{R2}^{(3)}:=
\begin{cases}
\dis 1, \hspace{0.2cm} \text{when} \hspace{0.2cm} k_1=k_2=k_3 \\
0, \hspace{0.2cm} \text{otherwise}
\end{cases}, \\
& \chi_{R3}^{(5)}:=
\begin{cases}
\dis 1, \hspace{0.2cm} \text{when} \hspace{0.2cm}  |k_5|^{3/4} \le 16^2 \max_{1 \le j \le 4} \{ |k_j| \}, \hspace{0.3cm} 
\max_{1 \le j \le 4} \{ |k_j| \} \le 16  \text{sec}_{1 \le j \le 4} \{ |k_j|  \}, \\
\hspace{2cm} |k_3-k_4| \le 16^2 |k_1-k_2| \\
0, \hspace{0.2cm} \text{otherwise}
\end{cases}. 
\end{align*}
We define 
\begin{align*} 
& \chi_{A1}^{(5)}:=
\begin{cases}
1, \hspace{0.2cm} \text{when} \hspace{0.2cm}  8^3 \max \{ |k_1|, |k_2|  \} < |k_{3,4,5}| \\
0, \hspace{0.2cm} \text{otherwise}
\end{cases}, \\
& \chi_{A2}^{(5)}:=
\begin{cases}
1, \hspace{0.2cm} \text{when} \hspace{0.2cm} 16^2 | k_1-k_2 |< |k_3-k_4| \\
0, \hspace{0.2cm} \text{otherwise}
\end{cases}, \\
& \chi_{A3}^{(5)}:=
\begin{cases}
1, \hspace{0.2cm} \text{when} \hspace{0.2cm} 16^2 |k_1-k_2| < |k_4-k_5|  \\
0, \hspace{0.2cm} \text{otherwise}
\end{cases}.  
\end{align*}
Moreover, we define
\begin{align*}
& \chi_{NR(1,1)}^{(5)}:= [ \chi_{H1,1}^{(3)}]_{ext1,1}^{(5)} [ \chi_{H1,1}^{(3)} ]_{ext2,1}^{(5)}, 
\hspace{0.3cm}
\chi_{NR(1,2)}^{(5)}:= \chi_{H1,1}^{(3)} (k_{3,4,5}, k_2, k_1) \chi_{H1,1}^{(3)} (k_3,k_4, k_5), \\
& \chi_{NR(2,1)}^{(5)}:= [ \chi_{H1,1}^{(3)}]_{ext1,1}^{(5)} [ \chi_{H1,2}^{(3)} ]_{ext2,1}^{(5)}, 
\hspace{0.3cm}
\chi_{NR(2,2)}^{(5)}:= \chi_{H1,1}^{(3)} (k_{3,4,5}, k_2, k_1) \chi_{H1,2}^{(3)} (k_3,k_4, k_5), \\
& \chi_{NR(3,1)}^{(5)}:= [ \chi_{H1,1}^{(3)}]_{ext1,1}^{(5)} [ \chi_{H2,1}^{(3)} ]_{ext2,1}^{(5)}, 
 \hspace{0.3cm}
\chi_{NR(3,2)}^{(5)}:= \chi_{H1,1}^{(3)} (k_{3,4,5}, k_2, k_1) \chi_{H2,1}^{(3)} (k_3, k_4,  k_5), \\
& \chi_{NR(4,1)}^{(5)}:= [ \chi_{H1,1}^{(3)}]_{ext1,1}^{(5)} [ \chi_{H2,2}^{(3)} ]_{ext2,1}^{(5)}, 
\hspace{0.3cm}
\chi_{NR(4,2)}^{(5)}:= \chi_{H1,1}^{(3)} (k_{3,4,5}, k_2, k_1) \chi_{H2,2}^{(3)} (k_3, k_4,  k_5), \\
& \chi_{NR(5,1)}^{(5)}:=[ \chi_{H1,1}^{(3)}]_{ext1,1}^{(5)} [ \chi_{H2,3}^{(3)} ]_{ext2,1}^{(5)}, 
\hspace{0.3cm}
\chi_{NR(5,2)}^{(5)}:= \chi_{H1,1}^{(3)} (k_{3,4,5}, k_2, k_1) \chi_{H2,3}^{(3)} (k_3,k_4,  k_5).
\end{align*}

\begin{lem} \label{Le6} 
Let $3$-multipliers $m_1^{(3)}$ and $m_2^{(3)}$ be symmetric. 
Then, it follows that 
\begin{align}
&\big[ [ m_1^{(3)} [2 \chi_{H1,1}^{(3)}]_{sym1}^{(3)} ]_{ext1,1}^{(5)} 
\big[m_2^{(3)} \big([2\chi_{H1,1}^{(3)}]_{sym1}^{(3)} + [2 \chi_{H2,1}^{(3)}]_{sym1}^{(3)} + [2 \chi_{H2,2}^{(3)} ]_{sym1}^{(3)} \big) \big]_{ext2,1}^{(5)}  \big]_{sym}^{(5)}  \notag \\
 & =  \sum_{j=1,3,4} \big\{ \big[ [ m_1^{(3)} ]_{ext1,1}^{(5)} [m_2^{(3)}]_{ext2,1}^{(5)} 2 \chi_{NR(j ,1)}^{(5)} \big]_{sym}^{(5)} 
+  \big[ [ m_1^{(3)} ]_{ext1,1}^{(5)} [m_2^{(3)}]_{ext2,1}^{(5)}  2 \chi_{NR(j, ,2)}^{(5)} \big]_{sym}^{(5)} \big\} 
\label{le211} 
\end{align}
and
\begin{align}
&\big[ [ m_1^{(3)} [2 \chi_{H1,1}^{(3)}]_{sym1}^{(3)} ]_{ext1,1}^{(5)} 
\big[m_2^{(3)} \big( \chi_{H1,2}^{(3)}+ \chi_{H2,3}^{(3)} \big) \big]_{ext2,1}^{(5)}  \big]_{sym}^{(5)}  \notag \\
 & =  \sum_{j=2,5} \big\{  \big[ [ m_1^{(3)} ]_{ext1,1}^{(5)} [m_2^{(3)}]_{ext2,1}^{(5)} \chi_{NR(j ,1)}^{(5)} \big]_{sym}^{(5)} 
+  \big[ [ m_1^{(3)} ]_{ext1,1}^{(5)} [m_2^{(3)}]_{ext2,1}^{(5)}  \chi_{NR(j,2)}^{(5)} \big]_{sym}^{(5)} \big\}.
\label{le212} 
\end{align}
\end{lem}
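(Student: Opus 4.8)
The plan is to verify both identities by direct computation of the symmetrizations on both sides, using the combinatorics of the permutation groups $S_3$ (permutations of the odd slots $\{1,3,5\}$) and $\ti S_2$ (permutations of the even slots $\{2,4\}$) together with Remark~\ref{rem_sym}. First I would record the structural fact, immediate from the definitions of $[\cdot]_{ext1,1}^{(5)}$ and $[\cdot]_{ext2,1}^{(5)}$, that $[m_1^{(3)}]_{ext1,1}^{(5)}$ depends on $(k_1,\dots,k_5)$ only through $(k_1,k_2,k_{3,4,5})$ while $[m_2^{(3)}]_{ext2,1}^{(5)}$ depends only on $(k_3,k_4,k_5)$; by Remark~\ref{rem_sym} the same holds for the products with any $\chi$'s of the corresponding extension type. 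Then, since $m_1^{(3)}$ and $m_2^{(3)}$ are symmetric, $[m_1^{(3)}]_{ext1,1}^{(5)}[2\chi_{H1,1}^{(3)}]_{sym1}^{(3)}(k_1,k_2,k_{3,4,5})$ is already symmetric in $(k_1,k_3,k_5)$ up to the factor coming from $\chi_{H1,1}$, and similarly on the $ext2$ side; this is what lets us open the outer $[\cdot]_{sym}^{(5)}$ and reorganize the sum over $\sigma\in S_3$, $\tau\in\ti S_2$ into a sum of the listed non-resonant pieces.

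The core of the argument is a bookkeeping lemma: I would show that, inside the outer symmetrization, the product $[2\chi_{H1,1}^{(3)}]_{sym1}^{(3)}(k_1,k_2,k_{3,4,5})$ expands as a sum over which of the three indices $1,3,5$ plays the role of the "large" frequency $k_{2N+1}$ in $\chi_{H1,1}^{(3)}$, and likewise the $ext2$-side factor $[2\chi_{H1,1}^{(3)}]_{sym1}^{(3)}+[2\chi_{H2,1}^{(3)}]_{sym1}^{(3)}+[2\chi_{H2,2}^{(3)}]_{sym1}^{(3)}$ evaluated at $(k_3,k_4,k_5)$ expands over which of $3,5$ is the large one (the factor $2$ and the $sym1$ accounting exactly for the two choices, matching the definitions of $\chi_{NR(j,1)}^{(5)}$ versus $\chi_{NR(j,2)}^{(5)}$, where the two variants differ by swapping the roles of $k_1$ and $k_{3,4,5}$ in the first $\chi_{H1,1}$ factor). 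Cross-multiplying these two expansions and then applying $[\cdot]_{sym}^{(5)}$ term by term, each product lands on exactly one of $2\chi_{NR(j,1)}^{(5)}$ or $2\chi_{NR(j,2)}^{(5)}$ for $j=1,3,4$; collecting terms gives \eqref{le211}. The second identity \eqref{le212} is the same computation with the $ext2$-side multiplier replaced by $\chi_{H1,2}^{(3)}+\chi_{H2,3}^{(3)}$, which carries no internal symmetrization and hence contributes coefficient $1$ rather than $2$, producing the $j=2,5$ terms; the mismatch of roles ($|k_2|$ largest for $\chi_{H1,2}$, $|k_2|$ smallest for $\chi_{H2,3}$ relative to the other two) is what distinguishes the indices $2$ and $5$.

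The main obstacle I anticipate is purely organizational rather than conceptual: keeping straight, across the two distinct permutation groups and the two extension conventions, precisely which slot is frozen and which are averaged, so that the case analysis "which frequency is large" partitions the support correctly and the multiplicities (the factors of $2$, the $1/N!(N+1)!$ normalizations) come out exactly. In particular one must check that the supports of the various $\chi_{H\ast}^{(3)}$ pulled back through $ext2,1$ are disjoint in the relevant frequency regime, so that no double counting occurs when the products are re-summed; this disjointness is built into the definitions (the conditions on $|k_2-k_3|$ versus $|k_1|$ and on the ratios $|k_1|/|k_3|$ partition the $H2$ regions), and invoking it carefully is the step that needs the most attention. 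Once the bookkeeping is set up, both \eqref{le211} and \eqref{le212} follow by matching monomials on the two sides.
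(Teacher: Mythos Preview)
Your approach is essentially the same as the paper's: expand the inner $sym1$ factors, use Remark~\ref{rem_sym} to separate the $m_i^{(3)}$ from the $\chi$'s, and match terms against the definitions of $\chi_{NR(j,1)}^{(5)}$ and $\chi_{NR(j,2)}^{(5)}$. Two points deserve sharpening.

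First, your description of the $ext1$-side expansion is off. The factor $[2\chi_{H1,1}^{(3)}]_{sym1}^{(3)}(k_1,k_2,k_{3,4,5})$ is a $3$-multiplier evaluated at the triple $(k_1,k_2,k_{3,4,5})$; the $sym1$ averages over the \emph{two} odd arguments of that triple, yielding exactly two terms, $\chi_{H1,1}^{(3)}(k_1,k_2,k_{3,4,5})+\chi_{H1,1}^{(3)}(k_{3,4,5},k_2,k_1)$, not a sum over the three indices $1,3,5$. These two terms are precisely what produce the split into $\chi_{NR(j,1)}^{(5)}$ versus $\chi_{NR(j,2)}^{(5)}$.

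Second, the step that makes the bookkeeping close cleanly---and which you allude to only vaguely---is the observation that, because $m_2^{(3)}$ is symmetric, the product $[m_1^{(3)}]_{ext1,1}^{(5)}[m_2^{(3)}]_{ext2,1}^{(5)}$ is symmetric in $(k_3,k_5)$. This is what lets you replace $\chi_{H1,1}^{(3)}(k_3,k_4,k_5)+\chi_{H1,1}^{(3)}(k_5,k_4,k_3)$ by $2\chi_{H1,1}^{(3)}(k_3,k_4,k_5)$ \emph{inside} the outer $[\cdot]_{sym}^{(5)}$, and it is exactly how the coefficient $2$ arises on the right. The paper isolates this as the key line. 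Your concern about disjointness of the $\chi_{H\ast}^{(3)}$ supports is unnecessary here: the identity is purely algebraic and holds term by term, with no appeal to support separation.
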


\begin{proof}
We prove (\ref{le211}). It suffices to show 
\begin{align}
& \big[ [ m_1^{(3)} [ 2\chi_{H1,1}^{(3)} ]_{sym1}^{(3)} ]_{ext1,1}^{(5)}  
[ m_2^{(3)} [ 2 \chi_{H1,1}^{(3)} ]_{sym1}^{(3)} ]_{ext2,1}^{(3)} \big]_{sym}^{(5)} \notag \\
& =\big[ [m_1^{(3)}]_{ext1,1}^{(5)} [m_2^{(3)}]_{ext2,1}^{(5)} 2 \chi_{NR(1,1)}^{(5)} \big]_{sym}^{(5)}
+\big[ [m_1^{(3)}]_{ext1,1}^{(5)} [m_2^{(3)}]_{ext2,1}^{(5)} 2 \chi_{NR(1,2)}^{(5)} \big]_{sym}^{(5)}, \label{le221} \\
& \big[ [ m_1^{(3)} [ 2\chi_{H1,1}^{(3)} ]_{sym1}^{(3)} ]_{ext1,1}^{(5)}  
[ m_2^{(3)} [ 2 \chi_{H2,1}^{(3)} ]_{sym1}^{(3)} ]_{ext2,1}^{(3)} \big]_{sym}^{(5)} \notag \\
& = \big[ [m_1^{(3)}]_{ext1,1}^{(5)} [m_2^{(3)}]_{ext2,1}^{(5)} 2 \chi_{NR(3,1)}^{(5)} \big]_{sym}^{(5)}
+ \big[ [m_1^{(3)}]_{ext1,1}^{(5)} [m_2^{(3)}]_{ext2,1}^{(5)} 2 \chi_{NR(3,2)}^{(5)} \big]_{sym}^{(5)} \label{le222}
\end{align}
and
\begin{align}
& \big[ [ m_1^{(3)} [ 2\chi_{H1,1}^{(3)} ]_{sym1}^{(3)} ]_{ext1,1}^{(5)}  
[ m_2^{(3)} [ 2 \chi_{H2,2}^{(3)} ]_{sym1}^{(3)} ]_{ext2,1}^{(3)} \big]_{sym}^{(5)} \notag \\
& =\big[ [m_1^{(3)}]_{ext1,1}^{(5)} [m_2^{(3)}]_{ext2,1}^{(5)} 2 \chi_{NR(4,1)}^{(5)} \big]_{sym}^{(5)}
+ \big[ [m_1^{(3)}]_{ext1,1}^{(5)} [m_2^{(3)}]_{ext2,1}^{(5)} 2 \chi_{NR(4,2)}^{(5)} \big]_{sym}^{(5)}. \label{le223}
\end{align}
We only prove \eqref{le221} since \eqref{le222} and \eqref{le223} follow in a similar manner. 
Put $M:= [ m_1^{(3)} [ 2\chi_{H1,1}^{(3)} ]_{sym1}^{(3)} ]_{ext1,1}^{(5)} [ m_2^{(3)} [ 2\chi_{H1,1}^{(3)} ]_{sym1}^{(3)}  ]_{ext2,1}^{(5)} $. 
By Remark~\ref{rem_sym}, it follows that 
\begin{align*}
M = & [m_1^{(3)}]_{ext1,1}^{(5)} [ m_2^{(3)}  ]_{ext2,1}^{(5)} [ [2 \chi_{H1,1}^{(3)}]_{sym1}^{(3)} ]_{ext1,1}^{(5)}
[ [2 \chi_{H1,1}^{(3)} ]_{sym1}^{(3)} ]_{ext2,1}^{(5)} \\
=& [m_1^{(3)}]_{ext1,1}^{(5)} [ m_2^{(3)}  ]_{ext2,1}^{(5)}
 \big( \chi_{H1,1}^{(3)} (k_1, k_2, k_{3,4,5})+ \chi_{H1,1}^{(3)} (k_{3,4,5}, k_2,  k_1)  \big) \\
& \hspace{0.5cm} \times \big( \chi_{H1,1}^{(3)} (k_3, k_4, k_5)+ \chi_{H1,1}^{(3)} (k_5, k_4, k_3)   \big).
\end{align*}
Since $m_2^{(3)}$ is symmetric, $[ m_1^{(3)} ]_{ext1,1}^{(5)} [ m_2^{(3)} ]_{ext2,1}^{(5)} $ is symmetric with $(k_3, k_5)$. 
Thus, $[M]_{sym}^{(5)}$ is equal to 
\begin{align*}
&  \big[ [m_1^{(3)}]_{ext1,1}^{(5)} [ m_2^{(3)}  ]_{ext2,1}^{(5)} (\chi_{H1,1}^{(3)} (k_1,  k_2, k_{3,,4,5})   + \chi_{H1,1}^{(3)} (k_{3,4,5}, k_2, k_1) ) 
2 \chi_{H1,1}^{(3)}(k_3, k_4, k_5)   \big]_{sym}^{(5)} \\
& = \big[ [m_1^{(3)} ]_{ext1,1}^{(5)} [m_2^{(3)}]_{ext2,1}^{(5)} 2 \chi_{NR(1,1)}^{(5)} \big]_{sym}^{(5)}
+ \big[ [m_1^{(3)}]_{ext1,1}^{(5)} [ m_2^{(3)}  ]_{ext2,1}^{(5)} 2 \chi_{NR(1,2)}^{(5)} \big]_{sym}^{(5)}
\end{align*}
which implies \eqref{le221}. 

Similarly, we obtain \eqref{le212}.
\end{proof}
Finally, we show variants of Sobolev's inequalities.
\begin{lem} \label{lem_nl2}
Let $s \ge 1$. Then, it follows that 
\begin{align} 
& \Big\| \sum_{k=k_{1,\cdots, 2N+1}} \langle k_{\max}  \rangle^{2} \prod_{l=1}^{2N+1} |\ha{v}_l(k_l)|  \Big\|_{l_{s-2}^2  } 
\lesssim \prod_{l=1}^{2N+1} \| v_l \|_{H^s}, \label{nl11} \\
& \Big\|   \sum_{k=k_{1, \dots, 2N+1}} \langle k_{1, \dots, 2N+1} \rangle^{-1} \langle k_{\max} \rangle^{-1} 
\prod_{l=1}^{2N+1} | \ha{v}_l (k_l) | \Big\|_{l_s^2} 
\lesssim \| v_m \|_{H^{s-2}} \prod_{l \in \{1, \dots, 2N+1 \} \setminus \{ m \}} \| v_l \|_{H^s} \label{nl21}
\end{align}
for any $m \in \{ 1, \dots, 2N+1 \}$.
\end{lem}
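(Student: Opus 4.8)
The plan is to peel off the extra frequency weights by elementary inequalities and then invoke the standard product (fractional Leibniz) estimates on the torus. First, for each $l$ I would introduce the auxiliary function $w_l$ defined by $\widehat{w_l}:=|\widehat v_l|\ge 0$; then $\|w_l\|_{H^\sigma}=\|v_l\|_{H^\sigma}$ for all $\sigma\in\R$, and, since $\widehat{w_l}$ is real and nonnegative,
\[
\sum_{k=k_{1,\dots,2N+1}}\ \prod_{l=1}^{2N+1}|\widehat v_l(k_l)|\ =\ \widehat{\bigl(w_1\overline{w_2}w_3\cdots w_{2N+1}\bigr)}(k)\ \ge 0,
\]
the even-indexed factors being conjugated. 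Inserting a weight $\langle k_j\rangle^{a}$ under the sum simply replaces $w_j$ by $\langle D\rangle^{a}w_j$. Besides this identification I would only use the standard bilinear estimate on $\T$: for $s>1/2$ and $|\sigma|\le s$,
\[
\|fg\|_{H^\sigma(\T)}\ \lesssim\ \|f\|_{H^s(\T)}\,\|g\|_{H^\sigma(\T)},
\]
whose endpoint $\sigma=s$ is the algebra property of $H^s(\T)$.

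For \eqref{nl11} I would bound $\langle k_{\max}\rangle^{2}\le\sum_{j=1}^{2N+1}\langle k_j\rangle^{2}$, so that by the triangle inequality (and monotonicity of $\|\cdot\|_{l^2_{s-2}}$ on nonnegative sequences) it suffices to estimate, for each fixed $j$, the same sum with $\langle k_{\max}\rangle^{2}$ replaced by $\langle k_j\rangle^{2}$. By the identification above, the $l^2_{s-2}$-norm of this sum equals $\|W_j\|_{H^{s-2}}$, where $W_j$ is the product of the $2N$ factors $w_l$ ($l\ne j$, some conjugated) with $\langle D\rangle^{2}w_j$. Applying the algebra property to the $2N$ factors lying in $H^{s}$, and then the bilinear estimate with $\sigma=s-2$, I would get
\[
\|W_j\|_{H^{s-2}}\ \lesssim\ \|\langle D\rangle^{2}w_j\|_{H^{s-2}}\prod_{l\ne j}\|w_l\|_{H^{s}}\ =\ \prod_{l=1}^{2N+1}\|v_l\|_{H^{s}},
\]
and summing over $j$ would finish \eqref{nl11}.

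For \eqref{nl21} I would first use $\langle k_{\max}\rangle^{-1}\le\langle k_m\rangle^{-1}$ and then pull the factor $\langle k_{1,\dots,2N+1}\rangle^{-1}=\langle k\rangle^{-1}$, which is constant on the summation region, outside the inner sum. What remains is $\langle k\rangle^{-1}$ times the Fourier coefficient of $G:=(\langle D\rangle^{-1}w_m)\cdot\prod_{l\ne m}w_l$ (some factors conjugated), so the left-hand side is dominated by $\|\langle\cdot\rangle^{s-1}\widehat G\|_{l^2}=\|G\|_{H^{s-1}}$. Collapsing the $2N$ factors $w_l$ ($l\ne m$) into a single $H^{s}$ function by the algebra property and then applying the bilinear estimate with $\sigma=s-1$ would give
\[
\|G\|_{H^{s-1}}\ \lesssim\ \|\langle D\rangle^{-1}w_m\|_{H^{s-1}}\prod_{l\ne m}\|w_l\|_{H^{s}}\ =\ \|v_m\|_{H^{s-2}}\prod_{l\ne m}\|v_l\|_{H^{s}}.
\]

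The hard part — really the only substantive point — is the multiplication $H^{s}\cdot H^{s-2}\to H^{s-2}$ used in \eqref{nl11}: it is valid exactly when $|s-2|\le s$, i.e. $s\ge 1$, so this step is where the regularity threshold of Theorem~\ref{thm_LWP} is reflected, and the range $1\le s<2$ is accommodated because the output lives in the negative-order space $H^{s-2}$. Everything else is bookkeeping; in particular the conjugations $w_l\mapsto\overline{w_l}$ are harmless since $\|\overline w\|_{H^\sigma}=\|w\|_{H^\sigma}$ and $|\widehat{\overline w}(k)|=|\widehat w(-k)|$. If one prefers to stay within the ``H\"older and Young'' toolbox and avoid quoting the fractional Leibniz rule, the alternative is to split each sum according to which index realizes $k_{\max}$, place the surplus weight on that factor, and bound the remaining convolution on $\Z$ by Young's inequality together with the embedding $l^2_s\hookrightarrow l^1$ (equivalently $H^s(\T)\hookrightarrow L^\infty(\T)$) for $s>1/2$; the only configuration needing a little extra thought is \eqref{nl11} with $3/2\le s<2$ and two comparably large top frequencies, where again the negative order of $H^{s-2}$ saves the day.
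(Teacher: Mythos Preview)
Your argument is correct and takes a genuinely different route from the paper. For \eqref{nl11} the paper does not invoke the product estimate $H^s\cdot H^{s-2}\to H^{s-2}$ as a black box; instead it splits according to which index realizes $k_{\max}$ and whether $|k_{1,\dots,2N+1}|\sim k_{\max}$ or $|k_{1,\dots,2N+1}|\ll k_{\max}$, and in each case closes with Young together with $l_s^2\hookrightarrow l^1$ (for $s>1/2$). In the high--high case $|k|\ll k_{\max}$ the paper exploits that two frequencies are comparable and trades the surplus weight via $\langle k\rangle^{s-2}\langle k_{\max}\rangle^2\lesssim\langle k\rangle^{-s}\langle k_i\rangle^s\langle k_j\rangle^s$, which is exactly the place where $s\ge 1$ enters their argument. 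Your approach hides this case analysis inside the fractional Leibniz rule, and you correctly isolate the constraint $|s-2|\le s$ as the reason the threshold is $s=1$. For \eqref{nl21} the strategies diverge more sharply: the paper reduces to \eqref{nl11} through the pointwise inequality $\langle k\rangle^{-1}\langle k_{\max}\rangle^{-1}\lesssim\langle k\rangle^{-2}\langle k_{\max}\rangle^{2}\langle k_m\rangle^{-2}$ and then feeds $\langle\cdot\rangle^{-2}|\ha v_m|$ back into \eqref{nl11}, whereas you use only the crude bound $\langle k_{\max}\rangle^{-1}\le\langle k_m\rangle^{-1}$ and close with the (easier) product estimate $H^s\cdot H^{s-1}\to H^{s-1}$, which requires only $s\ge 1/2$. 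Your route is shorter and more conceptual; the paper's is entirely self-contained within the ``H\"older and Young'' toolbox you allude to at the end, which is consistent with the elementary style of the surrounding estimates in Section~5.
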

\begin{proof}
Firstly, we prove \eqref{nl11}. 
By symmetry, we may assume that $|k_1| \le |k_3| \le \dots \le |k_{2N+1}|$ and 
$|k_2| \le |k_4| \le \dots \le |k_{2N}|$. Then, $k_{\max}=|k_{2N+1}|$ or $k_{\max}= |k_{2N}|$ holds. 

First, we suppose that $|k_{1,\dots ,2N+1}| \sim k_{\max} $ holds.  
When $k_{\max}=|k_{2N+1}|$, 
by the Young inequality and the Schwarz inequality, the left hand side of \eqref{nl11} is bounded by
\begin{align*}
& \big\| \{ (|\ha{v}_1|*|\ha{v}_3| * \dots * |\ha{v}_{2N-1}| ) \check{*} ( |\ha{v}_2|* |\ha{v}_4| * \dots * |\ha{v}_{2N}| ) \} 
* \langle \cdot \rangle^s |\ha{v}_{2N+1}|   \big\|_{l^2} \\
& \lesssim \prod_{l=1}^{2N} \| \ha{v}_l \|_{l^1} \|  \langle \cdot \rangle^s |\ha{v}_{2N+1}| \|_{l^2} 
\lesssim \prod_{l=1}^{2N} \| v_l \|_{H^{1/2+}} \| v_{2N+1}  \|_{H^s}.
\end{align*}  
In a similar manner as above, we have \eqref{nl11} when $k_{\max} = |k_{2N}|$. 

Next, we suppose that $|k_{1,\dots, 2N+1}|  \ll k_{\max}$ holds. 
Then, either $k_{max} \sim |k_{2N+1}| \sim |k_{2N}|$, $k_{\max} \sim |k_{2N+1}| \sim |k_{2N-1}|$ or 
$k_{\max} \sim |k_{2N}| \sim |k_{2N-2}|$ holds. 
When $k_{\max} \sim |k_{2N+1}| \sim |k_{2N}|$, it follows that 
\begin{align*}
\langle k_{1, \dots, 2N+1}  \rangle^{s-2} \langle k_{\max}  \rangle^2 
& \sim \langle k_{1,\dots, 2N+1} \rangle^{s-2} \langle k_{2N} \rangle^s \langle k_{2N+1} \rangle^s \langle k_{\max} \rangle^{-2s+2} \\
& \lesssim  \langle k_{1, \dots, 2N+1} \rangle^{-s} \langle k_{2N} \rangle^s \langle k_{2N+1} \rangle^s
\end{align*}
Thus, by the Young and the H\"{o}lder inequalities, the left hand side of \eqref{nl11} is bounded by
\begin{align*} 
& \big\| \langle \cdot \rangle^{-s} \big\{ ( |\ha{v}_1|* \dots * |\ha{v}_{2N-1}|* \langle \cdot  \rangle^s |\ha{v}_{2N+1}|) 
\check{*} ( |\ha{v}_2|* \dots * |\ha{v}_{2N-2}|* \langle \cdot \rangle^s |\ha{v}_{2N}| ) \big\} \big\|_{l^2} \\
& \lesssim  \big\| |\ha{v}_1|* \dots * |\ha{v}_{2N-1}|* \langle \cdot \rangle^{s} |\ha{v}_{2N+1}|  \big\|_{l^2} \,
\big\| |\ha{v}_2| * \dots * |\ha{v}_{2N-2}|* \langle \cdot \rangle^s | \ha{v}_{2N}| \big\|_{l^2} \\
& \lesssim \prod_{l=1}^{2N-1} \| v_l \|_{H^{1/2+}} \| v_{2N} \|_{H^s} \| v_{2N+1}  \|_{H^s}. 
\end{align*}
Similarly, we have \eqref{nl11} if either $k_{\max} \sim |k_{2N+1}| \sim |k_{2N-1}|$ or $k_{\max} \sim |k_{2N}| \sim |k_{2N-2}|$ holds. 
Therefore, we obtain \eqref{nl11}. 

Secondly, we prove \eqref{nl21}. 
By $\langle k_{1,\dots, 2N+1} \rangle^{-1} \langle k_{\max} \rangle^{-1} \lesssim \langle k_{1,\dots, 2N+1}  \rangle^{-2} 
\langle k_{\max} \rangle^2 \langle k_m  \rangle^{-2}$ and \eqref{nl11}, 
the left hand side of \eqref{nl21} is bounded by 
\begin{align*}
& \sum_{m=1}^N \Big\| \sum_{k=k_{1,\dots, 2N+1}} \langle k_{\max} \rangle^{2} \big( \langle k_m \rangle^{-2} |\ha{v}_{m} (k_m) | \big)
\prod_{l \in \{ 1, \dots, 2N+1 \} \setminus \{ m \}} |\ha{v}_l(k_l)| \Big\|_{l_{s-2}^2} \\
& \lesssim \| \langle \cdot \rangle^{-2} |\ha{v}_m|  \|_{l_s^2} \prod_{l \in \{ 1, \dots, 2N+1  \} \setminus \{ m \}} \| v_l \|_{H^s} 
\lesssim \| v_m \|_{H^{s-2}} \prod_{l \in \{1, \dots, 2N+1 \} \setminus \{ m \}} \| v_l \|_{H^s}. 
\end{align*}
\end{proof}
\begin{lem}\label{lem_go}
Let $f, g \in L^2(\T)$ and a $(2N+1)$-multiplier $m^{(2N+1)}$ satisfy
\begin{equation*} 
\Big\| \sum_{k=k_{1,\ldots,2N+1}} |m^{(2N+1)}(k_1,\ldots,k_{2N+1})| \prod_{l=1}^{2N+1} |\ha{v}_l(t,k_l) | \Big\|_{L^\infty_Tl^2_s} \le C_0 \prod_{l=1}^{2N+1} \|v_l\|_{L^\infty_TH^{s_l}}
\end{equation*}
for any $v_l \in C([-T,T];H^{s_l}(\T))$ with $l=1,2, \ldots, 2N+1$.
Then, for any 
$v_l \in C([-T,T];H^{s_l}(\T))$ with $l=1,2,\ldots,2N+1$, it follows that
\begin{equation*} 
\Big\| 
\Lambda_{f}^{(2N+1)} ( m^{(2N+1)} , \ha{v}_1,\ldots,\ha{v}_{2N+1} ) 
-\Lambda_{g}^{(2N+1)} ( m^{(2N+1)} , \ha{v}_1,\ldots,\ha{v}_{2N+1} )\Big\|_{L^\infty_Tl^2_s} \le C_*
\end{equation*}
where $C_*=C_*(C_0,v_1,\ldots,v_{2N+1}, s_1,\ldots,s_{2N+1} ,|E_1(f)-E_1(g)|,|\la_5|,T) \ge 0$ 
and $C_*\to 0$ when $|E_1(f)-E_1(g)|\to 0$.
\end{lem}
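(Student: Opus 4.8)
The plan is to use that $\Lambda_{f}^{(2N+1)}(m^{(2N+1)},\ha{v}_1,\dots,\ha{v}_{2N+1})$ and $\Lambda_{g}^{(2N+1)}(m^{(2N+1)},\ha{v}_1,\dots,\ha{v}_{2N+1})$ differ only through the oscillatory factor $e^{-it\Phi_f^{(2N+1)}}$ versus $e^{-it\Phi_g^{(2N+1)}}$, so that their difference is the Fourier-side sum of $(e^{-it\Phi_f^{(2N+1)}}-e^{-it\Phi_g^{(2N+1)}})\,m^{(2N+1)}(k_1,\dots,k_{2N+1})\prod_{l}^{*}\ha{v}_l(t,k_l)$, and to estimate the bracket by $\min\{|t|\,|\Phi_f^{(2N+1)}-\Phi_g^{(2N+1)}|,\,2\}$. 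Since $\phi_\vp(k)=k^4+\la_5 E_1(\vp)k^2$ and only the quadratic part depends on $\vp$, one reads off from the definition of $\Phi_{\vp}^{(2N+1)}$ that
\begin{equation*}
\Phi_f^{(2N+1)}-\Phi_g^{(2N+1)}=\la_5\bigl(E_1(f)-E_1(g)\bigr)\Bigl((k_{1,\dots,2N+1})^2-\sum_{l=1}^{N+1}k_{2l-1}^2+\sum_{l=1}^{N}k_{2l}^2\Bigr),
\end{equation*}
so that $|\Phi_f^{(2N+1)}-\Phi_g^{(2N+1)}|\le C_N|\la_5|\,|E_1(f)-E_1(g)|\,k_{\max}^2$ with $C_N$ depending only on $N$.

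Next I would fix a threshold $R>0$ and split the defining sum of $\Lambda_{f}^{(2N+1)}-\Lambda_{g}^{(2N+1)}$ according to $k_{\max}\le R$ and $k_{\max}>R$. On $\{k_{\max}\le R\}$ one has $|e^{-it\Phi_f^{(2N+1)}}-e^{-it\Phi_g^{(2N+1)}}|\le C_NT|\la_5|\,|E_1(f)-E_1(g)|R^2$ uniformly; bounding the restricted sum by the full one (all summands are nonnegative) and invoking the hypothesis on $m^{(2N+1)}$ bounds this piece in $L^\infty_Tl^2_s$ by $C_NT|\la_5|\,|E_1(f)-E_1(g)|R^2\,C_0\prod_{l=1}^{2N+1}\|v_l\|_{L^\infty_TH^{s_l}}$. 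On $\{k_{\max}>R\}$ one uses $|e^{-it\Phi_f^{(2N+1)}}-e^{-it\Phi_g^{(2N+1)}}|\le2$ together with the pointwise bound $\1_{\{k_{\max}>R\}}\le\sum_{j=1}^{2N+1}\1_{\{|k_j|>R\}}$; for the $j$-th term, replacing $v_j$ by its high-frequency projection $P_{>R}v_j$ (whose Fourier transform is $\1_{\{|k|>R\}}\ha{v}_j(k)$) and applying the hypothesis once more bounds this piece by $2\sum_{j=1}^{2N+1}C_0\|P_{>R}v_j\|_{L^\infty_TH^{s_j}}\prod_{l\neq j}\|v_l\|_{L^\infty_TH^{s_l}}$.

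The one genuinely non-elementary ingredient, which I expect to be the crux, is that $\|P_{>R}v_j\|_{L^\infty_TH^{s_j}}\to0$ as $R\to\infty$ for each fixed $v_j\in C([-T,T];H^{s_j}(\T))$: the orbit $\{v_j(t):t\in[-T,T]\}$ is compact in $H^{s_j}(\T)$ because $t\mapsto v_j(t)$ is continuous on a compact interval, and on a compact subset of $H^{s_j}(\T)$ the high-frequency tails vanish uniformly (cover it by finitely many balls of small radius and use that $P_{>R}$ is a contraction on $H^{s_j}$). Granting this, the proof concludes as follows: given $\e>0$, first choose $R$ so that the $\{k_{\max}>R\}$-contribution is $<\e/2$, and then, with $R$ fixed, take $|E_1(f)-E_1(g)|$ small enough that the $\{k_{\max}\le R\}$-contribution is $<\e/2$. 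Together with the crude uniform bound $\|\Lambda_{f}^{(2N+1)}-\Lambda_{g}^{(2N+1)}\|_{L^\infty_Tl^2_s}\le2C_0\prod_{l=1}^{2N+1}\|v_l\|_{L^\infty_TH^{s_l}}$ this gives the claim; explicitly one may take
\begin{align*}
C_*=\inf_{R>0}\Bigl(\,C_NT&|\la_5|\,|E_1(f)-E_1(g)|R^2C_0\prod_{l=1}^{2N+1}\|v_l\|_{L^\infty_TH^{s_l}}\\
&{}+2\sum_{j=1}^{2N+1}C_0\|P_{>R}v_j\|_{L^\infty_TH^{s_j}}\prod_{l\neq j}\|v_l\|_{L^\infty_TH^{s_l}}\,\Bigr),
\end{align*}
which is nonnegative, finite, and tends to $0$ as $|E_1(f)-E_1(g)|\to0$.
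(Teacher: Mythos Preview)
Your proof is correct and follows essentially the same approach as the paper (which defers to a minor modification of Lemma~2.11 in \cite{KTT}): split according to a frequency threshold, use the mean-value bound $|e^{-it\Phi_f}-e^{-it\Phi_g}|\le T|\Phi_f-\Phi_g|\lesssim T|\la_5||E_1(f)-E_1(g)|k_{\max}^2$ on the low-frequency piece, and control the high-frequency piece by the uniform smallness of the tails $\|P_{>R}v_j\|_{L^\infty_TH^{s_j}}$ coming from compactness of the orbit. Your explicit infimum formula for $C_*$ is a nice touch not spelled out in the reference.
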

\begin{proof}
By a slight modification of the proof of Lemma 2.11 in \cite{KTT}, we show this lemma.  
\end{proof}

\section{the normal form reduction}

Our aim in this section is to remove the derivative losses in the right-hand side of \eqref{4NLS3} by the normal form reduction, 
that is the differentiation by parts.
We  put
\EQQ{
&  q_1^{(3)}:= \la_2 k_1 k_3- \frac{\la_3}{2} (k_1+k_3) k_2 + \la_4 k_2^2 + \frac{\la_5}{2} (k_1^2+k_3^2), \\
& q_2^{(3)}:= (2 \la_4+\la_5-\la_3) \, [k_1^2 \, \chi_{R1}^{(3)}]_{sym1}^{(3)}, \\
& Q_1^{(3)}:= q_1^{(3)} \chi_{NR1}^{(3)}, \hspace{0.5cm} Q_2^{(3)}:= q_2^{(3)}- q_1^{(3)} \chi_{R2}^{(3)}. 
} 
We notice that all multipliers defined above are symmetric.  

The main proposition in this section is as below: 

\begin{prop} \label{prop_NF2}
Let $s \ge 1$, $\vp \in L^2(\T)$, $L \gg \max \{ 1, |\la_5| E_1(\vp) \}$, $T>0$ and 
$ u \in C([-T, T]:H^s(\T))$ be a solution of \eqref{4NLS3}. 
Then $\ha{v}(t,k):= e^{-it \phi_{\vp}(k)} \ha{u} (t,k) $ satisfies the following equation for each $k \in \Z$:
\begin{align} \label{NF21}
\p_t (\ha{v}(t,k)+ F_{\vp, L} (\ha{v}) (t,k)) = G_{\vp, L} (\ha{v}) (t, k), 
\end{align} 
where
\begin{align*}
F_{\vp, L} (\ha{v}) (t, k) :=
\sum_{j=1}^6 \La_{\vp}^{(3)} \big(  \ti{L}_{j, \vp}^{(3)} \chi_{>L}^{(3)}, \ha{v} (t)  \big) (t,k)
+ \sum_{j=1}^6 \La_{\vp}^{(5)} \big( \ti{L}_{j, \vp}^{(5)} \chi_{>L}^{(5)}, \ha{v} (t)  \big) (t, k)
\end{align*}
and
\begin{align*}
& G_{\vp, L} (\ha{v})(t, k) \\
& \hspace{0.3cm}
:= \sum_{j=1}^6 \La_{\vp}^{(3)} \big( i \ti{L}_{i, \vp}^{(3)} \Phi_{\vp}^{(3)} \chi_{\le L}^{(3)}, \ha{v}(t)   \big)(t, k)
+\sum_{j=1}^6 \La_{\vp}^{(5)} \big( i \ti{L}_{i, \vp}^{(5)} \Phi_{\vp}^{(5)} \chi_{\le L}^{(5)}, \ha{v}(t)   \big)(t, k) \\
& \hspace{0.3cm}
+ \La_{\vp}^{(3)} \big( i \ti{M}_{1,\vp}^{(3)}, \ha{v}(t) \big) (t, k)
 + \sum_{j=1}^{19} \La_{\vp}^{(5)} \big( i \ti{M}_{j,\vp}^{(5)}, \ha{v}(t) \big) (t, k) \\
& \hspace{0.3cm} + \sum_{j=1}^6 \La_{\vp}^{(7)} \big( i \ti{M}_{j,\vp}^{(7)}, \ha{v}(t) \big) (t, k) 
+ \sum_{j=1}^2 \La_{\vp}^{(9)} \big( i \ti{M}_{j, \vp}^{(9)}, \ha{v} (t)  \big) (t,k). 
\end{align*}
{\upshape (i)} The multipliers $\{L_{j, \vp}^{(3)}\}_{j=1}^6$ and $\{L_{j, \vp}^{(5)}\}_{j=1}^{6}$ are defined as below:
\begin{align*}
L_{1, \vp}^{(3)}  & := Q_1^{(3)} \, 2 \chi_{H1,1}^{(2)}/\Phi_{\vp}^{(3)}, \hspace{0.5cm} 
L_{2, \vp}^{(3)}  := Q_1^{(3)} \, \chi_{H1,2}^{(3)}/\Phi_{\vp}^{(3)}, \\ 
L_{3, \vp}^{(3)}  & := Q_1^{(3)} \, 2\chi_{H2,1}^{(3)}/\Phi_{\vp}^{(3)},
\hspace{0.5cm} L_{4, \vp}^{(3)}  := Q_1^{(3)} \, 2\chi_{H2,2}^{(3)}  /\Phi_{\vp}^{(3)}, \\
L_{5,  \vp}^{(3)} & := Q_1^{(3)} \chi_{H2,3}^{(3)}/ \Phi_{\vp}^{(3)}, 
\hspace{0.5cm} L_{6, \vp}^{(3)}:= Q_1^{(3)} \chi_{H3}^{(3)} /\Phi_{\vp}^{(3)}
\end{align*}
and 
\begin{align*}
L_{1, \vp}^{(5)}  &:= 4 \Big[ \frac{q_1^{(3)}}{ \Phi_0^{(3)} } \Big]_{ext1,1}^{(5)} 
[ q_1^{(3)} ]_{ext2,1}^{(5)} \, \chi_{NR1}^{(5)} \chi_{H1,1}^{(5)} (1 -\chi_{R1}^{(5)}) (1- \chi_{R3}^{(5)}) /\Phi_{\vp}^{(5)}, \\
L_{2, \vp}^{(5)}  &:= 4 \Big[ \frac{q_1^{(3)}}{ \Phi_0^{(3)} } \Big]_{ext1,1}^{(5)} 
[ q_1^{(3)} ]_{ext2,1}^{(5)} \, \chi_{NR1}^{(5)} \chi_{NR(1,1)}^{(5)} (1- \chi_{H1,1}^{(5)}) \chi_{A2}^{(5)} / \Phi_{\vp}^{(5)}, \\
L_{3, \vp}^{(5)}&:= 2 \Big[ \frac{q_1^{(3)}}{ \Phi_{\vp}^{(3)} } \chi_{>L}^{(3)}  \Big]_{ext1,1}^{(5)} [ q_1^{(3)} ]_{ext2,1}^{(5)} 
\, \chi_{NR1}^{(5)} \chi_{NR(2,1)}^{(5)} / \Phi_{\vp}^{(5)}, \\
L_{4, \vp}^{(5)}&:=  4 \Big[ \frac{q_1^{(3)}}{ \Phi_{\vp}^{(3)} } \chi_{>L}^{(3)}  \Big]_{ext1,1}^{(5)} [ q_1^{(3)} ]_{ext2,1}^{(5)} 
\, \chi_{NR1}^{(5)} \chi_{NR(3,1)}^{(5)} \chi_{A1}^{(5)} / \Phi_{\vp}^{(5)}, \\
L_{5, \vp}^{(5)}&:=  4 \Big[ \frac{q_1^{(3)}}{ \Phi_{\vp}^{(3)} } \chi_{>L}^{(3)}  \Big]_{ext1,1}^{(5)} 
[ q_1^{(3)} ]_{ext2,1}^{(5)} \, \chi_{NR1}^{(5)} \chi_{NR(4,1)}^{(5)} \chi_{A3}^{(5)} / \Phi_{\vp}^{(5)}, \\
L_{6, \vp}^{(5)}&:=  2 \Big[ \frac{q_1^{(3)}}{ \Phi_{\vp}^{(3)} } \chi_{>L}^{(3)}  \Big]_{ext1,1}^{(5)} [ q_1^{(3)}  ]_{ext2,1}^{(5)} 
\, \chi_{NR1}^{(5)} \chi_{NR(5,1)}^{(5)} \chi_{A1}^{(5)} / \Phi_{\vp}^{(5)}. 
\end{align*}
{\upshape (ii)} The multipliers $M_{1, \vp}^{(3)} $ and $\{M_{j, \vp}^{(5)}\}_{j=1}^{19}$ are defined as below:
\begin{align*}
M_{1,\vp}^{(3)}& := Q_2^{(3)}, \hspace{0.3cm} M_{1, \vp}^{(5)}:= \frac{3}{8} \la_1,  \\
M_{2, \vp}^{(5)} &:= \Big[2  \sum_{j=1}^6 \ti{L}_{j, \vp}^{(3)} \chi_{>L}^{(3)}  \Big]_{ext1,1}^{(5)} [ Q_2^{(3)} ]_{ext2,1}^{(5)}, \hspace{0.3cm}
M_{3, \vp}^{(5)} := -\Big[ \sum_{j=1}^6 \ti{L}_{j, \vp}^{(3)} \chi_{>L}^{(3)}  \Big]_{ext2,1}^{(5)} [ Q_2^{(3)} ]_{ext2,2}^{(5)}, \\
M_{4, \vp}^{(5)} &:= \Big[2  \sum_{j=2,3,5} \ti{L}_{j, \vp}^{(3)} \chi_{>L}^{(3)}  \Big]_{ext1,1}^{(5)} [ Q_1^{(3)} ]_{ext2,1}^{(5)}, \\
M_{5, \vp}^{(5)} & : = -\Big[ \sum_{j=2,3,5} \ti{L}_{j, \vp}^{(3)} \chi_{>L}^{(3)}  \Big]_{ext2,1}^{(5)} [ Q_1^{(3)} ]_{ext2,2}^{(5)}, \\
M_{6, \vp}^{(5)} &:= \Big[2  \sum_{j=4,6} \ti{L}_{j, \vp}^{(3)} \chi_{>L}^{(3)}  \Big]_{ext1,1}^{(5)} [ Q_1^{(3)} ]_{ext2,1}^{(5)}, \hspace{0.3cm} 
M_{7, \vp}^{(5)} := -\Big[ \sum_{j=4,6} \ti{L}_{j, \vp}^{(3)} \chi_{>L}^{(3)}  \Big]_{ext2,1}^{(5)} [ Q_1^{(3)} ]_{ext2,2}^{(5)}, \\
M_{8, \vp}^{(5)} &:= 4 \Big[ \frac{q_1^{(3)}}{ \Phi_0^{(3)}}  \Big]_{ext1,1}^{(5)} [ q_1^{(3)} ]_{ext2,1}^{(5)} \, 
\chi_{NR1}^{(5)}  \chi_{H1,1}^{(5)} \chi_{R1}^{(5)}, \\
M_{9, \vp}^{(5)} & :=4 \Big[ \frac{q_1^{(3)}}{ \Phi_0^{(3)}}  \Big]_{ext1,1}^{(5)} [ q_1^{(3)} ]_{ext2,1}^{(5)} \, 
\chi_{NR1}^{(5)} \chi_{H1,1}^{(5)} (1-\chi_{R1}^{(5)}) \chi_{R3}^{(5)}, \\
M_{10, \vp}^{(5)} &:=4 \Big[ \frac{q_1^{(3)}}{ \Phi_0^{(3)}}  \Big]_{ext1,1}^{(5)} [ q_1^{(3)} ]_{ext2,1}^{(5)} \, 
\chi_{NR1}^{(5)} \chi_{NR(1,1)}^{(5)}(1- \chi_{H1,1}^{(5)}) (1-\chi_{A2}^{(5)}), \\
M_{11 ,\vp}^{(5)} &:= 4 \Big[ \Big( \frac{q_1^{(3)}}{ \Phi_\vp^{(3)}}- \frac{ q_1^{(3)} }{ \Phi_0^{(3)} } \Big) \chi_{>L}^{(3)} \Big]_{ext1,1}^{(5)} 
[ q_1^{(3)} ]_{ext2,1}^{(5)} \, \chi_{NR1}^{(5)} \chi_{NR(1,1)}^{(5)}, \\
M_{12, \vp}^{(5)} &:= 4 \Big[ \Big(- \frac{ q_1^{(3)} }{ \Phi_0^{(3)} } \Big) \chi_{ \le L}^{(3)} \Big]_{ext1,1}^{(5)} 
[ q_1^{(3)} ]_{ext2,1}^{(5)} \, \chi_{NR1}^{(5)} \chi_{NR(1,1)}^{(5)}, \\
M_{13, \vp}^{(5)} &:= 4 \Big[ \frac{q_1^{(3)}}{ \Phi_\vp^{(3)}} \chi_{>L}^{(3)} \Big]_{ext1,1}^{(5)} 
[ q_1^{(3)} ]_{ext2,1}^{(5)} \, \chi_{NR1}^{(5)} \chi_{NR(3,1)}^{(5)} (1-\chi_{A1}^{(5)}) , \\
M_{14, \vp}^{(5)} &:= 4 \Big[ \frac{q_1^{(3)}}{ \Phi_\vp^{(3)}} \chi_{>L}^{(3)} \Big]_{ext1,1}^{(5)} 
[ q_1^{(3)} ]_{ext2,1}^{(5)} \, \chi_{NR1}^{(5)} \chi_{NR(4,1)}^{(5)} (1-\chi_{A3}^{(5)}) , \\
M_{15, \vp}^{(5)} &:= 2 \Big[ \frac{q_1^{(3)}}{ \Phi_\vp^{(3)}} \chi_{>L}^{(3)} \Big]_{ext1,1}^{(5)} 
[ q_1^{(3)} ]_{ext2,1}^{(5)} \, \chi_{NR1}^{(5)} \chi_{NR(5,1)}^{(5)} (1-\chi_{A1}^{(5)}) , \\
M_{16, \vp}^{(5)} &:= 2 \Big[ \frac{q_1^{(3)}}{ \Phi_\vp^{(3)}} \chi_{>L}^{(3)} \Big]_{ext1,1}^{(5)} 
[ q_1^{(3)} ]_{ext2,1}^{(5)} \, \chi_{NR1}^{(5)} (2 \chi_{NR(3,2)}^{(5)}+ 2 \chi_{NR(4,2)}^{(5)}+ \chi_{NR(5,2)}^{(5)}), \\ 
M_{17, \vp}^{(5)} &:= 2 \Big[ \frac{q_1^{(3)}}{ \Phi_\vp^{(3)}} \chi_{>L}^{(3)} \Big]_{ext1,1}^{(5)} 
[ q_1^{(3)} ]_{ext2,1}^{(5)} \, \chi_{NR1}^{(5)} (2 \chi_{NR(1,2)}^{(5)}+ \chi_{NR(2,2)}^{(5)} ) , \\
M_{18, \vp}^{(5)} &:=   \big[2 \ti{L}_{1, \vp}^{(3)} \chi_{>L}^{(3)} \big]_{ext1,1}^{(5)} [Q_1^{(3)} \chi_{H3}^{(3)} ]_{ext2,1}^{(5)}, \\
M_{19, \vp}^{(5)} &:=  - \big[ \ti{L}_{1, \vp}^{(3)}  \chi_{> L}^{(3)} \big]_{ext1,2}^{(5)} [Q_1^{(3)} ]_{ext2,2}^{(5)}.  
\end{align*}
{\upshape (iii)} The multipliers $\{M_{j, \vp}^{(7)}\}_{j=1}^{6}$ and $\{ M_{j, \vp}^{(9)} \}_{j=1}^2$ are defined as below:
\begin{align*}
 M_{1, \vp}^{(7)} &:= \frac{3}{8} \la_1 \Big[ 2 \sum_{j=1}^6 \ti{L}_{j, \vp}^{(3)} \chi_{>L}^{(3)}  \Big]_{ext1,1}^{(7)} 
 ,  \hspace{0.3cm}
M_{2, \vp}^{(7)} :=- \frac{3}{8} \la_1 \Big[ \sum_{j=1}^6 \ti{L}_{j, \vp}^{(3)} \chi_{>L}^{(3)}  \Big]_{ext1,2}^{(7)},  \\
M_{3, \vp}^{(7)} &:= \Big[ 3 \sum_{j=1}^6 \ti{L}_{j, \vp}^{(5)} \chi_{>L}^{(5)} \Big]_{ext1,1}^{(7)} 
[ Q_2^{(3)} ]_{ext2,1}^{(7)}, \hspace{0.3cm}
M_{4, \vp}^{(7)} := -\Big[ 2 \sum_{j=1}^6 \ti{L}_{j, \vp}^{(5)} \chi_{>L}^{(5)} \Big]_{ext2,1}^{(7)} 
[ Q_2^{(3)} ]_{ext2,2}^{(7)}, \\
M_{5, \vp}^{(7)} &:= \Big[ 3 \sum_{j=1}^6 \ti{L}_{j, \vp}^{(5)} \chi_{>L}^{(5)} \Big]_{ext1,1}^{(7)} 
[ Q_1^{(3)} ]_{ext2,1}^{(7)}, \hspace{0.3cm}
M_{6, \vp}^{(7)} := -\Big[ 2 \sum_{j=1}^6 \ti{L}_{j, \vp}^{(5)} \chi_{>L}^{(5)} \Big]_{ext2,1}^{(7)} 
[ Q_1^{(3)} ]_{ext2,2}^{(7)}, \\
M_{1, \vp}^{(9)} &:= \frac{3}{8} \la_1 \Big[ 3 \sum_{j=1}^6 \ti{L}_{j, \vp}^{(5)} \chi_{>L}^{(5)} \Big]_{ext1,1}^{(9)}, \hspace{0.3cm}
M_{2, \vp}^{(9)} := - \frac{3}{8} \la_1 \Big[ 2 \sum_{j=1}^6 \ti{L}_{j, \vp}^{(5)} \chi_{>L}^{(5)} \Big]_{ext1,2}^{(9)}. 
\end{align*}
\end{prop}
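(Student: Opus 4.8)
The identity \eqref{NF21} is obtained by carrying out the differentiation--by--parts (normal form) procedure twice on the Fourier side and then merely \emph{collecting} every term produced; no hard analysis enters, only the elementary phase bounds of Lemmas~\ref{Le1}--\ref{Le3}. The sole algebraic mechanism is $\p_t(e^{-it\Phi_\vp^{(2N+1)}})=-i\Phi_\vp^{(2N+1)}e^{-it\Phi_\vp^{(2N+1)}}$, and the cut--offs $\chi_{>L}^{(\cdot)}$ together with $L\gg\max\{1,|\la_5|E_1(\vp)\}$ guarantee, via those lemmas, that every division by $\Phi_\vp^{(3)}$ or $\Phi_\vp^{(5)}$ is performed where the phase is comparable to a positive power of $k_{\max}$. \emph{Step 1 (interaction representation).} First I would take $\mathcal{F}_x$ of \eqref{4NLS3}, use $E_1(u)(t)\equiv E_1(\vp)$ (Lemma~\ref{lem_E1}) to cancel $i\la_5E_1(\vp)\p_x^2u$ against the $E_1(u)$--part of $J_2$, note that the $\int_\T|\p_xu|^2dx\,u$--parts of $J_2$ and $J_3$ cancel, and pass to the profile $\ha v(t,k):=e^{-it\phi_\vp(k)}\ha u(t,k)$. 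Computing the Fourier symbols of the cubic nonlinearity (e.g.\ $\mathcal{F}_x[\bar u(\p_xu)^2](k)=-\sum_{k=k_{1,2,3}}k_1k_3\,\ha u(k_1)\overline{\ha u(k_2)}\ha u(k_3)$, and the three analogous identities for $u|\p_xu|^2$, $u^2\p_x^2\bar u$, $|u|^2\p_x^2u$), together with the diagonal symbols of $E_2(u)\p_xu$, and symmetrising, a direct (if tedious) computation yields, for each $k\in\Z$,
\[
\p_t\ha v(t,k)=\La_\vp^{(3)}\big(i\,(Q_1^{(3)}+Q_2^{(3)}),\ha v(t)\big)(t,k)+\La_\vp^{(5)}\big(\tfrac{3}{8}i\la_1,\ha v(t)\big)(t,k),
\]
the subtracted $-q_1^{(3)}\chi_{R2}^{(3)}$ inside $Q_2^{(3)}$ correcting the over--counting at $k_1=k_2=k_3$. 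The $Q_2^{(3)}$--term and the $\tfrac{3}{8}\la_1$--quintic term carry no derivative loss and are placed into $G_{\vp,L}$ as $\La_\vp^{(3)}(i\ti M_{1,\vp}^{(3)},\ha v)$ and $\La_\vp^{(5)}(i\ti M_{1,\vp}^{(5)},\ha v)$.

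\emph{Step 2 (first differentiation by parts).} Next I would split $Q_1^{(3)}=Q_1^{(3)}\big([2\chi_{H1,1}^{(3)}]_{sym1}^{(3)}+\chi_{H1,2}^{(3)}+[2\chi_{H2,1}^{(3)}]_{sym1}^{(3)}+[2\chi_{H2,2}^{(3)}]_{sym1}^{(3)}+\chi_{H2,3}^{(3)}+\chi_{H3}^{(3)}\big)$, a partition of the support of $\chi_{NR1}^{(3)}$; by Lemma~\ref{Le1} each cell $\chi_{(j)}^{(3)}$ carries a lower bound $|\Phi_\vp^{(3)}|\gtrsim$ one of the quantities in \eqref{ff1}, \eqref{ff3}--\eqref{ff5}, recovering one derivative, so I set $L_{j,\vp}^{(3)}:=Q_1^{(3)}\chi_{(j)}^{(3)}/\Phi_\vp^{(3)}$. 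Splitting $1=\chi_{\le L}^{(3)}+\chi_{>L}^{(3)}$, the low part of $\La_\vp^{(3)}(iQ_1^{(3)}\chi_{(j)}^{(3)},\ha v)$ goes into $G$ as $\La_\vp^{(3)}(i\ti L_{j,\vp}^{(3)}\Phi_\vp^{(3)}\chi_{\le L}^{(3)},\ha v)$, whereas for the high part I use
\[
\La_\vp^{(3)}(i\Phi_\vp^{(3)}m,\ha v)=-\p_t\big[\La_\vp^{(3)}(m,\ha v)\big]+\sum_{\ell}\La_\vp^{(3)}(m,\ldots,\p_t\ha v_\ell,\ldots)
\]
with $m=\ti L_{j,\vp}^{(3)}\chi_{>L}^{(3)}$, so that $\sum_{j=1}^6\La_\vp^{(3)}(\ti L_{j,\vp}^{(3)}\chi_{>L}^{(3)},\ha v)$ moves to the left of \eqref{NF21}. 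Substituting the Step~1 equation into the freed slots ($\p_t\ha v_\ell$; barred slots pick up a sign, since $\overline{\p_t\ha v_\ell}$ is inserted), the cubic part of $\p_t\ha v_\ell$ gives quintic terms whose multipliers are products of $\ti L_{j,\vp}^{(3)}\chi_{>L}^{(3)}$ (via $[\cdot]_{ext1,1}^{(5)}$ or $[\cdot]_{ext1,2}^{(5)}$) with $Q_1^{(3)}$ or $Q_2^{(3)}$ (via $[\cdot]_{ext2,1}^{(5)}$ or $[\cdot]_{ext2,2}^{(5)}$), the phase composing to $\Phi_\vp^{(5)}=\Phi_\vp^{(3)}(k_{1,2,3},k_4,k_5)+\Phi_\vp^{(3)}(k_1,k_2,k_3)$ as in \eqref{L1}; the quintic part of $\p_t\ha v_\ell$ gives the $\La^{(7)}$--terms $M_{1,\vp}^{(7)}$, $M_{2,\vp}^{(7)}$. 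The bookkeeping device that reorganises the symmetrisations of these products into the ten patterns $\chi_{NR(m,n)}^{(5)}$ is Lemma~\ref{Le6}; on the patterns where $|k_5|$ dominates I further replace $\Phi_\vp^{(3)}$ by $\Phi_0^{(3)}$ in the $ext1,1$--factor and drop the then--redundant $\chi_{>L}^{(3)}$, the two errors being exactly the derivative--loss--free remainders $M_{11,\vp}^{(5)}$ and $M_{12,\vp}^{(5)}$. The $Q_2^{(3)}$--substitutions and the remaining low--frequency and barred--slot contributions account for $M_{2,\vp}^{(5)},\dots,M_{7,\vp}^{(5)}$, $M_{18,\vp}^{(5)}$, $M_{19,\vp}^{(5)}$.

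\emph{Step 3 (second differentiation by parts).} The genuinely non--resonant quintic produced in Step~2 still carries one derivative loss and must be treated once more; essentially all the work is here. I would run through the quintic frequency configurations---the $\chi_{NR(m,n)}^{(5)}$ refined by $\chi_{H1,1}^{(5)},\chi_{A1}^{(5)},\chi_{A2}^{(5)},\chi_{A3}^{(5)},\chi_{R1}^{(5)},\chi_{R3}^{(5)}$---and check, via Lemmas~\ref{Le2} and \ref{Le3}, on which cells one has a lower bound $|\Phi_\vp^{(5)}|\gtrsim|k_1-k_2+k_3-k_4|k_{\max}^3$, $\gtrsim|k_4-k_5|k_{\max}^3$, or $\gtrsim k_{\max}^4$ strong enough to recover the last derivative (Proposition~\ref{prop_co} is exactly what forces the slightly stronger hypotheses \eqref{rel3}--\eqref{rel5} rather than merely $|k_5|$ dominant). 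On those cells I take the $L_{j,\vp}^{(5)}$ of (i) and apply the identity above a second time, moving $\sum_{j=1}^6\La_\vp^{(5)}(\ti L_{j,\vp}^{(5)}\chi_{>L}^{(5)},\ha v)$ to the left; re--substituting the equation now produces the $\La^{(7)}$--terms $M_{3,\vp}^{(7)},\dots,M_{6,\vp}^{(7)}$ and the $\La^{(9)}$--terms $M_{1,\vp}^{(9)},M_{2,\vp}^{(9)}$, while the low parts go into $G$ as $\La_\vp^{(5)}(i\ti L_{j,\vp}^{(5)}\Phi_\vp^{(5)}\chi_{\le L}^{(5)},\ha v)$. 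On the complementary (resonant) cells the multipliers are precisely $M_{8,\vp}^{(5)},\dots,M_{17,\vp}^{(5)}$; the critical one is $M_{8,\vp}^{(5)}$, on $\{k_1-k_2+k_3-k_4=0\}$ with $|k_5|$ dominant, where no phase gain exists (Proposition~\ref{prop_co})---that its symmetrisation nevertheless loses no derivative is the separate content of Proposition~\ref{prop_res} and is not needed for the present identity. Collecting all total--derivative terms into $F_{\vp,L}$ and everything else into $G_{\vp,L}$ gives \eqref{NF21}.

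\emph{Main obstacle.} The difficulty is organisational rather than analytic: one must verify that the partition in Step~3 is exhaustive, that on each non--resonant cell one of Lemmas~\ref{Le1}--\ref{Le3} applies with the exact power of $k_{\max}$ claimed, and that each resonant cell coincides with one of the finitely many listed $M_{j,\vp}^{(5)}$, all the while keeping every multiplier in symmetrised form (the repeated use of Lemma~\ref{Le6}, together with the $\Phi_0$--versus--$\Phi_\vp$ corrections $M_{11,\vp}^{(5)},M_{12,\vp}^{(5)}$, is the source of the many case distinctions). Finally, for $u\in C([-T,T];H^s(\T))$ with $s\ge1$ every Fourier sum above converges absolutely and all the manipulations are legitimate, by the H\"{o}lder and Young inequalities---Lemma~\ref{lem_nl2} being the prototype of the estimates this uses.
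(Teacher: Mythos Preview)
Your proposal is correct and follows the same route as the paper (Proposition~\ref{prop_req1} for Step~1, the general differentiation-by-parts machinery Proposition~\ref{prop_NF11} together with Lemma~\ref{Le31} for Steps~2--3, and Lemma~\ref{Le6} as the symmetrisation bookkeeping device). One small correction to Step~1: do not invoke Lemma~\ref{lem_E1} here---the left-side term $i\la_5E_1(\vp)\p_x^2u$ in \eqref{4NLS3} is \emph{absorbed into the phase} $\phi_\vp(k)=k^4+\la_5E_1(\vp)k^2$ (not cancelled against $J_2$), and the $E_1(u)$-piece of $J_2$ instead contributes to the resonant subtraction $-q_1^{(3)}[2\chi_{R1}^{(3)}]_{sym1}^{(3)}$ in the computation leading to $Q_1^{(3)}+Q_2^{(3)}$; this is precisely why $\phi_\vp$ rather than $\phi_0$ appears in the profile, and note that Proposition~\ref{prop_NF2} places no hypothesis on $\la_5$.
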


\begin{rem}
Precisely speaking, $L_{1, \vp}^{(3)}  := Q_1^{(3)} \, 2 \chi_{H1,1}^{(3)}/\Phi_{\vp}^{(3)}$ means
\begin{align*}
L_{1, \vp}^{(3)} :=
\begin{cases}
Q_1^{(3)} \, 2 \chi_{H1,1}^{(3)}/\Phi_{\vp}^{(3)} \ \ &\text{ when } \Phi_{\vp}^{(3)} \neq 0\\
0\ \ &\text{ when } \Phi_{\vp}^{(3)}= 0.
\end{cases}
\end{align*}
We adapt the same rule in the definition of each multiplier when its denominator is equal to $0$.
\end{rem}

Before we prove Proposition \ref{prop_NF2}, we prepare some propositions and lemmas.
\begin{prop} \label{prop_req1}
Let $s \ge 1$, $\vp \in L^2(\T)$, $T>0$ and $u \in C([-T, T]:H^s(\T))$ be a solution of \eqref{4NLS3}. 
Put $v:= U_\vp(-t)u$. Then, $v \in C([-T, T]: H^s(\T))\cap C^1([-T, T]: H^{s-2}(\T))$ and 
$\ha{v}(t,k)$ satisfies 
\begin{align} 
\p_t \ha{v} (t,k) =&  \La_{\vp}^{(3)}( iQ_1^{(3)} ,\ha{v} (t) )(t,k)
+ \La_{\vp}^{(3)}(i Q_2^{(3)}, \ha{v} (t) )(t,k) 
+ \La_{\vp}^{(5)} \big(i \frac{3}{8} \la_1, \ha{v} (t) \big) (t, k), \label{eq20} \\
\p_t \bar{\ha{v}}(t, k)= &  
\sum_{k=k_{1,2,3}} e^{it \Phi_{\vp}^{(3)}} (-i Q_1^{(3)}) \bar{\ha{v}}(t, k_1) \ha{v}(t, k_2) \bar{\ha{v}}(t, k_3) \notag \\
&+ \sum_{k=k_{1,2,3}} e^{it \Phi_{\vp}^{(3)}} (-i Q_2^{(3)}) \bar{\ha{v}}(t, k_1) \ha{v}(t, k_2) \bar{\ha{v}}(t, k_3) \notag \\
&+ \sum_{k=k_{1,2,3,4,5}} e^{it \Phi_{\vp}^{(5)} } \big( -i \frac{3}{8} \la_1\big) 
\bar{\ha{v}}(t,k_1) \ha{v} (t, k_2) \bar{\ha{v}}(t, k_3) \ha{v} (t, k_4) \bar{\ha{v}} (t, k_5)
\label{eq21}
\end{align}
for each $k \in \Z$, where $U_{\vp}(t)= \mathcal{F}^{-1} \exp(i t \phi_{\vp} (k)) \mathcal{F}_x$. Moreover, we have 
\begin{align}
\|\p_t v\|_{L_T^\infty H^{s-2}} \lesssim \|v\|_{L_T^\infty H^{s}}^3 + \|  v \|_{L_T^{\infty} H^s}^5.  \label{eq_es}
\end{align}
\end{prop}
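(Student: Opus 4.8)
The plan is to transfer \eqref{4NLS3} from $u$ to the interaction-picture variable $v := U_\vp(-t)u$ by a direct computation, and then read off the multiplier structure from the expressions $J_1(u), \dots, J_4(u)$. First I would record the regularity of $v$: since $U_\vp(t)$ is a unitary group on every $H^\sigma(\T)$ (its symbol $e^{it\phi_\vp(k)}$ has modulus one), $v \in C([-T,T]:H^s(\T))$ is immediate. For the time differentiability, the right-hand side of \eqref{4NLS3} — the terms $J_1(u), J_2(u), J_3(u), J_4(u)$ — maps $H^s(\T)$ into $H^{s-2}(\T)$ when $s \ge 1$: this is exactly the content of Remark~\ref{rem_opt}(i) together with the Sobolev-type product estimates (a special case of the bounds behind Lemma~\ref{lem_nl2}, or just the algebra property of $H^{s-1}(\T)$ for $s \ge 1$ to handle two derivatives landing on a product of three or five factors). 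Hence $\p_t u = i\p_x^4 u - i\la_5 E_1(\vp)\p_x^2 u + (\text{terms in } H^{s-2}) \in H^{s-4}$, but after conjugating by $U_\vp(-t)$ the linear part cancels and $\p_t v = U_\vp(-t)\big(\text{the }J\text{-terms}\big) \in H^{s-2}$; this gives $v \in C^1([-T,T]:H^{s-2}(\T))$.

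Next I would carry out the conjugation on the Fourier side. With $\ha{v}(t,k) = e^{-it\phi_\vp(k)}\ha{u}(t,k)$, differentiating in $t$ and using \eqref{4NLS3} gives
\begin{align*}
\p_t\ha{v}(t,k) = e^{-it\phi_\vp(k)}\,\mathcal{F}_x\big[J_1(u)+J_2(u)+J_3(u)+J_4(u)\big](t,k).
\end{align*}
Now expand each $J_i$ in Fourier coefficients of $u$, then substitute $\ha{u}(t,k_l) = e^{it\phi_\vp(k_l)}\ha{v}(t,k_l)$ (with a complex conjugate on the even-indexed factors, because $\bar u$ has Fourier coefficient $\overline{\ha u(-\,\cdot\,)}$ and the phase is even in $k$). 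The product of exponentials assembles into $e^{-it\Phi_\vp^{(2N+1)}}$ by the very definition of $\Phi_\vp^{(2N+1)}$. The cubic terms $J_1(u)$ together with the derivative-containing part of $J_2(u)$ combine: each $\p_x$ produces a factor $ik_l$, $\p_x^2$ a factor $-k_l^2$, so $-i\la_2\bar u(\p_x u)^2$ contributes $\la_2 k_1 k_3$ (times $i$, times the conjugate pattern), $-i\la_3 u|\p_x u|^2 = -i\la_3 u\,\p_x u\,\overline{\p_x u}$ contributes the $k_2$-cross-term, $-i\la_4 u^2\p_x^2\bar u$ and $-i\la_5|u|^2\p_x^2 u$ contribute the $k_2^2$ and $k_1^2{+}k_3^2$ pieces, and after symmetrizing in the unbarred slots one obtains precisely $q_1^{(3)} = \la_2 k_1 k_3 - \tfrac{\la_3}{2}(k_1{+}k_3)k_2 + \la_4 k_2^2 + \tfrac{\la_5}{2}(k_1^2{+}k_3^2)$. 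Splitting the region $\{(k_1{-}k_2)(k_3{-}k_2)\neq 0\}$ off as the non-resonant multiplier $Q_1^{(3)} = q_1^{(3)}\chi_{NR1}^{(3)}$, the complementary diagonal contribution $q_1^{(3)}\chi_{R2}^{(3)}$ gets absorbed, together with the $\int|\p_x u|^2\,dx\,u$ term from $J_2(u)$ (whose symbol is $(2\la_4{+}\la_5{-}\la_3)[k_1^2\chi_{R1}^{(3)}]_{sym1}^{(3)} = q_2^{(3)}$), into $Q_2^{(3)} = q_2^{(3)} - q_1^{(3)}\chi_{R2}^{(3)}$. Note $J_3(u) = -\,(\text{the }u\int|\p_x u|^2\text{ term of }J_2)$, so $J_2 + J_3$ leaves only the $E_1$- and $E_2$-multiplier pieces — but the $E_1$ piece $i\la_5 E_1(u)\p_x^2 u$ equals $i\la_5 E_1(\vp)\p_x^2 u$ by Lemma~\ref{lem_E1} and is exactly what was moved to the linear side in passing to \eqref{4NLS3}, while the $E_2$ piece was removed by the change of variables; this is why only $Q_1^{(3)}, Q_2^{(3)}$ and the quintic $\tfrac38\la_1$ survive. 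Finally $J_4(u) = i\tfrac38\la_1|u|^4 u$ gives the constant quintic multiplier $\tfrac38\la_1$ and the phase $e^{-it\Phi_\vp^{(5)}}$. This yields \eqref{eq20}; equation \eqref{eq21} is just its complex conjugate (using that $Q_1^{(3)}, Q_2^{(3)}$ are real-valued and symmetric, so conjugation only flips barred/unbarred slots and the sign of the phase).

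For the bound \eqref{eq_es}: since $U_\vp(-t)$ is unitary on $H^{s-2}$, $\|\p_t v\|_{L_T^\infty H^{s-2}} = \|J_1(u) + \dots + J_4(u)\|_{L_T^\infty H^{s-2}}$. Each cubic term has at most two derivatives distributed over three factors, so by the fractional Leibniz rule and the algebra property of $H^{\sigma}(\T)$ for $\sigma \ge 1/2$ (putting $s-2$ derivatives plus the two internal derivatives, i.e. $s$ total, via Lemma~\ref{lem_nl2}-type estimates or directly $\|fgh\|_{H^{s-2}} \lesssim \prod\|f_i\|_{H^s}$ when $s\ge 1$) one gets $\lesssim \|u\|_{L_T^\infty H^s}^3 = \|v\|_{L_T^\infty H^s}^3$; the quintic term $|u|^4 u \in H^s \hookrightarrow H^{s-2}$ by the algebra property, giving $\lesssim \|v\|_{L_T^\infty H^s}^5$. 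The main obstacle is purely bookkeeping rather than analytic: tracking the combinatorial factors (the $2$'s and the $[\cdot]_{sym1}$ symmetrizations arising because $\bar u(\p_x u)^2$ and $u|\p_x u|^2$ are not yet symmetric in their two unbarred arguments) so that the symbols come out exactly as $q_1^{(3)}$, $q_2^{(3)}$, $Q_1^{(3)}$, $Q_2^{(3)}$ as defined; one must be careful that the $E_1$-term really does cancel against the linear term carried to the left in \eqref{4NLS3}, which is where the hypothesis $\la_5 = \la_2+\la_4$ or $\la_5 = 0$ (ensuring $\la_5 E_1(u)(t) = \la_5 E_1(\vp)$ via Lemma~\ref{lem_E1}) is used.
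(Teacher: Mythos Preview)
Your overall strategy matches the paper's: conjugate by $U_\varphi(-t)$, read off the symbols on the Fourier side, and use the $H^{s-2}$ mapping of the $J_i$ (via Lemma~\ref{lem_nl2}) for the regularity claim and for \eqref{eq_es}; deducing \eqref{eq21} by conjugating \eqref{eq20} is also fine.

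The symbol bookkeeping, however, is wrong. In the paper's computation $J_1$ contributes $iq_1^{(3)}$, the \emph{entire} $J_2$ --- its $E_1$-, $E_2$-, and $\int|\p_x u|^2$-pieces taken together --- contributes $-iq_1^{(3)}[2\chi_{R1}^{(3)}]_{sym1}^{(3)}$, and it is $J_3$ (not $J_2$) that contributes $iq_2^{(3)}$. The inclusion--exclusion identity $1=\chi_{NR1}^{(3)}+[2\chi_{R1}^{(3)}]_{sym1}^{(3)}-\chi_{R2}^{(3)}$ then yields $J_1+J_2 \to iQ_1^{(3)}-iq_1^{(3)}\chi_{R2}^{(3)}$, and adding $J_3$ gives \eqref{eq20}. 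Your claim that the $E_1$ and $E_2$ pieces of $J_2$ are ``moved to the linear side'' or ``removed by the change of variables'' is incorrect: in \eqref{4NLS3} the $E_1(\vp)$ term is already absorbed into $\phi_\vp$ and has been conjugated away, so the $i\la_5 E_1(u)\p_x^2 u$ on the right has nothing to cancel with --- it is simply part of the resonant multiplier $-q_1^{(3)}[2\chi_{R1}^{(3)}]_{sym1}^{(3)}$ that subtracts the resonant portion of $J_1$. Lemma~\ref{lem_E1} plays no role in this proposition. Your account is also internally inconsistent: you attribute $q_2^{(3)}$ to the $\int|\p_x u|^2$ term in $J_2$, and then say $J_2+J_3$ cancels that very term.
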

\begin{proof}
Since $u\in C([-T, T]:H^s(\T))$ satisfies \eqref{4NLS3} and $\ha{v}(t,k)=e^{-i t\phi_{\vp} (k)} \ha{u}(t,k)$, it follows that
$v\in C([-T,T]: H^{s}(\T))$ and
\begin{equation}\label{EE2}
\p_t \ha{v} (t,k) = \sum_{j=1,2,3,4} e^{-i t \phi_{\vp} (k) } \mathcal{F}_x [J_j(u)](t, k)
\end{equation}
By \eqref{nl11}, it follows that $J_1(u), J_2(u), J_3(u), J_4(u) \in C([-T, T]: H^{s-2}(\T))$. 
Thus, we have $v \in C^1([-T,T]: H^{s-2}(\T))$ and \eqref{eq_es}. 
A direct computation yields that 
\begin{align*}
e^{-it \phi_{\vp} (k) } \mathcal{F}_x [J_1(u)](t,k)
& = \sum_{k=k_{1,2,3}} e^{-it \Phi_{\vp}^{(3)}  } (i q_1^{(3)}) \prod_{1 \le l \le 3}^{*} \ha{v}(t, k_l), \\
e^{-it \phi_{\vp} (k) } \mathcal{F}_x [ J_2(u) ](t,k)
&= \sum_{k=k_{1,2,3}} e^{-it \Phi_{\vp}^{(3)}} (-iq_1^{(3)} [2 \chi_{R1}^{(3)}]_{sym1}^{(3)} ) \prod_{1 \le l \le 3}^{*} \ha{v} (t, k_l). 
\end{align*}
Thus, by $1= \chi_{NR1}^{(3)}+ [ 2 \chi_{R1}^{(3)}  ]_{sym1}^{(3)}-\chi_{R2}^{(3)}$, it follows that 
\begin{align} \label{EE4}
& e^{-it \phi_{\vp}(k)} \mathcal{F}_x [J_1(u)+J_2(u)  ]
= \sum_{k=k_{1,2,3}} e^{- it \Phi_{\vp}^{(3)} } \big\{ i q_1^{(3)} (1- [2\chi_{R1}^{(3)} ]_{sym1}^{(3)}  ) \big\}
\prod_{1 \le l \le 3}^{*} \ha{v} (t, k_l) \notag \\
& \hspace{1cm} 
= \sum_{k=k_{1,2,3}} e^{- it \Phi_{\vp}^{(3)} } ( i q_1^{(3)} \chi_{NR1}^{(3)}- i q_1^{(3)} \chi_{R2}^{(3)} \big)
\prod_{1 \le l \le 3}^{*} \ha{v} (t, k_l) \notag \\ 
& \hspace{1cm}
= \La_{\vp}^{(3)} \big(i Q_1^{(3)} , \ha{v} (t) \big) (t, k)+ \La_{\vp}^{(3)} \big(-i q_1^{(3)} \chi_{R2}^{(3)}, \ha{v} (t) \big) (t, k).  
\end{align}
A simple calculation yields that 
\begin{align} 
e^{-it \phi_{\vp} (k) } \mathcal{F}_x [ J_3(u) ] (t,k)
& = \sum_{k=k_{1,2,3}} e^{-it \Phi_{\vp}^{(3)} } (i q_2^{(3)} ) \prod_{1 \le l \le 3}^{*} \ha{v} (t, k_l) \notag \\
& = \La_{\vp}^{(3)} \big( i q_2^{(3)}, \ha{v} (t)  \big) (t, k), \label{EE5} \\
e^{-it \phi_{\vp} (k) } \mathcal{F}_x [ J_4(u) ] (t,k)
& = \sum_{k=k_{1,2,3,4,5}} e^{-it \Phi_{\vp}^{(5)} } \big( i \frac{3}{8} \la_1\big) \prod_{1 \le l \le 5}^{*} \ha{v} (t, k_l) \notag \\
& = \La_{\vp}^{(5)} \big( i \frac{3}{8} \la_1, \ha{v} (t)  \big) (t, k). \label{EE6}
\end{align}
By \eqref{EE4}--\eqref{EE6} and $Q_2^{(3)} =q_2^{(3)}- q_1^{(3)} \chi_{R2}^{(3)}$, we obtain \eqref{eq20}. 
By \eqref{eq20}, we easily check that \eqref{eq21} holds. 
\end{proof}

\begin{prop} \label{prop_NF11}
Let $N =1,2$, $s \ge 1$, $\vp \in L^2(\T)$, $T>0$ and a $(2N+1)$-multiplier $m^{(2N+1)}$ satisfy
\begin{align*}
|m^{(2N+1)} | \lesssim 1, 
\hspace{0.5cm} | m^{(2N+1)} \Phi_{\vp}^{(2N+1)} | \lesssim \langle k_{\max} \rangle^2.
\end{align*} 
If $ u \in C([-T, T]:H^s(\T))$ is a solution of \eqref{4NLS3}, then 
$\ha{v}(t,k)=e^{-i t \phi_{\vp} (k)} \ha{u}(t,k) $ satisfies the following equation for each $k \in \Z$:
\begin{align}
\label{NF11}
& \p_t \Lambda_{\vp}^{(2N+1)} (\ti{m}^{(2N+1)}, \ha{v}(t))(t,k) = 
\Lambda_{\vp}^{(2N+1)} (- i \, \ti{m}^{(2N+1)} \Phi_{\vp}^{(2N+1)}, \ha{v} (t) )(t,k) \notag \\
& \hspace{0.3cm} 
+ \Lambda_{\vp}^{(2N+3)} \big( \big[ [(N+1) \ti{m}^{(2N+1)} ]_{ext1,1}^{(2N+3)} [ i Q_1^{(3)} ]_{ext2,1}^{(2N+3)}  \big]_{sym}^{(2N+3)}, 
\ha{v} (t) \big)(t,k) \notag \\
& \hspace{0.3cm} 
+ \Lambda_{\vp}^{(2N+3)} \big( \big[ [(N+1) \ti{m}^{(2N+1)} ]_{ext1,1}^{(2N+3)} [ i Q_2^{(3)} ]_{ext2,1}^{(2N+3)}  \big]_{sym}^{(2N+3)}, 
\ha{v} (t) \big)(t,k) \notag \\
& \hspace{0.3cm} 
+ \Lambda_{\vp}^{(2N+3)} \big( \big[ [ N \ti{m}^{(2N+1)} ]_{ext1,2}^{(2N+3)} [ -i Q_1^{(3)} ]_{ext2,2}^{(2N+3)}  \big]_{sym}^{(2N+3)}, 
\ha{v} (t) \big)(t,k) \notag \\
& \hspace{0.3cm}
+ \Lambda_{\vp}^{(2N+3)} \big( \big[ [ N \ti{m}^{(2N+1)} ]_{ext1,2}^{(2N+3)} [ - i Q_2^{(3)} ]_{ext2,2}^{(2N+3)}  \big]_{sym}^{(2N+3)}, 
\ha{v} (t) \big)(t,k) \notag \\
& \hspace{0.3cm}
+ \Lambda_{\vp}^{(2N+5)} \big( i \frac{3}{8} \la_1 \big[ [ (N+1) \ti{m}^{(2N+1)} ]_{ext1,1}^{(2N+5)} \big]_{sym}^{(2N+5)}, 
\ha{v} (t) \big)(t,k) \notag \\
& \hspace{0.3cm}
+ \Lambda_{\vp}^{(2N+5)} \big( -i \frac{3}{8} \la_1 \big[ [ N \ti{m}^{(2N+1)} ]_{ext1,2}^{(2N+5)} \big]_{sym}^{(2N+5)}, 
\ha{v} (t) \big)(t,k).
\end{align} 
\end{prop}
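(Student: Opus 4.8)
The plan is to differentiate the defining series of $\La_\vp^{(2N+1)}(\ti m^{(2N+1)},\ha v(t))(t,k)$ in $t$ term by term, substitute the evolution identities \eqref{eq20} and \eqref{eq21} of Proposition~\ref{prop_req1} into the factors $\ha v(t,k_l)$, combine the resulting phase functions, and finally reorganize using the symmetry of $\ti m^{(2N+1)}$ together with the symmetry of $\La_\vp^{(\cdot)}$ in its multiplier slot. First I would record that, since $\ti m^{(2N+1)}$ inherits the two bounds $|\ti m^{(2N+1)}|\lesssim 1$ and $|\ti m^{(2N+1)}\Phi_\vp^{(2N+1)}|\lesssim\langle k_{\max}\rangle^2$ from $m^{(2N+1)}$ (symmetrization only averages), and since $v\in C([-T,T]:H^s)\cap C^1([-T,T]:H^{s-2})$ with $\|\p_t v\|_{L_T^\infty H^{s-2}}\lesssim\|v\|_{L_T^\infty H^s}^3+\|v\|_{L_T^\infty H^s}^5$ by Proposition~\ref{prop_req1}, the Sobolev-type estimates of Lemma~\ref{lem_nl2} make every series produced after differentiation absolutely and uniformly convergent on $[-T,T]$; this legitimizes differentiating $\La_\vp^{(2N+1)}(\ti m^{(2N+1)},\ha v(t))(t,k)=\sum_{k=k_{1,\dots,2N+1}}e^{-it\Phi_\vp^{(2N+1)}}\ti m^{(2N+1)}\prod_{1\le l\le 2N+1}^{*}\ha v(t,k_l)$ under the sum.

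Next I would apply the Leibniz rule inside the summation. The derivative falling on $e^{-it\Phi_\vp^{(2N+1)}}$ yields the factor $-i\Phi_\vp^{(2N+1)}$, hence the first term $\La_\vp^{(2N+1)}(-i\,\ti m^{(2N+1)}\Phi_\vp^{(2N+1)},\ha v(t))(t,k)$ of \eqref{NF11}. The derivative falling on the $(2j-1)$-st (odd) factor $\ha v(t,k_{2j-1})$ is replaced, via \eqref{eq20}, by a cubic expression with multiplier $iQ_1^{(3)}$, a cubic expression with multiplier $iQ_2^{(3)}$, and a quintic expression with multiplier $i\frac38\la_1$; the derivative on the $(2i)$-th (even) factor $\overline{\ha v}(t,k_{2i})$ is likewise replaced, via the conjugate equation \eqref{eq21}, by the same expressions with an overall minus sign. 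After renaming the new summation variables so that they occupy the last positions, the exponentials recombine through the additivity identities
\[
\Phi_\vp^{(2N+1)}(\dots,k_{2N},k_{2N+1,2N+2,2N+3})+\Phi_\vp^{(3)}(k_{2N+1},k_{2N+2},k_{2N+3})=\Phi_\vp^{(2N+3)}(k_1,\dots,k_{2N+3}),
\]
\[
\Phi_\vp^{(2N+1)}(\dots,k_{2N}-k_{2N+3}+k_{2N+2},k_{2N+1})-\Phi_\vp^{(3)}(k_{2N},k_{2N+3},k_{2N+2})=\Phi_\vp^{(2N+3)}(k_1,\dots,k_{2N+3}),
\]
and the $(2N+5)$-variable analogues obtained from the quintic substitutions; these follow at once from the definitions of $\Phi_\vp^{(2N+1)}$ and $\phi_\vp$. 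With this relabeling, the $(2N+3)$-linear term coming from the $Q_1^{(3)}$-cubic in an odd slot carries (a permuted version of) the multiplier $[\ti m^{(2N+1)}]_{ext1,1}^{(2N+3)}[iQ_1^{(3)}]_{ext2,1}^{(2N+3)}$, the one from an even slot carries $[\ti m^{(2N+1)}]_{ext1,2}^{(2N+3)}[-iQ_1^{(3)}]_{ext2,2}^{(2N+3)}$, and the same pattern holds with $Q_2^{(3)}$ in place of $Q_1^{(3)}$ and with $\pm i\frac38\la_1$ in the $(2N+5)$-linear terms — the four extension operators $[\cdot]_{ext1,1}$, $[\cdot]_{ext1,2}$, $[\cdot]_{ext2,1}$, $[\cdot]_{ext2,2}$ being defined precisely so as to encode these substitutions.

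Finally I would collect the contributions. Since all factors equal $v$, the product $\prod_{1\le l\le 2N+1}^{*}$, the phase $\Phi_\vp^{(2N+3)}$ and the phase $\Phi_\vp^{(2N+5)}$ are invariant under permutations of the odd indices among themselves and of the even indices among themselves, so $\La_\vp^{(2N+3)}(M,\ha v)=\La_\vp^{(2N+3)}([M]_{sym}^{(2N+3)},\ha v)$ for any $(2N+3)$-multiplier $M$, and similarly in $2N+5$ variables. Consequently each of the $N+1$ odd-slot substitutions produces, after symmetrization, the same multiplier, and each of the $N$ even-slot substitutions likewise, which is the source of the constants $N+1$ and $N$ in \eqref{NF11}; assembling the seven pieces gives exactly its right-hand side. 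The main obstacle is the bookkeeping of the third step: tracking the variable relabelings and, in the even-slot case, the sign from complex conjugation, so that one genuinely lands on the extension operators as defined and not on some unsymmetrized permutation of them. The symmetrization wrappers $[\cdot]_{sym}^{(2N+3)}$ and $[\cdot]_{sym}^{(2N+5)}$ in the statement are precisely what absorbs this residual ambiguity, so once the phase additivity identities above are in hand the remaining work is routine.
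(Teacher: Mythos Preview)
Your proposal is correct and follows essentially the same route as the paper's proof: differentiate the defining series term by term (justified via the multiplier bounds, Proposition~\ref{prop_req1}, and Lemma~\ref{lem_nl2}), substitute \eqref{eq20}--\eqref{eq21}, recombine phases, and symmetrize. The only cosmetic difference is that the paper uses the symmetry of $\ti m^{(2N+1)}$ up front to collapse the $N{+}1$ odd-slot (resp.\ $N$ even-slot) derivatives into a single term with the derivative on $k_{2N+1}$ (resp.\ $k_{2N}$) before substituting, whereas you substitute first and then invoke $\La_\vp^{(\cdot)}(M,\ha v)=\La_\vp^{(\cdot)}([M]_{sym},\ha v)$; the phase additivity identities you display are exactly what the paper uses implicitly when passing from \eqref{NF113}--\eqref{NF114} to \eqref{NF115}--\eqref{NF116}.
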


Note that any $L_{j, \vp}^{(N)}$ defined in Proposition~\ref{prop_NF2} satisfies 
\begin{align*}
|L_{j, \vp}^{(N)} \chi_{>L}^{(N)} | \lesssim 1,
\hspace{0.5cm}
|L_{j, \vp}^{(N)} \Phi_{\vp}^{(N)} \chi_{>L}^{(N)} | \lesssim \langle k_{\max} \rangle^2
\end{align*}
for $L \gg \max\{ 1, |\la_5 | E_1(\vp)  \}$. Thus, we can apply Proposition~\ref{prop_NF11} 
with $\ti{m}^{(N)}=\ti{L}_{j, \vp}^{(N)} \chi_{>L}^{(N)}$. 
For details, see Remak~\ref{rem_pwb11}.

\begin{proof}[Proof of Proposition~\ref{prop_NF11}] 
At least formally, we have 
\begin{align} \label{NF112}
 \p_t \La_{\vp}^{(2N+1)} (\ti{m}^{(2N+1)}, \ha{v} (t)) (t,k)
&= \sum_{k=k_{1, \dots, 2N+1}} \frac{\p}{\p t} \Big\{ e^{-it  \Phi_{\vp}^{(2N+1)}}  \ti{m}^{(2N+1)} \prod_{1 \le l \le 2N+1}^{*} \ha{v} (t, k_{l}) \Big\} \\
& =: \mathcal{N}_1(t,k)+ \mathcal{N}_2(t, k) + \mathcal{N}_3 (t,k) \notag 
\end{align}
where
\begin{align}
& \mathcal{N}_1(t,k):= 
\sum_{k=k_{1, \dots, 2N+1}} e^{-i t \Phi_{\vp}^{(2N+1)}} (-i \ti{m}^{(2N+1)} \Phi_{\vp}^{(2N+1)}) \prod_{1 \le l \le 2N+1}^{*} \ha{v}(t, k_l), 
\notag \\
& \mathcal{N}_2(t, k):= \sum_{k=k_{1,\dots, 2N+1}} e^{-it \Phi_{\vp}^{(2N+1)} } (N+1) \ti{m}^{(2N+1)} \notag \\
& \hspace{3cm} \times 
\prod_{1\le l \le 2N-1}^{*} \ha{v} (t, k_l) \, \bar{\ha{v}}(t, k_{2N}) \, \p_t \ha{v} (t, k_{2N+1})  , \label{NF113} \\
& \mathcal{N}_3(t, k):=\sum_{k=k_{1,\dots, 2N+1}} e^{-it \Phi_{\vp}^{(2N+1)} } N \ti{m}^{(2N+1)} 
\prod_{1 \le l \le 2N-1}^{*} \ha{v}(t, k_l) \, \p_t \bar{\ha{v}} (t, k_{2N}) \,  \ha{v} (t, k_{2N+1}). 
 \label{NF114}
\end{align}
By $ |\ti{m}^{(2N+1)} \Phi_{\vp}^{(2N+1)} | \le [ |  m^{(2N+1)} \Phi_{\vp}^{(2N+1)}  | ]_{sym}^{(2N+1)} \lesssim \langle k_{\max} \rangle^2 $ 
and \eqref{nl11}, 
\begin{equation*}
\| \mathcal{N}_1 \|_{L_T^{\infty} l_{s-2}^2 } \lesssim \|  v \|_{L_T^{\infty} H^s}^{2N+1}. 
\end{equation*} 
By $|\ti{m}^{2N+1}| \lesssim 1 \lesssim \langle k_{\max}  \rangle^2 \langle k_{2N+1} \rangle^{-2}$, \eqref{nl11} and 
\eqref{eq_es}, 
\begin{align*}
\| \mathcal{N}_2  \|_{L_T^{\infty} l_{s-2}^2 } 
& \lesssim \Big\| \sum_{k=k_{1, \dots, 2N+1}} \langle k_{\max} \rangle^2 \prod_{l=1}^{2N} |\ha{v} (t, k_l)|
 \big( \langle k_{2N+1} \rangle^{-2} |\p_t \ha{v} (t, k_{2N+1})| \big) \Big\|_{L_T^{\infty} l_{s-2}^2} \\
& \lesssim \| v \|_{L_T^{\infty} H^s }^{2N} \| \p_t v \|_{L_T^{\infty} H^{s-2} } 
\lesssim \| v \|_{L_T^{\infty}  H^s }^{2N+3} + \| v \|_{L_T^{\infty} H^s}^{2N+5}  .
\end{align*} 
In a similar manner as above, we have 
$\|  \mathcal{N}_{3}  \|_{L_T^{\infty} l_{s-2}^2  } \lesssim \| v \|_{L_T^{\infty} H^s}^{2N+3}+\| v \|_{L_T^{\infty} H^s(\T)}^{2N+5} $. 
Thus, for each $k \in \Z$,  the convergence of $\mathcal{N}_1(t,k)$, $\mathcal{N}_2(t,k)$ and $\mathcal{N}_3 (t,k)$ 
is absolute and uniform in $t \in [-T, T]$. 
Therefore, changing the sum and the time differentiation in \eqref{NF112} can be verified strictly. 
Substituting \eqref{eq20} into \eqref{NF113}, we have
\begin{align}
\mathcal{N}_2 (t, k)
= & \La_{\vp}^{(2N+3)} \Big( [ (N+1) \ti{m}^{(2N+1)}  ]_{ext1,1}^{(2N+3)} [ i Q_1^{(3)} ]_{ext1,2}^{(2N+3)}, \ha{v} (t)  \Big) (t,k) \notag \\
&+ \La_{\vp}^{(2N+3)} \Big( [ (N+1) \ti{m}^{(2N+1)}  ]_{ext1,1}^{(2N+3)} [ i Q_2^{(3)} ]_{ext1, 2}^{(2N+3)}, \ha{v} (t)  \Big) (t,k) \notag \\
& + \La_{\vp}^{(2N+5)} \Big( i \frac{3}{8} \la_1 [ (N+1) \ti{m}^{(2N+1)}  ]_{ext1,1}^{(2N+5)}, \ha{v} (t)  \Big) (t,k) \notag \\
= & \La_{\vp}^{(2N+3)} \Big( \big[ [ (N+1) \ti{m}^{(2N+1)}  ]_{ext1,1}^{(2N+3)} [ i Q_1^{(3)} ]_{ext1,2}^{(2N+3)} \big]_{sym}^{(2N+3)} , 
\ha{v} (t)  \Big) (t,k) \notag \\
&+ \La_{\vp}^{(2N+3)} \Big( \big[ [ (N+1) \ti{m}^{(2N+1)}  ]_{ext1,1}^{(2N+3)} [ i Q_2^{(3)} ]_{ext1,2}^{(2N+3)} \big]_{sym}^{(2N+3)} , 
\ha{v} (t)  \Big) (t,k) \notag \\
&+ \La_{\vp}^{(2N+5)} \Big(  i \frac{3}{8} \la_1 \big[ [ (N+1) \ti{m}^{(2N+1)}  ]_{ext1,1}^{(2N+5)} \big]_{sym}^{(2N+5)} , 
\ha{v} (t)  \Big) (t,k). \label{NF115}
\end{align}
Similarly, we insert \eqref{eq21} into \eqref{NF114} to have 
\begin{align} \label{NF116}
\mathcal{N}_3 (t, k)
= & \La_{\vp}^{(2N+3)} \Big( \big[ [ N \ti{m}^{(2N+1)}  ]_{ext1,2}^{(2N+3)} [ -i Q_1^{(3)} ]_{ext2,1}^{(2N+3)} \big]_{sym}^{(2N+3)}, 
\ha{v} (t)  \Big) (t,k) \notag \\
&+ \La_{\vp}^{(2N+3)} \Big( \big[ [ N \ti{m}^{(2N+1)}  ]_{ext1,2}^{(2N+3)} [ -i Q_2^{(3)} ]_{ext2,1}^{(2N+3)} \big]_{sym}^{(2N+3)}, 
\ha{v} (t)  \Big) (t,k) \notag \\
&+ \La_{\vp}^{(2N+5)} \Big( - i \frac{3}{8} \la_1 \big[ [ N \ti{m}^{(2N+1)}  ]_{ext1,2}^{(2N+5)} \big]_{sym}^{(2N+5)}, 
\ha{v} (t)  \Big) (t,k). 
\end{align}
By \eqref{NF115}, \eqref{NF116} and $\mathcal{N}_1(t,k)= \La_{\vp}^{(2N+1)} (-i \ti{m}^{(2N+1)} \Phi_{\vp}^{(2N+1)}, \ha{v} (t))(t,k)$, 
we obtain \eqref{NF11}. 
\end{proof}

\begin{lem}\label{Le31}
\begin{align} 
&\partial_t \Lambda_{\vp}^{(3)} (  \ti{L}_{1, \vp}^{(3)} \chi_{>L}^{(3)}, \ha{v} (t) ) (t,k)
=-\Lambda_{\vp}^{(3)} (  i \ti{L}_{1, \vp}^{(3)} \Phi_{\vp}^{(3)}  \chi_{>L}^{(3)}, \ha{v} (t) ) (t,k) \notag \\
&+ \Lambda_{\vp}^{(5)} \big( i \sum_{j=1}^{6} \ti{L}_{j ,\vp}^{(5)} \Phi_{\vp}^{(5)}  \chi_{ > L}^{(5)} + 
i \sum_{j=1}^{6} \ti{L}_{j, \vp}^{(5)} \Phi_{\vp}^{(5)} \chi_{ \le L}^{(5)} +
i \sum_{j=8}^{18} \ti{M}_{j, \vp}^{(5)}, \ha{v} (t) \big) (t,k) \notag \\ 
&+ \La_{\vp}^{(5)}  \big( i \ti{M}_{19, \vp}^{(5)}, \ha{v}(t) \big)(t,k) \notag \\
& + \La_{\vp}^{(5)} \big( i \big[ [ 2 \ti{L}_{1, \vp}^{(3)} \chi_{>L}^{(3)} ]_{ext1,1}^{(5)} 
[Q_2^{(3)} ]_{ext2,1}^{(5)} \big]_{sym}^{(5)}, \ha{v} (t) \big) (t, k) \notag \\
&  + \Lambda_{\vp}^{(5)} 
\big(-i \big[ [  \ti{L}_{1, \vp}^{(3)}  \chi_{>L}^{(3)} ]_{est1,2}^{(5)} [Q_2^{(3)}]_{ext2,2}^{(5)} \big]_{sym}^{(5)}, \ha{v} (t) \big) (t,k) \notag \\
&+ \La_{\vp}^{(7)} \big( i \frac{3}{8} \la_1 \big[ [ 2 \ti{L}_{1, \vp}^{(3)} \chi_{>L}^{(3)} ]_{ext1,1}^{(7)} \big]_{sym}^{(7)}, \ha{v} (t) \big) (t, k) 
\notag \\
& +\La_{\vp}^{(7)} \big(- i \frac{3}{8} \la_1 \big[  [ \ti{L}_{1, \vp}^{(3)} \chi_{>L}^{(3)}]_{ext2,1}^{(7)}  \big]_{sym}^{(7)}, \ha{v}(t)  \big) (t,k). 
\label{eq36}
\end{align}
\end{lem}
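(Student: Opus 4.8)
The plan is to obtain \eqref{eq36} by applying Proposition~\ref{prop_NF11} once, with $N=1$ and $\ti{m}^{(3)}:=\ti{L}_{1,\vp}^{(3)}\chi_{>L}^{(3)}$, and then to identify each of the resulting terms with a term on the right-hand side of \eqref{eq36}. First I would check that this $\ti{m}^{(3)}$ is admissible: on $\supp L_{1,\vp}^{(3)}$ one has $|k_3|>16\max\{|k_1|,|k_2|\}$ (from $\chi_{H1,1}^{(3)}$) and $(k_1-k_2)(k_3-k_2)\neq 0$ (from $\chi_{NR1}^{(3)}$ inside $Q_1^{(3)}$), so for $L\gg\max\{1,|\la_5|E_1(\vp)\}$ the factorization \eqref{2eq2} and Lemma~\ref{Le1}(i) give $|\Phi_{\vp}^{(3)}|\gtrsim k_{\max}^3$; together with $|q_1^{(3)}|\lesssim\langle k_{\max}\rangle^2$ this yields $|L_{1,\vp}^{(3)}\chi_{>L}^{(3)}|\lesssim 1$ and $|L_{1,\vp}^{(3)}\Phi_{\vp}^{(3)}\chi_{>L}^{(3)}|\lesssim\langle k_{\max}\rangle^2$, and the same survives symmetrization, so Proposition~\ref{prop_NF11} applies.

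Applying it produces seven terms, six of which are already displayed in \eqref{eq36}: the $\mathcal{N}_1$-term is $-\Lambda_{\vp}^{(3)}(i\ti{L}_{1,\vp}^{(3)}\Phi_{\vp}^{(3)}\chi_{>L}^{(3)},\ha{v})$; the two $\Lambda_{\vp}^{(7)}$-terms carrying $\tfrac38\la_1$ (produced when $\p_t\ha{v}$, resp.\ $\p_t\bar{\ha{v}}$, is inserted into the $\Lambda^{(2N+5)}$-slots) are the two $\Lambda_{\vp}^{(7)}$-terms of \eqref{eq36}; the two quintic terms carrying $[Q_2^{(3)}]_{ext2,1}^{(5)}$, resp.\ $[Q_2^{(3)}]_{ext2,2}^{(5)}$, are the matching quintic terms of \eqref{eq36}; and the quintic term carrying $[-iQ_1^{(3)}]_{ext2,2}^{(5)}$ (from $\p_t\bar{\ha{v}}$) equals $\Lambda_{\vp}^{(5)}(i\ti{M}_{19,\vp}^{(5)},\ha{v})$ by the definition of $M_{19,\vp}^{(5)}$ and Remark~\ref{rem_sym}. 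Only the quintic term carrying $[iQ_1^{(3)}]_{ext2,1}^{(5)}$ (from $\p_t\ha{v}$) needs further analysis:
\[
\Lambda_{\vp}^{(5)}\bigl(\bigl[[2\ti{L}_{1,\vp}^{(3)}\chi_{>L}^{(3)}]_{ext1,1}^{(5)}[iQ_1^{(3)}]_{ext2,1}^{(5)}\bigr]_{sym}^{(5)},\ha{v}\bigr).
\]

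For this term I would first use that $q_1^{(3)}\chi_{NR1}^{(3)}/\Phi_{\vp}^{(3)}$ and $\chi_{>L}^{(3)}$ are symmetric, so $\ti{L}_{1,\vp}^{(3)}\chi_{>L}^{(3)}=(q_1^{(3)}\chi_{NR1}^{(3)}/\Phi_{\vp}^{(3)})[2\chi_{H1,1}^{(3)}]_{sym1}^{(3)}\chi_{>L}^{(3)}$, and that the two copies of $\chi_{NR1}^{(3)}$ combine, via Remark~\ref{rem_sym}, to $\chi_{NR1}^{(5)}$. Then I would insert into the outer factor $[Q_1^{(3)}]_{ext2,1}^{(5)}$, i.e.\ at $(k_3,k_4,k_5)$, the partition of unity $1=[2\chi_{H1,1}^{(3)}]_{sym1}^{(3)}+\chi_{H1,2}^{(3)}+[2\chi_{H2,1}^{(3)}]_{sym1}^{(3)}+[2\chi_{H2,2}^{(3)}]_{sym1}^{(3)}+\chi_{H2,3}^{(3)}+\chi_{H3}^{(3)}$, which holds by the definition of $\chi_{H3}^{(3)}$. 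The $\chi_{H3}^{(3)}$-piece is $\Lambda_{\vp}^{(5)}(i\ti{M}_{18,\vp}^{(5)},\ha{v})$ by definition. For the other five pieces, Lemma~\ref{Le6} (identities \eqref{le211} and \eqref{le212}) collapses the symmetrization of the product of $[[2\chi_{H1,1}^{(3)}]_{sym1}^{(3)}]_{ext1,1}^{(5)}$ with each outer cutoff onto the multipliers $\chi_{NR(j,1)}^{(5)}$ and $\chi_{NR(j,2)}^{(5)}$, $j=1,\dots,5$. On each of these I would split further by $\chi_{H1,1}^{(5)}$, $\chi_{R1}^{(5)}$, $\chi_{R3}^{(5)}$, $\chi_{A1}^{(5)}$, $\chi_{A2}^{(5)}$, $\chi_{A3}^{(5)}$, and, where $L_{1,\vp}^{(5)}$ and $L_{2,\vp}^{(5)}$ carry $1/\Phi_0^{(3)}$, replace $1/\Phi_{\vp}^{(3)}$ by $1/\Phi_0^{(3)}$ via
\[
\frac{q_1^{(3)}}{\Phi_{\vp}^{(3)}}\chi_{>L}^{(3)}=\frac{q_1^{(3)}}{\Phi_0^{(3)}}-\frac{q_1^{(3)}}{\Phi_0^{(3)}}\chi_{\le L}^{(3)}+\Bigl(\frac{q_1^{(3)}}{\Phi_{\vp}^{(3)}}-\frac{q_1^{(3)}}{\Phi_0^{(3)}}\Bigr)\chi_{>L}^{(3)},
\]
the last two groups producing $M_{12,\vp}^{(5)}$ and $M_{11,\vp}^{(5)}$. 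Comparing each resulting piece with the definitions of $L_{1,\vp}^{(5)},\dots,L_{6,\vp}^{(5)}$ and $M_{8,\vp}^{(5)},\dots,M_{18,\vp}^{(5)}$, using $\ti{L}_{j,\vp}^{(5)}\Phi_{\vp}^{(5)}=[L_{j,\vp}^{(5)}\Phi_{\vp}^{(5)}]_{sym}^{(5)}$ (legitimate since the cutoffs in each $L_{j,\vp}^{(5)}$ confine its support to $\{\Phi_{\vp}^{(5)}\neq 0\}$, by Lemmas~\ref{Le2} and \ref{Le3} or by the low-frequency restrictions), and writing $1=\chi_{>L}^{(5)}+\chi_{\le L}^{(5)}$ to split $i\sum_{j=1}^6\ti{L}_{j,\vp}^{(5)}\Phi_{\vp}^{(5)}$ into the first two sums of \eqref{eq36}, yields the claimed identity.

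Finally, the interchange of $\p_t$ with the sum defining $\Lambda_{\vp}^{(3)}$, and the absolute, $t$-uniform convergence on $[-T,T]$ of all the $\Lambda^{(5)}$- and $\Lambda^{(7)}$-series, are justified exactly as in the proof of Proposition~\ref{prop_NF11}, from the two bounds above together with \eqref{nl11}, \eqref{nl21} and \eqref{eq_es}. The main obstacle, I expect, is purely organisational: tracking the numerical constants produced by $[\cdot]_{sym1}$, $[\cdot]_{sym}$ and the extension operators; checking that the six-fold splitting of the outer cubic factor really collapses (via Lemma~\ref{Le6}) onto the ten multipliers $\chi_{NR(j,l)}^{(5)}$; and verifying that every region on which $\Phi_{\vp}^{(5)}$ could vanish is absorbed into $M_{8,\vp}^{(5)}$, $M_{9,\vp}^{(5)}$ or $M_{10,\vp}^{(5)}$, where the cutoffs $\chi_{R1}^{(5)}$, $\chi_{R3}^{(5)}$, $1-\chi_{H1,1}^{(5)}$, $1-\chi_{A2}^{(5)}$ make division by $\Phi_{\vp}^{(5)}$ unnecessary, so that each $L_{j,\vp}^{(5)}$ is genuinely well defined. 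No estimate beyond Lemmas~\ref{Le1}--\ref{Le3} and \ref{Le6} enters.
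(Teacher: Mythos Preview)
Your proposal is correct and follows essentially the same route as the paper's own proof: apply Proposition~\ref{prop_NF11} with $N=1$ and $\ti{m}^{(3)}=\ti{L}_{1,\vp}^{(3)}\chi_{>L}^{(3)}$, match six of the seven resulting terms directly, and reduce the remaining $[iQ_1^{(3)}]_{ext2,1}^{(5)}$-term to the identity \eqref{eq37} via the partition $1=\chi_{H3}^{(3)}+(1-\chi_{H3}^{(3)})$ in the outer factor, Lemma~\ref{Le6}, the replacement $1/\Phi_{\vp}^{(3)}\to 1/\Phi_0^{(3)}$ on the $\chi_{NR(1,1)}^{(5)}$-piece, and the further splitting by $\chi_{H1,1}^{(5)},\chi_{R1}^{(5)},\chi_{R3}^{(5)},\chi_{A1}^{(5)},\chi_{A2}^{(5)},\chi_{A3}^{(5)}$. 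The organisational concerns you flag are exactly the ones the paper works through in \eqref{eq38}--\eqref{eq34}.
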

\begin{proof}
By Proposition \ref{prop_NF11} with $N=1$ and $\ti{m}^{(2N+1)}=\ti{L}_{1, \vp}^{(3)} \chi_{>L}^{(3)}$, we have
\begin{align*} 
&\partial_t \Lambda_{\vp}^{(3)} (  \ti{L}_{1, \vp}^{(3)} \chi_{>L}^{(3)}, \ha{v} (t) ) (t,k)
=- \Lambda_{\vp}^{(3)} ( i  \ti{L}_{1, \vp}^{(3)} \Phi_{\vp}^{(3)}  \chi_{>L}^{(3)}, \ha{v} (t) ) (t,k)\\
&+ \Lambda_{\vp}^{(5)} \big( i \big[ [2 \ti{L}_{1, \vp}^{(3)} \chi_{>L}^{(3)}  ]_{ext1,1}^{(5)} [ Q_1^{(3)} ]_{ext2,1}^{(5)} \big]_{sym}^{(5)}, \ha{v} (t) \big) (t,k) \notag \\ 
&+ \Lambda_{\vp}^{(5)} \big( i \ti{M}_{19, \vp}^{(5)} , \ha{v} (t) \big) (t,k) \notag \\
& + \Lambda_{\vp}^{(5)} \big( i \big[ [2 \ti{L}_{1, \vp}^{(3)} \chi_{>L}^{(3)}  ]_{ext1,1}^{(5)} [ Q_2^{(3)} ]_{ext2,1}^{(5)} \big]_{sym}^{(5)}, \ha{v} (t) \big) (t,k) \notag \\ 
&+ \Lambda_{\vp}^{(5)} \big( -i \big[ [\ti{L}_{1, \vp}^{(3)} \chi_{>L}^{(3)}  ]_{ext1,2}^{(5)} [ Q_2^{(3)} ]_{ext2,2}^{(5)} \big]_{sym}^{(5)}, \ha{v} (t) \big) (t,k) \notag \\
&+ \La_{\vp}^{(7)} \big( i \frac{3}{8} \la_1 \big[ [ 2 \ti{L}_{1, \vp}^{(3)} \chi_{>L}^{(3)} ]_{ext1,1}^{(7)} \big]_{sym}^{(7)}, \ha{v} (t) \big) (t, k) 
\notag \\
& +\La_{\vp}^{(7)} \big(- i \frac{3}{8} \la_1 \big[  [ \ti{L}_{1, \vp}^{(3)} \chi_{>L}^{(3)}]_{ext2,1}^{(7)}  \big]_{sym}^{(7)}, \ha{v}(t)  \big) (t,k). 
\end{align*}
Thus, we only need to show
\begin{equation} \label{eq37}
\big[ [2 \ti{L}_{1, \vp}^{(3)} \chi_{>L}^{(3)}  ]_{ext1,1}^{(5)} [ Q_1^{(3)}  ]_{ext2,1}^{(5)} \big]_{sym}^{(5)} 
= \sum_{j=1}^{6} \ti{L}_{i, \vp}^{(5)} \Phi_{\vp}^{(5)}+  \sum_{j=8}^{18} \ti{M}_{j, \vp}^{(5)}. 
\end{equation}
By the definition, the left hand side of \eqref{eq37} is equal to 
\begin{equation*}
\ti{M}_{18, \vp}^{(5)}+ 
\big[ [2 \ti{L}_{1, \vp}^{(3)} \chi_{>L}^{(3)}  ]_{ext1}^{(5)} 
[ Q_1^{(3)} (1-\chi_{H3}^{(3)} ) ]_{ext2,1}^{(5)} \big]_{sym}^{(5)}. 
\end{equation*}
By $1-\chi_{H3}^{(3)}=[2 \chi_{H1,1}^{(3)}]_{sym1}^{(3)} +\chi_{H1,2}^{(3)}+[2 \chi_{H2,1}^{(3)}]_{sym1}^{(3)}+ [2 \chi_{H2,2}^{(3)} ]_{sym1}^{(3)} + \chi_{H2,3}^{(5)}$ and Lemma~\ref{Le6} with $m_1^{(3)} =Q_1^{(3)} \chi_{>L}^{(3)} / \Phi_{\vp}^{(3)}$ and $m_2^{(3)} = Q_1^{(3)}$, 
we have 
\begin{align}
& \big[  [2 \ti{L}_{1,\vp}^{(3)} \chi_{>L}^{(3)} ]_{ext1,1}^{(3)} [Q_1^{(3)}(1- \chi_{H3}^{(3)}) ]_{ext2,1}^{(5)} \big]_{sym}^{(5)}\notag\\
&= 2 \Big[ \Big[ \frac{Q_1^{(3)} }{ \Phi_{\vp}^{(3)}}  \chi_{>L}^{(3)} [ 2 \chi_{H1,1}^{(3)} ]_{sym1}^{(3)}  \Big]_{ext1,1}^{(5)} 
\big[ Q_1^{(3)} \big([2 \chi_{H1,1}^{(3)}]_{sym1}^{(3)}+ [ 2\chi_{H2,1}^{(3)} ]_{sym1}^{(3)} +[2 \chi_{H2,2}^{(3)}]_{sym1}^{(3)} \big) \big]_{ext2,1}^{(5)} \Big]_{sym}^{(5)}  \notag \\
&\hspace{0.4cm} +  2 \Big[ \Big[ \frac{Q_1^{(3)} }{ \Phi_{\vp}^{(3)}}  \chi_{>L}^{(3)} [ 2 \chi_{H1,1}^{(3)} ]_{sym1}^{(3)}  \Big]_{ext1,1}^{(5)} 
\big[ Q_1^{(3)} \big( \chi_{H2,1}^{(3)}+ \chi_{H2,3}^{(3)} \big) \big]_{ext2,1}^{(5)} \Big]_{sym}^{(5)}  \notag \\
& =\sum_{j=1,3,4} 2 \Big\{ \Big[ \Big[ \frac{Q_1^{(3)} }{\Phi_{\vp}^{(3)} } \chi_{>L}^{(3)}  \Big]_{ext1,1}^{(5)} [Q_1^{(3)}  ]_{ext2,1}^{(5)}  2 \chi_{NR(j,1)}^{(5)}  \Big]_{sym}^{(5)} \notag \\ 
& \hspace{3cm} + \Big[ \Big[ \frac{Q_1^{(3)} }{\Phi_{\vp}^{(3)} } \chi_{>L}^{(3)}  \Big]_{ext1,1}^{(5)} [Q_1^{(3)}  ]_{ext2,1}^{(5)}
 2 \chi_{NR(j, 2)}^{(5)}  \Big]_{sym}^{(5)} \Big\} \notag \\
& \hspace{0.4cm} + \sum_{j=2,5} 2 \Big\{ \Big[ \Big[ \frac{Q_1^{(3)} }{\Phi_{\vp}^{(3)} } \chi_{>L}^{(3)}  \Big]_{ext1,1}^{(5)} [Q_1^{(3)}  ]_{ext2,1}^{(5)}  \chi_{NR(j,1)}^{(5)}  \Big]_{sym}^{(5)} \notag \\
& \hspace{3cm} + \Big[ \Big[ \frac{Q_1^{(3)} }{\Phi_{\vp}^{(3)} } \chi_{>L}^{(3)}  \Big]_{ext1,1}^{(5)} [Q_1^{(3)}  ]_{ext2,1}^{(5)}
 \chi_{NR(j, 2)}^{(5)}  \Big]_{sym}^{(5)} \Big\}. \label{eq38}
\end{align} 
By Remark~\ref{rem_sym} and $\chi_{NR1}^{(5)}= [\chi_{NR1}^{(3)} ]_{ext1,1}^{(5)} [\chi_{NR1}^{(3)}]_{ext2,1}^{(5)}$, 
\begin{equation} \label{eq381}
\Big[  \frac{Q_1^{(3)}}{ \Phi_{\vp}^{(3)} } \chi_{>L}^{(3)} \Big]_{ext1,1}^{(5)} [Q_1^{(3)}]_{ext2,1}^{(5)}
=\Big[  \frac{q_1^{(3)}}{ \Phi_{\vp}^{(3)} } \chi_{>L}^{(3)} \Big]_{ext1,1}^{(5)} [q_1^{(3)}]_{ext2,1}^{(5)} \chi_{NR1}^{(5)}. 
\end{equation}
By \eqref{eq38} and \eqref{eq381}, 
\begin{align}
& \big[  [2 \ti{L}_{1, \vp}^{(3)} \chi_{>L}^{(3)} ]_{ext1,1}^{(5)} [ Q_1^{(3)} (1 -\chi_{H3}^{(3)}) ]_{ext2,1}^{(5)}  \big]_{sym}^{(5)} \notag \\
& = 4 \Big[ \Big[ \frac{q_1^{(3)}  }{ \Phi_{\vp}^{(3)} } \chi_{>L}^{(3)}   \Big]_{ext1,1}^{(5)} [ q_1^{(3)} ]_{ext2,1}^{(5)}
 \, \chi_{NR1}^{(5)} \chi_{NR(1,1)}^{(5)} \Big]_{sym}^{(5)} \notag \\
& + 4 \Big[ \Big[ \frac{q_1^{(3)}  }{ \Phi_{\vp}^{(3)} } \chi_{>L}^{(3)}   \Big]_{ext1,1}^{(5)} [ q_1^{(3)} ]_{ext2,1}^{(5)}
 \, \chi_{NR1}^{(5)} \chi_{NR(3,1)}^{(5)} \Big]_{sym}^{(5)} \label{eq392} \\
& + 4 \Big[ \Big[ \frac{q_1^{(3)}  }{ \Phi_{\vp}^{(3)} } \chi_{>L}^{(3)}   \Big]_{ext1,1}^{(5)} [ q_1^{(3)} ]_{ext2,1}^{(5)}
 \, \chi_{NR1}^{(5)} \chi_{NR(4,1)}^{(5)} \Big]_{sym}^{(5)} \label{eq393} \\
& +2 \Big[ \Big[ \frac{q_1^{(3)}  }{ \Phi_{\vp}^{(3)} } \chi_{>L}^{(3)}   \Big]_{ext1,1}^{(5)} [ q_1^{(3)} ]_{ext2,1}^{(5)}
 \, \chi_{NR1}^{(5)} \chi_{NR(5,1)}^{(5)} \Big]_{sym}^{(5)} \label{eq394} \\
& + \ti{L}_{3, \vp}^{(5)}\Phi_{\vp}^{(5)} + \sum_{j=16}^{17} \ti{M}_{j, \vp}^{(5)}. \notag 
\end{align}
By the  definition, \eqref{eq392} is equal to $\ti{L}_{4, \vp}^{(5)} \Phi_{\vp}^{(5)} + \ti{M}_{13, \vp}^{(5)}$, 
\eqref{eq393} is equal to $\ti{L}_{5, \vp}^{(5)} \Phi_{\vp}^{(5)} + \ti{M}_{14, \vp}^{(5)}$ and 
\eqref{eq394} is equal to $\ti{L}_{6, \vp}^{(5)} \Phi_{\vp}^{(5)} + \ti{M}_{15, \vp}^{(5)}$. 
Therefore, we only need to show
\begin{align} \label{eq39}
4 \Big[ \Big[ \frac{q_1^{(3)} }{\Phi_{\vp}^{(3)} } \chi_{>L}^{(3)}  \Big]_{ext1,1}^{(5)} [q_1^{(3)} ]_{ext2,1}^{(5)} \, \chi_{NR1}^{(5)}  
\chi_{NR(1,1)}^{(5)}  \Big]_{sym}^{(5)} 
=\sum_{j=1}^{2} \ti{L}_{j, \vp}^{(5)} \Phi_{\vp}^{(5)}+\sum_{j=8}^{12} \ti{M}_{j, \vp}^{(5)}.
\end{align}
By the definition,
\begin{align} \label{eq33}
 &4  \Big[ \Big[ \frac{q_1^{(3)} }{\Phi_{\vp}^{(3)} } \chi_{>L}^{(3)}  \Big]_{ext1,1}^{(5)} [q_1^{(3)}]_{ext2,1}^{(5)} \,
 \chi_{NR1}^{(5)} \chi_{NR(1,1)}^{(5)}  \Big]_{sym}^{(5)} \notag \\
& = \sum_{j=11}^{12} \ti{M}_{j, \vp}^{(5)} 
+ 4 \Big[ \Big[ \frac{q_1^{(3)} }{\Phi_{0}^{(3)} }  \Big]_{ext1,1}^{(5)} [q_1^{(3)}  ]_{ext2,1}^{(5)} \, 
\chi_{NR1}^{(5)} \chi_{NR(1,1)}^{(5)}  \Big]_{sym}^{(5)}.
\end{align}
Since $\supp \chi_{H1,1}^{(5)} \subset \supp \chi_{NR(1,1)}^{(5)}$, it follows  that
\begin{align*}
\chi_{NR(1,1)}^{(5)}& = \chi_{NR(1,1)}^{(5)}(1- \chi_{H1,1}^{(5)})+ \chi_{H1,1}^{(5)} \\
& = \chi_{NR(1,1)}^{(5)} (1- \chi_{H1,1}^{(5)}) \chi_{A2}^{(5)} + \chi_{NR(1,1)}^{(5)} (1-\chi_{H1}^{(5)}) (1- \chi_{A2}^{(5)}) \\
& \hspace{0.4cm} + \chi_{H1,1}^{(5)} \chi_{R1}^{(5)}+ \chi_{H1,1}^{(5)} (1-\chi_{R1}^{(5)}) \chi_{R3}^{(5)}+ 
\chi_{H1,1}^{(5)} (1-\chi_{R1}^{(5)}) (1-\chi_{R3}^{(5)})
\end{align*}
which leads that 
\begin{equation} \label{eq34}
4 \Big[ \Big[ \frac{q_1^{(3)} }{\Phi_{0}^{(3)} }   \Big]_{ext1,1}^{(5)} [q_1^{(3)}  ]_{ext2,1}^{(5)} \, \chi_{NR1}^{(5)}
\chi_{NR(1,1)}^{(5)}  \Big]_{sym}^{(5)} 
= \sum_{j=1}^2 \ti{L}_{j, \vp}^{(5)} \Phi_{\vp}^{(5)} + \sum_{j=8}^{10} \ti{M}_{j ,\vp}^{(5)}. 
\end{equation}
Collecting \eqref{eq39}--\eqref{eq34}, we obtain \eqref{eq39}. 
\end{proof}
Now, we prove Proposition~\ref{prop_NF2}. 
\begin{proof}[Proof of Proposition \ref{prop_NF2}]
By $ [2 \chi_{H1,1}^{(3)}]_{sym1}^{(3)}+ \chi_{H1,2}^{(3)}+[2 \chi_{H2,1}^{(3)}]_{sym1}^{(3)}+ [ 2  \chi_{H2,2}^{(3)}]_{sym1}^{(3)}+ \chi_{H2,3}^{(3)}+ \chi_{H3}^{(3)}=1 $, 
\begin{equation*}
Q_1^{(3)}
= \sum_{j=1}^6 \ti{L}_{j, \vp}^{(3)} \Phi_{\vp}^{(3)} \chi_{>L}^{(3)}+ \sum_{j=1}^6 \ti{L}_{j, \vp}^{(3)} \Phi_{\vp}^{(3)} \chi_{\le L}^{(3)}. 
\end{equation*}
Thus, by Proposition \ref{prop_req1}, we have
\begin{align} \label{eq370}
\p_t \ha{v} (t,k) =&   \La_{\vp}^{(3)} \Big(i \sum_{j=1}^6 \ti{L}_{j, \vp}^{(3)} \Phi_{\vp}^{(3)} \chi_{> L}^{(3)}
+ i \sum_{j=1}^6 \ti{L}_{j, \vp}^{(3)} \Phi_{\vp}^{(3)} \chi_{\le L}^{(3)} , \ha{v}(t) \Big)(t,k) \notag \\
& + \La_{\vp}^{(3)} \big( i \ti{M}_{1, \vp}^{(3)}, \ha{v}(t) \big) (t,k)
+\La_{\vp}^{(5)} \big( i \ti{M}_{1, \vp}^{(5)}  , \ha{v} (t) \big)(t,k). 
\end{align}
By Proposition~\ref{prop_NF11} with $N=1$ and $\ti{m}^{(2N+1)}=\sum_{j=2}^6 \ti{L}_{j, \vp}^{(3)} \chi_{>L}^{(3)}$, we have
\begin{align} \label{eq371}
&\p_t \La_{\vp}^{(3)} \Big(\sum_{j=2}^6 \ti{L}_{j, \vp}^{(3)} \chi_{>L}^{(3)}, \ha{v} (t) \Big) (t,k) 
=\La_{\vp}^{(3)} \Big(- i \sum_{j=2}^6 \ti{L}_{j, \vp}^{(3)} \Phi_{\vp}^{(3)} \chi_{>L}^{(3)} , \ha{v}(t) \Big) (t,k) \notag \\
& + \sum_{j=4}^7 \La_{\vp}^{(5)} \big( i \ti{M}_{j, \vp}^{(5)}, \ha{v}(t) \big) (t,k) 
 + \La_{\vp}^{(5)} \Big(i \big[ [ 2 \sum_{j=2}^6 \ti{L}_{j, \vp}^{(3)} \chi_{>L}^{(3)}]_{ext1,1}^{(5)} 
[ Q_2^{(3)} ]_{ext2,1}^{(5)} \big]_{sym}^{(5)}, \ha{v} (t) \Big) (t,k) \notag \\
& + \La_{\vp}^{(5)} \Big( -i \big[ [ \sum_{j=2}^6 \ti{L}_{j, \vp}^{(3)} \chi_{>L}^{(3)}]_{ext1,2}^{(5)} 
[Q_2^{(3)}]_{ext2,2}^{(5)} \big]_{sym}^{(5)}, \ha{v} (t) \Big) (t,k) \notag \\
& + \La_{\vp}^{(7)} \Big(i \frac{3}{8} \la_1 \big[ [ 2 \sum_{j=2}^6 \ti{L}_{j, \vp}^{(3)} \chi_{>L}^{(3)}]_{ext1,1}^{(7)} \big]_{sym}^{(7)}, 
\ha{v} (t) \Big) (t,k) \notag \\
& + \La_{\vp}^{(7)} \Big( -i \frac{3}{8} \la_1 \big[ [ \sum_{j=2}^6 \ti{L}_{j, \vp}^{(3)} \chi_{>L}^{(3)}]_{ext1,2}^{(7)} \big]_{sym}^{(7)}, 
\ha{v} (t) \Big) (t,k).
\end{align}
By Proposition~\ref{prop_NF11} with $N=2$ and 
$\ti{m}^{(2N+1)}= \sum_{j=1}^6 \ti{L}_{j, \vp}^{(5)} \chi_{>L}^{(5)}$, we have
\begin{align}
& \p_t \La_{\vp}^{(5)}  \Big( \sum_{j=1}^{6} \ti{L}_{j, \vp}^{(5)} \chi_{>L}^{(5)}, \ha{v}(t) \Big) (t,k) 
=\Lambda_{\vp}^{(5)}  \Big(- i \sum_{j=1}^6 \ti{L}_{j, \vp}^{(5)} \Phi_{\vp}^{(5)} \chi_{>L}^{(5)}, \ha{v}(t) \Big) (t,k) \notag \\
&  \hspace{0.5cm} + \sum_{j=1}^6  \Lambda_{\vp}^{(7)} (i \ti{M}_{j, \vp}^{(7)} , \ha{v}(t) ) (t,k)
+ \sum_{j=1}^2 \La_{\vp}^{(9)} (i \ti{M}_{j, \vp}^{(9)}, \ha{v} (t) )(t, k).    \label{eq31}
\end{align}
By \eqref{eq370}--\eqref{eq31} and Lemma \ref{Le31}, we conclude \eqref{NF21}.
\end{proof}


\section{cancellation property}

In Lemma~\ref{lem_pwb2} and Lemmas \ref{lem_mle}--\ref{lem_nl10}, 
we show all multipliers $\{ M_{j, \vp}^{(N)}  \}$ except $M_{8, \vp}^{(5)}$ have no derivative loss. 
On the other hand, $M_{8, \vp}^{(5)}$ has one derivative loss. More precisely, for 
$(k_1, k_2, k_3, k_4, k_5) \in \supp \chi_{NR1}^{(5)} \chi_{H1,1}^{(5)} \chi_{R1}^{(5)}$, we have 
$16^2 \max_{j=1,2,3,4} |k_j| < |k_5| $ and
\begin{equation*}
| M_{8, \vp}^{(5)}  | \sim  |k_1-k_2|^{-1} |k_5|. 
\end{equation*}
However, since $M_{8, \vp}^{(5)}$ is the resonant part, we cannot apply the normal form reduction to it. 
To overcome this difficulty, we use a kind of cancellation property. 
In the following proposition, we compute the symmetrization of $M_{8, \vp}^{(5)} $ and show  that it has no derivative loss.

\begin{prop} \label{prop_res}
It follows that 
\begin{equation} \label{res1}
|\ti{M}_{8, \vp}^{(5)}| \lesssim \Big[ \frac{ \max_{j=1,2,3,4} \{ |k_j| \}  }{|k_1-k_2|} \, \chi_{NR1}^{(5)} \chi_{H1,1}^{(5)} \chi_{R1}^{(5)}  \Big]_{sym}^{(5)}. 
\end{equation}
\end{prop}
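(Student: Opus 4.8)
The plan is to exploit a cancellation between the two terms that appear when one symmetrizes over the odd indices (the group $S_3$ acting on $\{1,3,5\}$) and the even indices (the group $\widetilde S_2$ acting on $\{2,4\}$) in $M_{8,\vp}^{(5)}$. Recall
\[
M_{8,\vp}^{(5)} = 4 \Big[ \frac{q_1^{(3)}}{\Phi_0^{(3)}} \Big]_{ext1,1}^{(5)} [q_1^{(3)}]_{ext2,1}^{(5)} \, \chi_{NR1}^{(5)} \chi_{H1,1}^{(5)} \chi_{R1}^{(5)},
\]
so that on $\supp \chi_{NR1}^{(5)} \chi_{H1,1}^{(5)} \chi_{R1}^{(5)}$ one has $k_1-k_2+k_3-k_4=0$, $16^2 \max_{j\le 4}\{|k_j|\} < |k_5|$, and the size bound $|M_{8,\vp}^{(5)}| \sim |k_1-k_2|^{-1}|k_5|$ quoted just before the proposition. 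The single derivative loss comes from the factor $[q_1^{(3)}]_{ext1,1}^{(5)}/[\Phi_0^{(3)}]_{ext1,1}^{(5)}$, whose numerator is evaluated at $(k_1,k_2,k_{3,4,5})$ and is of size $|k_5|^2$ while the denominator $\Phi_0^{(3)}(k_1,k_2,k_{3,4,5})$ is of size $|k_1-k_2||k_5|^3$ by the factorization formula \eqref{2eq2} together with $|k_{3,4,5}|\sim |k_5|\gg |k_1-k_2|$. So I would first record, from \eqref{2eq2},
\[
\Phi_0^{(3)}(k_1,k_2,k_{3,4,5}) = (k_1-k_2)(k_{3,4,5}-k_2)\big\{ (k_1-k_2)^2 + (k_{3,4,5}-k_2)^2 + 3(k_1+k_{3,4,5})^2 \big\},
\]
and note that on the support $k_{3,4,5}-k_2 = -(k_1-k_2)+k_3-k_4+k_5 = k_5 - 2k_4 + k_3 - \dots$; in any case $k_{3,4,5}-k_2$ and $k_1+k_{3,4,5}$ are both $\sim |k_5|$, so $\Phi_0^{(3)}(k_1,k_2,k_{3,4,5}) = (k_1-k_2)\cdot(\text{quantity of size } |k_5|^3)$ with the size controlled from below (using $\chi_{NR1}^{(5)}$ to keep $k_1-k_2\neq 0$). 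This isolates the loss into the single scalar factor $(k_1-k_2)^{-1}$ times a bounded-up-to-$|k_5|^2/|k_5|^2$ symbol.

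Next, the key step: the symmetrization $[\,\cdot\,]_{sym}^{(5)}$ averages over permutations of $\{1,3,5\}$ and of $\{2,4\}$. Since $|k_5|$ is the largest variable by a wide margin, the dominant contributions are those permutations that leave $k_5$ in the fifth slot or move it to slot $1$ or slot $3$; but $\chi_{H1,1}^{(5)}$ forces the fifth variable to be the largest, so after symmetrization only the terms where the large index sits in position $5$ survive with the constraint, and the other orbit elements vanish. Within that restricted sum, I expect to pair up the permutation fixing $k_1\leftrightarrow k_3$ (respectively $k_2 \leftrightarrow k_4$) so that the two contributions $\frac{q_1^{(3)}(k_1,k_2,\ast)}{(k_1-k_2)\{\ldots\}}$ and $\frac{q_1^{(3)}(k_3,k_4,\ast)}{(k_3-k_4)\{\ldots\}}$ combine: on $\chi_{R1}^{(5)}$ we have $k_1-k_2 = -(k_3-k_4)$, so the two denominators are negatives of each other, and the difference of numerators $q_1^{(3)}(k_1,k_2,k_{3,4,5}) - q_1^{(3)}(k_3,k_4,k_{1,2,5})$ — or whatever the precise index bookkeeping produces — is divisible by $k_1-k_2$, yielding a factor that cancels the $(k_1-k_2)^{-1}$. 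Concretely, $q_1^{(3)}(a,b,c) = \la_2 ac - \tfrac{\la_3}{2}(a+c)b + \la_4 b^2 + \tfrac{\la_5}{2}(a^2+c^2)$ is a quadratic polynomial, so a difference of two of its values across a swap of the first two arguments (with the third argument adjusted consistently via the constraint) is a polynomial that vanishes when $k_1-k_2=0$, hence is $(k_1-k_2)$ times something of size $\lesssim \max_{j\le 4}\{|k_j|\}^2$ (since all remaining variables except $k_5$ are $\lesssim \max_{j\le4}\{|k_j|\}$ and the $k_5$-dependence enters only linearly in the difference after cancellation). Dividing by the size-$|k_5|^3$ bracket and using $|k_5|^2$ from $[q_1^{(3)}]_{ext2,1}^{(5)}$, one lands on $|\ti{M}_{8,\vp}^{(5)}| \lesssim \max_{j\le4}\{|k_j|\}/|k_1-k_2|$ on each orbit, which is exactly \eqref{res1} after re-applying $[\,\cdot\,]_{sym}^{(5)}$ to the right-hand side.

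The main obstacle I anticipate is the bookkeeping of the symmetrization: one must carefully track, for each $\sigma\in S_3$ and $\tau\in\widetilde S_2$, where the large index $k_5$ is sent, which terms are killed by $\chi_{H1,1}^{(5)}$ after permutation, and how the constraint $\chi_{R1}^{(5)}$ transforms — because $\chi_{R1}^{(5)}$ is the constraint $k_1-k_2+k_3-k_4=0$ on the first four slots and is itself symmetric under the relevant swaps, but the numerator $[q_1^{(3)}/\Phi_0^{(3)}]_{ext1,1}^{(5)}$ plugs $k_{3,4,5}$ into the third argument and hence does not commute naively with $S_3$. The cleanest route is probably to observe that $\chi_{H1,1}^{(5)}$ forces us, after symmetrization over $\{1,3,5\}$, into exactly those $3$ permutations that send one of $\{k_1,k_3,k_5\}$ into the fifth position, and since only $k_5$ can be the maximum, only the identity-type term survives with nonzero multiplier — but then the symmetrization over $\{2,4\}$ together with the pairing $\sigma$ that transposes the two non-maximal odd indices is where the actual cancellation lives. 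So the delicate point is to verify that this pairing is consistent with all the cutoff functions and that the residual polynomial genuinely carries the factor $(k_1-k_2)$; I would prove the latter by writing out $q_1^{(3)}$ explicitly and computing the difference symbolically, using $k_3-k_4 = -(k_1-k_2)$ to eliminate one variable, which reduces it to a one-line polynomial identity.
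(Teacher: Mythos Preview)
Your high-level strategy --- pair the term with its image under the simultaneous swap $(k_1,k_2)\leftrightarrow(k_3,k_4)$, which is a symmetry of $M:=\chi_{NR1}^{(5)}\chi_{H1,1}^{(5)}\chi_{R1}^{(5)}$ on the resonant set --- is exactly the mechanism the paper uses. However, two of your concrete claims fail, and they are the ones carrying the weight of the argument.

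First, the two denominators are \emph{not} negatives of each other. On $\chi_{R1}^{(5)}$ one has $k_{3,4,5}=k_5-(k_1-k_2)$ and $k_{1,2,5}=k_5+(k_1-k_2)$, so
\[
\Phi_0^{(3)}(k_1,k_2,k_{3,4,5})=(k_1-k_2)(k_5-k_1)\big\{(k_1-k_2)^2+(k_5-k_1)^2+3(k_5+k_2)^2\big\},
\]
\[
\Phi_0^{(3)}(k_3,k_4,k_{1,2,5})=(k_3-k_4)(k_5-k_3)\big\{(k_3-k_4)^2+(k_5-k_3)^2+3(k_5+k_4)^2\big\}.
\]
Only the linear factors $(k_1-k_2)$ and $(k_3-k_4)=-(k_1-k_2)$ are negatives; the cubic brackets differ at subleading order in $k_5$. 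So you cannot simply write the sum as $(\text{num}_1-\text{num}_2)/D$. Second, even after factoring out $(k_1-k_2)$ the residual expression is \emph{not} the difference $q_1^{(3)}(k_1,k_2,k_{3,4,5})-q_1^{(3)}(k_3,k_4,k_{1,2,5})$: you have ignored both the outer factors $q_1^{(3)}(k_3,k_4,k_5)$ versus $q_1^{(3)}(k_1,k_2,k_5)$ and the two distinct cubic brackets above. Your assertion that ``the $k_5$-dependence enters only linearly in the difference after cancellation'' is thus unjustified --- each of the four building blocks is quadratic in $k_5$, so a priori the residual is degree $7$ in $k_5$, not degree $1$.

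What the paper does instead is bring both paired terms over the common denominator $\Phi_0^{(3)}(k_1,k_2,k_{3,4,5})\,\Phi_0^{(3)}(k_3,k_4,k_{1,2,5})\gtrsim |k_1-k_2||k_3-k_4||k_5|^6$, producing a numerator $L+L'$ with $L=2q_1^{(3)}(k_1,k_2,k_{3,4,5})\,q_1^{(3)}(k_3,k_4,k_5)\,\Phi_0^{(3)}(k_3,k_4,k_{1,2,5})$ and $L'=L|_{(k_1,k_2)\leftrightarrow(k_3,k_4)}$. A direct expansion gives $L=2\la_5^2(k_3-k_4)k_5^7+\{\text{degree }\le 6\text{ in }k_5\}$, hence the leading $k_5^7$-coefficient of $L+L'$ is $2\la_5^2(k_1-k_2+k_3-k_4)$, which \emph{vanishes on the support of $\chi_{R1}^{(5)}$}. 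This is the actual cancellation: not divisibility by $(k_1-k_2)$, but the resonance relation killing the top-order coefficient. What remains is $|L+L'|\lesssim |k_3-k_4|\max_{j\le4}|k_j|\,|k_5|^6$, and dividing by the denominator gives the bound \eqref{res1}. To fix your proof you should carry out exactly this common-denominator computation and track the $k_5^7$-coefficient explicitly.
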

\begin{proof}
Put $M:= \chi_{NR1}^{(5)} \chi_{H1,1}^{(5)} \chi_{R1}^{(5)}$. 
For $(k_1, k_2, k_3, k_4,  k_5) \in \supp \chi_{H1,1}^{(5)} $, it follows that 
\begin{equation*}
\chi_{NR1}^{(5)} \chi_{R1}^{(5)} = 
\begin{cases}
1 \hspace{0.2cm} \text{when} \hspace{0.2cm} (k_1-k_2) (k_3-k_4) \neq 0, ~ k_1-k_2+k_3-k_4=0 \\
0 \hspace{0.2cm} \text{otherwise}
\end{cases}
\end{equation*}
which leads 
\begin{equation*}
M(k_1, k_2, k_3, k_4, k_5) = M(k_3, k_4, k_1, k_2, k_5). 
\end{equation*}
Thus, it follows that 
\begin{align}
& \ti{M}_{8, \vp}^{(5)}= \Big[ \frac{q_1^{(3)}}{ \Phi_0^{(3)} } (k_1, k_2,  k_{3,4,5}) q_1^{(3)} (k_3, k_4, k_5) \, 4 M  \Big]_{sym}^{(5)} \notag \\
&= \Big[ \big\{ \frac{q_1^{(3)}}{ \Phi_0^{(3)} } (k_1, k_2, k_{3,4,5} ) q_1^{(3)} (k_3, k_4, k_5)+ 
\frac{ q_1^{(3)} }{ \Phi_0^{(3)} }(k_3,  k_4, k_1-k_2+k_5) q_1^{(3)} (k_1, k_2, k_5) \big\} 2 M   \Big]_{sym}^{(5)}\notag \\
& =  \Big[ \frac{L(k_1, k_2, k_3, k_4, k_5) +L(k_3, k_4, k_1, k_2, k_5) }{ \Phi_0^{(3)} (k_1, k_2, k_{3,4,5}) \Phi_0^{(3)}
 (k_3,  k_4, k_{1}-k_2+k_5) } 
\, M  \Big]_{sym}^{(5)} \label{res11}
\end{align} 
where 
\begin{equation*}
L(k_1, k_2, k_3, k_4, k_5)= 2q_1^{(3)} (k_1, k_2, k_{3,4,5}) q_1^{(3)} (k_3,  k_4,  k_5) \Phi_{0}^{(3)} (k_3,  k_4, k_1-k_2+k_5). 
\end{equation*}
By a direct computation, it follows that 
\begin{equation*}
L(k_1, k_2, k_3, k_4, k_5)= 2  \la_5^2 (k_3-k_4) k_5^{7}+ \sum_{l=1}^7 (k_3-k_4) r_l(k_1, k_2, k_3,  k_4) k_{5}^{7-l}
\end{equation*}
where each $r_l$ is a polynomial of degree $l$ for $l=1, \dots, 7$ and 
\begin{align} \label{res14}
& L(k_1, k_2, k_3, k_4, k_5)+ L(k_3,k_4, k_1,  k_2,  k_5)  = 2  \la_5^2 (k_1-k_2+k_3-k_4) k_5^{7}  \notag \\
& \hspace{1.5cm} + \sum_{l=1}^7 \{ (k_3-k_4) r_l(k_1, k_2, k_3,  k_4) + (k_1-k_2) r_l(k_3, k_4, k_1, k_2) \} k_{5}^{7-l}.
\end{align} 
Since $(k_1, k_2, k_3,  k_4,  k_5) \in \supp M$ leads that $2 \la_5^2 (k_1-k_2+k_3-k_4) k_5^7=0$, 
by \eqref{res14}, we have
\begin{equation} \label{res12}
 \big| (L(k_1, k_2, k_3, k_4, k_5) + L(k_3, k_4, k_1, k_2, k_5) ) M \big|  
\lesssim |k_3-k_4| \max_{j=1,2,3,4} \{ |k_j| \} |k_5|^6 \,  M.
\end{equation}
By Lemma~\ref{Le1}, 
\begin{equation} \label{res13}
| \Phi_0^{(3)} (k_1, k_2, k_{3,4,5}) \Phi_0^{(3)}  (k_3, k_4,k_1-k_2+k_5) | \, M 
\gtrsim |k_1-k_2| |k_3-k_4| |k_5|^6 \, M.
\end{equation}
Collecting \eqref{res11}, \eqref{res12} and \eqref{res13}, we obtain \eqref{res1}. 
\end{proof}

\section{multilinear estimates}
In this section, we present some multilinear estimates in order to prove main estimates stated in Section 7. 

\begin{lem} \label{lem_cnl1}
Let $s \ge 1$. Then, for any $m \in \{1,2,3\}$ and $L>0$, it follows that 
\begin{align}
& \Big\| \sum_{k=k_{1,2,3}} \big[ \langle k_2-k_3  \rangle^{-1} \langle k_{\max}  \rangle^{-1} \chi_{H2,2}^{(3)} \big]_{sym1}^{(3)} 
\prod_{l=1}^3  |\ha{v}_l(k_l)|  \Big\|_{l_s^2} 
\lesssim \| v_m \|_{H^{s-2}} \prod_{l \in \{ 1,2,3 \} \setminus \{ m \}} \| v_l  \|_{H^s}, \label{cnl1} \\
& \Big\| \sum_{k=k_{1,2,3}} \big[ \langle k_1-k_2  \rangle^{-1} \langle k_{\max}  \rangle^{-1} \chi_{H1,1}^{(3)} \big]_{sym1}^{(3)} 
\prod_{l=1}^3  |\ha{v}_l(k_l)|  \Big\|_{l_s^2} 
\lesssim  \| v_2 \|_{H^{s-2}} \| v_1 \|_{H^s} \| v_3  \|_{H^s}, \label{cnl4} \\
& \Big\| \sum_{k=k_{1,2,3}} \langle k_1-k_2 \rangle^{-1} \langle k_3-k_2  \rangle^{-1} \, \chi_{H3}^{(3)}
 \prod_{l=1}^3  |\ha{v}_l(k_l)|  \Big\|_{l_s^2} 
\lesssim \| v_m \|_{H^{s-2}} \prod_{l \in \{ 1,2,3 \} \setminus \{ m \}} \| v_l  \|_{H^s}, \label{cnl2} \\
& \Big\| \sum_{k=k_{1,2,3}} \langle k_1-k_2  \rangle^{-1} \langle k_3-k_2  \rangle^{-1} \, \chi_{H3}^{(3)} \chi_{>L}^{(3)} 
\prod_{l=1}^3  |\ha{v}_l(k_l)|  \Big\|_{l_s^2} 
\lesssim L^{-2} \prod_{i=1}^3 \| v_l  \|_{H^s}. \label{cnl3} 
\end{align}
\end{lem}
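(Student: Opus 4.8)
The plan is to split the four estimates into two groups according to the frequency geometry of the supporting cut-offs. For \eqref{cnl1} and \eqref{cnl4} I would first expand the symmetrization $[\,\cdot\,]_{sym1}^{(3)}$ into its two summands (interchanged by $k_1\leftrightarrow k_3$); in every case it suffices, after pulling the weight $\langle k_{1,2,3}\rangle^s$ out of the $l^2_s$-norm and passing to absolute values, to bound a nonnegative trilinear form.

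For \eqref{cnl1} and \eqref{cnl4} the argument will be: first read off the configuration from the support — on $\supp\chi_{H2,2}^{(3)}$ one has $k_{\max}\sim|k_2|\sim|k_3|$, $|k_1|<k_{\max}/32$ and $|k_2-k_3|\le 32|k_1|$, hence $\langle k_{1,2,3}\rangle\lesssim\langle k_1\rangle\lesssim k_{\max}$; on $\supp\chi_{H1,1}^{(3)}$ one has $|k_3|=k_{\max}$, $|k_1|,|k_2|<k_{\max}/16$ and $\langle k_{1,2,3}\rangle\sim k_{\max}$. When $s\ge 2$ one can then afford the pointwise bound $\langle k_{1,2,3}\rangle^s\cdot(\text{multiplier})\lesssim\langle k_m\rangle^{s-\frac73}\prod_{l\neq m}\langle k_l\rangle^{s-\frac13}$ (using $s\ge1$, $\langle k_1-k_2\rangle^{-1}\le1$ or $\langle k_2-k_3\rangle^{-1}\le1$, and the strict frequency gap), after which the form is controlled by $\|w_1\overline{w_2}\,w_3\|_{L^2}\le\prod_l\|w_l\|_{L^6}\lesssim\prod_l\|w_l\|_{H^{1/3}}$, where $w_l=\mathcal{F}^{-1}[\langle\cdot\rangle^{a_l}|\ha v_l|]$ with $a_m=s-\tfrac73$ and $a_l=s-\tfrac13$ for $l\ne m$, and where we use $\|fg\bar h\|_{L^2}\le\|f\|_{L^6}\|g\|_{L^6}\|h\|_{L^6}$ together with $H^{1/3}(\T)\hookrightarrow L^6(\T)$. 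When $1\le s<2$ this pointwise bound degrades, so instead I would keep the full negative weight $\langle k_m\rangle^{s-2}$ on the reduced factor and do the convolution sums by hand (for \eqref{cnl4}, e.g.\ sum first in $(k_1,k_2)$ for fixed $k-k_3$ by Cauchy--Schwarz, then in $k_3$ by Young), closing the estimate by exploiting $\langle\cdot\rangle^{-1}\in l^2(\Z)$ and $\langle\cdot\rangle^{s-3}\in l^1(\Z)$.

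For \eqref{cnl2} and \eqref{cnl3} the preceding scheme fails, because on $\supp\chi_{H3}^{(3)}$ all frequencies are comparable, $|k_1|\sim|k_2|\sim|k_3|\sim k_{\max}$, and the only smoothing comes from $\langle k_1-k_2\rangle^{-1}\langle k_3-k_2\rangle^{-1}$, which is not $l^1$-summable. I would instead write $|\ha v_l(k_l)|=\langle k_l\rangle^{-s_l}g_l(k_l)$ with $\|g_l\|_{l^2}=\|v_l\|_{H^{s_l}}$, where $(s_1,s_2,s_3)=(s,s,s)$ for \eqref{cnl3} and $s_m=s-2$, $s_l=s$ ($l\ne m$) for \eqref{cnl2}; since $\langle k_{1,2,3}\rangle^s\langle k_1\rangle^{-s_1}\langle k_2\rangle^{-s_2}\langle k_3\rangle^{-s_3}\lesssim\langle k_{\max}\rangle^{\,s-(s_1+s_2+s_3)}$, this is $\lesssim1$ for \eqref{cnl2} and $\lesssim\langle k_{\max}\rangle^{-2s}\le\langle k_{\max}\rangle^{-2}\le L^{-2}$ on $\supp\chi_{>L}^{(3)}$ for \eqref{cnl3}. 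Setting $a=k_1-k_2$, $b=k_3-k_2$ (so $k_1=k-b$, $k_2=k-a-b$, $k_3=k-a$) and applying Minkowski's inequality to move the $(a,b)$-sum out of $\|\cdot\|_{l^2_k}$, the left-hand side of \eqref{cnl2} is bounded by $\sum_{a,b}\langle a\rangle^{-1}\langle b\rangle^{-1}\|g_1(\cdot-b)\,g_2(\cdot-a-b)\,g_3(\cdot-a)\|_{l^2_k}$; by Cauchy--Schwarz in $(a,b)$, using $\langle a\rangle^{-2}\langle b\rangle^{-2}\in l^1(\Z^2)$, together with the identity $\sum_{a,b}\|g_1(\cdot-b)g_2(\cdot-a-b)g_3(\cdot-a)\|_{l^2_k}^2=\|g_1\|_{l^2}^2\|g_2\|_{l^2}^2\|g_3\|_{l^2}^2$ (Fubini, summing first in $k$), this is $\lesssim\prod_l\|g_l\|_{l^2}$; \eqref{cnl3} follows with the extra factor $L^{-2}$.

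The step I expect to be the main obstacle is the case bookkeeping in \eqref{cnl1} and \eqref{cnl4}: because the symmetrization may place the reduced-regularity input $v_m$ at the \emph{highest} occurring frequency, the pointwise multiplier bound is tight at the endpoint $s=1$, and one must track precisely how the two available gains $\langle k_{\max}\rangle^{-1}$ and $\langle\text{difference}\rangle^{-1}$ add up to the two derivatives plus the margin needed for the $H^{1/3}\hookrightarrow L^6$ (or the $l^1$--$l^2$ Young) step, over all the subcases determined by which of the two low indices controls the difference. By contrast, the $\chi_{H3}^{(3)}$ estimates \eqref{cnl2} and \eqref{cnl3} reduce cleanly to the orthogonality identity above and should be routine once that device is in place.
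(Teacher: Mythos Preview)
Your plan is correct and will close all four estimates, but it is more elaborate than the paper's and the case split in $s$ is unnecessary. For \eqref{cnl1}, \eqref{cnl4} and \eqref{cnl2} the paper first reduces to an estimate with \emph{all} inputs in $H^s$ via the trivial inequality $\langle k_m\rangle^{-2}\ge\langle k_{\max}\rangle^{-2}$: writing $\langle k_{\max}\rangle^{-1}\le\langle k_{\max}\rangle\langle k_m\rangle^{-2}$ and absorbing $\langle k_m\rangle^{-2}$ into $\hat v_m$ leaves a single inequality (the paper's \eqref{cnl11}, \eqref{cnl41}, \eqref{cnl21}) that is then dispatched by one pointwise multiplier bound plus Young/H\"older, uniformly in $s\ge1$. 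In fact your own $L^6$ scheme also works uniformly for $s\ge1$: the exponents $s-\tfrac73$, $s-\tfrac13$ become exactly $s-2$, $s$ after the $H^{1/3}\hookrightarrow L^6$ loss, and the pointwise bound you state holds on each piece of the $[\,\cdot\,]_{sym1}^{(3)}$ for every $s\ge1$ (the frequency gap alone supplies the margin), so the ``main obstacle'' you flag never materialises and the separate treatment of $1\le s<2$ can be dropped. For \eqref{cnl2}--\eqref{cnl3} your change of variables plus the orthogonality identity $\sum_{a,b}\|g_1(\cdot-b)g_2(\cdot-a-b)g_3(\cdot-a)\|_{l^2}^2=\prod_l\|g_l\|_{l^2}^2$ is a clean alternative; the paper instead assumes by symmetry $|k_1-k_2|\le|k_3-k_2|$, bounds pointwise by $\langle k_1-k_2\rangle^{-2}\langle k_1\rangle\langle k_2\rangle\langle k_3\rangle^{s}$, and uses $\langle\cdot\rangle^{-2}\in l^1$ together with $\chi_{>L}^{(3)}\lesssim L^{-2}\langle k_{\max}\rangle^{2}$ for \eqref{cnl3}. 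Both routes are short; the paper's reduction buys a single uniform argument, while your orthogonality trick handles the $\chi_{H3}^{(3)}$ case without the extra symmetry reduction.
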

\begin{proof}
First, we prove \eqref{cnl1}. It suffices to show 
\begin{equation} \label{cnl11}
\Big\|  \sum_{k=k_{1,2,3}} \langle k_{1,2,3}  \rangle^{s} \langle k_2-k_3  \rangle^{-1} \langle k_{\max} \rangle 
\prod_{l=1}^3 |\ha{v}_l(k_l)|   \Big\|_{l^2} \lesssim \prod_{l=1}^3 \|  v_l \|_{H^s}
\end{equation}
for any $\{ v_l \}_{l=1}^3 \subset H^s(\T)$. Since 
\begin{equation*}
\langle k_{1,2,3} \rangle^{s} \langle k_2-k_3  \rangle^{-1} \langle k_{\max} \rangle \, \chi_{H2,2}^{(3)}
\lesssim \langle k_1 \rangle^s \langle k_2-k_3  \rangle^{-1} \langle k_2 \rangle^{1/2} \langle k_3 \rangle^{1/2},
\end{equation*}
by the Young and the H\"{o}lder inequalities, we have \eqref{cnl11}. 

Next, we prove \eqref{cnl4}. It suffices to show 
\begin{equation} \label{cnl41}
\Big\|  \sum_{k=k_{1,2,3}} \langle k_{1,2,3}  \rangle^{s} \langle k_1-k_2 \rangle^{-1} \langle k_2 \rangle^2 \langle k_3 \rangle^{-1} \, \chi_{H1,1}^{(3)}
\prod_{l=1}^3 |\ha{v}_l(k_l)|  \Big\|_{l^2} \lesssim \prod_{l=1}^3 \| v_l \|_{H^s}
\end{equation}
for any $\{ v_l \}_{l=1}^3 \subset H^s(\T)$. Since 
\begin{equation*}
\langle k_{1,2,3} \rangle^{s} \langle k_1-k_2  \rangle^{-1} \langle k_2 \rangle^2 \langle k_3  \rangle^{-1} \, \chi_{H1,1}^{(3)}
 \lesssim \langle k_1-k_2 \rangle^{-1} \langle k_2  \rangle \langle k_3  \rangle^s, 
\end{equation*}
by the H\"{o}lder and the Young inequalities, the left hand side of \eqref{cnl41} is bounded by 
\begin{align*}
\big\| \{ \langle \cdot  \rangle^{-1} (  |\ha{v}_1| \check{*} \langle \cdot \rangle |\ha{v}_2|  )  \} * (\langle \cdot \rangle^s |\ha{v}_3|) \big\|_{l^2}  &
\lesssim  \| |\ha{v}_1| \check{*} \langle \cdot  \rangle |\ha{v}_2| \|_{l^2} \| \langle \cdot \rangle^s |\ha{v}_3| \|_{l^2} \\
&  \lesssim \| v_1 \|_{H^{1/2+}} \| v_2 \|_{H^1} \| v_3 \|_{H^s}.
\end{align*}

Next, we prove \eqref{cnl2}. It suffices to show 
\begin{equation} \label{cnl21}
\Big\| \sum_{k=k_{1,2,3}} \langle k_{1,2,3} \rangle^{s} \langle k_1-k_2  \rangle^{-1} \langle k_3-k_2 \rangle^{-1} \langle k_{\max} \rangle^2
\, \chi_{H3}^{(3)} 
\prod_{l=1}^3 |\ha{v_l} (k_l) | \Big\|_{l^2} \lesssim \prod_{l=1}^3 \| v_l \|_{H^s} 
\end{equation}
for any $\{ v_l  \}_{l=1}^3 \subset H^s(\T)$. 
By symmetry, we assume that $|k_1-k_2| \le |k_3-k_2|$ holds. 
Then, it follows that 
\begin{equation*}
\langle k_{1,2,3} \rangle^{s} \langle k_1-k_2 \rangle^{-1} \langle k_3-k_2 \rangle^{-1} \langle k_{\max} \rangle^2 \, \chi_{H3}^{(3)} 
\lesssim \langle k_1-k_2  \rangle^{-2} \langle k_1 \rangle \langle k_2 \rangle  \langle k_3 \rangle^s
\end{equation*}
so that \eqref{cnl21} holds by the Young and H\"{o}lder inequalities. 

Furthermore, by $\chi_{>L}^{(3)} \lesssim L^{-2} \langle k_{\max} \rangle^2 $ and \eqref{cnl21}, 
the left hand side of \eqref{cnl3} is bounded by 
\begin{equation*}
L^{-2} \big\| \sum_{k=k_{1,2,3}} \langle k_1-k_2 \rangle^{-1} \langle k_3-k_2 \rangle^{-1} 
\langle k_{\max} \rangle^2 \, \chi_{H3}^{(3)} \prod_{l=1}^3 |\ha{v}_l(k_l)| \big\|_{l_s^2}
 \lesssim L^{-2} \prod_{l=1}^3 \| v_l \|_{H^s}. 
\end{equation*}
\end{proof}

\begin{lem} \label{lem_cnl3}
Let $s \ge 1$. Then, for $\{ v_l \}_{l=1}^3 \subset H^{s} (\T)$, it follows that 
\begin{align} 
& \Big\|  \sum_{k=k_{1,2,3}} [ \langle k_1 \rangle^2 \chi_{R1}^{(3)} ]_{sym1}^{(3)}  \prod_{l=1}^3 |\ha{v}_l(k_l)|  \Big\|_{l_{s}^2} 
\lesssim \prod_{l=1}^3 \| v_l \|_{H^s}, \label{cnl5} \\
& \Big\|  \sum_{k=k_{1,2,3}} \langle k_{\max} \rangle^2 \chi_{R2}^{(3)} \prod_{l=1}^3 |\ha{v}_l(k_l)|  \Big\|_{l_{s}^2} 
\lesssim \prod_{l=1}^3 \| v_l \|_{H^s}. \label{cnl6}
\end{align}
\end{lem}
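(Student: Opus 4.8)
The plan is to exploit the resonance conditions, which collapse the frequency interaction on the supports of both multipliers; the apparent derivative losses $\langle k_1 \rangle^2$ and $\langle k_{\max} \rangle^2$ are then absorbed precisely because $s \ge 1$. Recall that, with $k = k_{1,2,3} := k_1 - k_2 + k_3$, the multiplier $\chi_{R1}^{(3)}$ is supported on $\{k_1 = k_2\}$ and $\chi_{R2}^{(3)}$ on $\{k_1 = k_2 = k_3\}$.

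For \eqref{cnl5}, first I would unfold the symmetrization: $[\langle k_1 \rangle^2 \chi_{R1}^{(3)}]_{sym1}^{(3)} = \tfrac12\big(\langle k_1 \rangle^2 \mathbf{1}[k_1 = k_2] + \langle k_3 \rangle^2 \mathbf{1}[k_3 = k_2]\big)$, and since the two terms are interchanged by $v_1 \leftrightarrow v_3$ it suffices to bound the contribution of $\langle k_1 \rangle^2 \mathbf{1}[k_1 = k_2]$. On that set $k = k_{1,2,3} = k_3$, so the inner sum decouples as $|\ha{v}_3(k)| \sum_{m \in \Z} \langle m \rangle^2 |\ha{v}_1(m)|\,|\ha{v}_2(m)|$. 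Taking the $l_s^2$-norm in $k$ produces $\|v_3\|_{H^s}$ times the constant $\sum_{m} \langle m \rangle^2 |\ha{v}_1(m)|\,|\ha{v}_2(m)|$; using $\langle m \rangle^2 \le \langle m \rangle^{2s}$ (valid since $s \ge 1$) and Cauchy--Schwarz bounds this constant by $\|v_1\|_{H^s}\|v_2\|_{H^s}$, giving \eqref{cnl5}.

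For \eqref{cnl6}, on $\supp \chi_{R2}^{(3)}$ we have $k_1 = k_2 = k_3$, hence $k = k_{1,2,3} = k_1$ and $k_{\max} = |k|$, so the sum collapses to the single diagonal term and the left-hand side equals $\big\| \langle k \rangle^{s+2} |\ha{v}_1(k)|\,|\ha{v}_2(k)|\,|\ha{v}_3(k)| \big\|_{l^2}$. Here I would use $s \ge 1$ in the form $s+2 \le 3s$ to bound $\langle k \rangle^{s+2} \le \langle k \rangle^{3s} = \langle k \rangle^s \langle k \rangle^s \langle k \rangle^s$, then apply $\|fgh\|_{l^2} \le \|f\|_{l^2} \|g\|_{l^\infty} \|h\|_{l^\infty} \le \|f\|_{l^2}\|g\|_{l^2}\|h\|_{l^2}$ with $f = \langle \cdot \rangle^s \ha{v}_1$, $g = \langle \cdot \rangle^s \ha{v}_2$, $h = \langle \cdot \rangle^s \ha{v}_3$, which yields $\prod_{l=1}^3 \|v_l\|_{H^s}$.

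There is essentially no analytic obstacle here, since these are the fully-resonant (hence harmless) pieces; the only points needing care are the bookkeeping of $[\,\cdot\,]_{sym1}^{(3)}$ in \eqref{cnl5} and noting that the exponent inequalities $2 \le 2s$ and $s+2 \le 3s$ are both exactly $s \ge 1$, so the regularity threshold is sharp already at this elementary step.
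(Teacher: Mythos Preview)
Your proof is correct and follows essentially the same approach as the paper. For \eqref{cnl5} the paper likewise reduces to the $k_1=k_2$ contribution, splits $\langle k_1\rangle^2=\langle k_1\rangle\langle k_1\rangle$ and applies Cauchy--Schwarz to obtain $\|v_1\|_{H^1}\|v_2\|_{H^1}\|v_3\|_{H^s}$ (a slightly sharper intermediate bound than yours, but the same idea); for \eqref{cnl6} the paper also collapses to the diagonal and uses H\"older, choosing the symmetric $l^6\times l^6\times l^6$ splitting together with $l^2\hookrightarrow l^6$ rather than your $l^2\times l^\infty\times l^\infty$ with $l^2\hookrightarrow l^\infty$, both of which encode the same threshold $s\ge 1$.
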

\begin{proof}
Firstly, we prove \eqref{cnl5}. It suffices to show 
\begin{equation} \label{cnl51}
\Big\| \sum_{k=k_{1,2,3}} \langle k_1 \rangle^2 \langle k_3 \rangle^s \chi_{R1}^{(3)} \prod_{l=1}^3 |\ha{v}_l(k_l)| \Big\|_{l^2} 
\lesssim \prod_{l=1}^3 \| v_l \|_{H^s}. 
\end{equation}
By the Schwarz inequality, the left hand side of \eqref{cnl51} is bounded by 
\begin{equation*}
\Big\| \Big( \sum_{k_1 \in \Z} \langle k_1  \rangle |\ha{v}_1 (k_1)| \langle k_1 \rangle |\ha{v}_2 (k_1)| \Big) \, 
\langle k  \rangle^s |\ha{v}_3 (k)| \Big\|_{l^2}
\leq  \| v_1 \|_{H^1} \| v_2 \|_{H^1} \| v_3 \|_{H^s}. 
\end{equation*}
Secondly, we prove \eqref{cnl6}. 
By the H\"{o}lder inequality and the continuous embedding $l^2 \hookrightarrow l^6$, the left hand side of \eqref{cnl6} is bounded by 
\begin{equation*}
\big\| \langle k \rangle^{s+2}  \prod_{l=1}^3  |\ha{v}_l(k)|  \big\|_{l^2} 
\le \prod_{l=1}^3 \| \langle k \rangle^{s/3+2/3} |\ha{v}_l(k)| \|_{l^6}
\lesssim \prod_{l=1}^3 \| v_l \|_{H^{s/3+2/3}} \le \prod_{l=1}^3 \| v_l \|_{H^s}. 
\end{equation*}
Here we used $s  \ge 1$ in the last equality. 
\end{proof}

\section{pointwise upper bounds}
In this section, we state the pointwise upper bounds of some multipliers $L_{j, \vp}^{(2N+1)}$ and $M_{j, \vp}^{(2N+1)}$ 
defined in Proposition~\ref{prop_NF2}. Now, we put 
\begin{align*}
& J_1=\{ (1,2), (1,3), (1,5), (2,1), (2,2), (2,3), (2,4), (2,5), (2,6) \},  \\
& J_2=\{  (1,4), (1,6)   \},  \hspace{0.3cm} J_3=\{ (1,1) \}. 
\end{align*}
\begin{lem} \label{lem_pwb1}
Let $f \in L^2(\T)$ 
and $L_{j, f}^{(2N+1)}$ with $(N, j) \in J_1 \cup J_2 \cup J_3 $ be as in Proposition~\ref{prop_NF2}. 
Then, the following hold for $L \gg \max \{1, | \la_5| E_1(f) \}$: \\
(I) When $(N, j) \in J_1$, we have
\begin{align} 
|L_{j,f}^{(2N+1)} \chi_{>L}^{(2N+1)} | \lesssim \langle k_{1, \dots, 2N+1} \rangle^{-1} \langle k_{\max}  \rangle^{-1} 
\chi_{>L}^{(2N+1)}. \label{pwb1} 
\end{align}
(II) It follows that 
\begin{align} 
& |L_{1, f}^{(3)} \chi_{>L}^{(3)} | \lesssim \langle k_1-k_2 \rangle^{-1} \langle k_{\max} \rangle^{-1} \chi_{H1,1}^{(3)} \chi_{>L}^{(3)}, 
\label{pwb01}  \\
& |L_{4, f}^{(3)} \chi_{>L}^{(3)} | \lesssim \langle k_2-k_3 \rangle^{-1} \langle k_{\max} \rangle^{-1} \chi_{H2,2}^{(3)} \chi_{>L}^{(3)}, 
\label{pwb02}  \\
& |L_{6, f}^{(3)} \chi_{>L}^{(3)} | \lesssim  \langle k_1-k_2  \rangle^{-1} \langle k_3-k_2  \rangle^{-1} \chi_{H3}^{(3)}  \chi_{>L}^{(3)}. \label{pwb03} 
\end{align}
\end{lem}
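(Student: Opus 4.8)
The plan is to prove all the bounds by a direct, region-by-region inspection of the multipliers $L_{j,f}^{(2N+1)}$ from Proposition~\ref{prop_NF2}. Each of them is, up to a numerical constant, a product of polynomial factors $q_1^{(3)}(\cdot)$ in the numerator, reciprocals of phase functions $\Phi^{(3)}(\cdot)$ and $\Phi_f^{(5)}$ in the denominator, and several frequency cutoffs, and three elementary facts will be used throughout. First, $q_1^{(3)}$ is a homogeneous quadratic form, so $|q_1^{(3)}(\ell_1,\ell_2,\ell_3)|\lesssim\max\{|\ell_1|,|\ell_2|,|\ell_3|\}^2$. Second, on $\supp\chi_{>L}^{(2N+1)}$ one has $k_{\max}>L\gg\max\{1,|\la_5|E_1(f)\}$, so every hypothesis of Lemmas~\ref{Le1}--\ref{Le3} concerning the size of $k_{\max}$ is automatically satisfied and the $\la_5E_1(f)$-contributions to the phase functions are negligible compared with the main parts. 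Third, $\chi_{NR1}^{(3)}$ is built into $Q_1^{(3)}$ and $\chi_{NR1}^{(5)}$ into the quintic multipliers, so the relevant differences $k_i-k_j$ never vanish there and $\langle k_i-k_j\rangle\sim|k_i-k_j|\ge1$; this last point is what converts a lower bound on $|\Phi|$ into the stated estimate.

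For the cubic multipliers --- that is, item (II) and the cases $(N,j)\in J_1$ with $N=1$, namely $L_{2,f}^{(3)},L_{3,f}^{(3)},L_{5,f}^{(3)}$ --- the argument is short. Each cubic $L_{j,f}^{(3)}$ is supported on exactly one of $\chi_{H1,1}^{(3)},\chi_{H1,2}^{(3)},\chi_{H2,1}^{(3)},\chi_{H2,2}^{(3)},\chi_{H2,3}^{(3)},\chi_{H3}^{(3)}$; on each of these the cutoff pins down, up to constants, which of $|k_1|,|k_2|,|k_3|$ equals $k_{\max}$, and the factorization \eqref{2eq2} together with the matching case of Lemma~\ref{Le1} gives the lower bound on $|\Phi_f^{(3)}|$: \eqref{ff1} on $\chi_{H1,1}^{(3)}$, \eqref{ff3} on $\chi_{H1,2}^{(3)}$ and $\chi_{H2,3}^{(3)}$, \eqref{ff4} on $\chi_{H2,1}^{(3)}$ and $\chi_{H2,2}^{(3)}$, and \eqref{ff5} on $\chi_{H3}^{(3)}$. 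Dividing $|q_1^{(3)}|\lesssim k_{\max}^2$ by these, and recording that $|k_{1,2,3}|\sim k_{\max}$ on $\chi_{H1,1}^{(3)},\chi_{H1,2}^{(3)},\chi_{H2,3}^{(3)}$, $|k_{1,2,3}|\sim|k_2-k_3|$ on $\chi_{H2,1}^{(3)}$, $|k_1-k_2|\sim k_{\max}$ on $\chi_{H1,1}^{(3)}$, and $|k_2-k_3|\le 32|k_1|<k_{\max}$ on $\chi_{H2,2}^{(3)}$ (while $|k_1-k_2|,|k_3-k_2|\le 2k_{\max}$ always), yields all the cubic estimates, including the refined forms of item (II).

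For the quintic multipliers $L_{j,f}^{(5)}$, $j=1,\dots,6$, the shape is $\big(q_1^{(3)}/\Phi^{(3)}\big)(k_1,k_2,k_{3,4,5})\cdot q_1^{(3)}(k_3,k_4,k_5)\cdot(\text{cutoffs})/\Phi_f^{(5)}$, so one needs lower bounds on \emph{both} $\Phi^{(3)}(k_1,k_2,k_{3,4,5})$ and $\Phi_f^{(5)}$. In every case the cutoffs force $16\max\{|k_1|,|k_2|\}<|k_{3,4,5}|$ (contained in $[\chi_{H1,1}^{(3)}]_{ext1,1}^{(5)}$, a factor of each $\chi_{NR(j',1)}^{(5)}$, and implied by $\chi_{H1,1}^{(5)}$ when $j=1$), which puts $(k_1,k_2,k_{3,4,5})$ into case (i) of Lemma~\ref{Le1}, so $|\Phi^{(3)}(k_1,k_2,k_{3,4,5})|\gtrsim|k_1-k_2|\,|k_{3,4,5}|^3$ by \eqref{ff1}; for $j=3,\dots,6$, where this $\Phi^{(3)}$ is $\Phi_\vp^{(3)}$, one also uses the $\chi_{>L}^{(3)}(k_1,k_2,k_{3,4,5})$ built into $L_{j,f}^{(5)}$, which on its effective support forces $|k_{3,4,5}|>L$, while for $j=1,2$ it is $\Phi_0^{(3)}$ and no such issue arises. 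The other cutoff factor in $\chi_{NR(j',1)}^{(5)}$ then identifies the size of $|k_{3,4,5}|$ --- comparable to $|k_5|$, $|k_4|$, or $|k_4-k_5|$ depending on $j'$ --- and hence fixes $k_{\max}$. For the lower bound on $\Phi_f^{(5)}$ I would invoke Lemma~\ref{Le2} or Lemma~\ref{Le3}: the auxiliary cutoffs $\chi_{A1}^{(5)},\chi_{A2}^{(5)},\chi_{A3}^{(5)}$ together with $1-\chi_{R1}^{(5)}$ and $1-\chi_{R3}^{(5)}$ are tailored so that on each resulting piece one of \eqref{rel3}--\eqref{rel5} (when $j=1$) or \eqref{rel21}--\eqref{rel24} (when $j=2,\dots,6$) holds, giving $|\Phi_f^{(5)}|$ bounded below by the same frequency-difference times $k_{\max}^3$; on the few ``generic'' pieces where no clean hypothesis of Lemma~\ref{Le2}/\ref{Le3} applies (e.g.\ for $L_{2,f}^{(5)}$, where $|k_1|\sim|k_2|\sim|k_5|\sim k_{\max}$ while $|k_1-k_2|\ll|k_3-k_4|$) I would instead use the decomposition $\Phi_f^{(5)}=\Phi_f^{(3)}(k_1,k_2,k_{3,4,5})+\Phi_f^{(3)}(k_3,k_4,k_5)$ (or the grouping in \eqref{co1}), bound each summand by \eqref{2eq2} and Lemma~\ref{Le1}, and observe that the relevant $\chi_{A}$-cutoff makes one of the two summands dominant. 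Combining the two lower bounds, dividing the numerator $|q_1^{(3)}q_1^{(3)}|\lesssim|k_{3,4,5}|^2\,k_{\max}^2$ by $|\Phi^{(3)}(k_1,k_2,k_{3,4,5})|\,|\Phi_f^{(5)}|$, and computing $\langle k_{1,\dots,5}\rangle$ in each region (comparable to $k_{\max}$, to $|k_4-k_5|$, or to $|k_{3,4,5}|$ depending on the piece), one arrives at the claimed $\langle k_{1,\dots,5}\rangle^{-1}\langle k_{\max}\rangle^{-1}$ after using $|k_i-k_j|\ge1$ on $\supp\chi_{NR1}^{(5)}$.

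The hard part is precisely this simultaneous control of $\Phi^{(3)}(k_1,k_2,k_{3,4,5})$ and $\Phi_f^{(5)}$ for the quintic multipliers. Proposition~\ref{prop_co} shows that $\Phi_f^{(5)}$ by itself cannot provide a full $k_{\max}^3$ gain under the natural non-resonance condition, which is exactly why these multipliers carry the extra cutoffs $\chi_{A1}^{(5)},\chi_{A2}^{(5)},\chi_{A3}^{(5)}$ and the resonance cutoffs $\chi_{R1}^{(5)},\chi_{R3}^{(5)}$. Verifying, on each of the resulting pieces, that the correct hypothesis of Lemma~\ref{Le2} or Lemma~\ref{Le3} is met --- deciding which frequency is $k_{\max}$, which differences are comparable, and bookkeeping the exact powers of $k_{\max}$ gained and lost --- is the bulk of the work, even though each individual verification is elementary.
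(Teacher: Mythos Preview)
Your plan is essentially the paper's own proof: bound $|q_1^{(3)}|\lesssim k_{\max}^2$, invoke the matching case of Lemma~\ref{Le1} for the cubic multipliers, and for the quintic ones combine the cubic bound \eqref{ff1} on $(k_1,k_2,k_{3,4,5})$ (this is exactly the paper's intermediate estimate \eqref{pwes11}--\eqref{pwes12}) with the appropriate case of Lemma~\ref{Le2} or~\ref{Le3} for $\Phi_f^{(5)}$. One concrete slip: your description of the support of $L_{2,f}^{(5)}$ is wrong. On $\supp\chi_{NR(1,1)}^{(5)}$ one has $16\max\{|k_1|,|k_2|\}<|k_{3,4,5}|$ and $16\max\{|k_3|,|k_4|\}<|k_5|$, which together force $|k_5|>14\max_{1\le j\le 4}|k_j|$; the extra factor $(1-\chi_{H1,1}^{(5)})$ only says $|k_5|\le 16^2\max_j|k_j|$, so one is still in the regime $|k_j|\ll|k_5|$ for $j\le4$, not $|k_1|\sim|k_2|\sim|k_5|$. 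Combined with $\chi_{A2}^{(5)}$ this is exactly hypothesis \eqref{rel3}, so Lemma~\ref{Le2} applies directly and no ad hoc decomposition of $\Phi_f^{(5)}$ is needed; in fact every quintic piece is covered cleanly by Lemma~\ref{Le2} or~\ref{Le3}.
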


\begin{rem} \label{rem_pwb11}
By Lemma~\ref{lem_pwb1} and the definition of the multipliers, we can easily check that 
\begin{equation*} 
|L_{j, f}^{(2N+1)} \chi_{>L}^{(2N+1)} | \lesssim 1, \hspace{0.5cm} 
|L_{j, f}^{(2N+1)} \Phi_f^{(2N+1)} \chi_{>L}^{(2N+1)}| \lesssim \langle k_{\max} \rangle^2 
\end{equation*}
with $(N,j) \in J_1 \cup J_2 \cup J_3 $ for $L \gg \max\{1, |\la_5| E_1(f)  \}$. 
Thus, we can apply Proposition~\ref{prop_NF11} with $\ti{m}^{(2N+1)}=\ti{L}_{j, f}^{(2N+1)} \chi_{>L}^{(2N+1)}$. 
\end{rem}
\begin{proof}[Proof of Lemma~\ref{lem_pwb1}] 
(I) We prove \eqref{pwb1} for $(N, j ) \in J_1$. \\
(Ia) Estimates of $L_{2, f}^{(3)} \chi_{>L}^{(3)}$, $L_{3,f}^{(3)} \chi_{>L}^{(3)}$ and $L_{5, f}^{(3)} \chi_{>L}^{(3)}$: 
By $|Q_1^{(3)}|  \lesssim \langle k_{\max} \rangle^{2} \chi_{NR1}^{(3)}$ and (ii) of Lemma~\ref{Le1}, we have  
\begin{equation*}
|L_{2, f}^{(3)} \chi_{>L}^{(3)}| \lesssim \langle k_{\max} \rangle^{-2}\chi_{>L}^{(3)} , \hspace{0.5cm} 
|L_{5,f}^{(3)} \chi_{>L}^{(3)} | \lesssim \langle k_{\max} \rangle^{-2} \chi_{>L}^{(3)}.
\end{equation*}
Since $(k_1, k_2, k_3) \in \supp \chi_{H2,1}^{(3)} $ implies that $|k_2-k_3| \sim |k_{1,2,3}|$, 
by (iii) of Lemma~\ref{Le1}, we have 
$|L_{3, f}^{(3)} \chi_{>L}^{(3)} | \lesssim \langle k_{1,2,3} \rangle^{-1} \langle k_{\max} \rangle^{-1} \chi_{>L}^{(3)} $. 

Let $\chi^{(5)}$ be a characteristic function such that $\supp \chi^{(5)} \subset \supp [\chi_{H1,1}^{(3)}]_{ext1,1}^{(5)} $. 
Then, by $ \chi^{(5)}= [\chi_{H1,1}^{(3)}]_{ext1,1}^{(5)}  \chi^{(5)}$, 
$\chi_{NR1}^{(5)}=[ \chi_{NR1}^{(3)} ]_{ext1,1}^{(5)} \chi_{NR1}^{(5)}$, Remark~\ref{rem_sym} and Lemma~\ref{Le1},  
it follows that 
\begin{align}
& \Big| \Big[  \frac{q_1^{(3)}}{ \Phi_{f}^{(3)} } \chi_{>L}^{(3)} \Big]_{ext1,1}^{(5)} [ q_1^{(3)} ]_{ext2,1}^{(5)} \chi_{NR1}^{(5)} \chi^{(5)} \Big| 
\notag \\
& = \Big[ \Big| \frac{q_1^{(3)}}{ \Phi_{f}^{(3)} } \chi_{NR1}^{(3)} \chi_{H1,1}^{(3)} \chi_{>L}^{(3)} \Big| \Big]_{ext1,1}^{(5)} [| q_1^{(3)} |]_{ext2,1}^{(5)} \chi_{NR1}^{(5)} \chi^{(5)} \notag \\
& \lesssim 
\langle k_1-k_2 \rangle^{-1} \langle k_{3,4,5} \rangle^{-1} \max_{j=3,4,5} \{ |k_j|^2 \} \, \chi_{NR1}^{(5)} \chi^{(5)} .\label{pwes11} 
\end{align}
Similarly, we have 
\begin{equation}
\Big| \Big[  \frac{q_1^{(3)}}{ \Phi_{0}^{(3)} } \Big]_{ext1,1}^{(5)} [ q_1^{(3)} ]_{ext2,1}^{(5)} \chi_{NR1}^{(5)} \chi^{(5)} \Big| 
\lesssim 
\langle k_1-k_2 \rangle^{-1} \langle k_{3,4,5} \rangle^{-1} \max_{j=3,4,5} \{ |k_j|^2 \} \, \chi_{NR1}^{(5)} \chi^{(5)}. \label{pwes12}
\end{equation}
(Ib) Estimate of $L_{1, f}^{(5)} \chi_{>L}^{(5)}$: 
$(k_1, k_2,  k_3, k_4, k_5) \in \supp \chi_{H1,1}^{(5)} (1-\chi_{R1}^{(5)}) (1-\chi_{R3}^{(5)})$ leads that 
either \eqref{rel3}, \eqref{rel4} or \eqref{rel5} holds and $|k_{3,4,5}| \sim  |k_5|=k_{\max}$. 
Thus, by  Lemma~\ref{Le2}, it follows that 
\begin{align*}
& |\Phi_f^{(5)} \chi_{NR1}^{(5)} \chi_{H1,1}^{(5)} (1-\chi_{R1}^{(5)}) (1-\chi_{R3}^{(5)}) \chi_{>L}^{(5)} | \\
& \hspace{1cm} \gtrsim |k_1-k_2+k_3-k_4| |k_5|^3 \chi_{NR1}^{(5)} \chi_{H1,1}^{(5)} (1-\chi_{R1}^{(5)}) (1-\chi_{R3}^{(5)}) \chi_{>L}^{(5)}. 
\end{align*}
Hence, by \eqref{pwes12}, we have $|L_{1, f}^{(5)} \chi_{>L}^{(5)}| \lesssim \langle k_{\max} \rangle^{-2} \chi_{>L}^{(5)} $. \\
(Ic) Estimate of $L_{2, f}^{(5)} \chi_{>L}^{(5)}$: 
$(k_1, k_2, k_3, k_4, k_5) \in \supp \chi_{NR(1,1)}^{(5)} (1-\chi_{H1,1}^{(5)}) \chi_{A2}^{(5)}$ implies that 
\eqref{rel3} holds and $|k_{3,4,5}| \sim |k_5| = k_{\max}$. 
Thus, by \eqref{pwes12} and Lemma~\ref{Le2}, we have 
$|L_{2, f}^{(5)} \chi_{>L}^{(5)}  | \lesssim \langle k_{\max} \rangle^{-2} \chi_{>L}^{(5)}$. \\
(Id) Estimate of $L_{4, f}^{(5)} \chi_{>L}^{(5)} $: 
$(k_1, k_2, k_3, k_4, k_5) \in \supp \chi_{NR(3,1)}^{(5)} \chi_{A1}^{(5)}$ leads that 
\eqref{rel21} holds and $|k_4-k_5| \sim |k_{3,4,5}| \sim |k_{1,2,3,4,5}|$.
Thus, by \eqref{pwes11} and Lemma~\ref{Le3}, we have 
$|L_{4, f}^{(5)} \chi_{>L}^{(5)}  | \lesssim \langle k_{1,2,3,4,5}  \rangle^{-2} \langle k_{\max} \rangle^{-1} \chi_{>L}^{(5)}$. \\
(Ie) Estimate of $L_{5, f}^{(5)} \chi_{>L}^{(5)} $: For $(k_1, k_2, k_3, k_4, k_5) \in \supp \chi_{NR(4,1)}^{(5)} \chi_{A3}^{(5)} $, 
it follows that
\begin{equation*}
16 \max\{ |k_1|, |k_2| \} < |k_{3,4,5}| \le 33 |k_3| < \frac{33}{32} \min\{ |k_4|, |k_5|  \}
\end{equation*}
which leads \eqref{rel22} and $|k_{1,2,3,4,5}| \sim |k_{3,4,5}|$. 
Thus, by \eqref{pwes11} and Lemma~\ref{Le3}, we have 
$|L_{5, f}^{(5)} \chi_{>L}^{(5)}  | \lesssim \langle k_{1,2,3,4,5}  \rangle^{-1} \langle k_{\max} \rangle^{-1} \chi_{>L}^{(5)}$. \\
(If) Estimates of $L_{3, f}^{(5)} \chi_{>L}^{(5)}$ and $L_{6, f}^{(5)} \chi_{>L}^{(5)}$: 
By \eqref{pwes11} and (ii) of Lemma~\ref{Le3}, 
we have $|L_{j, f}^{(5)} \chi_{>L}^{(5)} | \lesssim \langle k_{\max} \rangle^{-2} \chi_{>L}^{(5)} $ with $j=3,6$. 

Therefore, we obtain \eqref{pwb1} for $(N, j) \in J_1$. \\
(II) By $|Q_1^{(3)}| \lesssim \langle k_{\max} \rangle^{2} \chi_{NR1}^{(3)} $ and Lemma~\ref{Le1}, 
we immediately get \eqref{pwb01}--\eqref{pwb03}. 
\end{proof}

Next, we give pointwise upper bounds of some multipliers $M_{j, \vp}^{(2N+1)}$ defined in Proposition~\ref{prop_NF2}. 

\begin{lem} \label{lem_pwb2}
Let $f \in L^2(\T)$, $s \ge 1$ and $M_{j, f}^{(2N+1)}$ be as in Proposition~\ref{prop_NF2}. 
Then, the following hold for $L \gg \max\{ 1, |\ga| E_1(f) \}$: 
\begin{align} 
& \langle k_{1,2,3,4,5}  \rangle^s |\ti{M}_{8, f}^{(5)}| \notag \\
&  \lesssim 
\big[ \min\{ \langle k_1-k_2 \rangle^{-1}, \langle k_3-k_4  \rangle^{-1}  \} \langle \max_{1 \le j \le 4}\{ |k_j| \} \rangle^{1/2} 
\langle \text{sec}_{1\le j \le 4}\{ |k_j| \}  \rangle^{1/2}  \langle k_5  \rangle^s \big]_{sym}^{(5)} , \label{pwb11} \\
& \langle k_{1,2,3,4,5}  \rangle^s |M_{9, f}^{(5)}| \notag \\
& \lesssim 
\min\{ \langle k_1-k_2 \rangle^{-1}, \langle k_3-k_4  \rangle^{-1}  \} \langle \max_{1 \le j \le 4}\{ |k_j| \} \rangle^{2/3} 
\langle \text{sec}_{1\le j \le 4}\{ |k_j| \}  \rangle^{2/3}  \langle k_5  \rangle^s, \label{pwb12} \\
& \langle k_{1,2,3,4,5}  \rangle^s |M_{10, f}^{(5)}| \lesssim \min\{ \langle k_1-k_2 \rangle^{-1}, \langle k_3-k_4  \rangle^{-1}  \} 
 \max_{1 \le j \le 4}\{ |k_j| \}  \langle k_5  \rangle^s,  \notag \\
\end{align}
$|M_{j, f}^{(5)} | \lesssim L $ with $j=11, 12$ and 
\begin{align}
& \langle k_{1,2,3,4,5} \rangle^s  |M_{13, f}^{(5)}| 
\lesssim \langle k_1-k_2 \rangle^{-1} \langle \max \{ |k_1|, |k_2|  \} \rangle^s \langle k_{3,4,5}  \rangle^{-1}
\langle k_4 \rangle \langle k_5  \rangle,  \label{pwb14} \\
& \langle k_{1,2,3,4,5} \rangle^s  |M_{15, f}^{(5)}| 
\lesssim \langle k_1-k_2 \rangle^{-1} \langle \max \{ |k_1|, |k_2|  \} \rangle^s \langle k_{3,4,5}  \rangle^{-1}
\langle k_3 \rangle \langle k_5 \rangle, \label{pwb15} \\
& \langle k_{1,2,3,4,5} \rangle^{s} |M_{14, f}^{(5)}| 
\lesssim \langle k_3 \rangle^s \langle k_4-k_5 \rangle^{-2} \langle k_4 \rangle \langle k_5 \rangle,  \label{pwb16} \\
& \langle k_{1,2,3,4,5} \rangle^s  |M_{16, f}^{(5)}| 
\lesssim \langle k_1\rangle^{s-1} \langle k_2-k_3+k_4-k_5 \rangle^{-1} 
\max\{ \langle k_3 \rangle \langle k_5 \rangle, \langle k_4 \rangle \langle k_5 \rangle \}, \label{pwb17} \\
& \langle k_{1,2,3,4,5} \rangle^s  |M_{17, f}^{(5)}| 
\lesssim \langle k_1\rangle^{s} \langle k_2-k_3+k_4-k_5 \rangle^{-1} 
\max\{ \langle k_4 \rangle, \langle k_5 \rangle \}. \label{pwb18}
\end{align}
\end{lem}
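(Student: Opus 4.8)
The plan is to bound each multiplier $M_{j,f}^{(5)}$ in the statement individually, reducing in every case to three inputs: (a) the elementary estimate $|q_1^{(3)}(l_1,l_2,l_3)|\lesssim\langle\max\{|l_1|,|l_2|,|l_3|\}\rangle^2$, immediate from the explicit quadratic form defining $q_1^{(3)}$ (and sharpened to the product of the two largest of $\langle l_1\rangle,\langle l_2\rangle,\langle l_3\rangle$ whenever those two are comparable, which is the situation on the supports carrying $\chi_{H2,1}^{(3)},\chi_{H2,2}^{(3)},\chi_{H2,3}^{(3)}$); (b) the lower bounds for $|\Phi_f^{(3)}|$ supplied by Lemma~\ref{Le1} and the factorization \eqref{2eq2}, applied to the triple $(k_1,k_2,k_{3,4,5})$ — since each $M_{j,f}^{(5)}$ in the statement carries an $[\,\cdot\,]_{ext1,1}^{(5)}$ factor whose denominator is $\Phi_f^{(3)}(k_1,k_2,k_{3,4,5})$ or $\Phi_0^{(3)}(k_1,k_2,k_{3,4,5})$ — together with the difference estimate \eqref{ff2} for $M_{11,f}^{(5)}$; and (c) the frequency comparisons forced by the characteristic functions cutting out $\supp M_{j,f}^{(5)}$, which in each case pin down which of $|k_1|,\dots,|k_5|$ is largest and how $\langle k_{1,2,3,4,5}\rangle$ compares to it.

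First I would treat $\ti{M}_{8,f}^{(5)}$. Since $\langle k_{1,2,3,4,5}\rangle$ is a symmetric function of the $k_l$ it commutes with $[\,\cdot\,]_{sym}^{(5)}$; on $\supp\chi_{R1}^{(5)}$ one has $k_{1,2,3,4,5}=k_5$ and $|k_1-k_2|=|k_3-k_4|$, and the relation $k_1-k_2+k_3-k_4=0$ forces $\max_{1\le j\le 4}\{|k_j|\}\lesssim\mathrm{sec}_{1\le j\le 4}\{|k_j|\}$, hence $\max_{1\le j\le 4}\{|k_j|\}\lesssim\langle\max_{1\le j\le 4}\{|k_j|\}\rangle^{1/2}\langle\mathrm{sec}_{1\le j\le 4}\{|k_j|\}\rangle^{1/2}$. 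Feeding these observations into Proposition~\ref{prop_res} yields \eqref{pwb11}. For $M_{9,f}^{(5)}$ and $M_{10,f}^{(5)}$ the common computation is $|M_{j,f}^{(5)}|\lesssim\langle k_1-k_2\rangle^{-1}\langle k_5\rangle$: on the support $|k_{3,4,5}|\sim|k_5|\gg\max\{|k_1|,|k_2|\}$ and $16\max\{|k_3|,|k_4|\}<|k_5|$, so $|\Phi_0^{(3)}(k_1,k_2,k_{3,4,5})|\sim|k_1-k_2||k_5|^3$ by \eqref{2eq2} while $|q_1^{(3)}(k_1,k_2,k_{3,4,5})|,|q_1^{(3)}(k_3,k_4,k_5)|\lesssim|k_5|^2$. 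The claimed powers of $\langle\max_{1\le j\le 4}\{|k_j|\}\rangle$ and $\langle\mathrm{sec}_{1\le j\le 4}\{|k_j|\}\rangle$ then come from $\chi_{R3}^{(5)}$ (which gives $\max\sim\mathrm{sec}$ and $\langle k_5\rangle\lesssim\langle\max\rangle^{4/3}\sim\langle\max\rangle^{2/3}\langle\mathrm{sec}\rangle^{2/3}$) for $M_{9,f}^{(5)}$, and from $|k_5|\sim\max_{1\le j\le 4}\{|k_j|\}$ on $(1-\chi_{H1,1}^{(5)})$ for $M_{10,f}^{(5)}$; the factor $(1-\chi_{A2}^{(5)})$ or $\chi_{R3}^{(5)}$ makes $\langle k_1-k_2\rangle^{-1}$ the asserted minimum.

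The bounds $|M_{j,f}^{(5)}|\lesssim L$ for $j=11,12$ come from $\chi_{NR(1,1)}^{(5)}$ placing $(k_1,k_2,k_{3,4,5})$ into case (i) of Lemma~\ref{Le1}: for $M_{11,f}^{(5)}$ one uses \eqref{ff2} with $g=0$, so that $\bigl|\Phi_f^{(3)}(k_1,k_2,k_{3,4,5})^{-1}-\Phi_0^{(3)}(k_1,k_2,k_{3,4,5})^{-1}\bigr|\lesssim|\la_5|E_1(f)\,|k_5|^{-2}|\Phi_0^{(3)}|^{-1}$, which multiplied by $|q_1^{(3)}q_1^{(3)}|\lesssim|k_5|^4$ and divided by $|\Phi_0^{(3)}|\gtrsim|k_1-k_2||k_5|^3$ gives $|M_{11,f}^{(5)}|\lesssim|\la_5|E_1(f)\lesssim L$; for $M_{12,f}^{(5)}$ the cutoff $\chi_{\le L}^{(3)}(k_1,k_2,k_{3,4,5})$ forces every frequency $\lesssim L$, so the crude bound $|M_{12,f}^{(5)}|\lesssim|k_5|\lesssim L$ suffices. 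Finally, the estimates \eqref{pwb14}--\eqref{pwb18} follow the same recipe. On $\supp M_{13,f}^{(5)},M_{15,f}^{(5)},M_{14,f}^{(5)}$ the triple $(k_1,k_2,k_{3,4,5})$ satisfies \eqref{rel1}, so (i) of Lemma~\ref{Le1} applies with $k_{\max}\sim|k_{3,4,5}|$; one reads off $\langle k_{1,2,3,4,5}\rangle\lesssim\langle k_{3,4,5}\rangle$, uses $|k_{3,4,5}|\sim\max\{|k_1|,|k_2|\}$ on $(1-\chi_{A1}^{(5)})$ for $M_{13,f}^{(5)},M_{15,f}^{(5)}$ and $|k_4-k_5|\lesssim\min\{|k_1-k_2|,|k_{3,4,5}|\}$ (which holds on $\supp M_{14,f}^{(5)}$ thanks to $1-\chi_{A3}^{(5)}$ and $\chi_{H2,2}^{(3)}$) for $M_{14,f}^{(5)}$, and combines with the product bound on $q_1^{(3)}(k_3,k_4,k_5)$ (where $|k_4|\sim|k_5|$). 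On $\supp M_{16,f}^{(5)},M_{17,f}^{(5)}$ it is instead $|k_1|$ that dominates — from the factor $\chi_{H1,1}^{(3)}(k_{3,4,5},k_2,k_1)$ inside each $\chi_{NR(i,2)}^{(5)}$ — so (i) of Lemma~\ref{Le1} after the transposition $k_1\leftrightarrow k_{3,4,5}$ gives $|\Phi_f^{(3)}(k_1,k_2,k_{3,4,5})|\gtrsim|k_{3,4,5}-k_2|\,|k_1|^3=|k_2-k_3+k_4-k_5|\,|k_1|^3$; since $\langle k_{1,2,3,4,5}\rangle\sim\langle k_1\rangle$, dividing $|q_1^{(3)}(k_1,k_2,k_{3,4,5})q_1^{(3)}(k_3,k_4,k_5)|$ by this lower bound and estimating $q_1^{(3)}(k_3,k_4,k_5)$ by the product of its two largest frequency weights (each $\lesssim\langle k_1\rangle$) produces exactly \eqref{pwb17} and \eqref{pwb18}.

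I expect the main obstacle to be organizational rather than conceptual: each $M_{j,f}^{(5)}$ is supported on a different, somewhat intricate union of frequency regions, and in each case one must identify the correct application of Lemma~\ref{Le1}, track the potentially very large output frequency $k_{1,2,3,4,5}$ against the largest input frequency, and — as in the treatment of $M_{14,f}^{(5)}$ above — occasionally split into sub-cases according to the relative sizes of $|k_3|$ and $|k_4-k_5|$ to extract the precise power of $\langle k_4-k_5\rangle$. The only genuinely non-routine ingredient beyond Lemma~\ref{Le1} is the cancellation already isolated in Proposition~\ref{prop_res}, which absorbs the single derivative loss carried by $M_{8,f}^{(5)}$.
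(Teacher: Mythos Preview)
Your proposal is correct and follows essentially the same route as the paper's proof: case-by-case treatment using the bound $|q_1^{(3)}|\lesssim\langle k_{\max}\rangle^2$ (refined to the product of the two largest inputs on the $H2$-regions), Lemma~\ref{Le1} applied to the triple $(k_1,k_2,k_{3,4,5})$ (or its transpose for $M_{16},M_{17}$), the difference estimate \eqref{ff2} for $M_{11}$, and Proposition~\ref{prop_res} for $\ti M_{8}$. The paper packages the recurring combination $|q_1^{(3)}/\Phi^{(3)}|_{ext1,1}\,|q_1^{(3)}|_{ext2,1}$ into the auxiliary bounds \eqref{pwes11}--\eqref{pwes12} from the proof of Lemma~\ref{lem_pwb1}, but your inline derivation is the same content; your remark that the difficulty is organizational rather than conceptual is exactly right.
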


\begin{proof}
Put $n_1= \max_{1\le j \le 4} \{ |k_j| \}$ and $n_2=\text{sec}_{1\le j \le 4} \{ |k_j| \}$. \\
(i) Estimate of $\ti{M}_{8, f}^{(5)}$: Since $(k_1, k_2, k_3, k_4, k_5) \in \supp \chi_{H1,1}^{(5)} \chi_{R1}^{(5)}$ implies 
$ |k_{1,2,3,4,5}| \sim |k_5| =k_{\max} $, $n_1 \sim n_2$ and $|k_1-k_2|=|k_3-k_4|$, 
by Proposition~\ref{prop_res}, we have
\begin{equation*}
\langle k_{1,2,3,4,5} \rangle^{s} |\ti{M}_{8, f}^{(5)}| \lesssim \big[ \min\{ \langle k_1-k_2  \rangle^{-1}, \langle k_3-k_4  \rangle^{-1}  \} 
n_1^{1/2} n_2^{1/2} \langle k_5 \rangle^s \big]_{sym}^{(5)}. 
\end{equation*} 
(ii) Estimate of $M_{9, f}^{(5)}$: For $(k_1, k_2, k_3, k_4, k_5) \in \supp \chi_{H1,1}^{(5)} \chi_{R3}^{(5)}$, it follows that 
\begin{equation*}
|k_{1,2,3,4,5}| \sim |k_{3,4,5}| \sim |k_5|=k_{\max} \lesssim n_1^{4/3}, \hspace{0.3cm} 
n_1 \sim n_2, \hspace{0.3cm} |k_3-k_4| \lesssim |k_1- k_2|.
\end{equation*}
Thus, by \eqref{pwes12}, we have
\begin{equation*}
\langle k_{1,2,3,4,5} \rangle^{s} |M_{9, f}^{(5)}| 
\lesssim \min\{ \langle k_1-k_2 \rangle^{-1}, \langle k_3-k_4 \rangle^{-1}   \}  n_1^{2/3} n_2^{2/3} \langle k_5 \rangle^s. 
\end{equation*}
(iii) Extimate of $M_{10, f}^{(5)}$: For $(k_1, k_2, k_3, k_4, k_5) \in \supp \chi_{NR(1,1)}^{(5)} (1-\chi_{H1,1}^{(5)}) (1-\chi_{R2}^{(5)})$, 
it follows that $ |k_{1,2,3,4,5}| \sim |k_{3,4,5}| \sim |k_5| =k_{\max} \lesssim n_1$ and $|k_3-k_4| \lesssim |k_1-k_2|$ . 
Thus, by \eqref{pwes12}, we have 
\begin{equation*}
\langle k_{1,2,3,4,5} \rangle^{s} |M_{10, f}^{(5)}| \lesssim \min\{ \langle k_1-k_2  \rangle^{-1}, \langle k_3-k_4  \rangle^{-1}  \} 
\, n_1 \langle k_5 \rangle^s. 
\end{equation*} 
(iv) Estimate of $M_{11, f}^{(5)}$: By $\chi_{NR(1,1)}^{(5)}= [ \chi_{H1,1}^{(3)} ]_{ext1,1}^{(5)} \chi_{NR(1,1)}^{(5)}$, 
$\chi_{NR1}^{(5)}= [ \chi_{NR1}^{(3)} ]_{ext1,1}^{(5)} \chi_{NR1}^{(5)}$, Remark~\ref{rem_sym} and (i) of Lemma~\ref{Le1}, 
we have  
\begin{align*}
& \Big| \Big[ \Big( \frac{q_1^{(3)}}{ \Phi_{f}^{(3)} }- \frac{q_1^{(3)}}{\Phi_0^{(3)}} \Big) \chi_{>L}^{(3)} \Big]_{ext1,1}^{(5)} \chi_{NR1}^{(5)} \chi_{NR(1,1)}^{(5)}  \Big| \\
&= \Big[ \Big| \Big( \frac{q_1^{(3)}}{ \Phi_{f}^{(3)} }- \frac{q_1^{(3)}}{\Phi_0^{(3)}} \Big) \chi_{NR1}^{(3)} \chi_{H1,1}^{(3)} \chi_{>L}^{(3)} \Big| \Big]_{ext1,1}^{(5)} \chi_{NR1}^{(5)} \chi_{NR(1,1)}^{(5)} \\
& \lesssim  |\la_5| E_1(f) [ \langle k_{3}  \rangle^{-3} \chi_{H1,1}^{(3)} ]_{ext1,1}^{(5)} \chi_{NR1}^{(5)} \chi_{NR(1,1)}^{(5)} \\
& = |\la_5| E_1(f) \langle k_{3,4,5}  \rangle^{-3} \chi_{NR1}^{(5)} \chi_{NR(1,1)}^{(5)}
\lesssim |\la_5| E_1(f) \langle k_5 \rangle^{-3} \chi_{NR(1,1)}^{(5)}. 
\end{align*}
Thus, by $L \gtrsim |\la_5| E_1(f)$, we have $|M_{11, f}^{(5)}| \lesssim L$. \\
(v) Estimate of $M_{12,f}^{(5)}$: Since $(k_1, k_2, k_3, k_4, k_5) \in \supp [ \chi_{ \le L}^{(3)} ]_{ext1,1}^{(5)} \chi_{NR(1,1)}^{(5)}$ leads that 
$k_{\max}= |k_{5}| \le 8 L/7$, we have $|M_{12, f}^{(5)}| \lesssim L $. \\
(vi) Estimate of $M_{13, f}^{(5)}$ and $M_{15, f}^{(5)}$: 
Since $(k_1, k_2, k_3, k_4, k_5) \in \supp \chi_{NR(3,1)}^{(5)} (1-\chi_{A1}^{(5)}) $ leads that 
\begin{equation*}
|k_{1,2,3,4,5}| \sim |k_{3,4,5}| \lesssim \max \{ |k_1|, |k_2| \}, \hspace{0.5cm} |k_4| \sim |k_5| \sim k_{\max},
\end{equation*}
by \eqref{pwes11}, we have \eqref{pwb14}. 
In a similar manner as above, we get \eqref{pwb15}. \\
(vii) Estimate of $M_{14, f}^{(5)}$: For $(k_1, k_2, k_3, k_4, k_5) \in \supp \chi_{NR(4,1)}^{(5)} (1-\chi_{A3}^{(5)})$ implies that 
\begin{equation*}
|k_{1,2,3,4,5}| \sim |k_{3,4,5}| \lesssim |k_3| \lesssim |k_4| \sim |k_5|, \hspace{0.5cm} 
|k_4-k_5| \lesssim \min \{ |k_1-k_2|,  |k_3| \}. 
\end{equation*} 
Thus, by \eqref{pwes11}, we have \eqref{pwb16}. \\
(viii) Estimate of $M_{16, f}^{(5)}$ and $M_{17, f}^{(5)}$: 
For $j=1,2,3,4$ and $5$, 
$(k_1, k_2, k_3, k_4, k_5) \in \supp \chi_{NR(j,2)}^{(5)}$ leads that $|k_{1,2,3,4,5}| \sim |k_1| $. 
By $\chi_{NR(2,j)}^{(5)}= \chi_{H1,1}^{(3)} (k_{3,4,5}, k_2, k_1) \chi_{NR(j,2)}^{(5)} $, 
$\chi_{NR1}^{(5)}= \chi_{NR1}^{(3)} (k_{3,4,5}, k_2, k_1) \chi_{NR1}^{(5)}$, Remark~\ref{rem_sym} and Lemma~\ref{Le1}, 
we have 
\begin{align*}
& \langle k_{1,2,3,4,5}  \rangle^{s} 
\Big| \Big[ \frac{q_1^{(3)}}{ \Phi_{f}^{(3)} } \chi_{>L}^{(3)} \Big]_{ext1,1}^{(5)} [q_1^{(3)}]_{ext1,2}^{(5)} 
\chi_{NR1}^{(5)} \chi_{NR(j,2)}^{(5)} \Big| \\
& \hspace{0.3cm} \lesssim \langle k_1 \rangle^{s-1} \langle k_2-k_3+k_4-k_5  \rangle^{-1} \max_{j=3,4,5}\{ |k_j|^2 \} \, 
\chi_{NR(j,2)}^{(5)}. 
\end{align*}
Thus, by the definition of $\{ \chi_{NR(j,2)}^{(5)} \}_{j=1}^5$, we get \eqref{pwb17} and \eqref{pwb18}. 
\end{proof}

In the following remark, we state a property of multipliers $L_{j, \vp}^{(2N+1)}$ and $ M_{j, \vp}^{(2N+1)}$ which is needed to 
prove main estimates as in Section 7. 

\begin{rem} \label{rem_pwb3}
For $f, g \in L^2(\T)$ and $L \gg \max \{1,  |\la_5| E_1(f), |\la_5| E_1(g)  \}$, 
by Lemmas~\ref{Le1}, \ref{Le2}, \ref{Le3} and the definition of $\{  L_{j, f}^{(2N+1)} \}$ and $\{ M_{j, f}^{(2N+1)} \}$ as in Proposition~\ref{prop_NF2}, 
each $L_{j, f}^{(2N+1)}$ satisfies 
\begin{align} \label{pwb21}
& |(L_{j, f}^{(2N+1)}-L_{j, g}^{(2N+1)} ) \chi_{>L}^{(2N+1)} | \notag \\
& \hspace{0.3cm}  \lesssim |\la_5|  |E_1(f) -E_1(g)| \, (|L_{j, f}^{(2N+1)} \chi_{>L}^{(2N+1)} |+ |L_{j, g}^{(2N+1)} \chi_{>L}^{(2N+1)} | ). 
\end{align}
and each $M_{j, f}^{(2N+1)}$ satisfies 
\begin{align} \label{pwb22}
|M_{j, f}^{(2N+1)} - M_{j, g}^{(2N+1)}| \lesssim |\la_5| |E_1(f)- E_1(g)| |M_{j, f}^{(2N+1)}|. 
\end{align}
In particular, for $(N, j) \in \{ (1,1), (2,1), (2,8), (2,9), (2,10), (2,12) \}$, it follows that 
$M_{j, f}^{(2N+1)}-M_{j, g}^{(2N+1)}=0$. 
\end{rem}

\section{main estimates}
The main estimates of the proof  of Theorem~\ref{thm_LWP} is as below. 


\begin{prop} \label{prop_main1}
Let $s \ge 1$, $f, g \in L^2(\T)$ and $F_{f, L}$, $G_{f, L}$ be as in Proposition~\ref{prop_NF2}. 
Then, for $v, w \in C([-T, T]: H^s(\T))$ and $L \gg \max\{1, |\la_5| E_1(f), |\la_5| E_1(g)\}$, 
we have 
\begin{align}
& \big\| F_{f, L} (\ha{v})- F_{f, L} (\ha{w})  \big\|_{L_T^{\infty} l_s^2} \lesssim 
L^{-1} (1+\| v \|_{L_T^{\infty} H^s} + \| w \|_{L_T^{\infty} H^s}  )^4 \| v-w \|_{L_T^{\infty} H^s}, \label{mes1} \\
& \big\| G_{f, L} (\ha{v})- G_{f, L} (\ha{w})  \big\|_{L_T^{\infty} l_s^2} \lesssim 
L^{2} (1+\| v \|_{L_T^{\infty} H^s} + \| w \|_{L_T^{\infty} H^s}  )^ 8\| v-w \|_{L_T^{\infty} H^s}. \label{mes2} 
\end{align}
Moreover, it follows that 
\begin{align}
& \big\| F_{f, L} (\ha{v})- F_{g, L} (\ha{v})  \big\|_{L_T^{\infty} l_s^2} \le C_*, \label{mes3} \\
& \big\|  G_{f, L} (\ha{v})- G_{g, L} (\ha{v}) \big\|_{L_T^{\infty} l_s^2} \le C_{*} \label{mes4}
\end{align}
where $C_{*}=C_{*}(v, s, |E_1(f)-E_1(g)|, |\la_5|, T, L) \ge 0 $, $C_{*} \to 0$ when $|E_1(f)-E_1(g)| \to 0$ and 
$C_{*}=0$ when $E_1(f)= E_1(g)$. 
\end{prop}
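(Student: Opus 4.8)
The plan is to estimate each of the finitely many multilinear forms $\La_f^{(2N+1)}(\ti m,\ha u_1,\dots,\ha u_{2N+1})$ making up $F_{f,L}(\ha v)$ and $G_{f,L}(\ha v)$ separately, and then sum. First a purely algebraic reduction. Since $\La_f^{(2N+1)}$ is multilinear in $(\ha u_1,\dots,\ha u_{2N+1})$, writing $\ha v-\ha w$ and telescoping shows that $F_{f,L}(\ha v)-F_{f,L}(\ha w)$ and $G_{f,L}(\ha v)-G_{f,L}(\ha w)$ are finite sums of forms $\La_f^{(2N+1)}(\ti m,\ha u_1,\dots,\ha u_{2N+1})$ in which exactly one $u_l$ is $v-w$ and the others are $v$ or $w$. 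Because $|e^{-it\Phi_f^{(2N+1)}}|=1$, for each $t\in[-T,T]$ one has the $t$-independent pointwise bound
\begin{equation*}
\big|\La_f^{(2N+1)}(\ti m,\ha u_1,\dots,\ha u_{2N+1})(t,k)\big|\le\sum_{k=k_{1,\dots,2N+1}}|\ti m|\prod_{l=1}^{2N+1}|\ha u_l(t,k_l)|,
\end{equation*}
and, since $|\ti m|\le[|m|]_{sym}^{(2N+1)}$ and, by relabelling, any bound $\|\sum_{k=k_{1,\dots,2N+1}}|m|\prod_l|\ha v_l(k_l)|\|_{l^2_s}\lesssim\prod_l\|v_l\|_{H^s}$ valid for arbitrary $v_l\in H^s(\T)$ passes to the symmetrization, the whole proposition reduces to such weighted $l^2_s$-convolution estimates for the bare multipliers $q_1^{(3)}$, $Q_1^{(3)}$, $Q_2^{(3)}$, the $L_{j,\vp}^{(2N+1)}\chi_{>L}^{(2N+1)}$, the $L_{j,\vp}^{(2N+1)}\Phi_\vp^{(2N+1)}\chi_{\le L}^{(2N+1)}$ and the $M_{j,\vp}^{(2N+1)}$ — the sole exception being the symmetrized $\ti M_{8,\vp}^{(5)}$, which carries a one-derivative loss before symmetrization and must be handled through Proposition~\ref{prop_res} directly.

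For \eqref{mes1} I would insert, term by term, the pointwise bounds of Lemma~\ref{lem_pwb1}: every $L_{j,f}^{(2N+1)}\chi_{>L}^{(2N+1)}$ is dominated by $\langle k_{\max}\rangle^{-1}\chi_{>L}^{(2N+1)}\le L^{-1}$ times a multiplier to which a loss-free multilinear estimate applies — either the crude Young/Sobolev estimate (using that the output weight $\langle k_{1,\dots,2N+1}\rangle^s$ is then controlled by one of the input weights on the relevant support), or, for the $\chi_{H3}^{(3)}$ piece, \eqref{cnl3}, which in fact gains $L^{-2}$. Counting the $\La^{(3)}$- and $\La^{(5)}$-contributions gives $L^{-1}(1+\|v\|_{L^\infty_TH^s}+\|w\|_{L^\infty_TH^s})^4\|v-w\|_{L^\infty_TH^s}$. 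For \eqref{mes2} the key point is that in $L_{j,f}^{(2N+1)}\Phi_f^{(2N+1)}\chi_{\le L}^{(2N+1)}$ the factor $1/\Phi_f^{(2N+1)}$ is absorbed, leaving a multiplier dominated by $\langle k_{\max}\rangle^2\chi_{\le L}^{(2N+1)}$; since on $\supp\chi_{\le L}^{(2N+1)}$ the output frequency is $\lesssim L$, estimate \eqref{nl11} turns this into an $l^2_s$-bound at the price of $L^2$. All remaining pieces of $G_{f,L}$ carry no derivative loss: $\La^{(3)}(\ti M_{1,\vp}^{(3)},\cdot)$ by Lemma~\ref{lem_cnl3}, the resonant quintic $\ti M_{8,\vp}^{(5)}$ by Proposition~\ref{prop_res} together with an elementary Cauchy–Schwarz/Young bound (on $\supp\chi_{R1}^{(5)}$ the output reduces to $k_5$), the other $M_{j,\vp}^{(5)}$ by Lemma~\ref{lem_pwb2} (with $M_{11,\vp}^{(5)},M_{12,\vp}^{(5)}$ costing only a power of $L$), and the $\La^{(7)}$- and $\La^{(9)}$-terms by combining $|\ti L_{j,f}^{(2N+1)}\chi_{>L}^{(2N+1)}|\lesssim1$, $|Q_i^{(3)}|\lesssim\langle k_{\max}\rangle^2$ and the algebra estimate for the constant $\tfrac{3}{8}\la_1$ with \eqref{nl11}. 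As the worst term is $9$-linear, this yields \eqref{mes2} with the factor $(1+\|v\|_{L^\infty_TH^s}+\|w\|_{L^\infty_TH^s})^8$.

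For \eqref{mes3} and \eqref{mes4} I would split $X_{f,L}(\ha v)-X_{g,L}(\ha v)$ ($X=F,G$) into the contribution of the difference of phases $e^{-it\Phi_f^{(2N+1)}}-e^{-it\Phi_g^{(2N+1)}}$ (all multipliers kept as the $f$-multipliers) and the contribution of the difference of multipliers (the $L^{(2N+1)}_j$ and $M^{(2N+1)}_j$ depending on $f$ only through $E_1(f)$). The first contribution is a finite sum of differences $\La_f^{(2N+1)}(m,\ha v)-\La_g^{(2N+1)}(m,\ha v)$ with a fixed multiplier $m$ satisfying an $l^2_s$-bound of the type already proved, so Lemma~\ref{lem_go} bounds it by a constant $C_*$ with $C_*\to0$ as $|E_1(f)-E_1(g)|\to0$ and $C_*=0$ when $E_1(f)=E_1(g)$. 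The second contribution is controlled by Remark~\ref{rem_pwb3}, which gives $|L^{(2N+1)}_{j,f}-L^{(2N+1)}_{j,g}|\lesssim|\la_5|\,|E_1(f)-E_1(g)|(|L^{(2N+1)}_{j,f}|+|L^{(2N+1)}_{j,g}|)$ and $|M^{(2N+1)}_{j,f}-M^{(2N+1)}_{j,g}|\lesssim|\la_5|\,|E_1(f)-E_1(g)|\,|M^{(2N+1)}_{j,f}|$; feeding these into the same estimates as for \eqref{mes1}--\eqref{mes2} shows this part is $\lesssim|\la_5|\,|E_1(f)-E_1(g)|$ times a bounded quantity, hence again $\le C_*$ with the stated properties (and identically $0$ when $E_1(f)=E_1(g)$, since then $\Phi_f^{(2N+1)}=\Phi_g^{(2N+1)}$ and all multipliers agree). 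Adding the two contributions yields \eqref{mes3} and \eqref{mes4}.

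The genuine content sits in Proposition~\ref{prop_res}: the only multiplier with a real derivative loss, $M_{8,\vp}^{(5)}$, becomes harmless once symmetrized. Everything else is an exercise with the Hölder, Young and Sobolev inequalities, so I expect the main obstacle to be organisational rather than analytic — the roughly forty multipliers spread over $N=1,\dots,4$ force a long, mechanical case analysis, and for a few of the seven- and nine-linear multipliers one has to check with some care that substituting \eqref{eq20}--\eqref{eq21} (through Propositions~\ref{prop_NF11} and~\ref{prop_NF2}) has indeed removed the last derivative; it is that bookkeeping, not any single estimate, that constitutes the hard part.
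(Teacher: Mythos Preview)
Your overall plan matches the paper's: telescope the multilinear forms, bound each $\La_f^{(2N+1)}$ by its absolute-value version, use the pointwise bounds of Lemma~\ref{lem_pwb1} and Lemma~\ref{lem_pwb2}, invoke Proposition~\ref{prop_res} for the symmetrized $\ti M_{8,\vp}^{(5)}$, and handle the $f\!\to\! g$ differences through Lemma~\ref{lem_go} and Remark~\ref{rem_pwb3}. That part is correct.

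There is, however, a genuine gap in your treatment of the $\La^{(5)}$- and $\La^{(7)}$-terms that carry a factor $Q_1^{(3)}$, namely $M^{(5)}_{4,\vp},\dots,M^{(5)}_{7,\vp}$, $M^{(5)}_{18,\vp}$, $M^{(5)}_{19,\vp}$ and $M^{(7)}_{5,\vp}$, $M^{(7)}_{6,\vp}$ (the sets $K_3$ and $K_4$ in the paper's notation). You propose to bound these by $|\ti L_{j,f}^{(2N+1)}\chi_{>L}^{(2N+1)}|\lesssim 1$ and $|Q_1^{(3)}|\lesssim\langle k_{\max}\rangle^{2}$ and then apply \eqref{nl11}. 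But \eqref{nl11} only controls the $l^2_{s-2}$ norm, two derivatives short of the required $l^2_s$ bound; and the sharper bound $|\ti L_{j,f}^{(2N+1)}|\lesssim\langle k_{1,\dots,2N+1}\rangle^{-1}\langle k_{\max}\rangle^{-1}$ does not help directly either, because after the extension $[\,\cdot\,]_{ext1,1}^{(2N+3)}$ the ``$k_{\max}$'' sees only the contracted variable $k_{2N+1,2N+2,2N+3}$, which can be small while the individual $k_{2N+1},k_{2N+2},k_{2N+3}$ entering $Q_1^{(3)}$ are large. The paper closes this by \emph{not} estimating the $(2N+3)$-multiplier pointwise: it rewrites, e.g., $\La_f^{(7)}(\ti M^{(7)}_{5,f},\ha v)$ as $\La_f^{(5)}\big(\ti L^{(5)}_{j,f}\chi_{>L}^{(5)},\ha v,\dots,\ha v,\mathcal N^{(3)}_{1,f}(\ha v)\big)$, uses \eqref{nest12} to put $\mathcal N^{(3)}_{1,f}(\ha v)=\La_f^{(3)}(iQ_1^{(3)},\ha v)$ in $H^{s-2}$, and then applies \eqref{nl21} (or \eqref{cnl1}--\eqref{cnl4} for the $J_2$- and $J_3$-pieces), which is precisely designed to accept one argument in $H^{s-2}$. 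The $M^{(5)}_{18,\vp}$ term needs the analogous trick with $\mathcal N^{(3)}_{2,f}$ placed in $H^{s-1}$ via \eqref{nest14}. Note also that Lemma~\ref{lem_pwb2} covers only $j=8,\dots,17$; the multipliers $M^{(5)}_{2,\vp},\dots,M^{(5)}_{7,\vp}$ and $M^{(5)}_{18,\vp},M^{(5)}_{19,\vp}$ are not in its scope, so ``the other $M^{(5)}_{j,\vp}$ by Lemma~\ref{lem_pwb2}'' leaves these unaccounted for.
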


As a corollary of Propoistion~\ref{prop_main1}, we get the following estimates. 

\begin{cor} \label{cor_mainest}
Let $s \ge 1$. Then there exists a constant $C \ge 1$ such that the following estimates holds for 
$\vp_1, \vp_2 \in H^s(\T)$, $L \gg \max\{1, |\la_5| E_1(\vp_1), |\la_5| E_1(\vp_2) \}$ and 
any solution $u_1 \in C([-T, T]: H^s(\T))$ (resp. $u_2 \in C([-T, T]: H^s(\T))$) to \eqref{4NLS3} with initial data $\vp_1$ (resp. $\vp_2$): 
\begin{align}
\| u_1 \|_{L_T^{\infty} H^s} & \le  \| \vp_1 \|_{H^s}+ CL^{-1} (1 + \| u_1 \|_{L_T^{\infty} H^s })^4 \| u_1 \|_{L_T^{\infty} H^s}  \notag \\
& + C T L^2 (1+\| u_1 \|_{L_T^{\infty} H^s})^8 \| u_1 \|_{L_T^{\infty} H^s(\T)}, \label{mes21} \\
\| u_1-u_2 \|_{L_T^{\infty} H^s(\T)} & \le   \| u_1-u_2 \|_{H^s} + C_*(1+T) \notag \\
&+CL^{-1} (1+ \| u_1 \|_{L_T^{\infty} H^s } + \| u_2 \|_{L_T^{\infty}H^s } )^4 (\| u_1-u_2 \|_{L_T^{\infty}H^s}+C_{*} ) \notag \\
&+CT L^2 (1+ \| u_1 \|_{L_T^{\infty} H^s } + \| u_2 \|_{L_T^{\infty}H^s } )^8 (\| u_1-u_2 \|_{L_T^{\infty}H^s}+C_{*} ) \label{mes22}
\end{align}
where $C_{*}=C_{*}(u_1, s, |E_1(\vp_1)-E_1(\vp_2)|, |\la_5|, T, L) \ge 0 $, $C_{*} \to 0$ when $|E_1(\vp_1)-E_1(\vp_2)| \to 0$ and 
$C_{*}=0$ when $E_1(\vp_1)= E_1(\vp_2)$. 
\end{cor}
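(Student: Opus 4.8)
The plan is to derive Corollary~\ref{cor_mainest} directly from Proposition~\ref{prop_main1} by integrating the normal form equation \eqref{NF21} in time. Let $u_1$ be a solution of \eqref{4NLS3} with data $\vp_1$, set $\ha v_1(t,k):=e^{-it\phi_{\vp_1}(k)}\ha u_1(t,k)$ and apply Proposition~\ref{prop_NF2} with $\vp=\vp_1$, $f=\vp_1$. Integrating \eqref{NF21} from $0$ to $t\in[-T,T]$ gives
\begin{align*}
\ha v_1(t,k)= \ha v_1(0,k) - F_{\vp_1,L}(\ha v_1)(t,k) + F_{\vp_1,L}(\ha v_1)(0,k) + \int_0^t G_{\vp_1,L}(\ha v_1)(t',k)\,dt'.
\end{align*}
Since the free propagator $U_{\vp_1}(t)$ is a bijective isometry on $H^s(\T)$ and $\ha v_1(0,\cdot)=\ha\vp_1$, taking the $l_s^2$ norm in $k$, using $\|\ha v_1(t)\|_{l_s^2}=\|u_1(t)\|_{H^s}$ and $\|u_1(t)\|_{H^s}=\|v_1(t)\|_{H^s}$, yields
\begin{align*}
\|u_1\|_{L_T^\infty H^s}\le \|\vp_1\|_{H^s} + 2\|F_{\vp_1,L}(\ha v_1)\|_{L_T^\infty l_s^2} + T\|G_{\vp_1,L}(\ha v_1)\|_{L_T^\infty l_s^2}.
\end{align*}
The terms $\|F_{\vp_1,L}(\ha v_1)\|_{L_T^\infty l_s^2}$ and $\|G_{\vp_1,L}(\ha v_1)\|_{L_T^\infty l_s^2}$ are then bounded by taking $w=0$ in \eqref{mes1} and \eqref{mes2} (noting $F_{f,L}(0)=0$, $G_{f,L}(0)=0$), which gives exactly the two structural terms $CL^{-1}(1+\|u_1\|_{L_T^\infty H^s})^4\|u_1\|_{L_T^\infty H^s}$ and $CTL^2(1+\|u_1\|_{L_T^\infty H^s})^8\|u_1\|_{L_T^\infty H^s}$ appearing in \eqref{mes21}, after enlarging $C$ to absorb the factor $2$.

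For \eqref{mes22} the idea is the same but applied to the difference. I would write both normal form identities, for $v_1$ (parameters $\vp=\vp_1$, $f=\vp_1$) and for $v_2:=U_{\vp_2}(-t)u_2$ (parameters $\vp=\vp_2$, $f=\vp_2$), and subtract. To compare them I insert an intermediate term with mismatched parameters: schematically, $F_{\vp_1,L}(\ha v_1)-F_{\vp_2,L}(\ha v_2)=\big(F_{\vp_1,L}(\ha v_1)-F_{\vp_1,L}(\ha v_2)\big)+\big(F_{\vp_1,L}(\ha v_2)-F_{\vp_2,L}(\ha v_2)\big)$, and likewise for $G$. Here one must be careful that $v_1$ and $v_2$ are defined via different propagators, so $\|v_1(t)-v_2(t)\|_{H^s}$ is not literally $\|u_1(t)-u_2(t)\|_{H^s}$; the discrepancy $\|U_{\vp_1}(-t)u_1-U_{\vp_1}(-t)u_2\|_{H^s}=\|u_1-u_2\|_{H^s}$ handles the genuine difference, while $\|(U_{\vp_1}(-t)-U_{\vp_2}(-t))u_2\|_{H^s}$ is an error term controlled by $|E_1(\vp_1)-E_1(\vp_2)|$ (since $\phi_{\vp_1}-\phi_{\vp_2}=\la_5(E_1(\vp_1)-E_1(\vp_2))k^2$ and one truncates to $|k|\le K$ by uniform continuity, exactly as in Lemma~\ref{lem_go}), hence absorbable into $C_*$. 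Then the first bracket is estimated by \eqref{mes1} and \eqref{mes2} with $v=v_1$, $w=v_2$, producing the $L^{-1}$ and $TL^2$ terms with $\|v_1-v_2\|_{L_T^\infty H^s}\le \|u_1-u_2\|_{L_T^\infty H^s}+C_*$; the second bracket is estimated by \eqref{mes3} and \eqref{mes4}, contributing to $C_*$. Integrating in time multiplies the $G$-part of $C_*$ by $T$, giving the $C_*(1+T)$ factor.

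Collecting, we get
\begin{align*}
\|u_1-u_2\|_{L_T^\infty H^s}\le{}& \|\vp_1-\vp_2\|_{H^s} + C_*(1+T) \\
&+ CL^{-1}(1+\|u_1\|_{L_T^\infty H^s}+\|u_2\|_{L_T^\infty H^s})^4(\|u_1-u_2\|_{L_T^\infty H^s}+C_*)\\
&+ CTL^2(1+\|u_1\|_{L_T^\infty H^s}+\|u_2\|_{L_T^\infty H^s})^8(\|u_1-u_2\|_{L_T^\infty H^s}+C_*),
\end{align*}
which is \eqref{mes22} (using $\|u_1-u_2\|_{H^s}=\|\vp_1-\vp_2\|_{H^s}$ at $t=0$; the statement writes $\|u_1-u_2\|_{H^s}$ for this initial quantity). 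The main obstacle, and the only genuinely delicate point, is the bookkeeping for the mismatched-propagator comparison: one must verify that replacing $U_{\vp_1}$ by $U_{\vp_2}$ and $\phi_{\vp_1}$ by $\phi_{\vp_2}$ throughout (in the definition of $v$, in the phases $\Phi^{(2N+1)}$ inside $\La$, and in the multipliers $L_{j,f}$, $M_{j,f}$) produces only errors that vanish as $|E_1(\vp_1)-E_1(\vp_2)|\to 0$ and are identically zero when $E_1(\vp_1)=E_1(\vp_2)$ — this is precisely where Lemma~\ref{lem_go}, Remark~\ref{rem_pwb3} and estimates \eqref{mes3}, \eqref{mes4} are used, and where one must keep the constant $C_*$ depending only on $u_1$ (not $u_2$), as recorded in the statement. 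Everything else is a routine triangle-inequality-and-fundamental-theorem-of-calculus argument.
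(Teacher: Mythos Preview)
Your approach is essentially the same as the paper's: integrate \eqref{NF21}, take $l_s^2$ norms, apply \eqref{mes1}--\eqref{mes4}, and handle the propagator mismatch via a uniform-continuity argument as in Lemma~\ref{lem_go}, yielding the extra $C_*$ in the passage between $\|v_1-v_2\|$ and $\|u_1-u_2\|$.

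One small bookkeeping point: the specific splitting you wrote, inserting $F_{\vp_1,L}(\ha v_2)$, makes the parameter-change bracket $F_{\vp_1,L}(\ha v_2)-F_{\vp_2,L}(\ha v_2)$, and \eqref{mes3}--\eqref{mes4} then give $C_*=C_*(v_2,\dots)$, i.e.\ a dependence on $u_2$ rather than on $u_1$ as recorded in the statement. The paper inserts $F_{\vp_2,L}(\ha v_1)$ instead (and likewise writes $(U_{\vp_1}(-t)-U_{\vp_2}(-t))u_1$ rather than $u_2$ in the propagator comparison), which places the $C_*$ dependence on $v_1$/$u_1$. You already flag this as the delicate point, so just swap the intermediate term.
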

\begin{proof}[Proof of Corollary \ref{cor_mainest}]
By proposition~\ref{prop_NF2}, 
$\ha{v}_j(t,k)=e^{-it \phi_{\vp_j}(k)} \ha{u}_j (t,k)$ satisfies (\ref{NF21}) with initial data $\ha{\vp}_j$ 
for each $k \in \Z$ and any $t \in [-T, T]$. Thus, it follows that 
\EQQ{
& \Big[ \ha{v}_1(t',k )+ F_{\vp_1,L} (\ha{v}_1) (t',k)  \Big]_0^t= \int_0^t G_{\vp_1, L} (\ha{v}_1) (t',k) \, dt', \\
& \Big[(\ha{v}_1-\ha{v}_2)(t',k)+ \big(F_{\vp_1,L}(\ha{v}_1 ) - F_{\vp_2,L}(\ha{v}_2  ) \big) (t',k)  \Big]_0^t \\
& \hspace{2.5cm} =\int_0^t \big( G_{\vp_1,L}(\ha{v}_1)-G_{\vp_2,L}(\ha{v}_2)  \big) (t',k)\, dt'
}
for each $k \in \Z$ and any $t \in [-T, T]$, which leads that 
\EQS{
& \| v_1 \|_{L^{\infty}_T H^s} \leq \| \vp_1 \|_{H^s} + 2\| F_{\vp_1, L} (\ha{v}_1) \|_{L_T^{\infty} l_s^2}
+ T \| G_{\vp_1,L}(\ha{v}_1) \|_{L_T^{\infty} l_s^2 },  \label{6es1}\\
& \|v_1-v_2\|_{L^\infty_T H^s} \leq \|\vp_1-\vp_2\|_{H^s} \notag \\ 
& \hspace{0.3cm} +2 \big( \|F_{\vp_1,L}(\ha{v}_1)-F_{\vp_2,L}( \ha{v}_1)\|_{L_T^{\infty} l_s^2} 
+ \| F_{\vp_2, L} (\ha{v}_1) - F_{\vp_2, L} (\ha{v}_2) \|_{L_T^{\infty} l_s^2} \big) \notag \\ 
& \hspace{0.3cm} +T \big( \|G_{\vp_1,L} (\ha{v}_1)-G_{\vp_2,L}(\ha{v}_1) \|_{L_T^{\infty} l_s^2} 
+ \|  G_{\vp_2, L} (\ha{v}_1)- G_{\vp_2, L} (\ha{v}_2)  \|_{L_T^{\infty} l_s^2}  \big). \label{6es2}
}
Applying \eqref{mes1}--\eqref{mes2} to \eqref{6es1} and \eqref{mes1}--\eqref{mes4} to \eqref{6es2},  
we find a constant $C \ge 1$ 
such that 
\EQS{
& \| v_1 \|_{L^\infty_T H^s}  \le  \| \vp_1 \|_{H^s}
+ C L^{-1} (1 +\| v_1\|_{L_T^{\infty} H^s } )^ 4\| v_1\|_{L_T^{\infty} H^s } \notag \\
& \hspace{2cm} +C T L^2(1+\| v_1\|_{L_T^{\infty }H^s } )^8 \| v_1 \|_{L_T^{\infty} H^s}, \label{6es3} \\
& \| v_1- v_2 \|_{L_T^{\infty} H^s}   \le \| \vp_1- \vp_2 \|_{H^s} +(1+T) C_{*} \notag \\
& \hspace{0.5cm} 
+ CL^{-1} (1+ \| v_1\|_{L_T^{\infty} H^s} + \| v_2\|_{L_T^{\infty} H^s})^4 \| v_1 -v_2 \|_{L_T^{\infty} H^s} \notag \\
& \hspace{0.5cm}
+ CT L^2  (1+ \| v_1\|_{L_T^{\infty} H^s} + \| v_2\|_{L_T^{\infty} H^s})^8 \| v_1 -v_2 \|_{L_T^{\infty} H^s} . \label{6es4}
}
By $\| u_1 \|_{L_T^{\infty} H^s}= \|  v_1 \|_{L_T^{\infty} H^s }$ and \eqref{6es3}, we have \eqref{mes21}. 
We notice that it follows that 
\begin{equation} \label{6es5}
\| u_1-u_2 \|_{L_T^{\infty} H^s} \le \| v_1 -v_2 \|_{L_T^{\infty} H^s}+C_{*}, \hspace{0.5cm} 
\| v_1-v_2 \|_{L_T^{\infty}H^s} \le \| u_1- u_2  \|_{L_T^{\infty}H^s} + C_{*} 
\end{equation}
by uniform $l_s^2$-continuity of $\ha{u}_1$ and $\ha{v}_1$ and the dominated convergence theorem. 
By \eqref{6es4}, \eqref{6es5} and $\| u_j \|_{L_T^{\infty} H^s } = \|  v_j \|_{L_T^{\infty} H^s }$ with $j=1,2$, 
we obtain \eqref{mes22}. 
\end{proof}
Before we prove Proposition~\ref{prop_main1}, we prepare some lemmas. 
The proofs of them are based on the multilinear  estimates and the pointwise upper bounds of multipliers 
stated in Section 5 and 6. 

\begin{lem} \label{lem_nes0} 
Let $s \ge 1$, $f, g \in L^2(\T)$ and $L_{j, f}^{(2N+1)}$ with $(N,j) \in J_1 \cup J_2 \cup J_3$ be as in Proposition~\ref{prop_NF2}. 
Then, for any $\{ v_l \}_{l=1}^{2N+1} \subset C([-T, T]: H^s(\T))$ and $L \gg \max \{1, |\la_5| E_{1} (f) , |\la_5| E_1(g) \}$, we have
\begin{align}
& \Big\| \sum_{k=k_{1, \dots, 2N+1}} 
|\ti{L}_{j, f}^{(2N+1)} \chi_{>L}^{(2N+1)} | \, \prod_{l=1}^{2N+1} |\ha{v}_l(t, k_l)| \Big\|_{L_T^{\infty} l_2^s} 
\lesssim L^{-1} \prod_{l=1}^{2N+1} \| v_l \|_{L_T^{\infty} H^s}, \label{ne01} \\
& \big\|  \La_f^{(2N+1)} ( \ti{L}_{j, f}^{(2N+1)} \chi_{>L}^{(2N+1)}, \ha{v}_1) - 
\La_{g}^{(2N+1)} ( \ti{L}_{j, g}^{(2N+1)} \chi_{>L}^{(2N+1)}, \ha{v}_1 )  \big\|_{L_T^{\infty}l_s^2} 
\le C_* \label{ne03}
\end{align}
where $C_*=C_*(v_1,s,|E_1(f)-E_1(g)|,|\la_5|,T) \ge 0$, $C_*\to 0$ when $|E_1(f)-E_1(g)|\to 0$ and 
$C_{*} =0$ when $E_1(f)=E_1(g)$. 
\end{lem}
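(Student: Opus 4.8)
The plan is to reduce both assertions to the pointwise bounds on $L_{j,f}^{(2N+1)}\chi_{>L}^{(2N+1)}$ proved in Lemma~\ref{lem_pwb1} together with the Sobolev-type summation estimate \eqref{nl21} and the continuity Lemma~\ref{lem_go}. First I would prove \eqref{ne01}. Since symmetrization does not increase the absolute value of a multiplier, $|\ti L_{j,f}^{(2N+1)}\chi_{>L}^{(2N+1)}|\le [\,|L_{j,f}^{(2N+1)}\chi_{>L}^{(2N+1)}|\,]_{sym}^{(2N+1)}$, and because $l_s^2$-summation bounds of the type \eqref{nl21} are stable under symmetrization (each permuted summand is estimated by the same right-hand side after relabelling the $v_l$, which is harmless since we take absolute values and a supremum over $l$), it suffices to bound the unsymmetrized multiplier. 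For $(N,j)\in J_1$, Lemma~\ref{lem_pwb1}(I) gives
\begin{equation*}
|L_{j,f}^{(2N+1)}\chi_{>L}^{(2N+1)}|\lesssim \langle k_{1,\dots,2N+1}\rangle^{-1}\langle k_{\max}\rangle^{-1}\chi_{>L}^{(2N+1)}\lesssim L^{-1}\langle k_{1,\dots,2N+1}\rangle^{-1}\langle k_{\max}\rangle^{-1},
\end{equation*}
using $\chi_{>L}^{(2N+1)}\le 1$ and that on its support $\langle k_{\max}\rangle\gtrsim L$, so one factor $\langle k_{\max}\rangle^{-1}$ produces the gain $L^{-1}$ while the remaining $\langle k_{1,\dots,2N+1}\rangle^{-1}\langle k_{\max}\rangle^{-1}$ is exactly the weight handled by \eqref{nl21} with any choice of $m$. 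This yields \eqref{ne01} for $(N,j)\in J_1$, with the $H^{s-2}$ slot absorbed into $H^s$. For $(N,j)\in J_2\cup J_3$, i.e.\ $L_{4,f}^{(3)}$, $L_{6,f}^{(3)}$, $L_{1,f}^{(3)}$, I would instead use the sharper bounds \eqref{pwb01}--\eqref{pwb03}: each of these is dominated (after extracting $L^{-1}$ from $\chi_{>L}^{(3)}$ as above, or rather keeping one $\langle k_{\max}\rangle^{-1}$ and noting $\chi_{>L}^{(3)}\lesssim L^{-1}\langle k_{\max}\rangle$) by a multiplier of the form appearing in Lemma~\ref{lem_cnl1}, namely $L^{-1}$ times $[\langle k_1-k_2\rangle^{-1}\langle k_{\max}\rangle^{-1}\chi_{H1,1}^{(3)}]_{sym1}^{(3)}$, $[\langle k_2-k_3\rangle^{-1}\langle k_{\max}\rangle^{-1}\chi_{H2,2}^{(3)}]_{sym1}^{(3)}$, or $\langle k_1-k_2\rangle^{-1}\langle k_3-k_2\rangle^{-1}\chi_{H3}^{(3)}$; estimates \eqref{cnl1}, \eqref{cnl2}, \eqref{cnl4} then close the argument. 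Combining all cases gives \eqref{ne01}.

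For \eqref{ne03}, write the difference as
\begin{equation*}
\Lambda_f^{(2N+1)}(\ti L_{j,f}^{(2N+1)}\chi_{>L}^{(2N+1)},\ha v_1)-\Lambda_g^{(2N+1)}(\ti L_{j,g}^{(2N+1)}\chi_{>L}^{(2N+1)},\ha v_1)
\end{equation*}
and split it into two terms: one in which the multiplier is fixed at $\ti L_{j,f}^{(2N+1)}\chi_{>L}^{(2N+1)}$ and only the phase changes from $\Phi_f^{(2N+1)}$ to $\Phi_g^{(2N+1)}$, and one in which the phase is fixed at $\Phi_g^{(2N+1)}$ and the multiplier changes. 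The first term is controlled by Lemma~\ref{lem_go} applied with $m^{(2N+1)}=\ti L_{j,f}^{(2N+1)}\chi_{>L}^{(2N+1)}$ and $s_1=\dots=s_{2N+1}=s$ (the hypothesis of that lemma is precisely the bound \eqref{ne01} we have just established), giving a constant tending to $0$ as $|E_1(f)-E_1(g)|\to 0$ and vanishing when $E_1(f)=E_1(g)$. The second term is estimated in $l_s^2$ by the triangle inequality and the Sobolev estimates above applied to the multiplier difference $(\ti L_{j,f}^{(2N+1)}-\ti L_{j,g}^{(2N+1)})\chi_{>L}^{(2N+1)}$, which by Remark~\ref{rem_pwb3}, inequality \eqref{pwb21}, is bounded by $|\la_5||E_1(f)-E_1(g)|$ times $|L_{j,f}^{(2N+1)}\chi_{>L}^{(2N+1)}|+|L_{j,g}^{(2N+1)}\chi_{>L}^{(2N+1)}|$ — hence again of the admissible form, producing a factor $|E_1(f)-E_1(g)|$ out front. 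Both contributions together define the constant $C_*$ with the stated properties.

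The main obstacle is organizational rather than analytic: verifying that the symmetrization $\ti L_{j,f}^{(2N+1)}$ (and in the quintic cases the extension-operator structure buried inside $L_{j,f}^{(5)}$) does not destroy the pointwise bounds, so that the $l_s^2$-estimates of Lemmas~\ref{lem_cnl1} and Lemma~\ref{lem_nl2} really do apply termwise. One must check that each summand obtained after applying a permutation $\sigma\in S_{N+1}$ or $\tau\in\ti S_N$ still satisfies a bound of the form \eqref{nl21} (or \eqref{cnl1}--\eqref{cnl4}) — this is true because those right-hand sides are symmetric under relabelling the functions $v_l$ once one takes a supremum over the distinguished index $m$, but it requires a careful bookkeeping of which variable plays the role of the low-regularity slot in each case of Lemma~\ref{lem_pwb1}. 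Once that is settled, \eqref{ne01} follows by summing finitely many such contributions, and \eqref{ne03} follows from \eqref{ne01} via Lemma~\ref{lem_go} and Remark~\ref{rem_pwb3} with no further difficulty.
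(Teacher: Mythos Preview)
Your approach is essentially the paper's, and your treatment of \eqref{ne03} via the splitting into a phase-change term (handled by Lemma~\ref{lem_go}) and a multiplier-change term (handled by \eqref{pwb21}) matches the paper exactly, up to the order of the two terms.

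There is, however, a bookkeeping slip in your argument for \eqref{ne01} when $(N,j)\in J_1$: the displayed chain
\[
\langle k_{1,\dots,2N+1}\rangle^{-1}\langle k_{\max}\rangle^{-1}\chi_{>L}^{(2N+1)}\ \lesssim\ L^{-1}\,\langle k_{1,\dots,2N+1}\rangle^{-1}\langle k_{\max}\rangle^{-1}
\]
double-counts the decay. Once you use $\langle k_{\max}\rangle^{-1}\chi_{>L}\lesssim L^{-1}$ to extract the factor $L^{-1}$, only $\langle k_{1,\dots,2N+1}\rangle^{-1}$ remains, so the weight of \eqref{nl21} is no longer available. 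The paper avoids this by a cruder but sufficient route: for every $(N,j)\in J_1\cup J_2\cup J_3\setminus\{(1,6)\}$ the bounds of Lemma~\ref{lem_pwb1} already contain a factor $\langle k_{\max}\rangle^{-1}$, hence $|\ti L_{j,f}^{(2N+1)}\chi_{>L}^{(2N+1)}|\lesssim\langle k_{\max}\rangle^{-1}\chi_{>L}^{(2N+1)}\lesssim L^{-1}$ pointwise, after which a plain Sobolev product estimate in $H^s$ (no use of \eqref{nl21} or Lemma~\ref{lem_cnl1}) closes \eqref{ne01}. Only the case $(N,j)=(1,6)$ is genuinely special: the bound \eqref{pwb03} carries no $\langle k_{\max}\rangle^{-1}$, so one cannot get $L^{-1}$ pointwise, and the paper invokes \eqref{cnl3} instead (obtaining even $L^{-2}$). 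Your detour through \eqref{nl21} and \eqref{cnl1}--\eqref{cnl4} is not wrong in spirit, but it is more work than necessary and, as written, needs the correction above.
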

\begin{proof}
First, we prove \eqref{ne01}. 
By Lemma~\ref{lem_pwb1}, it follows that 
\begin{equation*}  
| \ti{L}_{j, f}^{(2N+1)} \chi_{>L}^{(2N+1)} | \le [ | L_{j, f}^{(2N+1)} \chi_{>L}^{(2N+1)}|  ]_{sym}^{(2N+1)}  
\lesssim \langle k_{\max} \rangle^{-1} \chi_{>L}^{(2N+1)} \lesssim L^{-1}
\end{equation*}
for $(N, j) \in J_1 \cup J_2 \cup J_3 \setminus \{  (1,6) \}$. 
By \eqref{cnl3} and \eqref{pwb03}, we have 
\begin{equation*}
 \Big\| \sum_{k=k_{1,2,3}} | \ti{L}_{6, f}^{(3)} \chi_{>L}^{(3)} | \, \prod_{l=1}^3 |\ha{v}_l(t, k_l)| \Big\|_{L_T^{\infty} l_s^2} 
 \lesssim L^{-2} \prod_{l=1}^3 \| v_l \|_{L_T^{\infty} H^s}.  
 \end{equation*}
Thus, we obtain \eqref{ne01}. 
Next, we prove (\ref{ne03}). A direct computation yields that  
\begin{align*}
& [ \La_{f}^{(2N+1)} (\ti{L}_{j, f}^{(2N+1)} \chi_{>L}^{(2N+1)}, \ha{v}_1 )
- \La_{g}^{(2N+1)} (\ti{L}_{j, g}^{(2N+1)} \chi_{>L}^{(2N+1)}, \ha{v}_1)] (t,k) \\
& = \La_{f}^{(2N+1)} ( ( \ti{L}_{j, f}^{(2N+1)}-\ti{L}_{j,g}^{(2N+1)} ) \chi_{>L}^{(2N+1)}, \ha{v}_1 ) (t,k) \\
& \hspace{0.4cm}  + [ \La_f^{(2N+1)} (\ti{L}_{j,g}^{(2N+1)} \chi_{>L}^{(2N+1)}, \ha{v}_1) 
- \La_{g}^{(2N+1)} (\ti{L}_{j, g}^{(2N+1)} \chi_{>L}^{(2N+1)}, \ha{v}_1 )] (t,k) \\
& =: J_{1,1}^{(2N+1)}(\ha{v}_1) (t,k) + J_{1,2}^{(2N+1)} (\ha{v}_1) (t,k).
\end{align*}
By $|L_{j, f}^{(2N+1)} \chi_{>L}^{(2N+1)} | \lesssim 1$ and \eqref{pwb21}, it follows that 
\begin{equation*}
| ( \ti{L}_{j, f}^{(2N+1)} - \ti{L}_{j,g}^{(2N+1)}) \chi_{>L}^{(2N+1)} | \lesssim |\la_5| |E_1(f)-E_1(g)|
\end{equation*}
which leads that 
\begin{equation*}
\| J_{1,1}^{(2N+1)} (\ha{v}_1) \|_{L_T^{\infty} l_s^2} \lesssim |\la_5| |E_1(f) - E_1(g)| \, \| v_1 \|_{L_T^\infty H^s}^{2N+1} \le C_*.
\end{equation*}
By \eqref{ne01} and Lemma~\ref{lem_go}, we have $\| J_{1,2}^{(2N+1)} (\ha{v}_1)  \|_{L_T^{\infty} l_s^2} \le C_*$.
Therefore, we obtain (\ref{ne03}). 
\end{proof}

Next, we give multilinear estimates for multipliers $M_{j, \vp}^{(2N+1)}$ defined in 
Proposition~\ref{prop_NF2}. We now put
\begin{align*}
K_1=&  \{ (1,1), (2,1), (2,8), (2,9) \}, \\
K_2=&\{  (2,10), (2,11), (2,12), (2,13), (2,14) , (2,15), (2,16), (2,17)  \}, \\
K_3=& \{ (2,4), (2,5), (2,6), (2,7), (3,5), (3,6) \}, \\
K_4=& \{ (2,18), (2,19) \}, \hspace{0.3cm} K_5= \{ (2,2), (2,3), (3,3), (3,4)  \}, \\
K_6=& \{ (3,1), (3,2), (4,1), (4,2) \}.
\end{align*}
\begin{lem} \label{lem_mle}
Let $s \ge 1$, $f, g \in L^2(\T)$ and $M_{j, \vp}^{(2N+1)}$ with $(N, j) \in K_1 \cup K_2$ as be in Proposition~\ref{prop_NF2}. 
Then, for $L \gg \max\{1, |\la_5| E_1(f), |\la_5| E_1(g) \}$ and $\{ v_l \}_{l=1}^{2N+1}  \subset C([-T, T]: H^s(\T))$, 
we have  
\begin{align}
& \Big\| \sum_{k=k_{1, \cdots, 2N+1}} |\ti{M}_{j, f}^{(2N+1)}| \, \prod_{l=1}^{2N+1} |\ha{v}_l(t, k_l)| \Big\|_{L_T^{\infty} l_s^2 } 
\lesssim L \prod_{l=1}^{2N+1} \| v_l \|_{L_T^{\infty} H^s}, \label{mle1} \\
& \big\| \La_f^{(2N+1)} (\ti{M}_{j, f}^{(2N+1)}, \ha{v}_1) - \La_g^{(2N+1)} (\ti{M}_{j, g}^{(2N+1)}, \ha{v}_1) \big\|_{L_T^{\infty} l_s^2} 
\le C_{*} \label{mle2}
\end{align}
where $C_{*}=C_{*}(v_1, s, |E_1(f)-E_1(g) |, |\la_5|, L, T) \ge 0$, $C_{*} \to 0$ when $|E_1(f) -E_1(g)| \to 0$ 
and $C_{*} =0$ when $E_1(f)=E_1(g)$. 
\end{lem}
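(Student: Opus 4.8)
The plan is to prove \eqref{mle1} by treating each multiplier $M_{j,f}^{(2N+1)}$ with $(N,j)\in K_1\cup K_2$ separately, using the pointwise upper bounds from Lemma~\ref{lem_pwb2} together with the multilinear estimates collected in Section~5, notably \eqref{nl21}, \eqref{cnl1}, \eqref{cnl4}, \eqref{cnl2}, and the resonant estimates \eqref{cnl5}, \eqref{cnl6}. For the trivial members of $K_1$, namely $(1,1)$ with $M_{1,f}^{(3)}=Q_2^{(3)}$ and $(2,1)$ with $M_{1,f}^{(5)}=\tfrac38\la_1$, one invokes Lemma~\ref{lem_cnl3} (via the decomposition $Q_2^{(3)}=q_2^{(3)}-q_1^{(3)}\chi_{R2}^{(3)}$, using \eqref{cnl5} and \eqref{cnl6}) and, for the quintic constant multiplier, a direct Hölder/Sobolev estimate as in the proof of \eqref{cnl6}; these give bounds uniform in $L$, hence certainly $\lesssim L$. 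For $M_{8,f}^{(5)}$ and $M_{9,f}^{(5)}$ one uses \eqref{pwb11} and \eqref{pwb12}: after symmetrization (Lemma~\ref{Le6} is not needed here, only Remark on $[\cdot]_{sym}$), the factor $\min\{\langle k_1-k_2\rangle^{-1},\langle k_3-k_4\rangle^{-1}\}\langle k_{\max}\rangle^{1/2}\langle\mathrm{sec}\rangle^{1/2}\langle k_5\rangle^s$ is estimated by the Young and Hölder inequalities exactly as in the proof of \eqref{cnl1}, distributing the two "square-root" derivatives onto two of the low-frequency factors via $l^2\hookrightarrow l^6$; again these are $L$-independent.

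For the members of $K_2$ I would split into two groups. First, $(2,11)$ and $(2,12)$: here Lemma~\ref{lem_pwb2} gives the crude bound $|M_{j,f}^{(5)}|\lesssim L$, and since the remaining factors on $\supp M_{j,f}^{(5)}$ are frequency-localized so that $k_{\max}\lesssim L$ (for $j=12$) or carry the decaying factor $\langle k_5\rangle^{-3}$ (for $j=11$), a routine Young/Hölder estimate absorbing the polynomial weights yields \eqref{mle1} with the factor $L$. Second, for $(2,10),(2,13),(2,14),(2,15),(2,16),(2,17)$ the bounds \eqref{pwb14}--\eqref{pwb18} and the $M_{10}$-bound show $\langle k_{1,\dots,5}\rangle^s|M_{j,f}^{(5)}|$ is dominated by a product of the form $\langle k_{i_1}\rangle^{s-1/3}\langle k_{i_2}\rangle^{2/3}\langle k_{i_3}\rangle^{2/3}$ times the decaying factor $\langle k_1-k_2\rangle^{-1}$ or $\langle k_4-k_5\rangle^{-1}$ (or $\langle k_2-k_3+k_4-k_5\rangle^{-1}$); applying Young's inequality in the difference variable and then Hölder with $l^2\hookrightarrow l^6$ on the three remaining factors gives $\prod\|v_l\|_{H^s}$, with no loss in $L$. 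In every case the $L$ in \eqref{mle1} is a gross overestimate used only because the same statement must cover $(2,11),(2,12)$.

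For \eqref{mle2} I would argue as in the proof of \eqref{ne03}: write
\begin{align*}
\La_f^{(2N+1)}(\ti{M}_{j,f}^{(2N+1)},\ha{v}_1)-\La_g^{(2N+1)}(\ti{M}_{j,g}^{(2N+1)},\ha{v}_1)
&=\La_f^{(2N+1)}\big((\ti{M}_{j,f}^{(2N+1)}-\ti{M}_{j,g}^{(2N+1)}),\ha{v}_1\big)\\
&\quad+\Big(\La_f^{(2N+1)}(\ti{M}_{j,g}^{(2N+1)},\ha{v}_1)-\La_g^{(2N+1)}(\ti{M}_{j,g}^{(2N+1)},\ha{v}_1)\Big).
\end{align*}
The first term is controlled by the difference bound \eqref{pwb22} from Remark~\ref{rem_pwb3}, which replaces $\ti{M}_{j,f}-\ti{M}_{j,g}$ by $|\la_5||E_1(f)-E_1(g)|\,|\ti{M}_{j,f}|$, so that \eqref{mle1} applied to $|\ti{M}_{j,f}|$ yields a bound $\lesssim L|\la_5||E_1(f)-E_1(g)|\,\|v_1\|_{L_T^\infty H^s}^{2N+1}=C_*$ which vanishes as $|E_1(f)-E_1(g)|\to0$ and is $0$ when $E_1(f)=E_1(g)$; for $(N,j)\in K_1\cup\{(2,10),(2,12)\}$ the stronger assertion $\ti{M}_{j,f}=\ti{M}_{j,g}$ holds by the last sentence of Remark~\ref{rem_pwb3}, making this term identically zero. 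The second term is exactly of the form handled by Lemma~\ref{lem_go}, whose hypothesis is precisely \eqref{mle1}, so it is also bounded by such a $C_*$. The main obstacle is purely bookkeeping: correctly matching each $M_{j,f}^{(5)}$ in $K_2$ to the right case of Lemma~\ref{lem_pwb2} and choosing, for each, the correct assignment of the indices $i_1,i_2,i_3$ and the correct difference variable so that the Young-in-one-variable plus Hölder-in-three-variables scheme closes with exponent $s$ — there is no analytic difficulty beyond what is already packaged in Sections~5 and~6, but the number of subcases is large.
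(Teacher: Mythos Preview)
Your proposal is correct and follows essentially the same approach as the paper: the same case split over $K_1\cup K_2$, the same use of \eqref{cnl5}--\eqref{cnl6} for $(1,1)$, the same invocation of the pointwise bounds \eqref{pwb11}--\eqref{pwb18} from Lemma~\ref{lem_pwb2} combined with H\"older/Young for the remaining $K_2$ cases, and the identical two-term decomposition (via \eqref{pwb22} and Lemma~\ref{lem_go}) for \eqref{mle2}. One minor technical discrepancy: for $\ti{M}_{8,f}^{(5)}$ the paper does not use $l^2\hookrightarrow l^6$ but instead splits $\min\{\langle k_1-k_2\rangle^{-1},\langle k_3-k_4\rangle^{-1}\}\le \langle k_1-k_2\rangle^{-1/2}\langle k_3-k_4\rangle^{-1/2}$ and applies Young in each difference variable separately, with a short case analysis on which pair $(|k_i|,|k_j|)$ realizes $(n_1,n_2)$; your $l^6$ sketch would also close, but the paper's route is slightly cleaner here.
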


\begin{proof}
Firstly, we prove \eqref{mle1}. 
Put $n_1= \max_{1 \le j \le 4} \{ |k_j| \}$ and $n_2= \text{sec}_{1\le j \le 4} \{ |k_j| \}$. \\ 
(I) Estimate of $\ti{M}_{8, f}^{(5)}$: 
By \eqref{pwb11}, we need to show 
\begin{equation} \label{mle41}
\Big\| \sum_{k=k_{1,2,3,4, 5}} \min\{ \langle k_1-k_2  \rangle^{-1}, \langle k_3-k_4 \rangle^{-1} \} n_1^{1/2}  n_2^{1/2} 
\langle k_5 \rangle^{s} \, \prod_{l=1}^5 |\ha{v}_l (k_l)|  \Big\|_{l^2} \lesssim \prod_{l=1}^5 \| v_l \|_{H^s}
\end{equation}
for $\{ v_l \}_{l=1}^5 \subset H^{s}(\T)$. We may assume that $|k_1| \le |k_3|$ and $|k_2| \le |k_4|$ hold. 
First, we suppose that $ n_1=|k_4|$ and $n_2=|k_2|$ hold. 
Then, by the H\"{o}lder and the Young inequalities, the left hand side of \eqref{mle41} is bounded by 
\begin{align*}
& \big\| \{ \langle \cdot \rangle^{-1/2} ( |\ha{v}_1| \check{*} \langle \cdot \rangle^{1/2} |\ha{v}_2| ) \}*   
\{ \langle \cdot \rangle^{-1/2} ( |\ha{v}_3| \check{*} \langle \cdot \rangle^{1/2} |\ha{v}_4| ) \}* ( \langle \cdot \rangle^s |\ha{v}_5| )  
\big\|_{l^2} \\
& \lesssim \| |\ha{v}_1| \check{*} \langle \cdot \rangle^{1/2+} |\ha{v}_2|  \|_{l^2} 
\|  |\ha{v}_3| \check{*} \langle \cdot \rangle^{1/2+} |\ha{v}_4) \|_{l^2} 
\| \langle \cdot \rangle^s |\ha{v}_5| \|_{l^2} 
 \lesssim \prod_{l=1}^4 \| v_l \|_{H^{1/2+}} \| v_5 \|_{H^s}. 
\end{align*} 
Similarly, we obtain \eqref{mle41} when $n_1=|k_3|$ and $n_2=|k_1|$. 
Next, we suppose that $n_1= |k_4|$ and $n_2=|k_3|$ hold. 
Then, by the H\"{o}lder and the Young inequalities, the left hand side of \eqref{mle41} is bounded by 
\begin{align*}
& \big\|  ( |\ha{v}_1| \check{*} |\ha{v}_2| ) )*   
\{ \langle \cdot \rangle^{-1} ( \langle \cdot \rangle^{1/2} |\ha{v}_3| \check{*} \langle \cdot \rangle^{1/2} |\ha{v}_4| ) \}
* ( \langle \cdot \rangle^s |\ha{v}_5| ) \big\|_{l^2} \\
& \lesssim \| \ha{v}_1 \|_{l^1} \| \ha{v}_2 \|_{l^1} 
\|  \langle \cdot \rangle^{1/2+} |\ha{v}_3| \check{*} \langle \cdot \rangle^{1/2+} |\ha{v}_4| \|_{l^{\infty}} 
\| \langle \cdot \rangle^s |\ha{v}_5| \|_{l^2} 
 \lesssim \prod_{l=1}^4 \| v_l \|_{H^{1/2+}} \| v_5 \|_{H^s}. 
\end{align*} 
Similarly, we obtain \eqref{mle41} when $n_1=|k_3|$ and $n_2=|k_4|$. 

It suffices to show that 
\begin{equation} \label{mle3}
\Big\| \sum_{k=k_{1,\dots, 2N+1}} |M_{j, f}^{(2N+1)} | \, \prod_{l=1}^{2N+1} |\ha{v}_l (k_l) | \Big\|_{l_s^2} 
\lesssim L \prod_{l=1}^{2N+1} \| v_l \|_{H^s}
\end{equation}
for $\{ v_l \}_{l=1}^{2N+1} \subset H^s(\T)$ and $(N, j) \in K_1 \cup K_2 \setminus \{ (2,8) \}$. \\
(IIa) Estimate of $M_{1,f}^{(3)}$ and $M_{1, f}^{(5)}$: By \eqref{cnl5} and \eqref{cnl6}, we get \eqref{mle3} for $(N,j)=(1,1)$. 
Obviously, we have \eqref{mle3} for $(N, j)=(2,1)$.  \\
(IIb) Estimate of $M_{9, f}^{(5)}$: In a similar manner as the case (I), 
by \eqref{pwb12} and the H\"{o}lder and the Young inequalities, the left hand side of \eqref{mle3} is bounded by 
\begin{align*}
& \Big\| \sum_{k=k_{1,2,3,4,5}} \min\{ \langle k_1-k_2 \rangle^{-1}, \langle k_3-k_4  \rangle^{-1}  \} n_1^{2/3} n_2^{2/3} 
\langle k_5 \rangle^s \, \prod_{l=1}^5  |\ha{v}_l(k_l)|    \Big\|_{l^2} \\ 
& \hspace{0.3cm} \lesssim \prod_{l=1}^4 \| v_l \|_{H^{2/3+}} \| v_5 \|_{H^s}.
\end{align*}
(IIc) By Lemma~\ref{lem_pwb2} and the H\"{o}lder and the Young inequalities, we have \eqref{mle3} for $(N, j) \in K_2$.

Therefore, we obtain \eqref{mle3} for $(N,j) \in K_1 \cup K_2 \setminus \{  (2,8) \}$. 

Secondly, we prove \eqref{mle2}. By a direct computation, it follows that 
\begin{align*}
& \big[ \La_f^{(2N+1)} (\ti{M}_{j, f}^{(2N+1)}, \ha{v}_1) -\La_g^{(2N+1)} (\ti{M}_{j, g}^{(2N+1)}, \ha{v}_1) \big] (t,k) \\
& =\La_f^{(2N+1)} (\ti{M}_{j, f}^{(2N+1)}- \ti{M}_{j, g}^{(2N+1)} , \ha{v}_1) (t,k) \\
& \hspace{0.4cm} +\big[ \La_f^{(2N+1)} (\ti{M}_{j, g}^{(2N+1)}, \ha{v}_1) -\La_g^{(2N+1)} (\ti{M}_{j, g}^{2N+1}, \ha{v}_1)  \big] (t,k) \\
&= :J_{2,1}^{(2N+1)}  (\ha{v}_1)(t,k)+ J_{2,2}^{(2N+1)} (\ha{v}_1) (t,k). 
\end{align*}
By \eqref{pwb22}, $\ti{M}_{8, f}^{(5)}- \ti{M}_{8,g}^{(5)}=0$ and a slight modification of \eqref{mle3}, we have 
\begin{equation*}
\| J_{2,1}^{(2N+1)} (\ha{v}_1) \|_{L_T^{\infty} l_s^2} \lesssim |\la_5| |E_1(f)-E_1(g)| \| v_1 \|_{L_T^{\infty} H^s}^{2N+1} \le C_{*}. 
\end{equation*}
By \eqref{mle1} and Lemma~\ref{lem_go}, we have $\| J_{2,2}^{(2N+1)} (\ha{v}_1) \|_{L_T^{\infty} H^s } \le C_{*}$. 
Therefore, we obtain \eqref{mle2}.  
\end{proof}

\begin{lem} \label{lem_nl10}
Let $s \ge 1$, $f , g \in L^2(\T)$ and $M_{j, f}^{(2N+1)}$ with 
$(N,j) \in K_3 \cup K_4 \cup K_5 \cup K_6$ be as in Proposition~\ref{prop_NF2}. 
Then, for $\{ v_l \}_{l=1}^{2N+1} \subset C([-T, T]: H^s(\T)) $ and $L \gg \max\{ 1, |\la_5| E_1(f), |\la_5| E_1(g) \}$, 
we have
\begin{align}
& \big\| \La_{f}^{(2N+1)} \big(\ti{M}_{j, f}^{(2N+1)}, \ha{v}_1, \dots, \ha{v}_{2N+1} \big) \big\|_{L_T^{\infty} l_s^2} 
\lesssim \prod_{l=1}^{2N+1} \| v_l \|_{L_T^{\infty} H^s}, \label{nes11} \\
& \big\|  \La_{f}^{(2N+1)} (\ti{M}_{j, f}^{(2N+1)}, \ha{v}_1) - \La_g^{(2N+1)} (\ti{M}_{j, g}^{(2N+1)} , \ha{v}_1) \big\|_{L_T^{\infty} l_s^2} 
\le C_* \label{nes13} 
\end{align}
where $C_*=C_*(v_1, s ,|E_1(f)-E_1(g)|, |\la_5|, T) \ge 0$, $C_*\to 0$ when $|E_1(f)-E_1(g)|\to 0$ and 
$C_{*} =0$ when $E_1(f)=E_1(g)$. 
\end{lem}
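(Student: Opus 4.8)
My plan follows the template of the proofs of Lemmas~\ref{lem_nes0} and \ref{lem_mle}. Since every multiplier $M_{j,f}^{(2N+1)}$ occurring here is symmetric, $|\ti{M}_{j,f}^{(2N+1)}|\le[\,|M_{j,f}^{(2N+1)}|\,]_{sym}^{(2N+1)}$, so \eqref{nes11} follows once I prove the non-symmetrised multilinear bound
\[
\Big\| \sum_{k=k_{1,\dots,2N+1}} |M_{j,f}^{(2N+1)}| \prod_{l=1}^{2N+1} |\ha{v}_l(k_l)| \Big\|_{l_s^2} \lesssim \prod_{l=1}^{2N+1} \|v_l\|_{H^s}
\]
for $(N,j)\in K_3\cup K_4\cup K_5\cup K_6$. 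For the second estimate I would write $\La_f^{(2N+1)}(\ti{M}_{j,f},\ha{v}_1)-\La_g^{(2N+1)}(\ti{M}_{j,g},\ha{v}_1)=\La_f^{(2N+1)}(\ti{M}_{j,f}-\ti{M}_{j,g},\ha{v}_1)+\big(\La_f^{(2N+1)}-\La_g^{(2N+1)}\big)(\ti{M}_{j,g},\ha{v}_1)$; the first term is $\lesssim|\la_5|\,|E_1(f)-E_1(g)|\,\|v_1\|_{L_T^\infty H^s}^{2N+1}\le C_*$ by \eqref{pwb22} of Remark~\ref{rem_pwb3} together with a cosmetic variant of the displayed bound, and the second is $\le C_*$ by Lemma~\ref{lem_go}; this also yields the claimed properties of $C_*$. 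So the whole lemma reduces to the displayed multilinear estimate, and everything below is devoted to it.

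I would treat the four families in turn. For $(N,j)\in K_6$ the multiplier is $\tfrac{3}{8}\la_1$ times an $ext1,\cdot$-extension of $\sum_{j=1}^6\ti{L}_{j,f}^{(3)}\chi_{>L}^{(3)}$ (when $2N+1=7$) or of $\sum_{j=1}^6\ti{L}_{j,f}^{(5)}\chi_{>L}^{(5)}$ (when $2N+1=9$). By Lemma~\ref{lem_pwb1}, each $\ti{L}$-factor other than the $j=6$ one obeys $|\ti{L}_{j,f}^{(3)}\chi_{>L}^{(3)}|,\,|\ti{L}_{j,f}^{(5)}\chi_{>L}^{(5)}|\lesssim\langle k_{\max}\rangle^{-1}\lesssim1$, so \eqref{nl11} with the H\"{o}lder and Young inequalities closes those contributions at once, while $\ti{L}_{6,f}^{(3)}\chi_{>L}^{(3)}$ carries the two-derivative gain $\langle k_1-k_2\rangle^{-1}\langle k_3-k_2\rangle^{-1}\chi_{H3}^{(3)}$ of \eqref{pwb03}, for which the (merely relabelled) estimate \eqref{cnl2} is used.

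For $(N,j)\in K_3\cup K_4$ each multiplier factors as (an extension of $\ti{L}\,\chi_{>L}$) $\times$ (an extension of $Q_1^{(3)}$): the $\ti{L}$-factor gains two derivatives on its block of frequencies by Lemma~\ref{lem_pwb1}, whereas $|Q_1^{(3)}|\lesssim\langle k_{\max}\rangle^2\chi_{NR1}^{(3)}$ loses at most two on its block, so after a short case distinction on which frequency (or partial sum) is largest the surviving symbol is dominated by one of \eqref{nl11}, \eqref{nl21}, \eqref{cnl1}, \eqref{cnl4}; for $(2,18)$ the extra factor $\chi_{H3}^{(3)}$ forces $|k_3|\sim|k_4|\sim|k_5|$ on the $Q_1^{(3)}$-block so that \eqref{cnl2} applies there. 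For $(N,j)\in K_5$ the same splitting holds with $Q_2^{(3)}$ in place of $Q_1^{(3)}$; since $\supp Q_2^{(3)}\subset\supp[2\chi_{R1}^{(3)}]_{sym1}^{(3)}\cup\supp\chi_{R2}^{(3)}$, a pair of frequencies in the $Q_2^{(3)}$-block must coincide, so that block is summed by the resonant estimates \eqref{cnl5}, \eqref{cnl6} of Lemma~\ref{lem_cnl3} while the $\ti{L}$-smoothing together with \eqref{nl11} and H\"{o}lder--Young absorbs the remaining frequencies. When $2N+1=7$ the $\ti{L}$-factor is an $L^{(5)}$-multiplier, which the pointwise bounds of Lemma~\ref{lem_pwb1} control in exactly the same fashion.

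The main obstacle will be bookkeeping rather than any single hard inequality: for each of the dozen-plus multipliers one must run through the subcases according to which of $|k_1|,\dots,|k_{2N+1}|$ — and which of the partial sums $k_{3,4,5}$, $k_{5,6,7},\dots$ — is largest, balance the smoothing of the $\ti{L}$-block against the loss of the $Q$-block, and verify that the surviving symbol matches exactly one of the multilinear estimates of Section 5. The genuinely delicate regime is the near-cancellation one, in which the partial sum on which the $\ti{L}$-smoothing is measured is small; there the $\chi_{>L}$ cut-offs sitting inside the $\ti{L}$-factors are what rescue the estimate, because they force another frequency in the same block to exceed $L$ and thus restore enough room to close it.
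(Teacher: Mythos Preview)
Your overall plan — exploit the product structure $M_{j,f}^{(2N+1)}=(\text{ext of }\ti{L}\chi_{>L})\times(\text{ext of }Q)$, estimate the $Q$-block first and then absorb it into the outer $\ti{L}$-estimate via the multilinear lemmas of Section~5, and deduce \eqref{nes13} from the absolute-value bound together with \eqref{pwb22} and Lemma~\ref{lem_go} — is exactly the paper's strategy (the paper just packages the inner block as $\mathcal{N}_{i,f}^{(3)}$ and the nested objects as $\mathcal{I},\mathcal{K}$). For $K_3$, $K_5$, $K_6$ and for $M_{19,f}^{(5)}\in K_4$ your sketch is fine modulo bookkeeping.

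The genuine gap is $M_{18,f}^{(5)}$. Your assertion that ``the $\ti{L}$-factor gains two derivatives on its block'' is not true for $\ti{L}_{1,f}^{(3)}$: by \eqref{pwb01} one only has $|\ti{L}_{1,f}^{(3)}\chi_{>L}^{(3)}|\lesssim[\langle k_1-k_2\rangle^{-1}\langle k_{\max}\rangle^{-1}\chi_{H1,1}^{(3)}]_{sym1}^{(3)}$, and when $|k_1-k_2|\sim1$ this is a single power of $\langle k_{\max}\rangle^{-1}$. The estimate \eqref{cnl4} extracts a two-derivative gain, but only on the \emph{second} slot; for $M_{19,f}^{(5)}$ (ext$1,2$) the inner block sits there and the argument closes, but for $M_{18,f}^{(5)}$ (ext$1,1$) the inner $Q_1^{(3)}\chi_{H3}^{(3)}$-block sits in the third slot, so \eqref{cnl4} is useless. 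Your appeal to \eqref{cnl2} is also misplaced: \eqref{cnl2} concerns the multiplier $\langle k_1-k_2\rangle^{-1}\langle k_3-k_2\rangle^{-1}\chi_{H3}^{(3)}$, not $Q_1^{(3)}\chi_{H3}^{(3)}$, and there is no such smoothing factor on the inner block here.

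What is actually needed (and what the paper does) is the sharper inner estimate
\[
\Big\|\sum_{m=k_3-k_4+k_5}|Q_1^{(3)}|\,\chi_{H3}^{(3)}\prod_{l=3}^5|\ha{v}_l|\Big\|_{l_{s-1}^2}\lesssim\prod_{l=3}^5\|v_l\|_{H^s},
\]
i.e.\ only \emph{one} derivative loss on the $\chi_{H3}^{(3)}$-block. This is obtained by writing $\langle k_{3,4,5}\rangle^{s-1}|Q_1^{(3)}|\chi_{H3}^{(3)}\lesssim\langle k_3\rangle^{s-1/3}\langle k_4\rangle^{2/3}\langle k_5\rangle^{2/3}$ (using $|k_3|\sim|k_4|\sim|k_5|$), applying Plancherel and H\"older into $L^6\times L^6\times L^6$, and the Sobolev embedding $H^{1/3}(\T)\hookrightarrow L^6(\T)$; at $s=1$ this is sharp and no combination of \eqref{nl11}--\eqref{cnl6} alone will do. Once this is in hand, the outer bound $\langle k_{1,2,3}\rangle^s|\ti{L}_{1,f}^{(3)}\chi_{>L}^{(3)}|\lesssim\langle k_{\max}\rangle^{s-1}$ suffices to close \eqref{nes11} for $(2,18)$.
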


\begin{proof} 
Put 
\begin{align*}
& \mathcal{N}_{1,f}^{(3)} (\ha{w}_1, \ha{w}_2, \ha{w}_3) (t,k):= 
\La_f^{(3)} (i Q_1^{(3)} , \ha{w}_1, \ha{w}_2, \ha{w}_3 )(t, k), \\
& \mathcal{N}_{2,f}^{(3)} (\ha{w}_1, \ha{w}_2, \ha{w}_3) (t,k):= 
\La_f^{(3)} \big( i Q_1^{(3)} \chi_{H3}^{(3)}  , \ha{w}_1, \ha{w}_2 , \ha{w}_3 \big)(t, k), \\
& \mathcal{N}_{3,f}^{(3)} (\ha{w}_1, \ha{w}_2, \ha{w}_3) (t,k):= 
\La_f^{(3)} \big(i Q_2^{(3)} , \ha{w}_1, \ha{w}_2 , \ha{w}_3 \big)(t, k)
\end{align*}
and
\begin{equation*}
\mathcal{N}_{i, f}^{(3)}(\ha{w}_1) (t, k):= \mathcal{N}_{i, f}^{(3)} (\ha{w}_1, \ha{w}_1, \ha{w}_1) (t, k)
\end{equation*}
for $i=1,2,3$. 
By $|Q_1^{(3)} | \lesssim \langle k_{\max} \rangle^2$ and \eqref{nl11}, it follows that 
\begin{equation} \label{nest11}
\big\| \sum_{k=k_{1,2,3}} |Q_1^{(3)} | \, \prod_{l=1}^3 |\ha{w}_l(t, k_l)| \big\|_{L_T^{\infty} l_{s-2}^2 } 
\lesssim \prod_{l=1}^3 \| w_l \|_{L_T^{\infty} H^s}
\end{equation}
for $\{ w_l \}_{l=1}^3 \subset C([-T, T]: H^s(\T))$, which leads that  
\begin{equation} \label{nest12}
\big\| \mathcal{N}_{1,f}^{(3)} (\ha{w}_1, \ha{w}_2, \ha{w}_3)  \big\|_{L_T^{\infty} l_{s-2}^2 } 
\lesssim \prod_{l=1}^3 \| w_l \|_{L_T^{\infty} H^s}. 
\end{equation}
By \eqref{nest11} and Lemma~\ref{lem_go}, we have 
\begin{equation} \label{nest13}
\| \mathcal{N}_{1,f}^{(3)} (\ha{w}_1)- \mathcal{N}_{1,g}^{(3)} (\ha{w}_1)  \|_{L_T^{\infty} l_{s-2}^2} \le C_{*}(w_1, s, |E_1(f) -E_1(g) |, |\la_5|, T).
\end{equation} 
Moreover, we have 
\begin{equation}
\Big\| \sum_{k=k_{1,2,3}} |Q_1^{(3)}| \chi_{H3}^{(3)} \, \prod_{l=1}^3 |\ha{w}_l (t, k_l)| \Big\|_{L_T^{\infty} l_{s-1}^2} 
\lesssim \prod_{l=1}^3 \| w_l \|_{L_T^{\infty} H^s} \label{nest14} 
\end{equation}
for $\{ w_l \}_{l=1}^3 \subset C([-T, T]: H^s(\T)) $. 
In fact, since
\begin{equation*}
\langle k_{1,2,3}  \rangle^{s-1} |Q_1^{(3)}| \chi_{H3}^{(3)} \lesssim \langle k_1 \rangle^{s-1/3} 
\langle k_2 \rangle^{2/3} \langle k_3 \rangle^{2/3},
\end{equation*}
by the Plancherel theorem and the H\"{o}lder and the Sobolev inequalities, we have
\begin{align*}
& \Big\| \sum_{k=k_{1,2,3}} |Q_1^{(3)}| \chi_{H3}^{(3)} \,  \prod_{l=1}^3 |\ha{w}_l(t, k_l)|  \Big\|_{l_{s-1}^2} \\
& \lesssim \big\|\{ (\langle \cdot \rangle^{s-1/3} |\ha{w}_1(t) |) * ( \langle \cdot \rangle^{2/3} |\ha{w}_3(t) |) \} \check{*} 
(\langle \cdot \rangle^{2/3} |\ha{w}_2 (t)| )  \big\|_{l^2} \\
& = \big\| \mathcal{F}^{-1}[ \langle \cdot  \rangle^{s-1/3} |\ha{w}_1 (t) | ] \, \mathcal{F}^{-1}[ \langle \cdot \rangle^{2/3} |\ha{w}_2 (t)| ] \, \overline{\mathcal{F}^{-1} [ \langle \cdot  \rangle^{2/3} |\ha{w}_3(t)| ] }  \big\|_{L^2} \\
& \le \big\|   \mathcal{F}^{-1}[ \langle \cdot  \rangle^{s-1/3} |\ha{w}_1(t)| ] \big\|_{L^6} 
\big\|  \mathcal{F}^{-1}[ \langle \cdot  \rangle^{2/3} |\ha{w}_2 (t)| ] \big\|_{L^6} 
\big\|  \mathcal{F}^{-1}[ \langle \cdot \rangle^{2/3} |\ha{w}_3 (t)| ]  \big\|_{L^6} \\
& \lesssim \| w_1(t)  \|_{H^s} \|  w_2(t) \|_{H^1} \| w_3 (t) \|_{H^1} 
\end{align*}
for any $t \in [-T, T]$. This leads \eqref{nest14} and 
\begin{equation} \label{nest15}
\big\| \mathcal{N}_{2,f}^{(3)} (\ha{w}_1, \ha{w}_2, \ha{w}_3)  \big\|_{L_T^{\infty} l_{s-1}^2} 
\lesssim \prod_{l=1}^3 \| w_l  \|_{L_T^{\infty} H^s}.
\end{equation}
By \eqref{cnl5} and \eqref{cnl6}, it follows that 
\begin{equation}
\Big\| \sum_{k=k_{1,2,3}} |Q_2^{(3)}|  \, \prod_{l=1}^3 |\ha{w}_l (t, k_l)| \Big\|_{L_T^{\infty} l_{s}^2} 
\lesssim \prod_{l=1}^3 \| w_l \|_{L_T^{\infty} H^s} \label{nest16} 
\end{equation}
for $\{ w_l  \}_{l=1}^3 \subset C([-T, T] : H^s(\T))$, which leads that 
\begin{equation} \label{nest17}
\big\| \mathcal{N}_{3,f}^{(3)} (\ha{w}_1, \ha{w}_2, \ha{w}_3)  \big\|_{L_T^{\infty} l_s^2 } \lesssim \prod_{l=1}^3 \| w_l \|_{L_T^{\infty} H^s}.
\end{equation}
By \eqref{nest14}, \eqref{nest16} and Lemma~\ref{lem_go}, we have 
\begin{align}
& \big\| \mathcal{N}_{2,f}^{(3)} (\ha{w}_1)- \mathcal{N}_{2,g}^{(3)} (\ha{w}_1) \big\|_{L_T^{\infty} l_{s-1}^2} 
\le C_{*} (w_1, s, |E_1(f)-E_1(g)|, |\la_5|, T), \label{nest18} \\
& \big\| \mathcal{N}_{3,f}^{(3)} (\ha{w}_1)- \mathcal{N}_{3,g}^{(3)} (\ha{w}_1) \big\|_{L_T^{\infty} l_{s}^2} 
\le C_{*} (w_1, s, |E_1(f)-E_1(g)|, |\la_5|, T).  \label{nest19}
\end{align}
Here we introduce the following notations:
\begin{align*}
& \mathcal{I}_{i,f}^{(N,j)} (\ha{v}_1) (t,k) 
= \La_f^{(2N+1)} \big( \ti{L}_{j, f}^{(2N+1)} \chi_{>L}^{(2N+1)}, \ha{v}_1 , \dots, \ha{v}_1, 
\mathcal{N}_{i,f}^{(3)} (\ha{v}_{1} ) \big) (t,k), \\
& \mathcal{K}_{i,f}^{(N,j)} (\ha{v}_1) (t,k)
 := \La_f^{(2N+1)} \big( \ti{L}_{j, f}^{(2N+1)} \chi_{>L}^{(2N+1)}, \ha{v}_1, \dots, \ha{v}_{1}, \mathcal{N}_{i,f}^{(3)} (\ha{v}_1 ), \ha{v}_{1} \big) (t,k). 
\end{align*}
(I) Firstly, we prove \eqref{nes11} and \eqref{nes13} for $(N, j) \in K_3$. By the definition, 
\begin{align*}
& \La_f^{(5)} (i \ti{M}_{4, f}^{(5)}, \ha{v}_1) = 2 \sum_{j=2,3,5} \mathcal{I}_{1,f}^{(1,j)} (\ha{v}_1), \hspace{0.3cm}
\La_f^{(5)} ( i \ti{M}_{5,f}^{(5)}, \ha{v}_1) = \sum_{j=2,3,5} \mathcal{K}_{1, f}^{(1,j)} (\ha{v}_1), \\
& \La_f^{(5)} (i \ti{M}_{6, f}^{(5)}, \ha{v}_1) = 2 \sum_{j=4,6} \mathcal{I}_{1,f}^{(1,j)} (\ha{v}_1),  \hspace{0.3cm} 
\La_f^{(5)} (i \ti{M}_{7,f}^{(5)}, \ha{v}_1)= \sum_{j=4,6} \mathcal{K}_{1, f}^{(1,j)} (\ha{v}_1), \\
& \La_f^{(7)} (i \ti{M}_{5, f}^{(7)}, \ha{v}_1) = 3 \sum_{j=1}^6 \mathcal{I}_{1,f}^{(2,j)} (\ha{v}_1),  \hspace{0.3cm}
\La_f^{(7)} (i \ti{M}_{6,f}^{(7)}, \ha{v}_1) = 2 \sum_{j=1}^6 \mathcal{K}_{1, f}^{(2,j)} (\ha{v}_1).
\end{align*}
Thus, it suffices to show that for $i=1$, $(N, j) \in J_1 \cup J_2$ and $\{  v_l \}_{l=1}^{2N+3} \subset C([-T, T]: H^s(\T))$ 
\begin{align}
& \Big\| \sum_{k=k_{1, \dots, 2N+1}} |\ti{L}_{j, f}^{(2N+1)} \chi_{>L}^{(2N+1)} | \prod_{l=1}^{2N} |\ha{v}_l (t, k_l)| \, 
\big| \mathcal{N}_{i, f}^{(3)} (\ha{v}_{2N+1}, \ha{v}_{2N+2}, \ha{v}_{2N+3}) (t, k_{2N+1}) \big| \Big\|_{L_T^{\infty}l_s^2} \notag \\
& \hspace{0.5cm} \lesssim  \prod_{l=1}^{2N+1} \| v_l \|_{L_T^{\infty} H^s}, \label{nest21} \\
& \Big\| \sum_{k=k_{1, \dots, 2N+1}} |\ti{L}_{j, f}^{(2N+1)} \chi_{>L}^{(2N+1)} | \prod_{l=1}^{2N-1} |\ha{v}_l (t, k_l)| \,
 |\ha{v}_{2N+1} (t, k_{2N+1})| \notag \\
& \hspace{1cm} \times \big| \mathcal{N}_{i, f}^{(3)} (\ha{v}_{2N}, \ha{v}_{2N+3}, \ha{v}_{2N+2}) (t, k_{2N})  \big| 
\Big\|_{L_T^{\infty}l_s^2} \lesssim \prod_{l=1}^{2N+3} \|  v_l \|_{L_T^{\infty} H^s}, \label{nest22} \\
&  \big\| \mathcal{I}_{i,f}^{(N,j)} (\ha{v}_1)- \mathcal{I}_{i,g}^{(N, j)} (\ha{v}_1) \big\|_{L_T^{\infty} l_s^2 } \le C_{*}, \label{nest23} \\
& \big\| \mathcal{K}_{i,f}^{(N,j)} (\ha{v}_1)- \mathcal{K}_{i,g}^{(N, j)} (\ha{v}_1) \big\|_{L_T^{\infty} l_s^2 } \le C_{*}. \label{nest24} 
\end{align}
Let $(N, j ) \in J_1 $. Then, by \eqref{pwb1}, it follows that 
\begin{equation} \label{nest25}
|\ti{L}_{j, f}^{(2N+1)} \chi_{>L}^{(2N+1)} | \le [ |L_{j, f}^{(2N+1)} \chi_{>L}^{(2N+1)} | ]_{sym}^{(2N+1)} 
\lesssim \langle k_{1,\dots, 2N+1} \rangle^{-1} \langle k_{\max} \rangle^{-1}. 
\end{equation}
Thus, by \eqref{nl21} and \eqref{nest12}, the left hand side of \eqref{nest21} is bounded by 
\begin{equation*}
\prod_{l=1}^{2N} \| v_l \|_{L_T^{\infty} H^s} 
\big\| \mathcal{N}_{1,f}^{(3)} (\ha{v}_{2N+1}, \ha{v}_{2N+2}, \ha{v}_{2N+3} ) \big\|_{L_T^{\infty} l_{s-2}^2} 
\lesssim \prod_{l=1}^{2N+3} \| v_l \|_{L_T^{\infty} H^s}. 
\end{equation*}
In a similar manner as above, we have \eqref{nest22}. 

By \eqref{cnl1}, \eqref{cnl2}, \eqref{pwb02}, \eqref{pwb03} and \eqref{nest12}, we get \eqref{nest21} and \eqref{nest22} 
for $(N, j) \in J_2$. 

Next, we prove \eqref{nest23} and \eqref{nest24}. We only show \eqref{nest23} since \eqref{nest24} follows in a similar manner. 
A direct computation yields that 
\begin{align*}
& \big[ \mathcal{I}_{1,f}^{(N,j)} (\ha{v}_1)- \mathcal{I}_{1,g}^{(N,j)} (\ha{v}_1)  \big] (t,k) \\
& = \La_f^{(2N+1)} \big( (\ti{L}_{j, f}^{(2N+1)}- \ti{L}_{j, g}^{(2N+1)} ) \chi_{>L}^{(2N+1)}, \ha{v}_1, \dots, \ha{v}_1, 
\mathcal{N}_{1,f}^{(3)} (\ha{v}_1) \big) (t, k) \\ 
& + \big[ \La_f^{(2N+1)} \big( \ti{L}_{j, g}^{(2N+1)} \chi_{>L}^{(2N+1)}, \ha{v}_1, \dots, \ha{v}_1, 
\mathcal{N}_{1,f}^{(3)} (\ha{v}_1) \big) \\
& \hspace{0.5cm} - \La_g^{(2N+1)} \big( \ti{L}_{j, g}^{(2N+1)} \chi_{>L}^{(2N+1)}, \ha{v}_1, \dots, \ha{v}_1, 
\mathcal{N}_{1,f}^{(3)} (\ha{v}_1) \big) \big](t, k) \\
& + \La_g^{(2N+1)} \big( \ti{L}_{j, g}^{(2N+1)} \chi_{>L}^{(2N+1)}, \ha{v}_1, \dots, \ha{v}_1, 
\mathcal{N}_{1,f}^{(3)} (\ha{v}_1)- \mathcal{N}_{1,g}^{(3)}(\ha{v}_1) \big) (t, k) \\
&::= J_{3,1}^{(2N+3)} (\ha{v}_1) (t,k) + J_{3,2}^{(2N+3)} (\ha{v}_1) (t,k) + J_{3,3}^{(2N+3)} (\ha{v}_1) (t,k). 
\end{align*}
By \eqref{pwb21} and a slight modification of \eqref{nest21}, we have 
\begin{equation*}
\| J_{3,1}^{(2N+3)} (\ha{v}_1) \|_{L_T^{\infty} l_s^2 } \lesssim |\la_5| |E_1(f)-E_1(g)| \| v_1 \|_{L_T^{\infty} H^s}^{2N+3} \le C_{*}. 
\end{equation*}
By \eqref{nest21} and Lemma~\ref{lem_go}, we get $\| J_{2,2}^{(2N+3)} (\ha{v}_1)  \|_{L_T^{\infty} l_s^2} \le C_{*}$. 
By \eqref{nl21}, \eqref{cnl1}, \eqref{cnl2}, \eqref{pwb02}, \eqref{pwb03}, \eqref{nest25} and \eqref{nest13}, we have 
\begin{equation*}
\| J_{2,3}^{(2N+3)} (\ha{v}_1) \|_{L_T^{\infty} l_s^2} 
\lesssim \| v_1 \|_{L_T^{\infty} H^s}^{2N} 
\big\| \mathcal{N}_{1,f}^{(3)} (\ha{v}_1)- \mathcal{N}_{1,g}^{(3)} (\ha{v}_1) \big\|_{L_T^{\infty} l_{s-2}^2} \le C_{*}. 
\end{equation*} 
Therefore, we obtain \eqref{nest23}. \\
(II) Secondly, we prove \eqref{nes11} and \eqref{nes13} for $(N,j) \in K_4$. Since
\begin{equation*}
\La_f^{(5)} (i \ti{M}_{18,f}^{(5)}, \ha{v}_1)= 2 \mathcal{I}_{2,f}^{(1,1)} (\ha{v}_1), \hspace{0.3cm} 
\La_f^{(5)} (i \ti{M}_{19,f}^{(5)}, \ha{v}_1)= \mathcal{K}_{1,f}^{(1,1) } (\ha{v}_1), 
\end{equation*}
it suffices to show that
\begin{align}
& \Big\| \sum_{k=k_{1,2,3} } |\ti{L}_{1,1}^{(3)} \chi_{>L}^{(3)} | |\ha{v}_1(t, k_1)| |\ha{v}_2 (t, k_2)| 
\big| \mathcal{N}_{2,f}^{(3)} (\ha{v}_3, \ha{v}_4, \ha{v}_5) (t, k_3)|  \Big\|_{L_T^{\infty} l_s^2} \lesssim \prod_{l=1}^5 \| v_l  \|_{L_T^{\infty} l_s^2}, \label{nest31} \\
& \Big\| \sum_{k=k_{1,2,3} } |\ti{L}_{1,1}^{(3)} \chi_{>L}^{(3)} | |\ha{v}_1(t, k_1)| |\ha{v}_3 (t, k_3)| 
\big| \mathcal{N}_{1,f}^{(3)} (\ha{v}_2, \ha{v}_5, \ha{v}_4) (t, k_2)|  \Big\|_{L_T^{\infty} l_s^2} 
\lesssim \prod_{l=1}^5 \| v_l  \|_{L_T^{\infty} l_s^2}, \label{nest32} \\
& \big\| \mathcal{I}_{2,f}^{(1,1)} (\ha{v}_1)-\mathcal{I}_{2,g}^{(1,1)} (\ha{v}_1) \big\|_{L_T^{\infty} l_s^2} \le C_{*}, \label{nest33} \\
& \big\|  \mathcal{K}_{1,f}^{(1,1)}(\ha{v}_1) - \mathcal{K}_{1,g}^{(1,1)} (\ha{v}_1) \big\|_{L_T^{\infty} l_s^2} \le C_{*} \label{nest34}
\end{align}
for $\{ v_l \}_{l=1}^5 \subset C([-T, T]: H^s(\T))$. By \eqref{pwb01}, it follows that 
\begin{equation*}
\langle k_{1,2,3} \rangle^s |\ti{L}_{1,f}^{(3)}  \chi_{>L}^{(3)}| \lesssim [ \langle k_1-k_2 \rangle^{-1} \langle k_3 \rangle^{s-1} 
\chi_{H1,1}^{(3)} ]_{sym}^{(3)} \lesssim \langle k_{\max}  \rangle^{s-1}. 
\end{equation*}
Thus, by \eqref{nest12}, the left hand side of \eqref{nest31} is bounded by 
\begin{equation*}
\| v_1 \|_{L_T^{\infty} H^s } \| v_2 \|_{L_T^{\infty} H^s} 
\big\| \mathcal{N}_{2,f}^{(3)} (\ha{v}_3, \ha{v}_4, \ha{v}_5) \big\|_{L_T^{\infty} l_{s-1}^2} 
\lesssim \prod_{l=1}^5 \| v_l \|_{L_T^{\infty} H^s}.
\end{equation*}
By \eqref{cnl4}, \eqref{pwb01} and \eqref{nest12}, the left hand side of \eqref{nest32} is bounded by
\begin{equation*}
\| v_1 \|_{L_T^{\infty}H^s } \| v_3 \|_{L_T^{\infty} H^s} 
\big\| \mathcal{N}_{1,f}^{(3)} (\ha{v}_2, \ha{v}_5, \ha{v}_4) \big\|_{L_T^{\infty} l_{s-2}^2 } \lesssim \prod_{l=1}^5 \| v_l \|_{L_T^{\infty} H^s}.  
\end{equation*} 
Therefore, we obtain \eqref{nest31} and \eqref{nest32}. 
In a similar way to the proof of \eqref{nest23}, 
by \eqref{pwb21}, \eqref{nest13}, \eqref{nest18}, Lemma~\ref{lem_go} and a slight modification of \eqref{nest31} and \eqref{nest32}, 
we get \eqref{nest33} and \eqref{nest34}. \\
(III) Thirdly, we prove \eqref{nes11} and \eqref{nes13} for $(N,j) \in K_5$. 
By the definition, it suffices to show \eqref{nest21}--\eqref{nest24} for $i=3$, $(N, j) \in J_1 \cup J_2 \cup J_3$ 
and $\{  v_l \}_{l=1}^{2N+3} \subset C([-T, T]: H^s(\T))$. By Lemma~\ref{lem_pwb1}, it follows that 
\begin{equation}
| \ti{L}_{j, f}^{(2N+1)} \chi_{>L}^{(2N+1)}  | \lesssim 1, \label{nest41} 
\end{equation}
for $(N, j) \in J_1 \cup J_2 \cup J_3$. 
Thus, by \eqref{nest17}, we have \eqref{nest21} and \eqref{nest22}. 
Moreover, by \eqref{pwb21}, \eqref{nest19}, Lemma~\ref{lem_go} 
and a slight modification of \eqref{nest21} and \eqref{nest22} for $i=3$ and $(N, j) \in J_1 \cup J_2 \cup J_3$, 
we get \eqref{nest23} and \eqref{nest24}. \\
(IV) By the definition, \eqref{nest41} and Lemma~\ref{lem_go}, 
we easily check that \eqref{nes11} and \eqref{nes13} hold for $(N,j) \in K_6$. 

Therefore, we obtain \eqref{nes11} and \eqref{nes13} for $(N,j) \in K_3 \cup K_4 \cup K_5 \cup K_6$. 
\end{proof}

Now, we prove Proposition~\ref{prop_main1}. 
\begin{proof}[Proof of Proposition~\ref{prop_main1}] 
By the definition of $F_{f, L}$ and Lemma~\ref{lem_nes0}, we have (\ref{mes1}) and (\ref{mes3}).  
For $(N,j) \in J_1 \cup J_2 \cup J_3$, we can easily check
\begin{align} \label{mes20}
|L_{j, f}^{(2N+1)} \Phi_{f}^{(2N+1)} \chi_{ \le L}^{(2N+1)} | \lesssim L^2. 
\end{align} 
Thus, we have 
\begin{align} \label{mes200}
\Big\| \sum_{k=k_{1, \dots, 2N+1}} |\ti{L}_{j, f}^{(2N+1)} \Phi_{f}^{(2N+1)} \chi_{\le L}^{(2N+1)}| 
\, \prod_{l=1}^{2N+1} |\ha{v}_l(t, k_l)| \Big\|_{L_T^{\infty} l_s^2 } 
\lesssim L^2 \prod_{l=1}^{2N+1} \| v_l \|_{L_T^{\infty} H^s}
\end{align}
for any $\{ v_l \}_{l=1}^{2N+1} \subset C([-T, T]: H^s(\T))$. 
Hence, by the definition of $G_{f, L}$, Lemmas~\ref{lem_mle}--\ref{lem_nl10}, we have (\ref{mes2}). 
Next, we prove (\ref{mes4}). 
For $(N, j) \in J_1 \cup J_2 \cup J_3$, by the definition and (\ref{mes20}), it follows that 
\begin{align}
|\ti{L}_{j, f}^{(2N+1)} \Phi_{f}^{(2N+1)} \chi_{\le L}^{(2N+1)}- \ti{L}_{j, g}^{(2N+1)} \Phi_g^{(2N+1)} \chi_{\le L}^{(2N+1)}  | 
\lesssim  |\la_5| |E_1(f)- E_1(g)| L^2.  \label{mes23}
\end{align}
By (\ref{mes200}), (\ref{mes23}) and Lemma~\ref{lem_go}, we have 
\begin{align*}
\big\|  \La_f^{(2N+1)} ( \ti{L}_{j, f}^{(2N+1)} \Phi_f^{(2N+1)} \chi_{\le L}^{(2N+1)}, \ha{v} )
- \La_g^{(2N+1)} ( \ti{L}_{j, g}^{(2N+1)} \Phi_g^{(2N+1)} \chi_{\le L}^{(2N+1)}, \ha{v})  \big\|_{L_T^{\infty} l_s^2} \le C_*.
\end{align*}
Therefore, by the definition of $G_{f, L}$ and Lemmas~\ref{lem_mle}--\ref{lem_nl10}, we obtain \eqref{mes4}. 
\end{proof}

\section{proof of Theorem~\ref{thm_LWP}}

In this section, we give the proof of Theorem~\ref{thm_LWP}. 
By Lemma~\ref{lem_E1} below, it is verified that the following conservation law holds:
\begin{equation} \label{cle1}
E_1(u)(t)=E_1 (\vp)  \hspace{0,5cm} (t \in [-T, T])
\end{equation}
when $u \in C([-T,T]: H^{1}(\T))$ is a solution to \eqref{4NLS1} or \eqref{4NLS2} with \eqref{initial} and $\la_5= \la_2+ \la_4$. 
Thus, the unconditional local well-posedness for \eqref{4NLS1} with \eqref{initial} is equivalent to 
that for \eqref{4NLS2} with \eqref{initial} under the assumption $s \ge 1$ and $\la_5=\la_2+ \la_4$ or $\la_5=0$. 
\begin{lem} \label{lem_E1}
Let $s \ge 1$, $\la_5=\la_2+\la_4$, $\vp \in H^s(\T)$ 
and $u \in C([-T,T]: H^{s} (\mathbb{T}))$ be a solution to \eqref{4NLS1} or \eqref{4NLS2} with \eqref{initial}. 
Then it follows $\| u (t) \|_{L^2} = \| \varphi \|_{L^2}$ for any $t \in [-T,T]$. 
\end{lem}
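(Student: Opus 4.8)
The plan is to justify the formal identity $\tfrac{d}{dt}\|u(t)\|_{L^2}^2=0$; the only delicate point is the low regularity $s\ge1$, which prevents differentiating $\|u(t)\|_{L^2}^2$ directly in a duality pairing of $H^s$ with the space containing $\p_t u$, and which forbids unrestricted integration by parts. First I would write \eqref{4NLS1} (resp. \eqref{4NLS2}) in the form $i\p_t u=-\p_x^4u+N(u)$: for \eqref{4NLS1}, $N(u)$ is the right-hand side of \eqref{4NLS1}; for \eqref{4NLS2}, after solving for $\p_t u$ and cancelling the two $(\la_3-2\la_2)E_2(u)\p_x u$ contributions, one gets $N(u)=[\text{right-hand side of }\eqref{4NLS1}]-\la_5\bigl(E_1(u)(t)-E_1(\vp)\bigr)\p_x^2u$. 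Since $s\ge1$, using $H^1(\T)\hookrightarrow L^\I(\T)$ (which gives $L^1(\T)\hookrightarrow H^{-1}(\T)$ and the algebra property of $H^1(\T)$) one has $N(u)\in C([-T,T]:H^{-1}(\T))$, hence $\p_t u\in C([-T,T]:H^{-m}(\T))$ for some $m$; in particular, for each $k\in\Z$ the Fourier coefficient $\ha u(\cdot,k)$ is $C^1$ on $[-T,T]$ with $\p_t\ha u(t,k)=ik^4\ha u(t,k)-i\ha{N(u)}(t,k)$.

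Next, for each fixed $k$, since $ik^4|\ha u(t,k)|^2$ is purely imaginary,
\begin{equation*}
\tfrac{d}{dt}|\ha u(t,k)|^2=2\,\mathrm{Re}\bigl(\overline{\ha u(t,k)}\,\p_t\ha u(t,k)\bigr)=2\,\mathrm{Im}\bigl(\overline{\ha u(t,k)}\,\ha{N(u)}(t,k)\bigr).
\end{equation*}
Integrating in time, summing over $|k|\le M$, and writing $P_M$ for the projection onto frequencies $|k|\le M$ (so $P_Mu(0)=P_M\vp$), this yields
\begin{equation*}
\|P_Mu(t)\|_{L^2}^2=\|P_M\vp\|_{L^2}^2+2\int_0^t\mathrm{Im}\,\langle P_Mu(t'),N(u(t'))\rangle\,dt',
\end{equation*}
where $\langle\cdot,\cdot\rangle$ is the $H^1$--$H^{-1}$ pairing, finite here because $u\in C([-T,T]:H^1(\T))$ and $N(u)\in C([-T,T]:H^{-1}(\T))$. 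Letting $M\to\I$, the left-hand side converges by Plancherel, while on the right $P_Mu(t')\to u(t')$ in $H^1(\T)$ and $\sup_{t'\in[-T,T]}\|N(u(t'))\|_{H^{-1}}<\I$, so dominated convergence gives
\begin{equation*}
\|u(t)\|_{L^2}^2=\|\vp\|_{L^2}^2+2\int_0^t\mathrm{Im}\,\langle u(t'),N(u(t'))\rangle\,dt'.
\end{equation*}

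It then suffices to show $\mathrm{Im}\,\langle u,N(u)\rangle=0$ at each time. For \eqref{4NLS1} I would invoke \eqref{opt1}: since $\la_5=\la_2+\la_4$ we have $\la_2-\la_5=-\la_4$, so the derivative part of $N(u)$ equals $-\la_4\,\bar u(\p_x u)^2+(\la_3-2\la_4-\la_5)\,u|\p_x u|^2+\la_4\,\p_x(u^2\p_x\bar u)+\la_5\,\p_x(|u|^2\p_x u)$. Pairing with $\bar u$ and integrating by parts once in the last two terms---legitimate since $u^2\p_x\bar u,\,|u|^2\p_x u\in L^2(\T)$ and $\bar u\in H^1(\T)$---the terms $-\tfrac38\la_1|u|^4u$, $(\la_3-2\la_4-\la_5)u|\p_x u|^2$ and $\la_5\p_x(|u|^2\p_x u)$ contribute the real numbers $-\tfrac38\la_1\int|u|^6$, $(\la_3-2\la_4-\la_5)\int|u|^2|\p_x u|^2$, $-\la_5\int|u|^2|\p_x u|^2$, whereas the pair $-\la_4\bar u(\p_x u)^2$ and $\la_4\p_x(u^2\p_x\bar u)$ contributes $-\la_4\int\bar u^2(\p_x u)^2-\la_4\int u^2(\p_x\bar u)^2=-2\la_4\,\mathrm{Re}\int\bar u^2(\p_x u)^2$, again real. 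Hence $\langle u,N(u)\rangle\in\R$ and $\mathrm{Im}\,\langle u,N(u)\rangle=0$, so $\|u(t)\|_{L^2}=\|\vp\|_{L^2}$. For \eqref{4NLS2}, $N(u)$ carries the extra term $-\la_5(E_1(u)(t)-E_1(\vp))\p_x^2u$, which pairs with $\bar u$ to $\la_5(E_1(u)(t)-E_1(\vp))\|\p_x u\|_{L^2}^2\in\R$ (one integration by parts, valid at $H^1$ level), hence does not affect the imaginary part, and the computation for \eqref{4NLS1} applies verbatim; there is no circularity, since $\mathrm{Im}\,\langle u,\p_x^2u\rangle=0$ irrespective of the value of $E_1(u)(t)$. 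The main obstacle is precisely the regularity bookkeeping of the first two paragraphs: passing between the formal energy identity and the frequency-truncated one, taking the limit, and organizing every integration by parts so that $H^1$ regularity is enough.
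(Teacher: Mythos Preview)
Your proof is correct and follows essentially the same route as the paper: frequency-truncate, compute the time derivative of the truncated $L^2$ norm, use the condition $\la_5=\la_2+\la_4$ together with the identity \eqref{opt1} to eliminate the dangerous second-derivative terms, then pass to the limit. The only organizational difference is that the paper keeps the $P_{>N}$ commutator terms explicit throughout and shows they vanish as $N\to\infty$, whereas you first take the limit $M\to\infty$ (obtaining the full pairing $\mathrm{Im}\,\langle u,N(u)\rangle$) and only then verify that this quantity is real; these are two ways of bookkeeping the same computation. Your handling of the \eqref{4NLS2} case---observing that the extra $-\la_5(E_1(u)-E_1(\vp))\p_x^2u$ contributes a real pairing regardless of whether $E_1$ is already conserved---is exactly the point the paper makes in its final paragraph.
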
 
\begin{proof}
First, we suppose that $u$ is a solution to \eqref{4NLS1}--\eqref{initial}. 
For $N \in \N$, the projection operator $P_{\le N}$ is defined as 
\begin{equation*}
P_{\le N} u(t,x):= \sum_{|k| \le N} e^{i k x} \ha{u} (t, k). 
\end{equation*}
Then, $P_{\le N} u$ belongs to $C^1([-T, T]: H^{\infty} (\T))$ since 
\begin{align*}
& \p_t P_{\le N} u = i \p_x^4 P_{\le N} u + i \frac{3}{8} \la_1 P_{\le N}(|u|^4 u) +i (\la_5-\la_2) P_{\le N}( \bar{u} (\p_x u)^2  ) \notag \\
& \hspace{0.5cm} +i(2\la_4+\la_5- \la_3) P_{\le N} ( u |\p_x u|^2 ) 
 - i \la_4 \p_x P_{\le N} (u^2 \p_x \bar{u})   - i \la_5 \p_x P_{\le N}  ( |u|^2 \p_x u) 
\end{align*}
which is in $C([-T,T] : H^{\infty}(\T))$. 
Thus, we calculate $\frac{d}{dt} \| P_{\le N} u (t) \|_{L^2}^2 $ in the classical sense as follows: 
\begin{align} \label{cle11}
& \frac{d}{dt} \| P_{\le N} u(t) \|_{L^2}^2= 2 \text{Re} \int_{\T} \p_t P_{\le N}u \, P_{\le N} \bar{u} \, dx \notag \\
& = -\frac{3}{4} \la_1\, \text{Im}  \int_{\T} P_{\le N} (|u|^4 u) \, P_{\le N} \bar{u} \, dx 
+ 2 (\la_2-\la_5) \, \text{Im} \int_{\T} P_{\le N} (\bar{u} (\p_x u)^2) \, P_{\le N} \bar{u} \, dx \notag \\
& \hspace{0.3cm} + 2 (\la_3-2\la_4-\la_5) \, \text{Im} \int_{\T} P_{\le N}  (u |\p_x u|^2) \, P_{\le N} \bar{u} \, dx \notag \\
& \hspace{0.3cm} + 2 \la_4 \, \text{Im} \int_{\T} \p_x  P_{\le N} (u^2 \p_x \bar{u}) \, P_{\le N} \bar{u} \, dx 
+ 2 \la_5 \, \text{Im} \int_{\T} \p_x P_{\le N} (|u|^2 \p_x u) \, P_{\le N} \bar{u} \, dx \notag \\
& =: I_1+I_2+I_3+I_4+I_5. 
\end{align} 
A simple calculation yields that  
\begin{align*}
I_4= &  -2 \la_4 \, \text{Im} \int_{\T} P_{\le N}( u^2 \p_x \bar{u}) \, \p_x P_{\le N} \p_x \bar{u} \, dx \\
& =  -2 \la_4 \, \text{Im} \int_{\T} u^2 (\p_x \bar{u})^2 \, dx  
+  2 \la_4 \, \text{Im} \int_{\T} u^2 \p_x \bar{u} \, \p_x P_{>N} \bar{u} \, dx.   
\end{align*}
Similarly, it follows that 
\begin{align} 
& I_1+I_2+I_3+I_4+I_5= 2 (\la_5-\la_2-\la_4) \, \text{Im} \int_{\T} u^2 (\p_x \bar{u})^2 \, dx \notag \\
& \hspace{0.3cm} + 2(2 \la_4+\la_5-\la_3) \, \text{Im} \int_{\T} u P_{>N} \bar{u} |\p_x u|^2 \, dx 
+ 2(\la_5 -\la_2) \,  \text{Im} \int_{\T} \bar{u} P_{>N} \bar{u} (\p_x u)^2 \, dx  \notag  \\
& \hspace{0.3cm} + 2\la_4 \, \text{Im} \int_{\T} u^2 \p_x \bar{u} \p_x P_{>N} \bar{u}  \, dx 
+ 2 \la_5 \,  \text{Im} \int_{\T} |u|^2 \p_x u \p_x P_{>N} \bar{u}   \, dx  \notag  \\
& \hspace{0.3cm} + \frac{3}{4} \la_1 \, \text{Im} \int_{\T} |u|^4 u \, P_{>N} \bar{u} \, dx. \label{cle12}
\end{align}
By the assumption $\la_5= \la_2+ \la_4$, the fist term of the right hand side of \eqref{cle12} is equal to $0$. 
Thus, by the H\"{o}lder and the Sobolev inequalities, we have 
\begin{equation*}
|I_1+I_2+I_3+I_4+I_5| \lesssim (\| u \|_{L_T^{\infty} H^1}^3 + \| u \|_{L_T^{\infty} H^1}^5) \| P_{>N} u \|_{L_T^{\infty} H^1}. 
\end{equation*} 
Integrating \eqref{cle11} over $[0, t]$ for $t \in [-T, T]$, we have 
\begin{equation*}
\Big| \| P_{\le N} u(t) \|_{L^2}^2 - \| P_{\le N} \vp \|_{L^2}^2  \Big| 
\lesssim T (\| u \|_{L_T^{\infty} H^1}^3 + \| u \|_{L_T^{\infty} H^1}^5) \| P_{>N} u \|_{L_T^{\infty} H^1}. 
\end{equation*}
By uniform $H^1$-continuity on $[-T, T]$, the right hand side goes to $0$ as $N \to \infty$.  
By taking $N \to \infty$, $P_{\le N} u(t) \to u(t)$ for $t$ fixed and $P_{\le N} \vp \to \vp$ in $L^2(\T)$. 
Thus, we get 
\begin{equation*}
\Big| \| u(t)  \|_{L^2}^2 - \| \vp \|_{L^2}^2  \Big|=0
\end{equation*}
for any $t \in [-T, T]$. 

Next, we suppose that $u$ is a solution to \eqref{4NLS2}--\eqref{initial}. 
We notice that \eqref{4NLS2} is equivalent to
\begin{align*} 
& \p_t u -i \p_x^4 u + i \la_5 (E_1(\vp) - E_1(u)) \p_x^2 u \\
& \hspace{0.3cm} = 
i \frac{3}{8} \la_1 |u|^4 u -i \la_2 \bar{u} (\p_x u)^2- i \la_3 u |\p_x u|^2 -i \la_4 u^2  \p_x^2 \bar{u}-i \la_5 |u|^2 \p_x^2 u. 
\end{align*} 
Since $ \la_5 (E_1(\vp)-E_1(u) ) \text{Re}[ i \int_{\T} \p_x^2 P_{\le N} u \, P_{\le N} \bar{u} \, dx] =0$, 
we have the same result as above in the same manner. 
\end{proof}

Here, we define the translation operators $\mathcal{T}_{v}$ and $\mathcal{T}_{v}^{-1}$ by 
\begin{align*}
\mathcal{T}_v u(t, x)& := u \Big( t,  x+ \int_0^t [(\la_3-2 \la_2) E_2(v) ] (t') \, dt'  \Big), \\
\mathcal{T}_{v}^{-1} u(t,x)& :=u \Big(t, x- \int_0^t [(\la_3- 2 \la_2) E_2(v)] (t') \, dt'  \Big) 
\end{align*}
for $v \in C([-T, T]: H^{1/2}(\T))$. 
In a similar manner as Proposition 8.1 in \cite{KTT}, the following lemma holds. 
\begin{lem} \label{lem_homeo}
Let $s \ge 1/2$. Then, a map $\mathcal{T}: u \mapsto \mathcal{T}_u u$ is a homeomorphism on 
$C([-T,T]: H^s(\T))$.
\end{lem}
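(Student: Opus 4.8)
\textbf{Proof proposal for Lemma~\ref{lem_homeo}.}
The plan is to show $\mathcal{T}$ is a continuous bijection on $C([-T,T]:H^s(\T))$ with continuous inverse, following the scheme of Proposition~8.1 in \cite{KTT}. First I would observe that for $v \in C([-T,T]:H^{1/2}(\T))$ the quantity $E_2(v)(t) = \mathrm{Im}\int_\T \bar v\,\p_x v\,dx$ is a well-defined continuous real-valued function of $t$ (by Cauchy--Schwarz, $|E_2(v)(t)| \le \|v(t)\|_{H^{1/2}}^2$), so that $c_v(t) := \int_0^t (\la_3-2\la_2)E_2(v)(t')\,dt'$ is a well-defined $C^1$ function of $t$ with $c_v(0)=0$. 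Since translation by a fixed real number is an isometry on $H^s(\T)$ (it only multiplies Fourier coefficients by unimodular factors $e^{ikc}$), the map $x \mapsto u(t,x+c_v(t))$ preserves $C([-T,T]:H^s(\T))$, and likewise for $\mathcal{T}^{-1}_v$; moreover $\mathcal{T}_v$ and $\mathcal{T}_v^{-1}$ are mutually inverse for a \emph{fixed} profile $v$ in the shift.

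The key point is that $\mathcal{T}$ is genuinely nonlinear because the shift depends on $u$ itself. To invert $\mathcal{T}$, given $w = \mathcal{T}_u u$, I note that $w(t,x) = u(t, x+c_u(t))$ is a rigid $x$-translation of $u(t,\cdot)$, hence $E_2(w)(t) = E_2(u)(t)$ for every $t$ (translation invariance of $E_2$, which follows from the fact that $\widehat{w}(t,k) = e^{ikc_u(t)}\widehat u(t,k)$ and $E_2(w)(t) = \sum_k k|\widehat w(t,k)|^2 = \sum_k k|\widehat u(t,k)|^2$). Therefore $c_w = c_u$, and $\mathcal{T}_w^{-1} w (t,x) = w(t, x - c_w(t)) = w(t,x-c_u(t)) = u(t,x)$. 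This shows $\mathcal{T}_w^{-1}\circ\mathcal{T}_u$ recovers $u$, i.e. the map $w \mapsto \mathcal{T}_w^{-1} w$ is a two-sided inverse of $\mathcal{T}$ (the same computation in the other order gives $\mathcal{T}_u\circ(\mathcal{T}_w^{-1} w) = w$ using $E_2(\mathcal{T}_w^{-1}w) = E_2(w)$). So $\mathcal{T}$ is a bijection.

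It remains to prove continuity of $\mathcal{T}$ and of $\mathcal{T}^{-1}$. For a sequence $u_n \to u$ in $C([-T,T]:H^s(\T))$, the bound $|E_2(u_n)(t) - E_2(u)(t)| \lesssim (\|u_n(t)\|_{H^{1/2}} + \|u(t)\|_{H^{1/2}})\|u_n(t)-u(t)\|_{H^{1/2}}$ gives $\sup_t|c_{u_n}(t) - c_u(t)| \to 0$; then I would estimate
\begin{align*}
\|\mathcal{T}_{u_n}u_n(t) - \mathcal{T}_u u(t)\|_{H^s}
&\le \|u_n(t,\cdot+c_{u_n}(t)) - u(t,\cdot+c_{u_n}(t))\|_{H^s} \\
&\quad + \|u(t,\cdot+c_{u_n}(t)) - u(t,\cdot+c_u(t))\|_{H^s},
\end{align*}
where the first term equals $\|u_n(t)-u(t)\|_{H^s} \to 0$ by the isometry property, and the second term tends to $0$ uniformly in $t$ by uniform continuity of $t \mapsto u(t)$ in $H^s$ together with strong continuity of the (unitary) shift group $c \mapsto (\text{translation by }c)$ on $H^s(\T)$ applied to the compact set $\{u(t): t\in[-T,T]\}$. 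The same argument applied to $w \mapsto \mathcal{T}_w^{-1}w$, again using $c_w = c_u$ at the level of the inverse, gives continuity of $\mathcal{T}^{-1}$. The main obstacle is the bookkeeping in this last step: one must be careful that the shift amount in $\mathcal{T}^{-1}$ is determined by the \emph{image} $w$ and verify the identity $E_2(w) = E_2(u)$ so that the inverse is explicit; once that is in hand the continuity estimates are routine applications of Cauchy--Schwarz, the isometry property of torus translations on $H^s$, and strong continuity of the translation group.
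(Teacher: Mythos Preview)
Your proof is correct and follows exactly the scheme the paper invokes (Proposition~8.1 in \cite{KTT}): the crucial observation that $E_2$ is invariant under spatial translation, so that $c_w=c_u$ and $\mathcal{T}^{-1}w=\mathcal{T}_w^{-1}w$, combined with the isometry property of torus translations on $H^s$ and strong continuity of the translation group to handle continuity. There is nothing to add.
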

We notice that $\mathcal{T}^{-1}u = \mathcal{T}_{u}^{-1} u $ holds for $u \in C([-T, T]: H^s(\T))$ with $s \ge 1/2$. 
Thus, the unconditional local well-posedness for \eqref{4NLS2} with \eqref{initial} is equivalent to 
that for \eqref{4NLS3} with \eqref{initial} in $C([-T, T]: H^s(\T))$ for $s \ge 1$. 
Therefore, Theorem~\ref{thm_LWP} is equivalent to Proposition~\ref{prop_LWP} below and we only show it. 

\begin{prop} \label{prop_LWP}
Let $s \geq 1$ and $\la_5=\la_2+\la_4$ or $\la_5=0$. Then, for any $\varphi \in H^s (\mathbb{T})$, 
there exists $T=T(\| \varphi \|_{H^{s}}) >0$ and a unique solution $u \in C([-T,T]: H^{s} (\T))$ to 
\eqref{4NLS3} with \eqref{initial}. 
Moreover, the solution map, $H^s(\T) \ni \varphi \mapsto u \in C([-T,T]: H^s (\T))$ is continuous.  
\end{prop}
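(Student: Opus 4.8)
\textbf{Proof proposal for Proposition~\ref{prop_LWP}.}

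The plan is to prove existence, uniqueness and continuous dependence for \eqref{4NLS3} with \eqref{initial} using the normal form reduction already set up in Proposition~\ref{prop_NF2}, together with the estimates collected in Corollary~\ref{cor_mainest}. First I would fix $\vp\in H^s(\T)$, set $R:=\|\vp\|_{H^s}$, and choose the truncation parameter $L\gg\max\{1,|\la_5|E_1(\vp)\}$ large enough (depending on $R$) so that the constant $CL^{-1}(1+2M)^4$ appearing in \eqref{mes21} is, say, $\le 1/4$ once we restrict to the ball $\|u\|_{L^\infty_TH^s}\le M:=4R$; then choose $T=T(R)>0$ small enough that $CTL^2(1+2M)^8M\le R$ as well. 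For \emph{existence}, I would run a standard iteration/compactness scheme for \eqref{4NLS3}: e.g. solve the frequency-truncated equations $\p_t u_N=\dots$ on $P_{\le N}H^s$ (which are ODEs on a finite-dimensional space, globally well-posed by the $E_1$-type a priori bound and energy estimates), obtain uniform-in-$N$ bounds in $C([-T,T];H^s)$ by applying the integrated version of \eqref{NF21} exactly as in the proof of Corollary~\ref{cor_mainest} (the point being that \eqref{mes21} with the above choice of $L,T$ yields $\|u_N\|_{L^\infty_TH^s}\le 2R$), extract a weak-$\ast$ limit, and upgrade to a strong $C([-T,T];H^s)$ solution by using the difference estimate \eqref{mes22} (with $\vp_1=\vp_2=\vp$, so $C_\ast=0$) to show the sequence is Cauchy in $C([-T,T];H^s)$. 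Alternatively, since \eqref{mes22} gives a contraction on the ball directly, one can set up a fixed-point argument for the normal-form-reduced integral equation and then check that the fixed point solves \eqref{4NLS3} by reversing the normal form transformation (which is legitimate because all the multipliers $\ti L^{(2N+1)}_{j,\vp}\chi^{(2N+1)}_{>L}$ are bounded, so $F_{\vp,L}$ maps $H^s$ to $H^s$ boundedly and $u\mapsto u+F_{\vp,L}(\widehat v)$ is invertible on the ball for $L$ large).

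For \emph{uniqueness} — this is the unconditional part and the crux — suppose $u_1,u_2\in C([-T,T];H^s)$ both solve \eqref{4NLS3} with the same data $\vp$. The key observation is that Proposition~\ref{prop_NF2} applies to \emph{any} $H^s$-solution, with no auxiliary norm assumed: given only $u\in C([-T,T];H^s)$ solving \eqref{4NLS3}, Proposition~\ref{prop_req1} shows $v=U_\vp(-t)u\in C^1([-T,T];H^{s-2})$ and then the chain of identities \eqref{eq370}--\eqref{eq31} plus Lemma~\ref{Le31} establish \eqref{NF21} rigorously (the interchange of sum and $\p_t$ was justified inside the proof of Proposition~\ref{prop_NF11} using only $\|u\|_{L^\infty_TH^s}<\infty$). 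Integrating \eqref{NF21} in time for each $k$ and taking $\ell^2_s$-norms gives, via \eqref{mes1} and \eqref{mes2} applied to the difference $\widehat v_1-\widehat v_2$ (same phase $\phi_\vp$, since $E_1(u_1)=E_1(u_2)=E_1(\vp)$ by \eqref{cle1}/Lemma~\ref{lem_E1}), an estimate of the shape
\[
\|v_1-v_2\|_{L^\infty_{T'}H^s}\le \big(CL^{-1}(1+2M')^4+CT'L^2(1+2M')^8\big)\|v_1-v_2\|_{L^\infty_{T'}H^s}
\]
on any subinterval $[-T',T']$, with $M'=\max_j\|u_j\|_{L^\infty_TH^s}$; choosing $L$ large (depending on $M'$) and then $T'$ small makes the bracket $<1$, forcing $v_1\equiv v_2$ on $[-T',T']$, hence $u_1\equiv u_2$ there, and a continuation/connectedness argument propagates uniqueness to all of $[-T,T]$. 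Note $L$ here may be taken larger than the one used in existence — that is harmless since \eqref{NF21} holds for every admissible $L$.

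For \emph{continuous dependence}, I would take $\vp_n\to\vp$ in $H^s$ and corresponding solutions $u_n$, $u$. By the uniform bound from \eqref{mes21}, on a common time interval $[-T,T]=[-T(\sup_n\|\vp_n\|_{H^s}),T]$ all $u_n$ satisfy $\|u_n\|_{L^\infty_TH^s}\le 2\sup_n\|\vp_n\|_{H^s}$. Then \eqref{mes22} with $\vp_1=\vp_n$, $\vp_2=\vp$ gives
\[
\|u_n-u\|_{L^\infty_TH^s}\le \|\vp_n-\vp\|_{H^s}+C_\ast(1+T)+\tfrac12\|u_n-u\|_{L^\infty_TH^s}
\]
for $L$ large and $T$ small, where $C_\ast=C_\ast(u,s,|E_1(\vp_n)-E_1(\vp)|,\dots)\to 0$ since $E_1(\vp_n)\to E_1(\vp)$; absorbing the last term yields $\|u_n-u\|_{L^\infty_TH^s}\lesssim\|\vp_n-\vp\|_{H^s}+C_\ast\to 0$. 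Finally I would pass from the homeomorphism-conjugated equation back: Lemma~\ref{lem_homeo} and Lemma~\ref{lem_E1} together show that unconditional LWP for \eqref{4NLS3} is equivalent to that for \eqref{4NLS1} (via $\mathcal T$ and the change of frame), so Proposition~\ref{prop_LWP} gives Theorem~\ref{thm_LWP}. The main obstacle is purely the bookkeeping in the uniqueness step — making sure that Proposition~\ref{prop_NF2} is genuinely applied to an arbitrary $H^s$-solution with no hidden Bourgain-type norm, and that the time-continuation of uniqueness covers the full interval; the analytic content is entirely contained in the already-established estimates \eqref{mes1}--\eqref{mes4}.
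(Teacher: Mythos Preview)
Your uniqueness and continuous-dependence arguments are essentially the paper's: both rest on applying \eqref{mes22} (with $C_\ast=0$ for uniqueness) to arbitrary $H^s$-solutions via Proposition~\ref{prop_NF2}, and this part is fine.

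The gap is in existence. The paper explicitly flags the obstacle in your option~(b): a fixed point of the integrated \eqref{NF21} need not solve \eqref{4NLS3}. The identity \eqref{NF21} was \emph{derived} by substituting the equation \eqref{eq20}--\eqref{eq21} for $\p_t\ha v$ inside $\p_t F_{\vp,L}(\ha v)$ (see the proof of Proposition~\ref{prop_NF11}); going backwards is not merely a matter of inverting the bounded map $\ha v\mapsto \ha v+F_{\vp,L}(\ha v)$. From $\p_t(\ha v+F_{\vp,L}(\ha v))=G_{\vp,L}(\ha v)$ you would still need to know that $\p_t F_{\vp,L}(\ha v)$ equals the specific combination of $\La^{(2N+3)}_\vp$-terms computed in \eqref{NF115}--\eqref{NF116}, and that computation already \emph{assumed} $\p_t\ha v=$ (right-hand side of \eqref{eq20}). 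Your option~(a) has the analogous problem: Proposition~\ref{prop_NF2}, and hence \eqref{mes21}, applies to solutions of \eqref{4NLS3}, not to solutions of a $P_{\le N}$-truncated equation; you cannot invoke \eqref{mes21} for uniform-in-$N$ bounds without redoing the normal form for the truncated system, where commutators $[P_{\le N},\cdot]$ interfere with the symmetrization and cancellation of Proposition~\ref{prop_res}.

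The paper's route is different from either of your options: it approximates the \emph{initial data} by smooth $\vp_n$, invokes Proposition~\ref{prop_existence} (Segata's $H^m$ well-posedness, $m\gg1$) to obtain genuine solutions $u_n\in C((-T_n^{\max},T_n^{\max});H^\infty)$ of \eqref{4NLS3}, and then applies \eqref{mes21}--\eqref{mes22} to these $u_n$ --- which is legitimate precisely because they really solve \eqref{4NLS3} --- to get uniform $H^s$ bounds on a common interval and show $\{u_n\}$ is Cauchy in $C_TH^s$. The limit solves \eqref{4NLS3} in $C_TH^{s-4}$ by passing to the limit directly in the equation. Use of an external classical existence result for smooth data is the essential missing ingredient in your scheme.
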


As we proved in Proposition \ref{prop_NF2}, for any solution $u \in C([-T,T]:H^s(\T))$ to \eqref{4NLS3}, 
$\ha{v}=e^{-i t\phi_\vp(k)}\ha{u}$ satisfies \eqref{NF21}.
By the standard iteration argument with Proposition \ref{prop_main1}, we can prove the existence of solutions of \eqref{NF21}.
However, this argument is useless to show the existence of solutions to \eqref{4NLS3} 
since we do not know whether $u:=U_{\vp}(t) v$ satisfies \eqref{4NLS3} or not when $\ha{v}$ satisfies \eqref{NF21}.
To overcome this difficulty, we use the existence of the solution to \eqref{4NLS1}--\eqref{initial} for sufficiently smooth initial data.
By Theorem 1.1 in \cite{Se4}, we have the following result. 
\begin{prop} \label{prop_existence}
Let $m \in \N$ be sufficiently large.  Then, \eqref{4NLS1}--\eqref{initial} is locally well-posed in $H^m(\T)$ on $[-T,T]$ without any condition on $\la_1, \la_2, \la_3, \la_4$ and $\la_5$. The existence time $T=T(\|\vp\|_{H^m})>0$.
\end{prop}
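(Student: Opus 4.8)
The plan is to deduce this directly from the existing well-posedness theory for the Fukumoto--Moffatt hierarchy, so that essentially no new work is needed. First I would observe that \eqref{4NLS1} is precisely the special case of \eqref{fm4NLS} obtained by setting $\mu=0$ and $\la_6=0$, and that no relations among $\la_1,\dots,\la_5$ are imposed. Consequently, any local well-posedness statement for \eqref{fm4NLS} valid for arbitrary real coefficients $\la_1,\dots,\la_6$ and $\mu$ applies verbatim to \eqref{4NLS1} with \eqref{initial}.

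Next I would invoke Theorem 1.1 of \cite{Se4}, which asserts that \eqref{fm4NLS} with \eqref{initial} is locally well-posed in $H^s(\T)$ for every $s\ge 4$, with existence time depending only on $\|\vp\|_{H^s}$ and with no condition on the coefficients. Taking any integer $m\ge 4$ and applying this with $s=m$, $\mu=0$, $\la_6=0$ yields exactly the claimed conclusion: for $\vp\in H^m(\T)$ there exists $T=T(\|\vp\|_{H^m})>0$ and a unique solution $u\in C([-T,T]:H^m(\T))$ to \eqref{4NLS1}--\eqref{initial}, with continuous dependence on the data. Since the proposition only requires $m$ to be ``sufficiently large'', the choice $m\ge 4$ suffices and nothing further is required.

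For completeness I would recall the mechanism behind \cite{Se4}, as it is the engine of the reduction: the obstruction is the loss of two derivatives in the quasilinear terms $u^2\p_x^2\bar u$ and $|u|^2\p_x^2 u$, which is handled by Kwon's modified energy method \cite{Kwo}, adding correction terms to $\|u(t)\|_{H^m}^2$ so that the worst contributions cancel after integration by parts, giving an a priori bound on a time interval depending only on $\|\vp\|_{H^m}$; existence then follows from a standard approximation/compactness scheme and uniqueness together with continuous dependence from an energy estimate for the difference of two solutions after the same correction. The only point requiring attention --- uniformity of these estimates across the full range of coefficients $\la_1,\dots,\la_5$ --- is already built into \cite{Se4}, so there is no genuine obstacle here; the role of this proposition in the paper is merely to furnish smooth solutions of \eqref{4NLS1} (hence, after the reductions in Section~1, of \eqref{4NLS3}) to which the low-regularity a priori bounds of Corollary~\ref{cor_mainest} can be applied and then propagated to rough data.
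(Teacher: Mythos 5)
Your proposal matches the paper exactly: the proposition is obtained by citing Theorem 1.1 of \cite{Se4} (local well-posedness of \eqref{fm4NLS} in $H^s(\T)$ for $s\ge 4$ with no coefficient conditions) and specializing to $\mu=\la_6=0$ with $m\ge 4$. Nothing further is needed.
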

By Corollary~\ref{cor_mainest} and Proposition~\ref{prop_existence}, we show Proposition~\ref{prop_LWP}. 
For details, see the proof of Proposition 8.4 in \cite{KTT2}.


\begin{thebibliography}{99}
\bibitem{Ad} J. Adams, {\itshape Well-posedness for the NLS hierarchy}, arXiv:math/2402. 07652v2 [math AP]. 
\bibitem{BIT} A. Babin, A. Ilyin and E. Titi, {\itshape  On the regularization mechanism for the periodic Korteweg-de Vries equation},  Comm. Pure Appl. Math. {\bfseries 64} (2011), no. 5, 591--648.  
\bibitem{Bo1} J. Bourgain, {\itshape Fourier transform restriction phenomena for certain lattice subsets and applications to nonlinear evolution equations. I. Schr\"{o}dinger equations}, Geom. Funct. Anal. {\bfseries 3} (1993), no. 2, 107--156. 
\bibitem{DR} L. S. Da Rios, {\itshape On the motion of an unbounded fluid with a vortex filament of any shape}, 
Rend. Circ. Mat. Palermo {\bfseries 22} (1906), 117--135 [Italian]. 
\bibitem{CLW} M. Chen, Y. Lu and Y. Wang, {\itshape Global well-posedness for fourth-order nonlinear Schr\"{o}dinger equation}, 
arXiv:math/2409. 11002v1 [math AP]. 
\bibitem{FM} Y. Fukumoto and H. K. Moffatt, {\itshape Motion and expansion of a viscous vortex ring. Part I. 
A higher-order asymptotics formula for the velocity}, J. Fluid. Mech. {\bfseries 417} (2000), 1--45. 
\bibitem{GKO} Z. Guo, S. Kwon and T. Oh, {\itshape Poincar\'{e}-Dulac normal form reduction for unconditional well-posedness of the periodic cubic NLS}, Comm. Math. Phys. {\bfseries 322} (2013), no. 1, 19--48.  
\bibitem{Ha} H. Hasimoto, {\itshape A soliton on a vortex filament}, J. Fluid. Mech. {\bfseries 51} (1972), 477-485. 
\bibitem{HIT}H. Hirayama, M. Ikeda and T. Tanaka, 
{\itshape Well-posedness for the fourth-order Schr\"{o}dinger equation with third order derivative nonlinearities}, 
NoDEA Nonlinear Differential Equations Appl. {\bfseries 28} (2021), no. 5, Paper No. 46, 72pp. 
\bibitem{HJ1} Z. Huo and Y. Jia, {\itshape The Cauchy problem for the fourth-order nonlinear Schr\"{o}diger equation related to vortex filament}, Comm. Partial Differential Equations {\bfseries 214} (2005), 1-35. 
\bibitem{HJ2} Z. Huo and Y. Jia, {\itshape A refined well-posedness for the fourth-order nonlinear Schr\"{o}diger equation related to vortex filament}, J. Differential Equations {\bfseries 32} (2007), no. 10--12, 1493--1510.  
\bibitem{KTT} T. K. Kato and K. Tsugawa, {\itshape Cancellation properties and unconditional well-posedness 
for the fifth order KdV type equations with periodic boundary condition}, Partial Differ.  Equ. Appl. {\bfseries 5} (2024), 1--55. 
\bibitem{KTT2} T. K. Kato and K. Tsugawa, {\itshape Cancellation properties and unconditional well-posedness 
for fifth order modified KdV type equations with periodic boundary condition}, arXiv:math/2502. 04007v1 [math AP].
\bibitem{Kn1} N. Kishimoto, {\itshape Unconditional local well-posedness for periodic NLS}, J. Differential Equations {\bfseries 274} (2021), 766--787. 
\bibitem{Kn2} N. Kishimoto, {\itshape Unconditional uniqueness of solutions for nonlinear dispersive equations}, arXiv:math/1911.04349v4 [math. AP]. 
\bibitem{Kn3} N. Kishimoto, {\itshape Remark on periodic Zakharov system}, Electron J. Differential Equations {\bfseries 2022} no. 20, 1--19. 
\bibitem{Kn4} N. Kishimoto, {\itshape Unconditional uniqueness for the periodic modified Benjamin-Ono equation by normal form approach}, Int. Math. Res. Not. {\bfseries 2022} no. 16, 12180--12219. 
\bibitem{Kn5} N. Kishimoto, {\itshape Unconditional uniqueness for the Benjamin-Ono equation by normal form approach}, J. Math. Anal. Appl. {\bfseries 514} (2022), no. 2, Paper No. 126309. 
\bibitem{Kwo} S. Kwon, {\itshape On the fifth order KdV equation: Local well-posedness and lack of uniform continuity of the solution map}, J. Differential Equations {\bfseries 245} (2008), 2627--2659. 
\bibitem{KO} S. Kwon and T. Oh, {\itshape On unconditional well-posedness of modified KdV}, Int. Math. Res. Not. {\bfseries 2012} no. 15, 3509--3534. 
\bibitem{KOY} S. Kwon, T. Oh and H. Yoon, {\itshape Normal form approach to unconditional well-posedness of nonlinear dispersive PDEs on the real line}, Ann. Fac. Sci. Toulouse Math. {\bfseries 29} (2020), no. 3, 649--720.  
\bibitem{MoP} R. Mosincat and D. Pilod, {\itshape Unconditional uniqueness for the Benjamin-Ono equation}, Pure Appl. Anal. {\bfseries 5} (2023), no. 2, 285--322. 
\bibitem{MoY} R. Mosincat and H. Yoon, {\itshape Unconditional uniqueness for the derivative nonlinear Schr\"{o}dinger  equation on the real line}, Discrete Contin. Dyn. Syst. {\bfseries 40} (2020), no. 1, 47--80. 
\bibitem{Se1} J. Segata, {\itshape Well-posedness for the fourth order nonlinear Schr\"{o}dinger type equation related to the vortex filament},  Diff. Integral. Eqs. {\bfseries 6} (2003), 841--864. 
\bibitem{Se2} J. Segata, {\itshape Remark on well-posedness for the fourth order nonlinear Schr\"{o}dinger type equation}, 
Proc. Amer. Math. Soc. {\bfseries 132} (2004) 3559--3568.   
\bibitem{Se3} J. Segata, {\itshape Well-posedness and existence of standing waves for the fourth order nonlinear Schr\"{o}dinger type equation}, 
Discrete Contin. Dyn. Syst. {\bfseries 27} (2010),  1093--1105. 
\bibitem{Se4} J. Segata, {\itshape Refined energy inequality with application to well-posedness for the fourth order nonlinear Schr\"{o}dinger type equation on torus},  J. Differential Equations {\bfseries 252} (2012), no.11, 5994--6011. 
\bibitem{Se5} J. Segata, {\itshape Initial value problem for the fourth order nonlinear Schr\"{o}dinger type equation on torus 
and orbital stability of standing waves}, Comm. Pure and Appl. Anal. {\bfseries 14} (2015), no.3, 843--859.  
\bibitem{TaTs} H. Takaoka and Y. Tsutsumi, {\itshape Well-posedness of the Cauchy problem for the modified KdV equation with periodic boundary condition}, Int. Math. Res. Not. {\bfseries 2004} no. 56, 3009--3040.
\end{thebibliography}
\end{document}